\theoremstyle{theorem}
\newtheorem{theorem}{Theorem}[section]
\newtheorem{proposition}[theorem]{Proposition}
\newtheorem{corollary}[theorem]{Corollary}
\newtheorem{lemma}[theorem]{Lemma}
\theoremstyle{definition}
\newtheorem{definition}[theorem]{Definition}
\newtheorem{construction}[theorem]{Construction}
\newtheorem{example}[theorem]{Example}
\newtheorem{remark}[theorem]{Remark}
\newtheorem{notation}[theorem]{Notation}
\newtheorem{conjecture}[theorem]{Conjecture}
\newcommand{\map}{\mathrm{map}}
\newcommand{\Map}{\mathrm{Map}}
\newcommand{\Lan}{\mathrm{Lan}}
\newcommand{\Ran}{\mathrm{Ran}}
\newcommand{\N}{\mathbb{N}}
\newcommand{\Z}{\mathbb{Z}}
\newcommand{\Q}{\mathbb{Q}}
\newcommand{\R}{\mathbb{R}}
\newcommand{\C}{\mathbb{C}}
\newcommand{\id}{\mathrm{id}}
\newcommand{\Hom}{\mathrm{Hom}}
\newcommand{\End}{\mathrm{End}}
\newcommand{\Fun}{\mathrm{Fun}}
\newcommand{\simp}[1]{\mathrm{simp}(#1)} 
\newcommand{\op}{^{op}}
\newcommand{\perf}[1]{\mathrm{Perf}(#1)}
\newcommand{\colim}{\mathrm{Colim }}
\renewcommand{\lim}{\mathrm{Lim }}
\renewcommand{\dim}{\mathrm{dim }}
\newcommand{\ind}[2]{\mathrm{ind}_{#1}^{#2}}
\newcommand{\res}[2]{\mathrm{res}_{#1}^{#2}}
\newcommand{\rep}[1]{\mathrm{Rep}(#1)}
\newcommand{\pr}[1]{\mathrm{pr}_{#1}}
\newcommand{\sk}[1]{\mathrm{sk}_{#1}}
\newcommand{\K}[1]{\mathrm{K}(#1)}
\newcommand{\MV}{\mathcal{MV}}
\newcommand{\An}{\mathrm{An}}
\title{$K$-theory of rank one reductive $p$-adic groups and Bernstein blocks}
\author{Maximilian Tönies}
\date{}
\begin{document}
\maketitle

\begin{abstract}
   \noindent \textbf{Abstract}: We prove a colimit formula for the $K$-theory spectra of reductive $p$-adic groups of rank one with regular coefficients in terms of the $K$-theory of certain compact open subgroups.
   Furthermore, in the complex case, we show, using the construction of types provided by Roche \cite{RO98},  that this result can be improved to obtain a formula for the $K$-theory spectrum of every principal series Bernstein block if the group is split. 
\end{abstract}

\tableofcontents

\section{Introduction}
\subsection{Smooth representations}
Let $\mathbb{G}$ be a reductive algebraic group over a non-archimedean local field $F$ and let $G:=\mathbb{G}(F)$ be the $F$-points of $\mathbb{G}$.
We will always mean the $F$-points when we mention algebraic groups.
The topology of $F$ induces a topology on $G$ which turns $G$ into a totally disconnected, Hausdorff topological group.
To these topological groups, we can associate a certain kind of representations that take the topology of $G$ into account.
For a ring $R$, a $G$-representation on an $R$-module $V$ is said to be \cal{smooth} if all isotropy groups are open. 
This is equivalent to the $G$-action on $V$ being continuous with respect to the discrete topology on $V$.
In this paper, we will always assume that $\Q \subseteq R$ and $R$ is regular in the sense that $R$ is Noetherian and regular coherent.
We denote by $\rep{G}$ the Abelian category of smooth $R$-representations together with equivariant maps and by $D(G)$ the associated derived stable $\infty$-category.
Throughout this paper we will freely use the language of $\infty$-categories as developed in \cite{HTT} and \cite{HA}. 
In particular, we will always identify ordinary categories with their nerve and treat them as $\infty$-categories.
For any open subgroup $H \subseteq G$ there is an adjoint pair of functors 
\[\begin{tikzcd}
	{D(H)} && {D(G)}
	\arrow[""{name=0, anchor=center, inner sep=0}, "{\ind{H}{G}}", shift left=2, curve={height=-6pt}, from=1-1, to=1-3]
	\arrow[""{name=1, anchor=center, inner sep=0}, "{\res{G}{H}}", shift left=2, curve={height=-6pt}, from=1-3, to=1-1]
	\arrow["\dashv"{anchor=center, rotate=-90}, draw=none, from=0, to=1]
\end{tikzcd}\]
given by compact induction and restriction along the inclusion $H \subseteq G$.
More generally this adjunction exists for any open homomorphism $H \to G$.
For a reductive algebraic group $G$ we define the (non-connective) $K$-theory spectrum as 
\begin{equation*}
   K(G):= K(\perf{G}) 
\end{equation*}
where $\perf{G}:=D(G)^\omega$ is the full subcategory of compact objects.
Whenever we write $K$-theory we will always mean non-connective $K$-theory as a localizing invariant ${Cat}_{\mathrm{ex}}^{\mathrm{perf}} \to \mathrm{Sp}$ in the sense of \cite{BGT13}.
In general, this is very difficult to compute as the representation theory of $G$ can be quite complicated.
However, if $G$ is compact, and we additionally assume that $R$ is semi-simple, then every $G$-representation over $R$ decomposes into a direct sum of irreducible representations. 
In combination with Schur's lemma, this induces a decomposition of the derived category $D(G)$ into a product of $D(A_\pi)$ for division $R$-algebras $A_{\pi}$ indexed over the set of (isomorphism classes of) irreducible representations of $G$. 
If $R$ is an algebraically closed field, then $A_\pi = R$ for all $\pi \in \mathrm{Irr}(G)$.
For general $G$ one might ask whether it is possible to reduce the computation of $K(G)$ to the compact case. 
The claim that this is true is the content of the Farrell--Jones conjecture for reductive algebraic groups which we will briefly sketch in the following.

Let $\mathrm{Orb}^\infty(G)$ be the category of transitive $G$-sets with open isotropy subgroups and equivariant maps.
Note that an equivariant map $G/H \to G/K$ can be (non-uniquely) represented by conjugation with a group element $g \in G$ such that $H^g =gHg^{-1}\subseteq K$.
To a transitive $G$-set $G/H$ we assign the $K$-theory spectrum $K(H)$.
This can be made functorial in $\mathrm{Orb}^{\infty} (G)$ such that for any equivariant map $G/H \to G/K$ represented by an element $g \in G$ the associated map is induced by compact induction along $(-)^g: H \to K$.
Hence, we obtain a functor 
\[\begin{tikzcd}
	{\mathrm{Orb}^{\infty(G)}} && {\mathrm{Sp}.}
	\arrow["K", from=1-1, to=1-3]
\end{tikzcd}\]
We can left Kan extend this along the Yoneda embedding $\mathrm{Orb}^{\infty}(G) \to \mathcal{P}(\mathrm{Orb}^{\infty}(G))$
\[\begin{tikzcd}
	{\mathrm{Orb}^{\infty}(G)} && {\mathrm{Sp}} \\
	\\
	{\mathcal{P}(\mathrm{Orb}^{\infty}(G))}
	\arrow["K", from=1-1, to=1-3]
	\arrow[from=1-1, to=3-1]
	\arrow[""', from=3-1, to=1-3]
\end{tikzcd}\]
to obtain a colimit preserving functor out of $\mathcal{P}(\mathrm{Orb}^{\infty}(G))= \Fun((\mathrm{Orb}^{\infty}(G))^{op}, \An)$ which we will again denote by $K$. 
Here $\mathrm{An}$ denotes the category of homotopy types of spaces, modeled for example via Kan complexes.
Note that $\mathcal{P}(\mathrm{Orb}^{\infty}(G))$ is equivalent to the category of smooth $G$-anima,
that is $G$-spaces with open isotropy groups.
In $\mathcal{P}(\mathrm{Orb}^{\infty}(G))$ we have the terminal object $*$, which is precisely the Yoneda embedding of $G/G$, hence any other $G$-space comes with a unique equivariant map to $*$. 
This induces for any $X \in \mathcal{P}(\mathrm{Orb}^{\infty}(G))$ a canonical map 
\[\begin{tikzcd}
	{K(X)} && {K(G)}
	\arrow[from=1-1, to=1-3]
\end{tikzcd}\]
where the left-hand side is, by the colimit formula for left Kan extensions, equivalent to 
$$
K(X) \simeq \colim_{G/H \to X}K(H).
$$
Let $\mathrm{Orb^{\infty}_{\mathcal{C}op}} (G)\subseteq \mathrm{Orb}^{\infty}(G)$ be the full subcategory of transitive $G$-sets with compact open isotropy.
There exists $E_{\mathcal{C}op} \in \mathcal{P}(\mathrm{Orb}^{\infty} (G))$ such that $E_{\mathcal{C}op}(G/H)$ is contractible if $G/H \in \mathrm{Orb}^{\infty}_{\mathcal{C}op}(G)$ and empty otherwise. 
To any reductive algebraic group, we can associate its extended Bruhat--Tits building.
It can be equipped with a metric and comes with an action of $G$ by isometries.
Furthermore, it can be given the structure of a polysimplical complex that is preserved by the action \cite[Section 2.3]{TI79}.
This building is a model for $E_{\mathcal{C}op}$ \cite[Theorem 6.4.13]{LU05}.
The canonical map $E_{\mathcal{C}op} \to *$ induces 
\begin{equation}    
\label{FJconj}
\begin{tikzcd}
	{K(E_{\mathcal{C}op}) \simeq \colim_{G/H \in \mathrm{Orb}^{\infty}_{\mathcal{C}op}(G)}K(H)} && {K(G).}
	\arrow[from=1-1, to=1-3]
\end{tikzcd}
\end{equation}
In particular, the colimit on the left-hand side only contains the $K$-theory spectra of compact groups.
For a more detailed discussion see for instance \cite{DL98} and \cite[Section 4.2.4]{LR05}.
Now we are in the position to formulate the Farrell--Jones conjecture.

\begin{conjecture}[Farrell--Jones]
Let $G$ be a totally disconnected group.
Then the map \eqref{FJconj} is an equivalence of spectra.
\end{conjecture}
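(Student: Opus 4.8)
The plan is not to address the conjecture in the stated generality --- for a reductive $p$-adic group of arbitrary rank this lies beyond the present methods --- but to prove its instance \eqref{FJconj} when $\mathbb{G}$ has rank one, so that the extended Bruhat--Tits building is a tree $T$, which by the discussion above models $E_{\mathcal{C}op}$. By Bass--Serre theory the action of $G$ on $T$ with fundamental domain an edge presents $G$ as an amalgam $G_{v_0} \ast_{G_e} G_{v_1}$ (or, when there is a single orbit of vertices, as an HNN extension), where $G_e$ is the stabilizer of an edge and $G_{v_0}, G_{v_1}$ the stabilizers of its endpoints, all compact open. As a smooth $G$-anima, $T$ is then the homotopy pushout of the span of orbits $G/G_{v_0} \leftarrow G/G_e \to G/G_{v_1}$ (a homotopy coequalizer in the HNN case), so --- since the extension of $K$ to $\mathcal{P}(\mathrm{Orb}^{\infty}(G))$ preserves colimits and sends $G/H$ to $K(H)$ --- the left-hand side of \eqref{FJconj} is identified with the pushout of spectra
\[\begin{tikzcd}
K(G_e) \arrow[r] \arrow[d] & K(G_{v_0}) \arrow[d] \\
K(G_{v_1}) \arrow[r] & K(E_{\mathcal{C}op}).
\end{tikzcd}\]
Thus the conjecture reduces to the Mayer--Vietoris statement that the analogous square with $K(E_{\mathcal{C}op})$ replaced by $K(G)$ is cocartesian.

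First I would collect the formal input: for each open inclusion $G_e \subseteq G_{v_i} \subseteq G$ the functor $\ind{H}{G}$ is fully faithful with right adjoint $\res{G}{H}$ and satisfies a projection formula, and every compact open subgroup of $G$ fixes a point of $T$, hence is subconjugate to a cell stabilizer. The heart of the matter is a categorical gluing statement: using the building-theoretic resolution of a smooth $G$-representation by representations induced from edge- and vertex-stabilizers --- which in the rank-one case has length one, since $T$ is one-dimensional --- one shows that restriction exhibits $D(G)$ as the pullback $D(G_{v_0}) \times_{D(G_e)} D(G_{v_1})$ of presentable stable $\infty$-categories (equivalently, compact induction exhibits it as the pushout of $D(G_{v_0}) \leftarrow D(G_e) \to D(G_{v_1})$ in $\mathrm{Pr}^{L}$), and that this identification restricts to the compact objects. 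Passing to $\perf{-}$ then gives a square of small stable $\infty$-categories whose two horizontal Verdier quotients agree; since $K$-theory is a localizing invariant it turns such a square into a cocartesian square of spectra, and combining this with the colimit identification of the previous paragraph proves \eqref{FJconj}.

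The main obstacle is exactly this categorical gluing, together with the control of compactness that it requires. One must (a) set up the building-theoretic resolution in the derived/$\infty$-categorical setting and verify that in the rank-one case it genuinely has amplitude one, so that the gluing is a two-term (Verdier-square) phenomenon rather than a higher totalization; (b) show that compact induction from a compact open subgroup preserves compact objects --- this fails without finiteness hypotheses and is precisely where the standing assumptions $\Q \subseteq R$ and $R$ regular enter, through a description of $\perf{G_{v_i}}$ in terms of the (Noetherian, regular coherent) Hecke algebras of the $G_{v_i}$; and (c) treat the HNN case of a single vertex orbit, where the Mayer--Vietoris square is replaced by a Bass--Serre cofiber sequence $K(G_e) \to K(G_{v_0}) \to K(G)$ incorporating the gluing automorphism. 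Once the gluing is established, the remaining ingredients --- a cofinality argument comparing the colimit over $\mathrm{Orb}^{\infty}_{\mathcal{C}op}(G)$ with the edge/vertex pushout, and the input that $T$ models $E_{\mathcal{C}op}$ --- are formal.
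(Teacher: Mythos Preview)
Your reduction of the left-hand side of \eqref{FJconj} to the pushout of $K$-theories of the cell stabilizers is correct and matches how the paper sets up the target in Theorem~\ref{Kseq}. The gap is in the ``categorical gluing'' step. Your formal input that compact induction $\ind{H}{G}$ is fully faithful is false: for $H$ open in $G$ one has $\Hom_G(\ind{H}{G}V, \ind{H}{G}W) \simeq \Hom_H(V, \res{G}{H}\ind{H}{G}W)$, and by Mackey the right-hand side contains $\Hom_H(V,W)$ as a summand together with many further terms indexed by nontrivial double cosets in $H\backslash G/H$. Consequently none of the maps in your square are inclusions of thick subcategories, the Verdier quotients you invoke are not defined as stated, and the assertion that restriction exhibits $D(G)$ as the pullback $D(G_{v_0}) \times_{D(G_e)} D(G_{v_1})$ is false (an object of that pullback is merely a pair of $G_{v_i}$-representations agreeing over $G_e$, which need not and in general does not globalize to a $G$-representation). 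This failure is not a technicality: it is precisely what Waldhausen's Nil-terms measure in the discrete amalgam setting, and the substance of the rank-one argument is to show that the analogue of those terms vanishes here.

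The paper circumvents this by introducing an auxiliary category $\MV(X)$ of Mayer--Vietoris resolutions on the tree, into which the embeddings $\Lan_{BG_x}: D(G_x) \to \MV(X)$ \emph{are} fully faithful (since $BG_x \hookrightarrow \int_G\simp{X}$ is a full subcategory). One then has a genuine Verdier sequence $\MV_0(X) \to \MV(X) \to D(G)$: the right map is a left Bousfield localization by contractibility of $X$, the middle term has $K$-theory $\bigoplus_{[x]} K(G_x)$ by a skeletal splitting, and the kernel $\MV_0(X)$ is handled by equipping its compact objects with a bounded $t$-structure, running a Waldhausen-style nilpotence filtration on the heart, and applying d\'evissage together with the theorem of the heart to obtain $K(\MV_0(X)^\omega) \simeq K(G_e)^{\oplus 2}$. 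Rearranging the resulting fibre sequence gives the pushout square. The regularity hypothesis on $R$ enters not to make induction fully faithful --- it never is --- but to produce the bounded $t$-structures with Noetherian heart needed for d\'evissage.
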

\begin{remark}
    Farrell and Jones originally formulated the conjecture in \cite{FJ93} for discrete groups $G$ and $\Z$-coefficients, and the assembly map was constructed with respect to all virtual cyclic subgroups. 
    This conjecture has been proven for a broad class of groups, including hyperbolic groups \cite{BLR08}, $\mathrm{CAT}(0)$-groups \cite{BL12} and mapping class groups \cite{BB19}.
    The variant for reductive $p$-adic groups with respect to compact open subgroups that we treat in this paper was formulated by Lück and Bartels in \cite[Section 1.H]{bartels2023algebraic} and proven for uniformly regular rings with $\Q \subseteq R$ in \cite{bartels2023algebraic}.
\end{remark}
In this paper, we will only treat the case where $G$ is a reductive algebraic group and $E_{\mathcal{C}op}$ is one dimensional.
This class contains split groups like $\mathrm{SL}_2(F)$ and $\mathrm{PGL}_2(F)$ but also non-split groups like $SU_n(D,h)$ for some $n$, where $D$ is a division algebra over $F$ with involution and $h$ a certain non-degenerate form on $D^n$.
For a list of examples where $G$ is absolutely simple of rank one see \cite{carbone2001classification}. 
In this situation, the assertion that the Farrell--Jones assembly map is an equivalence reduces to the following claim.
\begin{theorem}[see \ref{Kseq}]
    \label{theoremA}
    Let $R$ be a regular ring with $\Q \subseteq R$ and 
    let $G$ be a reductive, algebraic group that acts smoothly on a tree $X$ such that the action is transitive and without inversion on edges and has compact stabilizers.
    Then, for any edge $e \in X$ the following holds. 
\begin{enumerate}
        \item If $X_0$ has one $G$-orbit, then 
\[\begin{tikzcd}
	{K(G_e)} && {K(G_{\partial_0e})} && {K(G)}
	\arrow["{\ind{}{}}", from=1-3, to=1-5]
	\arrow["{\ind{}{}(\partial_0) - \ind{}{}(\partial_1)^g}", from=1-1, to=1-3]
\end{tikzcd}\]
            with maps induced by induction and $g \in G$ such that $g\partial_1e = \partial_0e$, is a cofibre sequence. 
        \item If $X_0$ has two $G$-orbits, then 
\[\begin{tikzcd}
	{K(G_e)} && {K(G_{\partial_0e})} \\
	\\
	{K(G_{\partial_1e})} && {K(G)}
	\arrow["{\ind{}{}}", from=1-3, to=3-3]
	\arrow["{\ind{}{}}", from=1-1, to=1-3]
	\arrow["{\ind{}{}}"', from=1-1, to=3-1]
	\arrow["{\ind{}{}}"', from=3-1, to=3-3]
\end{tikzcd}\]
   with maps induced by induction, is a pushout square.
    \end{enumerate}   
 In particular $K(G)$ is connective.
\end{theorem}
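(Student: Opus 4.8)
\emph{Strategy.} The two cases correspond to the two possible shapes of the quotient graph $G\backslash X$ (a loop, resp.\ a segment), and in both of them the decisive point is that a tree is contractible, hence resolves the trivial representation. Concretely, the augmented simplicial chain complex $0\to R[X_1]\xrightarrow{\partial}R[X_0]\xrightarrow{\varepsilon}R\to 0$ is exact in $\rep G$. Transitivity on edges gives $R[X_1]\cong\ind{G_e}{G}R$ as $G$-representations --- absence of inversion ensures $G_e$ fixes the chosen orientation, so the coefficients are trivial --- and $R[X_0]\cong\bigoplus_i\ind{G_{v_i}}{G}R$, the sum running over the one (case~(1)), resp.\ two (case~(2)), vertex orbits $v_i$. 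Each chain module is $R$-flat, so tensoring the resolution over $R$ with an arbitrary $V\in\rep G$ stays exact, and the projection formula $(\ind{H}{G}R)\otimes_R V\simeq\ind{H}{G}(\res{G}{H}V)$ rewrites it as a short exact sequence natural in $V$. In $D(G)$ this is a cofibre sequence of exact endofunctors
\[\ind{G_e}{G}\res{G}{G_e}\ \longrightarrow\ \textstyle\bigoplus_i\ind{G_{v_i}}{G}\res{G}{G_{v_i}}\ \longrightarrow\ \id_{D(G)}.\]
Since $\res{}{}$ is cocontinuous the functors $\ind{}{}$ preserve compactness, and the cofibre sequence shows $\perf G$ is the thick subcategory generated by the images of $\ind{G_\sigma}{G}(\perf{G_\sigma})$ for $\sigma\in\{e,v_i\}$.

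\emph{Passage to $K$-theory.} I would read the resolution as a Bass--Serre description of $\perf G$ as glued from the $\perf{G_\sigma}$ along $G\backslash X$ with induction functors --- equivalently, the Hecke algebra $C_c^\infty(G)$ is the amalgamated free product $C_c^\infty(G_{v_0})\ast_{C_c^\infty(G_e)}C_c^\infty(G_{v_1})$ in case~(2), and an HNN extension of $C_c^\infty(G_{v_0})$ over $C_c^\infty(G_e)$ in case~(1) --- and then apply a Waldhausen-type Mayer--Vietoris for the $K$-theory of such objects: $K(G)$ is the pushout (case~(2)), resp.\ the cofibre of $\ind{}{}(\partial_0)-\ind{}{}(\partial_1)^g$ (case~(1)), of the corresponding diagram of the $K(G_\sigma)$, \emph{up to} the customary $\mathrm{Nil}$-type error terms. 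The crucial point is that these vanish: each $C_c^\infty(G_\sigma)$ is the increasing union of the group rings $R[G_\sigma/N]$ over open normal subgroups $N$ of $G_\sigma$, and since $|G_\sigma/N|$ is invertible and $R$ is regular, each such ring --- and hence the colimit, together with the inclusions among them --- is regular enough to kill the $\mathrm{Nil}$ contribution. This produces exactly the asserted pushout square, resp.\ cofibre sequence, with the stated induction maps. One may instead bypass this by invoking the Farrell--Jones equivalence $K(E_{\mathcal{C}op})\simeq K(G)$ --- the extended Bruhat--Tits tree being a model for $E_{\mathcal{C}op}$ by \cite{LU05}, and the equivalence holding by \cite{bartels2023algebraic} for the coefficient rings where it is known --- and reading the square/sequence off the $G$-CW structure of $X$, since the left Kan extension $K$ on $\mathcal{P}(\mathrm{Orb}^\infty(G))$ preserves the colimit presenting $X$ from its $0$- and $1$-cells (in case~(1) the resulting pushout collapses along the split fold map $K(G_e)^{\oplus2}\to K(G_e)$ to the stated cofibre sequence).

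\emph{Connectivity.} Each $G_\sigma$ is compact, hence profinite, so $\perf{G_\sigma}\simeq\varinjlim_N\perf{R[G_\sigma/N]}$ with transition functors given by inflation (which preserves compactness because $\Q\subseteq R$), whence $K(G_\sigma)\simeq\varinjlim_N K(R[G_\sigma/N])$. As above each $R[G_\sigma/N]$ is a regular ring, so its (non-connective) $K$-theory is connective, and a filtered colimit of connective spectra is connective; thus every $K(G_\sigma)$ is connective. The cofibre (case~(1)), resp.\ pushout (case~(2)), description then exhibits $K(G)$ as a finite colimit of connective spectra along maps of connective spectra, so $K(G)$ is connective.

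\emph{Main difficulty.} The substance lies in the middle step: making the Bass--Serre gluing precise at the level of stable $\infty$-categories and, above all, proving that the $\mathrm{Nil}$-type obstruction to a clean Mayer--Vietoris in $K$-theory vanishes, which is where the regularity hypothesis on $R$ must genuinely be used --- this is precisely the rank-one incarnation of what makes the Farrell--Jones assembly map an equivalence. If carrying this out directly turns out to be too cumbersome, the fallback is the parenthetical route: quote Farrell--Jones and perform the (routine) bookkeeping of the cell colimit of the tree.
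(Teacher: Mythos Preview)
Your conceptual outline is the same one that motivates the paper --- the paper explicitly says its methods are ``closely related to the ideas of Waldhausen'' --- but the step you flag as the main difficulty is exactly where the work lies, and your sketch does not close it. Waldhausen's Mayer--Vietoris with $\mathrm{Nil}$ terms is set up for ring-theoretic generalized free products with specific hypotheses (pure subring inclusions, etc.); it does not apply off the shelf to Hecke algebras of totally disconnected groups, and writing $C_c^\infty(G_\sigma)$ as a filtered union of group algebras of finite quotients is a good heuristic for why the obstruction \emph{should} vanish but is not a proof that the $\mathrm{Nil}$ term for the amalgamation itself vanishes. The paper's machinery --- the category $\MV(X)$ of Mayer--Vietoris resolutions, the Verdier sequence $\MV_0(X)\to\MV(X)\to D(G)$, and the D\'evissage/theorem-of-the-heart computation of $K(\MV_0(X)^\omega)\simeq K(G_e)^{\oplus 2}$ --- \emph{is} the adaptation of Waldhausen to this setting: the filtration in Lemmas~\ref{FiltKer}--\ref{DevReq} plays the role of the nilpotence filtration, and the regularity hypothesis enters through the bounded $t$-structure on compact objects (Lemma~\ref{tcomp}), which is what enables D\'evissage. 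So you have correctly located the door but would still need to build the key.

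Your fallback route via \cite{bartels2023algebraic} is logically available but undercuts the point of the theorem: the paper presents Theorem~\ref{theoremA} as a \emph{reproof} (with slightly more general coefficients --- regular rather than uniformly regular $R$) of the rank-one case of that result, so invoking it is circular in spirit and does not deliver the stated generality. Your connectivity argument is fine; the paper instead cites \cite{AGH19} via the bounded $t$-structure on $D(G_x)^\omega$, but your filtered-colimit-of-regular-rings argument also works.
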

\begin{remark}
    The subdivision is necessary if and only if the action of $G$ inverts edges, that is if the pointwise stabilizer of an edge does not agree with the elements that leave the edge invariant as a set. 
    In that case, we have to baricentrically subdivide $X$ once and the subdivision inherits a $G$-action which does not invert any edges.
\end{remark}
The methods we use to prove this theorem are closely related to the ideas of Waldhausen in \cite{WH78a} and \cite{WH78b} and to work by Winges in \cite{winges2014filtering}.
This reproves the rank one part, and slightly generalizes the admissible coefficients, of a recent result by Bartels and Lück \cite{bartels2023algebraic} who show that the Farrell--Jones conjecture holds more generally for reductive algebraic groups without any assumption on the rank.

\begin{theorem}[Bartels--Lück \cite{bartels2023algebraic}]
The Farrell--Jones conjecture holds for all reductive algebraic groups and uniformly regular rings $R$ with $\Q \subseteq R$. 
\end{theorem}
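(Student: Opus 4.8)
The plan is to deduce the conjecture from the general transfer-reducibility criterion for the Farrell--Jones conjecture, applied to the geometry of Bruhat--Tits buildings. Fix a reductive algebraic group $\mathbb{G}$ over $F$, write $G = \mathbb{G}(F)$, and let $R$ be uniformly regular with $\Q \subseteq R$. As recalled above, the extended Bruhat--Tits building $X$ of $G$ is a model for $E_{\mathcal{C}op}$: it is a finite-dimensional polysimplicial complex carrying a complete $G$-invariant $\mathrm{CAT}(0)$-metric, on which $G$ acts smoothly, cocompactly, and with compact open stabilizers. Hence the assertion of the theorem is exactly that the assembly map $K(E_{\mathcal{C}op}) \to K(G)$ of \eqref{FJconj} is an equivalence, and it suffices to equip $X$ with enough combinatorial geometry to run the standard controlled-algebra argument.

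The first step is the geometric input. Working with the flow space $FS(X)$ of generalized geodesics in $X$, equipped with the $G$-action and the geodesic flow, and exploiting the convexity of the $\mathrm{CAT}(0)$-metric together with the contracting behaviour of geodesics, I would construct, for every $n \in \N$ and every $\beta > 0$, a finite-dimensional open cover of $G \times FS(X)$ whose members are ``long and thin'' (that is, $\beta$-long in the flow direction), whose isotropy groups all lie in the family of compact open subgroups, and whose multiplicity is bounded in terms of $\dim X$. This is the adaptation to the totally disconnected, smooth setting of the long-thin-cover construction of Bartels--Lück for $\mathrm{CAT}(0)$-groups \cite{BL12}; here cocompactness of the action and local compactness of $G$ are precisely what keep the cover locally finite and finite-dimensional after restriction to smooth $G$-sets. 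In the terminology of the excerpt, this says that $G$ is transfer reducible over the family of compact open subgroups.

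The second step feeds this into the obstruction-category machinery. Transfer reducibility implies, via the axiomatic reduction of Bartels--Lück--Reich suitably generalized to totally disconnected locally compact groups and to non-connective $K$-theory with smooth $R$-linear coefficients \cite{bartels2023algebraic}, that the relevant controlled $K$-theory spectra built from $X \times G$ --- with control measured in $X$ and isotropy outside the family --- have trivial $K$-theory, and therefore that \eqref{FJconj} is an equivalence. It is here that the hypotheses on $R$ are used: $\Q \subseteq R$ rules out any $\mathrm{Nil}$-phenomena, so that the family of compact open subgroups (rather than a larger family of compact-by-cyclic subgroups) already suffices, while uniform regularity of $R$ controls the negative $K$-groups and makes the passage between connective and non-connective $K$-theory, as localizing invariants, harmless.

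For a general reductive $\mathbb{G}$ one could in principle reduce to the absolutely simple case by combining an isogeny argument, relating $\mathbb{G}$ to a product of its central torus with its (almost-)simple factors, with the inheritance properties of the Farrell--Jones conjecture under finite-index subgroups, extensions, and products; but in fact the argument above applies to every reductive $G$ without modification, since the only inputs are the $\mathrm{CAT}(0)$-geometry of $X$ together with properness and cocompactness of the action. When $X$ is one-dimensional the long-thin covers degenerate to the elementary Bass--Serre analysis underlying Theorem~\ref{theoremA}, which gives an independent and much more explicit argument in that case. The main obstacle is the first step together with the bookkeeping of the second: the $\mathrm{CAT}(0)$-geometry of Bruhat--Tits buildings is classical, but transporting the flow-space technology from discrete groups to smooth actions of totally disconnected locally compact groups, and verifying that the obstruction categories are the correct localizing invariants in this setting, is where essentially all of the real work lies.
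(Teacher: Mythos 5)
There is no proof of this statement in the paper for you to match: the theorem is quoted verbatim from \cite{bartels2023algebraic}, and the paper's own techniques (Mayer--Vietoris resolutions, the Bousfield localization $\alpha:\MV(X)\to D(G)$, computation of the kernel via D\'evissage and the theorem of the heart) only establish the rank-one case, Theorem \ref{Kseq}, where the Bruhat--Tits building is a tree. So the only meaningful comparison is with the argument of Bartels--L\"uck themselves, and measured against that your proposal is not a proof but a plan: the two steps you yourself identify as containing ``essentially all of the real work'' --- the construction of long, thin, finite-dimensional covers of the flow space of the building with compact open isotropy and bounded multiplicity for a smooth action of a totally disconnected group, and the extension of the controlled/obstruction-category machinery to totally disconnected groups, smooth $R$-linear coefficients, and non-connective $K$-theory --- are exactly the content of \cite{bartels2023algebraic}, and you do not supply them; indeed you cite that very paper for the second step, which makes the argument circular as a proof of the theorem it is supposed to establish. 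Nothing in your sketch verifies that the long-thin-cover technology of \cite{BL12} survives the passage from discrete to td groups (where, e.g., isotropy must be kept open and the covers compatible with smoothness of the action), nor that the resulting obstruction categories compute the fiber of the assembly map \eqref{FJconj} for the localizing invariant used here.

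Two smaller points. First, your closing claim that in dimension one the long-thin covers ``degenerate to the elementary Bass--Serre analysis underlying Theorem~\ref{theoremA}'' misrepresents the relationship: the paper's rank-one proof is not a degenerate instance of the controlled-algebra argument but an independent route (an exact sequence $\MV_0(X)\to\MV(X)\to D(G)$ in $Pr^L_{st,\omega}$, with the kernel identified via a bounded $t$-structure, D\'evissage, and the theorem of the heart), and the remark following Lemma \ref{tker} explains why this route does not obviously extend beyond trees --- the existence of the bounded $t$-structure on $\MV_0(X,\mathcal{D})^\omega$ is special to dimension one. Second, the reduction you sketch for general reductive $\mathbb{G}$ via isogenies and inheritance properties is not needed in the cited argument and would itself require justification (inheritance of the td-group Farrell--Jones conjecture under the relevant operations is again part of the machinery you are assuming). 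If your goal is to give a proof within the scope of this paper's methods, you would have to confront the higher-dimensional kernel computation head-on; if your goal is to summarize Bartels--L\"uck, the summary is broadly accurate but should be presented as such, not as a proof.
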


\subsection{Bernstein blocks and types}
In \cite{BE84} and \cite{bernsteindraft} Bernstein proves a product decomposition theorem for the category $\rep{G}$ of smooth $G$-representations.
This induces a decomposition of the derived category $D(G)$ and with that a decomposition of $K(G)$ into a direct sum of the $K$-theory of the components.
As the second part of this paper deals with the $K$-theory of these components we will briefly recall this decomposition as well as some of its properties.
For a more detailed explanation see \cite{bernsteindraft}.

Let $L$ be a Levi subgroup of $G$. 
Then we have a pair of adjoint functors 
\[\begin{tikzcd}
	{\rep{G}} && {\rep{L}}
	\arrow[""{name=0, anchor=center, inner sep=0}, "{r_G^L}", shift left=2, from=1-1, to=1-3, curve={height=-6pt}]
	\arrow[""{name=1, anchor=center, inner sep=0}, "{i_L^G}", shift left=2, from=1-3, to=1-1, curve={height=-6pt}]
	\arrow["\dashv"{anchor=center, rotate=-90}, draw=none, from=0, to=1]
\end{tikzcd}\]
given by the so-called normalized parabolic induction $i^G_L$ and the normalized Jacquet restriction $r_G^L$.
\begin{definition}
    A smooth $G$-representation $\pi$ is called quasi-cuspidal if $r_G^L\pi =0$ for every Levi subgroup $L \neq G$.
    It is called cuspidal if it is in addition finitely generated.
\end{definition}
Let $\mathfrak{X}(L)$ be the group of unramified characters of $L$, that is, those characters that factor over $L/L^0$ where $L^0$ is the subgroup generated by the union of all maximal compact open subgroup of $L$.
Let $C(G)$ be the set of all pairs $(L, \rho)$ where $L$ is a Levi subgroup of $G$ and $\rho$ is an irreducible cuspidal $L$-representation.
There is an equivalence relation on $C(G)$ by declaring $(L_1, \rho_1) \sim (L_2, \rho_2)$ if there exists $g \in G$ and $\chi \in \mathfrak{X}(L_2)$ such that 
\begin{equation}
    \begin{split}
    L_1^g &= L_2 \\
    \rho_1^g &= \chi \rho_2,
    \end{split}
\end{equation}
and we denote by $\Omega(G)$ the set of equivalence classes $[L, \rho]$ under this relation.
To any such equivalence class, we can associate the full subcategory $\rep{G}_{[L,\rho]}$ that is spanned by all smooth $G$-representations $\pi$ such that every irreducible subquotient of $\pi$ appears as a subquotient of $i_{L'}^G(\rho')$ for some $(L',\rho') \in [L, \rho]$. 
\begin{theorem}[Bernstein]
The category $\rep{G}$ splits as a product 
$$
\rep{G} = \prod_{[L,\rho] \in \Omega(G)} \rep{G}_{[L,\rho]}
$$
and each factor is indecomposable. The subcategories $\rep{G}_{[L, \rho]}$ are called Bernstein blocks. 
\end{theorem}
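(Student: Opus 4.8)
\noindent\emph{Proof plan.}
This is the Bernstein decomposition of \cite{BE84}, \cite{bernsteindraft}, and the plan is to follow Bernstein's original argument; I will describe it for $R$ a field of characteristic zero (so that the usual structure theory of parabolic induction is available), the case of a general Noetherian regular $\Q$-algebra being handled the same way after observing that $\rep{G}$ is the category of smooth modules over $\mathcal{H}(G)\otimes_\Q R$. The first step is to attach to every $\pi \in \mathrm{Irr}(G)$ its \emph{cuspidal support}: I would choose a Levi subgroup $L$ minimal with $r_G^L\pi \neq 0$; transitivity of Jacquet restriction together with the minimality of $L$ forces every irreducible subquotient $\rho$ of $r_G^L\pi$ to be cuspidal, and the Bernstein--Zelevinsky geometric lemma combined with Frobenius reciprocity shows that the pair $(L,\rho)$ is well defined up to $G$-conjugacy. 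Projecting to $\Omega(G)$ yields the \emph{inertial support} of $\pi$, and one checks that the subcategory $\rep{G}_{[L,\rho]}$ of the statement --- defined by requiring all irreducible subquotients to have inertial support $[L,\rho]$ --- is closed under subobjects, quotients, and arbitrary direct sums (for a direct sum one uses that an irreducible subquotient, being finitely generated, already occurs in a finite partial sum).

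Next I would reduce the product decomposition to a finiteness statement. For a smooth representation $\pi$ let $\pi_{\mathfrak s}$ be the necessarily unique maximal subrepresentation lying in $\rep{G}_{\mathfrak s}$, one for each $\mathfrak s \in \Omega(G)$; the asserted splitting $\rep{G}=\prod_{\mathfrak s}\rep{G}_{\mathfrak s}$ is equivalent to $\pi=\bigoplus_{\mathfrak s}\pi_{\mathfrak s}$ for every $\pi$. Directness of this sum is formal: a finite partial sum is a quotient of the external direct sum and therefore has subquotients in only finitely many blocks. That the $\pi_{\mathfrak s}$ generate $\pi$ is the heart of the matter; passing to the smooth quotient $\pi/\sum_{\mathfrak s}\pi_{\mathfrak s}$ and using that every nonzero finitely generated smooth representation admits an irreducible quotient, it suffices to show that every nonzero smooth representation contains a nonzero subrepresentation all of whose irreducible subquotients share one inertial support. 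Here I would invoke Bernstein's key finiteness theorem: for a fixed compact open subgroup $K$, only finitely many blocks $\rep{G}_{\mathfrak s}$ meet the subcategory of representations generated by their $K$-fixed vectors, because such an $\mathfrak s=[M,\sigma]$ forces $\sigma$ to have bounded level and cuspidal representations of a Levi $M$ of bounded level fall into finitely many families under twisting by unramified characters. Granting this, one produces inside the centre of the Hecke algebra $\mathcal{H}(G,K)$ an idempotent cutting out each such block, and writing $\rep{G}$ as the filtered union of the categories of representations generated by their $K$-fixed vectors --- and checking that these idempotents are compatible as $K$ shrinks --- gives the decomposition.

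For indecomposability of a single block I would exhibit a progenerator of $\rep{G}_{[L,\rho]}$ by parabolically inducing the universal unramified twist of $\rho$, i.e.\ the family $\rho\otimes\chi$ with $\chi\in\mathfrak{X}(L)$: it is finitely generated by the standard behaviour of parabolic induction, and it is projective because, by the second adjointness theorem, the functor $\Hom_G(i_L^G(-),-)$ in its second variable factors through the Jacquet functor of the opposite parabolic, which is exact. Since this induced object generates the block (by the cuspidal-support description), the block is equivalent to modules over its endomorphism ring, and one identifies the connected components of the centre of that ring with those of the invariants of the coordinate ring of $\mathfrak{X}(L)$ under the finite ``relative Weyl group'' $W_{[L,\rho]}$ stabilizing the inertial class. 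As $\mathfrak{X}(L)$ is a connected torus, this quotient is connected, so the centre has connected spectrum and the block admits no nontrivial product decomposition.

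The hard part will be the finiteness input of the second paragraph --- equivalently, the finite generation of the Bernstein centre $Z(\mathcal{H}(G,K))$ and the finiteness of the set of central idempotents that are relevant for a given $K$. By contrast, the manipulations with cuspidal support and the progenerator formalism are comparatively soft (the latter is essentially automatic once the second adjointness theorem is available). The finiteness statement, however, is genuinely quantitative: it rests on the structure theory of cuspidal representations of Levi subgroups --- ultimately on Harish-Chandra-type finiteness for cuspidals together with the second adjoint theorem --- and has no purely category-theoretic shortcut.
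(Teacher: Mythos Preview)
The paper does not prove this theorem; it is stated as a background result attributed to Bernstein with citations to \cite{BE84} and \cite{bernsteindraft}, and no argument is supplied. Your sketch follows the standard Bernstein argument from those references, so there is nothing in the paper to compare against --- your proposal is appropriate as an outline of the cited proof.
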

This decomposition immediately implies that  
$$
K(G) = \bigoplus_{[L, \rho] \in \Omega(G)}K(G, \rho)
$$
where $K(G, \rho)$ is the $K$-theory spectrum of the derived category of $\rep{G}_{[L, \rho]}$.
Since the Farrell--Jones conjecture for $G$ is true there is a decomposition of $K(E_{\mathcal{C}op})$ corresponding to the factors $K(G, \rho)$.
Now one might also hope that there is a description of $K(G, \rho)$ in terms of a colimit with respect to the compact open subgroups, similar to the global case.
Further analysis of the Bernstein blocks $\rep{G}_{[L,\rho]}$ shows that $i_L^G \rho$ is always a compact generator and hence $\rep{G}_{[L, \rho]} \simeq \mathrm{Mod}(\End(i_L^G\rho))$.
While this is a very pleasant structural result about Bernstein blocks, there is no direct relation between parabolic induction and compact induction.
Hence, it seems unclear how the $K$-theory of Bernstein blocks can be computed as the colimit of compact inductions. 
However, it turns out that there are sometimes compact generators that can be obtained by compact induction.
This leads to the theory of types by Bushnell and Kutzko \cite{BK98}. 
\begin{definition}
    A pair $(K, \pi)$ where $\pi$ is an irreducible representation of a compact open subgroup $K\subset G$ is called a type for the Bernstein block $\rep{G}_{[L, \rho]}$ if $\ind{K}{G} \pi \in \rep{G}_{[L,\rho]}$ and $\ind{K}{G} \pi$ is a compact generator of $\rep{G}_{[L,\rho]}$.
\end{definition}
We will be primarily interested in the case where $L$ is the maximal torus $T$ as this is the only non-trivial Levi subgroup in the rank $1$ case if $G$ is split and connected.
Here Roche showed in \cite{RO98} that there always exists a type $(J, \rho)$ for $\rep{G}_{[T,\rho]}$ under a restriction on the residual characteristic of the field $F$. 
For these types, Roche also computes the associated Hecke algebra $\mathcal{H}(G, \rho)$ which is given by the endomorphism ring $\End(\ind{J}{G}\rho )$.
In general, the existence of types is not clear, however, it is conjectured that every Bernstein block admits a type. 
This is known by the work of Bushnell and Kutzko \cite{BK93, BK94, BK98, BK99} for $\mathrm{GL}_n$ and in combination with work by Roche and Goldberg \cite{GR02} for $\mathrm{SL}_n$.
For recent results on this conjecture see Fintzen \cite{FI21}.

In the case of principal series blocks with associated type $(J, \rho)$ by \cite{RO98} we can associate to every compact open subgroup $K$ a full subcategory $D(K, \rho) \subseteq D(K)$ (see definition \ref{defrel}) and we define $K(K, \rho):=K(D(K, \rho)^\omega)$. 
This system of subcategories is closed under compact induction. 
Therefore, the Farrell--Jones assembly map can be restricted  to obtain a relative map 
\[\begin{tikzcd}
	{\colim_{H\in \mathrm{Orb}^\infty_{\mathcal{C}op}(G)}K(H, \rho)} && {K(G, \rho),}
	\arrow[from=1-1, to=1-3]
\end{tikzcd}\]
and we conjecture that this map is an equivalence of spectra.

Similar to the absolute case this simplifies to the assertion that the following diagram is a cofibre sequence resp.\ a pushout square if $G$ has rank one.
\begin{theorem}[see Theorem \ref{KBernstein}]
    \label{theoremB}
            Let $G$ be a connected split reductive algebraic group of rank one with associated (subdivided) Bruhat--Tits tree $X$. Let $\rho_\chi$ be a principal series type associated to a smooth character $\chi: T^0 \to \C^\times$ and $e$ an edge of $X$.
        \begin{enumerate}
            \item If $X_0$ has one $G$-orbit, then 
            \[\begin{tikzcd}
	            {K(G_e, \rho_\chi)} && {K(G_{\partial_0e}, \rho_\chi)} && {K(G, \rho_\chi)}
	            \arrow["{\ind{}{}}", from=1-3, to=1-5]
	            \arrow["{\ind{}{}(\partial_0) - \ind{}{}(\partial_1)^g}", from=1-1, to=1-3]
            \end{tikzcd}\]
            with maps induced by induction and $g \in G$ such that $g\partial_1e = \partial_0e$, is a cofibre sequence.
            \item If $X_0$ has two $G$-orbits, then 
            \[\begin{tikzcd}
            	{K(G_e, \rho_\chi)} && {K(G_{\partial_0e}, \rho_\chi)} \\
            	\\
	            {K(G_{\partial_1e}, \rho_\chi)} && {K(G, \rho_\chi)}
            	\arrow["{\ind{}{}}",from=1-1, to=1-3]
            	\arrow["{\ind{}{}}",from=1-1, to=3-1]
            	\arrow["{\ind{}{}}",from=3-1, to=3-3]
            	\arrow["{\ind{}{}}",from=1-3, to=3-3]
            \end{tikzcd}\]
            with maps induced by induction, is a pushout square.
        \end{enumerate}
        In particular, $K(G, \rho_\chi)$ is connective.
\end{theorem}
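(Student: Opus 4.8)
The plan is to run the proof of Theorem~\ref{theoremA} internally to the Bernstein block $\rep{G}_{[T,\rho_\chi]}$, and then to recognise the resulting statement as a direct summand of Theorem~\ref{theoremA} itself. First, since the system $D(-,\rho_\chi)$ is closed under compact induction, the assignment $G/H\mapsto K(H,\rho_\chi)$ is functorial on $\mathrm{Orb}^\infty_{\mathcal{C}op}(G)$, so left Kan extension along the Yoneda embedding produces a colimit-preserving functor $K(-,\rho_\chi)\colon\mathcal{P}(\mathrm{Orb}^\infty_{\mathcal{C}op}(G))\to\Sp$ and the relative assembly map $K(E_{\mathcal{C}op},\rho_\chi)\to K(G,\rho_\chi)$. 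For $G$ connected, split and of rank one the (subdivided) Bruhat--Tits tree $X$ is a model for $E_{\mathcal{C}op}$ on which $G$ acts transitively and without inversion on edges with compact stabilizers, so Theorem~\ref{theoremA} applies; moreover, as a smooth $G$-anima with compact isotropy $X$ is the homotopy coequalizer of the two equivariant maps $G/G_e\rightrightarrows G/G_{\partial_0 e}$ coming from the two faces of $e$ (using $g$ with $g\partial_1 e=\partial_0 e$ for the second) when $X_0$ has one orbit, and the homotopy pushout of $G/G_{\partial_0 e}\leftarrow G/G_e\to G/G_{\partial_1 e}$ when it has two. Applying the colimit-preserving functor $K(-,\rho_\chi)$ to these presentations turns them, formally, into the cofibre sequence $K(G_e,\rho_\chi)\xrightarrow{\ind{}{}(\partial_0)-\ind{}{}(\partial_1)^g}K(G_{\partial_0 e},\rho_\chi)\to K(X,\rho_\chi)$, respectively a pushout square with lower-right corner $K(X,\rho_\chi)$. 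Everything therefore reduces to showing that the relative assembly map $K(X,\rho_\chi)\to K(G,\rho_\chi)$ is an equivalence — the rank-one case of the relative Farrell--Jones statement.

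I would deduce this last point from Theorem~\ref{theoremA} by splitting the entire diagram there into a direct sum. As we work over $\C$ and $G_e$, $G_{\partial_0 e}$, $G_{\partial_1 e}$ are compact, their categories of smooth representations are semisimple; hence each of $D(G_e)$, $D(G_{\partial_0 e})$, $D(G_{\partial_1 e})$ is a product indexed by its irreducible representations and the corresponding relative subcategory is the sub-product over a subset of the index set, in particular a direct factor whose complement is spanned by the remaining simples. Globally, Bernstein's decomposition exhibits $K(G,\rho_\chi)$ as the direct summand of $K(G)$ indexed by $[T,\rho_\chi]\in\Omega(G)$. Using Definition~\ref{defrel} — in particular that the relative subcategories are stable not only under compact induction but also under restriction along inclusions of compact open subgroups and under conjugation — one checks that every one of the compact-induction maps appearing in Theorem~\ref{theoremA} is block-diagonal for the splittings (``$\rho_\chi$-part'')~$\oplus$~(``rest''). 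A cofibre sequence, respectively a pushout square, in $\Sp$ whose entries carry compatible direct-sum decompositions splits as a direct sum of a cofibre sequence / pushout square on the $\rho_\chi$-parts and one on the ``rest'', and the former is precisely the diagram to be proved; connectivity of $K(G,\rho_\chi)$ follows either by restricting the connectivity statement of Theorem~\ref{theoremA} to this summand or directly, since all other terms of the established diagram are connective.

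The main obstacle is exactly the block-diagonality invoked above: that cutting out the block attached to $\rho_\chi$ commutes with all the induction and restriction functors glueing $D(G)$ together along the tree. Frobenius reciprocity shows that the closure under compact induction that is already available gives block-upper-triangularity, so the genuinely new input is closure of the relative subcategories under restriction (so that the complementary ``rest'' is also preserved), together with the statement that restricting a $[T,\rho_\chi]$-representation to a compact open subgroup lands in the relative subcategory there. This is where Roche's explicit construction of the type $(J,\rho_\chi)$ and of the Hecke algebra $\mathcal{H}(G,\rho_\chi)$ in \cite{RO98} enters: one unwinds Definition~\ref{defrel}, uses Mackey's formula to reduce to the behaviour of $\rho_\chi$ on the intersections of the relevant compact open subgroups with conjugates of $J$, and applies the defining property of a type. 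An essentially equivalent route is to replay the Waldhausen-style filtering argument behind Theorem~\ref{theoremA} — in the spirit of \cite{WH78a}, \cite{WH78b} and \cite{winges2014filtering} — with $D(-)$ replaced throughout by $D(-,\rho_\chi)$; the only non-formal ingredient is again this stability of the relative subcategories under the structural functors, while the reduction to a non-inverting action by a single barycentric subdivision is handled exactly as in the absolute case.
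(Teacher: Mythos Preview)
Your reduction to the relative assembly map is correct, and the paper follows exactly your ``second route'': it replays the argument of Theorem~\ref{theoremA} inside the admissible collection $\mathcal{D}(\chi)$, establishing a Verdier sequence $\MV_0(X,\rho_\chi)\to\MV(X,\rho_\chi)\to D(G,\rho_\chi)$ and computing the $K$-theory of the outer terms. But you have misidentified the non-formal input. The difficulty is \emph{not} that the categories $D(G_x,\rho_\chi)$ are closed under restriction; rather, it is that $\alpha:\MV(X,\rho_\chi)\to D(G,\rho_\chi)$ is a left Bousfield localization, i.e.\ that the counit $\alpha\circ\pr{\chi}\circ\res{}{}\to\id$ is an equivalence on $\ind{J}{G}\rho_\chi$. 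The paper proves this by constructing a filtration $F_n^\chi V$ interpolating between $\Lan^{\int_G\simp{X}}_{BL}V$ and $\pr{\chi}\res{}{}\ind{J}{G}\rho_\chi$ and showing all filtration quotients die under $\alpha$. The crux is Proposition~\ref{projind}: for the specific representations $W=\ind{L\cap L^g}{L}V^g$ arising in the filtration, $\ind{L}{K}\pr{\chi}W\cong\pr{\chi}\ind{L}{K}W$. This is a counting argument using Roche's description of intertwiners (Lemma~\ref{RocheIntertwiner}), a combinatorial analysis of double cosets $J\backslash N(T)_\chi/J$ versus $K\backslash N(T)_\chi/L$ (Lemma~\ref{diagram}), and the irreducibility/splitting behaviour of $\ind{J}{K}\rho_{\chi^\omega}$ (Lemmas~\ref{numbirred}, \ref{sameind}). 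Your sketch ``use Mackey and the defining property of a type'' does not touch any of this.

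Your ``first route'' --- deducing Theorem~\ref{theoremB} as a direct summand of Theorem~\ref{theoremA} --- has a genuine gap. Block-diagonality of $\ind{G_x}{G}$ with respect to the Bernstein decomposition would require that $\sigma\notin D(G_x,\rho_\chi)$ implies $\ind{G_x}{G}\sigma$ has no $[T,\chi]$-component, equivalently that $\res{G}{G_x}$ sends $D(G,\rho_\chi)$ into $D(G_x,\rho_\chi)$. But this fails: by Mackey, $\res{G}{G_x}\ind{J}{G}\rho_\chi=\bigoplus_{g\in G_x\backslash G/J}\ind{G_x\cap J^g}{G_x}\rho_\chi^g$, and for generic $g$ the summand $\ind{G_x\cap J^g}{G_x}\rho_\chi^g$ need not lie in $D(G_x,\rho_\chi)$, which by Lemma~\ref{Dchi} is generated only by $\ind{J^h}{G_x}\rho_{\chi^\omega}^h$ for the finitely many $h$ with $x\subset he$. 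This is precisely why the paper has to apply $\pr{\chi}$ before realizing, and why establishing that $\pr{\chi}\circ\res{}{}$ is still a section of $\alpha$ is the entire content of Section~4.2. So the result is not a retract of the absolute case; the paper's contribution is exactly the work you have deferred.
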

As a consequence, we obtain the following corollary about resolutions of finitely generated representations.
This generalizes a result by Schneider and Stuhler in \cite{SS97} for admissible representations to finitely generated representations and Bernstein blocks in the case of rank one groups.
\begin{corollary}
    Let $G$ be a reductive, algebraic group of rank $1$.
    Then every finitely generated smooth complex $G$-representation $V$ admits a resolution by a finite complex that is degree-wise a finite direct sum of compactly induced representations.
    Furthermore, if $G$ is split and connected, and $V \in \rep{G, \rho_\chi}$ lies in the Bernstein block for some type $\rho_\chi$, then the resolution can be chosen such that all representations are finite sums of direct summands of $\ind{J_\chi}{G_x}\rho_{\chi^\omega}$ for $\omega$ in the Weyl group $W$ and $x\in X$. 
\end{corollary}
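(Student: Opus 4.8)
Throughout, write $X$ for the (barycentrically subdivided, if necessary) Bruhat--Tits tree, so that $G$ acts on the one‑dimensional contractible complex $X$ as in Theorem \ref{theoremA}: with compact open cell stabilizers, transitively and without inversion on edges, and with one or two orbits of vertices. The plan is to read the corollary off from this geometric input rather than from the $K$‑theoretic conclusion of Theorem \ref{theoremB}. Since $X$ is contractible and one‑dimensional, the augmented oriented cellular chain complex
$$0\longrightarrow C_1(X;\C)\xrightarrow{\ \partial\ }C_0(X;\C)\longrightarrow\C\longrightarrow 0$$
is exact in $\rep{G}$, and the no‑inversion hypothesis forces $C_i(X;\C)\cong\bigoplus_{[\sigma]}\ind{G_\sigma}{G}\C$ (no sign character — this is exactly why one subdivides). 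Tensoring over $\C$ with a smooth representation $V$ and using the projection formula $\ind{G_\sigma}{G}\C\otimes_\C V\cong\ind{G_\sigma}{G}\res{G}{G_\sigma}V$ produces, for \emph{every} smooth $V$, a functorial two‑term resolution
$$0\longrightarrow\bigoplus_{[e]}\ind{G_e}{G}\res{G}{G_e}V\longrightarrow\bigoplus_{[v]}\ind{G_v}{G}\res{G}{G_v}V\longrightarrow V\longrightarrow 0$$
whose terms are finite direct sums (one edge orbit, at most two vertex orbits) of compactly induced representations. This already has the shape required by the corollary, except that the inducing representations $\res{G}{G_\sigma}V$ are in general infinite‑dimensional.

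The substantive step is to cut these down to finite‑dimensional representations once $V$ is finitely generated. Since $G_\sigma$ is compact, $\res{G}{G_\sigma}V$ is a (possibly infinite) direct sum of irreducible finite‑dimensional $G_\sigma$‑representations. I would choose, for each $\sigma$, a finite partial sum $W_\sigma\subseteq\res{G}{G_\sigma}V$, large enough that $\partial$ carries $\bigoplus_{[e]}\ind{G_e}{G}W_e$ into $\bigoplus_{[v]}\ind{G_v}{G}W_v$, that $\bigoplus_{[v]}\ind{G_v}{G}W_v\to V$ is onto, and that $\partial$ maps onto its kernel; injectivity of $\partial$ on the subcomplex is inherited from $C_1(X;\C)\otimes V$. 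The point that makes such a choice possible with \emph{finite‑dimensional} $W_\sigma$ is Bernstein's theorem that $\rep{G}$ is locally Noetherian \cite{BE84,bernsteindraft}: the relevant kernels are subrepresentations of finitely generated representations, hence finitely generated, so their finitely many generators lie in $\ind{G_e}{G}$ of a finite‑dimensional subrepresentation. This yields the resolution $0\to\bigoplus_{[e]}\ind{G_e}{G}W_e\to\bigoplus_{[v]}\ind{G_v}{G}W_v\to V\to 0$ asserted in the first part. (Alternatively and more crudely: the tree resolution shows $\rep{G}$ has projective dimension one, so $V$ has a length‑one resolution by finitely generated projectives, each a direct summand of a finite direct sum of the $\ind{K}{G}\C$ with $K$ compact open.)

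For the second part, with $G$ split and connected and $V\in\rep{G}_{[T,\rho_\chi]}$, I would run the same truncation inside the block. Because $J_\chi$ is contained in every cell stabilizer, each $\res{G}{G_\sigma}V$ lies in the relative subcategory $D(G_\sigma,\rho_\chi)$ of Definition \ref{defrel}, which — $G_\sigma$ being compact — is semisimple, with simple objects exactly the irreducible direct summands of the $\ind{J_\chi}{G_\sigma}\rho_{\chi^\omega}$ for $\omega\in W$; this is Roche's analysis of the type \cite{RO98} (equivalently, restriction of principal series to a maximal compact via the Iwasawa decomposition reduces to the principal series of the finite reductive quotient, whose constituents are indexed through $W$). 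Hence each finite‑dimensional truncating piece $W_\sigma$ is a finite direct sum of irreducible summands of the $\ind{J_\chi}{G_\sigma}\rho_{\chi^\omega}$, and writing $\ind{G_\sigma}{G}W_\sigma$ as $\ind{G_x}{G}$ of such a sum for a vertex $x$ with $J_\chi\subseteq G_\sigma\subseteq G_x$, using transitivity of induction, every term of the resolution is a finite direct sum of direct summands of $\ind{J_\chi}{G_x}\rho_{\chi^\omega}$ with $\omega\in W$; transitivity of the $G$‑action on cells lets $x$ range over all of $X$.

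The one real difficulty is the finiteness upgrade in the two preceding paragraphs: the tree gives a length‑one resolution of \emph{any} smooth $V$ by compact inductions immediately, but with infinite‑dimensional inducing representations, and cutting these down to finite‑dimensional ones is exactly where the finite‑generation hypothesis is used, through the interplay of Bernstein's Noetherianity and the semisimple representation theory of the compact stabilizers $G_\sigma$. The remaining ingredients — exactness of the cellular chain complex of the tree, the projection formula, and Roche's construction of the type — are formal by comparison.
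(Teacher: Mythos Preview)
Your approach is genuinely different from the paper's, and more ambitious: you aim for a length-one resolution by directly truncating the tree resolution, whereas the paper deduces the corollary from the $K$-theory. But there are real gaps.

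For the first part, the truncation you describe is exactly the crux, and you do not carry it out. The three conditions on the $W_\sigma$ --- that $\partial$ respect them, that the augmentation be onto, and that the middle be exact --- interact: enlarging $W_e$ to hit the kernel forces you to enlarge $W_v$ to contain $\partial(\ind{G_e}{G}W_e)$, which enlarges the kernel, and so on. Noetherianity of $\rep{G}$ does not terminate this, because the ascending chain lives inside the infinite-dimensional $\res{G}{G_\sigma}V$, where no chain condition holds. Your crude alternative produces only \emph{summands} of finite sums of compactly induced representations; the passage from ``summand of compactly induced'' to ``compactly induced'' is precisely what the paper extracts from surjectivity of the assembly map on $K_0$ (Theorem~\ref{theoremA}). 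Concretely: a finitely generated projective $P$ satisfies $[P]=[Q]-[Q']$ in $K_0(G)$ with $Q,Q'$ compactly induced, hence $P$ is \emph{stably} compactly induced, and one then repairs a finite projective resolution of $V$ by inserting acyclic complexes $F\xrightarrow{\id}F$ in consecutive degrees to absorb the stabilising terms.

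For the second part, your claim that $\res{G}{G_\sigma}V$ lies in $D(G_\sigma,\rho_\chi)$ is false. Already for the Iwahori block of $\mathrm{SL}_2(\Q_p)$, the category $D(K,\rho_\chi)$ at a vertex stabilizer $K$ is generated by $\ind{I}{K}\C$ and contains only two irreducibles (those of $\mathrm{SL}_2(\mathbb{F}_p)$ with an Iwahori-fixed vector), while $\res{G}{K}$ of an unramified principal series is $\ind{B\cap K}{K}\C$, with infinitely many. This is why the right adjoint in Theorem~\ref{BousfieldType} is $\pr{\chi}\circ\res{}{}$ rather than $\res{}{}$, and why showing that $\alpha\circ\pr{\chi}\circ\res{}{}\simeq\id$ occupies most of Section~4 --- it would be automatic if $\res{}{}V$ already lay in $\MV(X,\rho_\chi)$. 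The paper instead proceeds via Theorem~\ref{theoremB} and the same stabilisation trick, choosing the stabilising $F$ as a sum of copies of the compact generator $\ind{J}{G}\rho_\chi$.
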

\begin{proof}
First let $P$ be a finitely generated, projective $G$-representation. Then by the surjectivity of the assembly map on $\pi_0$ by Theorem \ref{theoremA} there are $G$-representations $Q$ and $Q'$ such that $[P]=[Q]-[Q']$ in $K_0(G)$ and $Q$,$Q'$ are direct sums of compactly induced representations.    
Hence, there exists some finite sum of compactly induced representations $F$ such that 
$P \oplus Q' \oplus F \cong Q \oplus F$, i.e. every finitely generated projective $G$-representation can be stabilized by compactly induced representations to a compactly induced representation.
Furthermore, if $P \in \rep{G, \rho_\chi}$, then $Q$ and $Q'$ can be chosen such that they are a sum of direct summands of $\ind{J_\chi}{G} \rho_{\chi^\omega}$ for $\omega \in W$ by \ref{theoremB}.
Additionally, $F$ can be chosen to be a finite direct sum of $\ind{J}{G}\rho_\chi$ since the latter is a compact generator of $\rep{G, \rho_\chi}$.

By \cite[Chapter IV.4]{bernsteindraft} the category of smooth complex representations is Noetherian and has finite cohomological dimension. This implies that any finitely generated smooth $G$-representation admits a finite resolution
\[\begin{tikzcd}
	\ldots & {P_2} & {P_1} & {P_0} & V
	\arrow[from=1-1, to=1-2]
	\arrow[from=1-2, to=1-3]
	\arrow[from=1-3, to=1-4]
	\arrow[from=1-4, to=1-5]
\end{tikzcd}\]
by finitely generated projective $G$-representations $P_i$. 
By the previous discussion, we can add chain complexes of the form 
\[\begin{tikzcd}
	\ldots & 0 & F & F & 0 & \ldots
	\arrow[from=1-1, to=1-2]
	\arrow[from=1-2, to=1-3]
	\arrow["\id", from=1-3, to=1-4]
	\arrow[from=1-4, to=1-5]
	\arrow[from=1-5, to=1-6]
\end{tikzcd}\]
for $F$ a finite direct sum of compactly induced representations in the appropriate degrees to obtain the desired resolution.
Again, if $V \in \rep{G, \rho_\chi}$ we can choose $P_i$ to be finitely generated projective in $\rep{G, \rho_\chi}$ and proceed analogously.
\end{proof}
The methods used to prove Theorem \ref{theoremB} are very similar to the absolute case in Theorem \ref{theoremA}. 
The main difficulty lies in the proof that a certain functor associated to the type is a left Bousfield localization. This followed from purely geometric arguments in the absolute case. 
The previous theorem answers a question asked by Bartels and Lück in \cite[Section 1.N.]{bartels2023algebraic} in the case of split rank one groups about the existence of a relative assembly map for Bernstein blocks and the corresponding relative Farrell--Jones conjecture. 
\subsection{Structure of the paper}

This paper is divided into three main sections. 
In the first section, we start by introducing the definitions we will need later. 
The most important definition is that of the category of Mayer--Vietoris resolutions $\MV(X)$ that we can assign to a semi-simplicial set $X$ with an action of a group $G$. 
These are certain kinds of functors that encode a family of representations parametrized over the simplices of $X$ together with equivariant maps for every face inclusion.
In particular the category of Mayer--Vietoris resolutions on the terminal semi-simplicial set with the trivial $G$-action is equivalent to the derived category of (smooth) $G$-representations.
The name is based on the terminology used by Waldhausen in \cite{WH78a} and \cite{WH78b} and where many ideas in this paper originate. 
Note also that similar objects have already been considered by Schneider and Stuhler in \cite{SS97} and Winges in \cite{winges2014filtering}.
After introducing this definition we prove some properties of the category of all Mayer--Vietoris resolutions, in particular, we show that it is (co-)complete and compactly generated, and we provide an explicit set of compact generators. 
Furthermore, we show that the formation of Mayer--Vietoris resolutions is a functor on smooth semi-simplicial $G$-sets.

The goal of the second section is to compute the $K$-theory of $G$. 
We start by constructing a fibre sequence of $K$-theory spectra.
To do this we use the unique $G$-map $X \to *$ which induces a functor of Mayer--Vietoris categories. 
It turns out that this is a left Bousfield localization, hence taking the kernel on the left, which we will denote by $\MV_0(X)$, we obtain an exact sequence of categories in $Pr^L_{st, \omega}$ 
\[\begin{tikzcd}[ampersand replacement=\&]
	{\MV_0(X)} \&\& {\MV(X)} \&\& {D(G)}
	\arrow[from=1-3, to=1-5]
	\arrow[from=1-1, to=1-3]
\end{tikzcd}\] 
and hence a bifibre sequence on $K$-theory.
We then proceed by computing the $K$-theory of the middle and left-hand terms together with the induced map between them to obtain the $K$-theory spectrum of $G$.

In the next section, we establish a relative version of the above fibre sequence for every principal series type.
Before we begin with the construction, we recall some definitions and results about the existence of types due to Roche \cite{RO98} that assign to every character $\chi: T^0 \to \C^\times$ a type.
After that, we start by defining a full subcategory of $\MV(X)$ by only allowing those resolutions that pointwise take values in some specified subcategories depending on the type. 
This again defines a functor 
\[\begin{tikzcd}
	{\MV(X, \rho_\chi)} && {D(G, \rho_\chi)}
	\arrow[from=1-1, to=1-3]
\end{tikzcd}\]
and by the same means as before we can compute the kernel. 
However, in this situation, it turns out to require a lot more work to show that this functor is a left Bousfield localization or equivalently that the counit of a certain adjunction is an equivalence.
The idea to prove this is to define a filtration that in some sense measures how much the counit fails to be an equivalence.  
We then show that all quotients for this filtration vanish, which will occupy the main part of this section, and conclude that the counit is an equivalence. 
The key ingredient to achieve this is an explicit calculation of the intertwiners of the principal series types that was done by Roche in \cite{RO98}.

At the end of this paper, we apply the results we have obtained to the explicit example of $\mathrm{SL}_2(F)$ for non-archimedean local fields $F$.
We first provide a pushout formula for the entire $K$-theory spectrum of $\mathrm{SL}_2(F)$.
After that, we proceed with the calculation for the cuspidal Bernstein blocks followed by the principal series blocks. 
In particular, we highlight the case of the Iwahori block. 
Since the only non-trivial Levi subgroup of $\mathrm{SL}_2(F)$ is (up to conjugation) the maximal torus, it follows that these calculations cover all Bernstein blocks of $\mathrm{SL}_2(F)$.

\subsection*{Acknowledgements}
This paper is my PhD-project at the University of Münster.
First and foremost, I am very grateful to my advisor, Arthur Bartels, for his support, advice, and the many insightful conversations we had in the process of this project.
I would also like to thank Christoph Winges for his help during the early stages of this paper which led me in the right direction.
Last but not least, I also want to thank Claudius Heyer for answering numerous questions I had.
Funded by the Deutsche Forschungsgemeinschaft (DFG, German Research Foundation) under Germany's Excellence Strategy EXC 2044 –390685587, Mathematics Münster: Dynamics–Geometry–Structure.

\section{Mayer--Vietoris resolutions}

\subsection{Absolute resolutions}
The goal of this section is to set up the machinery that will be used in the later sections.
We will introduce the notion of Mayer--Vietoris resolutions as a localization of a certain functor category, followed by establishing the first general categorical properties.
In particular, we will show (co-)completeness and compute the compact objects.  The latter will be later used to compute the $K$-theory spectrum.

Throughout this paper, we will always fix a regular ring $R$ with $\Q \subseteq R$ whereby regular we mean Noetherian and regular coherent.
However, for the purpose of readability, we will always omit $R$ from our notation.

We begin with the following observation about the derived category $D(G)$ of $G$-representations over $R$ and $G$-actions on $D(R)$.
\begin{lemma}
    Let $G$ be a discrete group. Then the category $D(R)^{BG}=\Fun(BG, D(R))$ is equivalent to the derived category $D(G)$ of $G$-representations over $R$.
\end{lemma}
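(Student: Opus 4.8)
The plan is to identify both sides with the $\infty$-category of left modules over the group ring $R[G]$, viewed as a discrete associative algebra object of $D(R)$. First I would rewrite the right-hand side: since $G$ carries the discrete topology every isotropy subgroup is automatically open, so $\rep{G}$ is just the ordinary abelian category of $R[G]$-modules, where $R[G] = R \otimes_{\Z} \Z[G]$ is the usual group ring. Hence $D(G) = D(\rep{G}) \simeq \mathrm{Mod}_{R[G]}$, using the standard identification of the derived $\infty$-category of an ordinary ring $A$ with the $\infty$-category $\mathrm{Mod}_A$ of $A$-module spectra (see e.g.\ \cite[\S 7.1.2]{HA}); in particular $D(R) \simeq \mathrm{Mod}_R$ (none of the standing hypotheses on $R$ are needed for this).

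Next I would analyze the evaluation functor $U \colon \Fun(BG, D(R)) \to D(R)$ at the unique object of $BG$, which is restriction along $\iota \colon * \to BG$. Limits and colimits in $\Fun(BG, D(R))$ are computed objectwise, so $U$ preserves all of them, and since $BG$ has a single object $U$ is conservative. Because $D(R)$ is cocomplete, $U$ has a left adjoint, namely left Kan extension $F = \Lan_{\iota}$; using the pointwise formula for $\Lan_\iota$ together with the identification $* \times_{BG} * \simeq \Omega BG \simeq G$ (a discrete anima, since $G$ is discrete), I would compute $F(M)(*) \simeq \bigoplus_{g \in G} M$ with $G$ permuting the summands by left translation, so that $T := UF \simeq R[G] \otimes_R (-)$ as an endofunctor of $D(R)$.

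Now the Barr--Beck--Lurie monadicity theorem \cite[Thm.~4.7.3.5]{HA} applies: $U$ is conservative and preserves all colimits, in particular geometric realizations of $U$-split simplicial objects, so it is monadic and exhibits an equivalence $\Fun(BG, D(R)) \simeq \mathrm{LMod}_T(D(R))$. Identifying the monad $T$ with the one induced by the discrete algebra $R[G] \in \mathrm{Alg}(D(R))$ then gives $\mathrm{LMod}_T(D(R)) \simeq \mathrm{LMod}_{R[G]}(D(R)) = \mathrm{Mod}_{R[G]}$, and combining with the first paragraph yields $\Fun(BG, D(R)) \simeq D(G)$.

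The main obstacle is this last identification: one must check that the monad structure on $UF$ — not merely its underlying endofunctor — agrees with multiplication by the \emph{classical} group ring, i.e.\ that it carries no higher coherence data. Concretely this reduces to the statement that the canonical map of $\mathbb{E}_1$-algebras $R[G] \to C_*(\Omega BG; R)$ is an equivalence, which holds because $\Omega BG \simeq G$ is discrete, so $C_*(\Omega BG; R)$ sits in degree zero with the evident multiplication. An essentially equivalent route, which I might prefer for its brevity, avoids monadicity and instead invokes the general equivalence $\Fun(X, \mathrm{Mod}_R) \simeq \mathrm{LMod}_{C_*(\Omega X; R)}$ for pointed connected spaces $X$, applied to $X = BG$; the substantive point, namely $C_*(G; R) \simeq R[G]$, is the same either way.
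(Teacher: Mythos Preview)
Your argument is correct but proceeds along a genuinely different line from the paper's. The paper also reduces both sides to module categories, but via Schwede--Shipley rather than Barr--Beck: it shows that $\Lan_f R$ is a compact generator of $D(R)^{BG}$, so $D(R)^{BG}\simeq D(\End(\Lan_f R)^{\op})$, and likewise $D(G)\simeq D(\End(R[G])^{\op})$. The key move is then to observe that $\Lan_f R$ lies in the heart $(D(R)^{BG})^{\heartsuit}\simeq \mathrm{Mod}(R)^{BG}$ and corepresents the forgetful functor there, as does $R[G]$; the Yoneda lemma in this ordinary $1$-category forces $\Lan_f R\cong R[G]$, and hence their endomorphism rings agree as discrete rings with no higher coherence to check. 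Your monadicity route is arguably more conceptual, but it shifts the burden to identifying the full $\mathbb{E}_1$-structure on the monad $UF$ (equivalently, verifying $C_*(\Omega BG;R)\simeq R[G]$ as algebras), which you rightly flag as the nontrivial step; the paper's heart trick sidesteps precisely this point by doing the comparison in a $1$-category where Yoneda settles it automatically.
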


\begin{proof}
    The strategy of the proof is to show that both categories are equivalent to a category of modules over the same ring. 
    Write $f$ for the functor $* \to BG$.
    The forgetful functor $f^*: D(R)^{BG} \to D(R)$ is part of an adjunction 
\[\begin{tikzcd}
	{D(R)^{BG}} &&& {D(R)}
	\arrow[""{name=0, anchor=center, inner sep=0}, "{\res{f}{}}", from=1-1, to=1-4]
	\arrow[""{name=1, anchor=center, inner sep=0}, "{\Lan_f}"', shift right, curve={height=18pt}, from=1-4, to=1-1]
	\arrow[""{name=2, anchor=center, inner sep=0}, "{\Ran_f}", shift left, curve={height=-18pt}, from=1-4, to=1-1]
\end{tikzcd}\]
with $\Lan_f \dashv \res{f}{}  \dashv \Ran_f$.
This implies that $\Lan_f$ preserves compact objects, in particular $\Lan_fR$, with $R$ concentrated in degree $0$, is compact. 
Furthermore, for any $D \in D(R)^{BG}$ we have 

$$
\map_{D(R)^{BG}}(\Lan_fR, D) \simeq \map_{D(R)}(R, \res{f}{}D) \simeq D,
$$ 
in other words $\Lan_fR$ corepresents the forgetful functor.
In particular, it also corepresents the forgetful functor $\mathrm{Mod}(R)^{BG} \to \mathrm{Mod}(R)$ when restricted to the heart of the standard $t$-structure.
Since equivalences in functor categories can be checked pointwise the latter functor is conservative, hence $\Lan_f R$ is a compact generator of $D(R)^{BG}$ and by Schwede--Shipley \cite[7.1.2.1]{HA} 
$$D(R)^{BG} \simeq  D(\End(\Lan_f R)^{\op}).$$ 
By use of the colimit formula for left Kan extensions, we compute that the underlying object in $D(R)$ of $\Lan_f R$ is the colimit of the constant diagram $G \to D(R)$ with value $R$. 
But this is precisely $G \otimes R G$.
Since $G$ is discrete this has homology concentrated in degree $0$, hence $\Lan_f R$ is in the heart of the standard $t$-structure $( D(R)^{BG})^\heartsuit \simeq \mathrm{Mod}(R)^{BG}$.

On the other hand we know that $R[G]$ as a complex concentrated in degree $0$ is a compact generator of $D(\mathrm{Mod}(R)^{BG})$ and hence again by Schwede--Shipley $D(\mathrm{Mod}(R)^{BG})\simeq D(\End(R[G])^{\op})$.
Furthermore, $R[G]$ lies in the heart $D(\mathrm{Mod}(R)^{BG})^\heartsuit \simeq \mathrm{Mod}(R)^{BG}$ and also corepresents the forgetful functor $\mathrm{Mod}(R)^{BG} \to \mathrm{Mod}(R)$.
Hence, the Yoneda lemma implies that $\Lan_f R$ and $R[G]$ are equivalent in $\mathrm{Mod}(R)^{BG}$, in particular, both have the same endomorphism ring, and their module categories are equivalent.
\end{proof}

Now let $G$ act on a semi-simplicial set $X$. Then we define a ($G$-equivariant) category of simplices $\int_G \simp{X}$. 
\begin{definition}
    Let $G$ be a group acting on a semi-simplicial set $X$ and let $\Delta^{\op}_{\mathrm{inj}}\times BG \to \mathrm{Sets}$ be the corresponding functor.
    We define $\int_G \simp{X}$ to be the total space of the associated left fibration $\int_G \simp{X} \to \Delta^{\op}_{\mathrm{inj}} \times BG$.
\end{definition}
\begin{remark}
   Because $\Delta^{\op}_{\mathrm{inj}} \times BG \to Sets$ takes values in sets, the total space of the associated left fibration is a $1$-category and can be described by the classical Grothendieck construction. 
   More explicitly, an object of $\int_G \simp{X}$ is given by a simplex  $\Delta^n \to X$. For simplices $x:\Delta^n \to X$ and $y: \Delta^m \to X$ in $X$, a morphism from $x$ to $y$ is given by a map $\partial :\Delta^n \to \Delta^m$ together with an element $g \in G$ such that 
\[\begin{tikzcd}
	{\Delta^n} && {\Delta^m} \\
	\\
	& X
	\arrow["gx"', from=1-1, to=3-2]
	\arrow["\partial", from=1-1, to=1-3]
	\arrow["y", from=1-3, to=3-2]
\end{tikzcd}\]
     commutes. In other words, a morphism from $x$ to $y$ is a pair $(\partial,g)$ with $g \in G$ and a morphism $\partial$ in $\Delta^{\op}_{\mathrm{inj}}$ such that $\partial(gx)=y$.
     Note that maps go from simplices of higher dimensions to simplices of lower dimensions. 
     In particular, the endomorphisms for a simplex $x \in X$ are precisely the elements on $G$ that stabilize $x$.
\end{remark}
    From now on we assume $G=\mathbb{G}(F)$ for some reductive algebraic group $\mathbb{G}$ over a non-archimedean local field $F$, in particular $G$ carries a natural topology.
    Furthermore, we will always assume that the action of $G$ on $X$ is smooth, i.e. all stabilizers $G_x$ for $x \in X$ are open in $G$.
    Let $D(G)$ be the derived category of smooth $G$-representations over $R$.
    The underlying discrete group of $G$ is denoted by $G^\delta$.
    Furthermore, there is an adjunction 

\[\begin{tikzcd}
	{D(G)} && {D(G^\delta)}
	\arrow[""{name=0, anchor=center, inner sep=0}, shift left=3, from=1-1, to=1-3, curve={height=-6pt}]
	\arrow[""{name=1, anchor=center, inner sep=0}, "{(-)^\infty}", shift left=3, from=1-3, to=1-1, curve={height=-6pt}]
	\arrow["\dashv"{anchor=center, rotate=-90}, draw=none, from=0, to=1]
\end{tikzcd}\]
    where the left adjoint is induced by the inclusion of the full subcategory of smooth $G$-representations into all $G$-representations and $(-)^\infty$ is induced by taking the largest smooth subrepresentation.

    Next, we consider the category of functors $M: \int_G \simp{X} \to D(\C)$. For any simplex $x \in X$, the value of $M$ at $x$ carries an action of the stabilizer $G_x$ i.e. $M(x)$ is a $G_x$-representations.
    Furthermore, if $y$ is a face of $x$ then $G_x \subset G_y$ and there is a morphism $x \to y$ in $\int_G \simp{X}$ that induced a ($G_x$-equivariant map) from $M(x)$ to $M(y)$. 
    Note that since $G$ is totally disconnected, the homotopy type of $G$ is discrete, in particular, the classifying space $BG$ can not distinguish $G$ from its underlying discrete group, and our construction so far does not take into account the topology of $G$.
    This is what we are going to fix with the next definition where we define the notion of a smooth functor $\int_G \simp{X} \to D(R)$.
\begin{definition}
    A functor $M:\int_G \simp{X} \to D(R)$ is called smooth if for every simplex $x \in X$ and every $V:BG_x \to D(\C)$ the map 
    $$
    \begin{tikzcd}
      \map_{\int_G \simp{X}} (M, \Ran_{BG_x}^{\int_G\simp{X}} V^\infty) \arrow[r] & \map_{\int_G \simp{X}}(M , \Ran_{BG_x}^{\int_G \simp{X}} V)   
    \end{tikzcd}
    $$
    induced from the counit $V^\infty \to V$ is an equivalence. 
    Define $\MV(X)$ to be the full subcategory of $\Fun(\int_G \simp{X}, D(R))$ spanned by all smooth functors.
    We also call the objects of $\MV(X)$  Mayer--Vietoris resolutions (with coefficients in $R$).
\end{definition}
\begin{remark}
    \label{smoothRest}
    It follows immediately from the definition, that a resolution $M$ on $X$ is smooth precisely if for every $x \in X$ and $V\in D(G_x^\delta)$ the map 
\[\begin{tikzcd}[ampersand replacement=\&]
	{\map_{G_x}(M(x), V^\infty)} \&\& {\map_{G_x}(M(x), V)}
	\arrow[from=1-1, to=1-3]
\end{tikzcd}\]
    is an equivalence. In other words, $M$ is smooth if and only if the restriction of $M$ to $BG_x$ lies in the full subcategory $D(G_x)$ of $D(G^\delta_x)$ for every simplex $x\in X$.
\end{remark}
    
Next, we provide an easy way to construct (compact) Mayer--Vietoris resolutions. 
\begin{lemma}
    \label{smoothKan}
    For every simplex $x \in X$ and any smooth representation $V \in D(G_x)$ the functor $\Lan_{BG_x}^{\int_G \simp{X}} V$ obtained by left Kan extending along the full inclusion $BG_x \to \int_G \simp{X}$ is 
    smooth.
    If $V \in D(G_x)^\omega$ then $\Lan_{BG_x}^{\int_G \simp{X}}V$ is compact in $MV(X)$.
\end{lemma}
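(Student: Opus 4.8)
The plan is to treat the two assertions in turn: first that $M := \Lan_{BG_x}^{\int_G \simp{X}} V$ is smooth, and then, granting this, that it is compact when $V$ is. For smoothness, by Remark~\ref{smoothRest} it is enough to show that for every simplex $y \in X$ the restriction $\res{BG_y}{\int_G \simp{X}} M$ lies in the full subcategory $D(G_y) \subseteq D(G_y^\delta) = \Fun(BG_y, D(R))$, i.e.\ that $M(y)$ is a \emph{smooth} $G_y$-representation. I would compute $M(y)$ via the pointwise colimit formula for left Kan extension along the full inclusion $BG_x \hookrightarrow \int_G \simp{X}$. The relevant comma category has as objects the morphisms $x \to y$ of $\int_G \simp{X}$, that is, pairs $(\partial, g)$ with $g \in G$ and $\partial$ a morphism in $\Delta^{\op}_{\mathrm{inj}}$ with $\partial(gx) = y$, and a (necessarily unique) morphism $(\partial_1, g_1) \to (\partial_2, g_2)$ exactly when $\partial_1 = \partial_2$ and $g_2^{-1}g_1 \in G_x$. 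In particular every object has trivial automorphisms and this category is equivalent to its set $I_y$ of isomorphism classes, so $M(y)$ is the direct sum over $I_y$ of copies of the underlying object of $V$, with residual $G_y$-action given by $\gamma \cdot (\partial, g) = (\partial, \gamma g)$.

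Next I would decompose $I_y$ into $G_y$-orbits. The stabilizer of a representative $(\partial_0, g_0)$ is $H_0 := G_y \cap g_0 G_x g_0^{-1}$, which is open in $G_y$ because the action of $G$ on $X$ is smooth, so that $G_x$ — hence $g_0 G_x g_0^{-1}$ — is open in $G$. Tracking the $G_x$-action carried by each copy of $V$, the contribution of this orbit is identified with the compact induction $\ind{H_0}{G_y}$ of the $H_0$-representation obtained from $V$ by restriction along $H_0 \hookrightarrow g_0 G_x g_0^{-1} \xrightarrow{\ \sim\ } G_x$. Since $V$ is smooth, this $H_0$-representation is smooth, and compact induction along the open inclusion $H_0 \subseteq G_y$ preserves smoothness; thus each orbit contributes a smooth $G_y$-representation, and hence so does the (possibly infinite) direct sum $M(y)$. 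This proves the first claim. (For $y$ with $\dim y > \dim x$ one has $I_y = \emptyset$ and $M(y) = 0$, which is fine.)

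For compactness, observe that the smoothness condition defining $\MV(X)$ is an orthogonality condition: $\MV(X)$ is the left-orthogonal complement in $\Fun(\int_G \simp{X}, D(R))$ of the class of fibres $\fib(\Ran_{BG_x}^{\int_G \simp{X}} W^\infty \to \Ran_{BG_x}^{\int_G \simp{X}} W)$ over all simplices $x$ and all $W \in \Fun(BG_x, D(R))$. Being such a complement, $\MV(X)$ is closed under all colimits in the functor category, so colimits in $\MV(X)$ are computed pointwise; consequently $\res{BG_x}{\int_G \simp{X}} \colon \MV(X) \to D(G_x)$ preserves colimits (it lands in $D(G_x)$ by Remark~\ref{smoothRest}, and the inclusion $D(G_x) \hookrightarrow D(G_x^\delta)$ is a left adjoint, hence colimit-preserving). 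The adjunction $\Lan_{BG_x}^{\int_G \simp{X}} \dashv \res{BG_x}{\int_G \simp{X}}$, together with fullness of $\MV(X)$ and of $D(G_x) \subseteq D(G_x^\delta)$, then gives $\map_{\MV(X)}(M, N) \simeq \map_{D(G_x)}(V, \res{BG_x}{\int_G \simp{X}} N)$ for every $N \in \MV(X)$. If $V \in D(G_x)^\omega$ the right-hand side commutes with filtered colimits in $N$, so $M$ is compact in $\MV(X)$.

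The main obstacle I anticipate is the bookkeeping in the orbit decomposition: identifying the $G_y$-action on the comma category correctly and recognizing the orbit summands as compact inductions, together with the routine but load-bearing stability of smoothness under conjugation, restriction to open subgroups, and compact induction from open subgroups. A secondary point requiring care is the assertion that $\MV(X)$ is closed under colimits in the functor category, which is precisely what makes $\res{BG_x}{\int_G \simp{X}}$ colimit-preserving and hence powers the compactness argument.
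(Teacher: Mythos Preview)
Your proposal is correct, but for the smoothness claim you take the dual route to the paper. The paper verifies the defining condition of smoothness directly: using the adjunction $\Lan_{BG_x}\dashv\res{}{}$ it reduces to understanding $(\Ran_{BG_y}^{\int_G\simp{X}}W)(x)$, computes this via the limit formula over the slice $x/BG_y$, observes that this slice is a \emph{finite} set (bounded by the number of face maps $\partial$), and concludes that the value is a finite product of restrictions $\res{G_y^g}{G_x}W^g$, against which smoothness of $V$ is immediate. You instead compute $(\Lan_{BG_x}V)(y)$ via the colimit formula over the opposite slice $(BG_x)_{/y}$, which is typically an \emph{infinite} set $I_y$, and then recover finiteness only after decomposing into $G_y$-orbits and identifying each orbit contribution as a compact induction from an open subgroup. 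Both arguments are valid; the paper's is shorter because finiteness is visible from the outset, while yours requires the extra orbit bookkeeping you flag. On the other hand, your computation directly produces the formula $\res{}{}\Lan_{BG_x}V\simeq\bigoplus\ind{G_x^{g^{-1}}}{G_y}V^{g^{-1}}$, which the paper records separately as Corollary~\ref{Ranres} and obtains only indirectly by dualizing its $\Ran$ formula. For compactness, your argument (closure of $\MV(X)$ under colimits as a left-orthogonal class, hence $\res{}{}$ preserves colimits, hence $\Lan$ preserves compacts) is exactly what the paper does, though the paper defers this to Lemma~\ref{Lancomp}.
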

\begin{proof}
    Let $y\in X$ and $W:BG_y \to D(R)$. 
    We have to check that 
\[\begin{tikzcd}
	{\map_{\int_G \simp{X}} (\Lan_{BG_x}^{\int_G \simp{X}}V, \Ran_{BG_x}^{\int_G\simp{X}} W^\infty)} & {} & {} \\
	{\map_{\int_G \simp{X}}(\Lan_{BG_x}^{\int_G \simp{X}}V , \Ran_{BG_x}^{\int_G \simp{X}} W)}
	\arrow[from=1-1, to=2-1]
\end{tikzcd}\]
    is an equivalence.
    By the universal property of left Kan extensions, we have
    $$
    \map_{\int_G \simp{X}} (\Lan _{BG_x}^{\int_G \simp{X}}V, \Ran_{BG_y}^{\int_G \simp{X}}W)\simeq \map_{G_x} (V, (\Ran_{BG_y}^{\int_G\simp{X}}W)(x)).
    $$ 
    We can compute the underlying object of $(\Ran_{BG_y}^{\int_G\simp{X}}W)(x)$ by use of the limit formula for right Kan extensions. This yields 

    $$
    ( \Ran_{BG_y}^{\int_G \simp{X}}W)(x) = \lim_{x/BG_y}W.
    $$ 
    To understand the limit we need to compute the category $x/BG_y$. 
    The objects are given by pairs $(\partial,g)$ with $g\in G$ together with a map $\partial : [n] \to [m]$ in $\Delta^{\op}_{\mathrm{inj}}$ such that $g^{-1}y= \partial(x) $. 
    A morphism from $(\partial_0,g_0):x \to y$ to $(\partial_1,g_1):x \to y$  only exists if $\partial_0=\partial_1$ and is given by $h \in G_y$ such that $g_0 =g_1 h$. 
    This shows that $x/BG_y$ is a groupoid and every object has a trivial set of automorphisms.
    Hence, $x/BG_y$ is a set.
    For a fixed $\partial: [n] \to [m]$, two objects $(\partial,g_0): x \to y$ and $(\partial,g_1) :x \to y$ only differ by $g_0^{-1}g_1 \in G_y$, hence the number of isomorphism classes is bounded by the number of maps $\partial: [n] \to [m]$. 
    In particular, there are only finitely many and the limit is a finite product.
    Then for any $(\partial,g)\in x/BG_y$ conjugation with $g$ maps $G_x$ into $G_y$ and $G_x$ acts on $W$ via restriction along this conjugation map. Hence,  
     $$
     \lim _{x / BG_y}W \simeq \prod_{(d,g)\in x/BG_y} \res{G_y^g}{G_x}W^g.
     $$
    Then $\Lan^{\int_G \simp{X}}_{BG_x}V$ is smooth if and only if the induced map
\[\begin{tikzcd}[ampersand replacement=\&]
	{\map_{G_x}(V,\prod \res{G_y^g}{G_x}W^\infty)} \&\& {\map_{G_x}(V,\prod \res{G_y^g}{G_x}W)}
	\arrow[from=1-1, to=1-3]
\end{tikzcd}\] 
    is an equivalence for every $y \in X$ and $W : BG_y \to D(R)$.
    But this is true precisely if $V$ is in $D(G_x)$, that is if $V$ is smooth.
\end{proof}
The calculation in the proof also provides the following corollary that we will need again at a later point.
\begin{corollary}
    \label{Ranres}
    For simplices $x,y \in X$ the value of the right Kan extension is computed by 
    $$\res{\int_G \simp{X}}{BG_x}\Ran_{BG_y}^{\int_G \simp{X}}W= \prod_{x/BG_y} \res{G^g_y}{G_x}W^g.$$
    This product is always finite and hence compatible with filtered colimits. 
    Furthermore, this shows that the left adjoint can be computed as 
    $$
    \res{\int_G \simp{X}}{BG_y}\Lan_{BG_x}^{\int_G \simp{X}}V = \bigoplus_{x  /BG_y}\ind{G_x^{g^{-1}}}{G_y}V^{g^{-1}}.
    $$
\end{corollary}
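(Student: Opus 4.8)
The plan is to read off the first identity directly from the computation already performed in the proof of Lemma~\ref{smoothKan}, and then to deduce the formula for $\Lan$ from it by a short chain of adjunctions together with the Yoneda lemma. In the proof of Lemma~\ref{smoothKan} we identified $(\Ran^{\int_G\simp{X}}_{BG_y}W)(x)$ with $\lim_{x/BG_y}W$, observed that $x/BG_y$ is a discrete category with only finitely many objects, and computed both the underlying object of this limit and the residual $G_x$-action, arriving at an equivalence $\lim_{x/BG_y}W\simeq\prod_{x/BG_y}\res{G_y^g}{G_x}W^g$ in $D(G_x)$. Since $\res{\int_G\simp{X}}{BG_x}\Ran^{\int_G\simp{X}}_{BG_y}W$ is by definition this value $(\Ran^{\int_G\simp{X}}_{BG_y}W)(x)$ together with its $G_x$-action, this is precisely the first display. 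Finiteness of $x/BG_y$ makes the product a finite one, and in the stable $\infty$-category $D(R)$ a finite product agrees with the finite coproduct and hence commutes with filtered colimits; this is the second assertion.

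For the formula for $\Lan^{\int_G\simp{X}}_{BG_x}$ the idea is to compute the mapping spectrum out of $\res{\int_G\simp{X}}{BG_y}\Lan^{\int_G\simp{X}}_{BG_x}V$ and to invoke Yoneda in $D(G_y)$. For an arbitrary $W\in D(G_y)$ I would chain the $(\res\dashv\Ran)$-adjunction at $BG_y$, the universal property of $\Lan^{\int_G\simp{X}}_{BG_x}$, and the identity just established, to get
\[
\map_{G_y}\!\bigl(\res{\int_G\simp{X}}{BG_y}\Lan^{\int_G\simp{X}}_{BG_x}V,\;W\bigr)\;\simeq\;\prod_{x/BG_y}\map_{G_x}\!\bigl(V,\;\res{G_y^g}{G_x}W^g\bigr).
\]
Applying the twisting auto-equivalence $(-)^{g^{-1}}$ to each factor identifies the $(\partial,g)$-term with $\map_{G_x^{g^{-1}}}\!\bigl(V^{g^{-1}},\res{G_y}{G_x^{g^{-1}}}W\bigr)$, where $G_x^{g^{-1}}$ is the conjugate of $G_x$ lying inside $G_y$, and Frobenius reciprocity — compact induction being left adjoint to restriction along the open inclusion $G_x^{g^{-1}}\subseteq G_y$ — turns this into $\map_{G_y}\!\bigl(\ind{G_x^{g^{-1}}}{G_y}V^{g^{-1}},W\bigr)$. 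As the index set is finite, the product over it of these mapping spectra is the mapping spectrum out of $\bigoplus_{x/BG_y}\ind{G_x^{g^{-1}}}{G_y}V^{g^{-1}}$, and since everything is natural in $W$, the Yoneda lemma gives the asserted description.

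All the steps are routine; the only genuine care needed is bookkeeping. To run the adjunctions inside the smooth derived categories rather than inside $D(G^\delta)$ one should check that $\res{\int_G\simp{X}}{BG_x}\Ran^{\int_G\simp{X}}_{BG_y}W$ lands in $D(G_x)$ — it is a finite product of smooth representations, each a restriction along an open inclusion — and that $\res{\int_G\simp{X}}{BG_y}\Lan^{\int_G\simp{X}}_{BG_x}V$ lands in $D(G_y)$, which is the smoothness assertion of Lemma~\ref{smoothKan} combined with Remark~\ref{smoothRest}. The remaining subtlety is simply keeping track of which conjugate of $G_x$ sits inside $G_y$ and whether a given factor is twisted by $g$ or by $g^{-1}$; this is exactly the data recorded while computing the $G_x$-action in the proof of Lemma~\ref{smoothKan}.
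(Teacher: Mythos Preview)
Your proof is correct and follows essentially the same approach as the paper. The paper is terser: it simply observes that $\res{\int_G\simp{X}}{BG_y}\Lan^{\int_G\simp{X}}_{BG_x}$ is left adjoint to $\res{\int_G\simp{X}}{BG_x}\Ran^{\int_G\simp{X}}_{BG_y}$ (composition of left adjoints being left adjoint to the composition of the corresponding right adjoints) and then asserts the explicit left adjoint from the formula for the right adjoint, whereas you unwind this same adjunction via the chain of mapping-spectra computations and Frobenius reciprocity --- but that is exactly how one justifies the paper's assertion, so the arguments coincide.
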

\begin{proof}
    The first part follows directly from the previous proof. 
    For the second part first note that $    \res{\int_G \simp{X}}{BG_y}\Lan_{BG_x}^{\int_G \simp{X}}$ is left adjoint to  
    $\res{\int_G \simp{X}}{BG_x}\Ran_{BG_y}^{\int_G \simp{X}}$ because to composition of left adjoints is left adjoint to the composition of the corresponding right adjoints. 
    But from the explicit formula for the composition of the right adjoints, we know that the left adjoint has to be $\bigoplus_{x /BG_y}\ind{G_x^{g^{-1}}}{G_y}V^{g^{-1}}$.
\end{proof}
A consequence of the previous lemma is that left Kan extension and restriction along inclusions $BG_x \to \int_G \simp{X}$ restrict to an adjunction between smooth objects.
\begin{lemma}
    \label{Lancomp}
    By restricting the functors we obtain an adjunction 
\[\begin{tikzcd}
	{D(G_x)} && {MV(X).}
	\arrow[""{name=0, anchor=center, inner sep=0}, "{\Lan_{BG_x}^{\int_G \simp{X}}.}", shift left=2, curve={height=-6pt}, from=1-1, to=1-3]
	\arrow[""{name=1, anchor=center, inner sep=0}, "{\res{\int_G \simp{X}}{BG_x}}", shift left=2, curve={height=-6pt}, from=1-3, to=1-1]
	\arrow["\dashv"{anchor=center, rotate=-90}, draw=none, from=0, to=1]
\end{tikzcd}\]
    Small colimits exist in $\MV(X)$ and the inclusion $\MV(X) \to \Fun(\int_G \simp{X}, D(R))$ commutes with colimits, in particular the left adjoint preserves compact objects.
\end{lemma}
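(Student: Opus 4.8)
The plan is to bootstrap everything from Lemma~\ref{smoothKan} and Remark~\ref{smoothRest}; no new geometric input is needed. Write $j\colon BG_x\hookrightarrow\int_G\simp{X}$ for the inclusion. Since $D(R)$ is presentable, $j$ induces an adjoint triple $\Lan_j\dashv j^{*}\dashv\Ran_j$ on the ambient functor categories, with $j^{*}=\res{\int_G\simp{X}}{BG_x}\colon\Fun(\int_G\simp{X},D(R))\to\Fun(BG_x,D(R))=D(G_x^{\delta})$. By Lemma~\ref{smoothKan} the composite $D(G_x)\hookrightarrow D(G_x^{\delta})\xrightarrow{\;\Lan_j\;}\Fun(\int_G\simp{X},D(R))$ factors through the full subcategory $\MV(X)$, and by Remark~\ref{smoothRest} the composite $\MV(X)\hookrightarrow\Fun(\int_G\simp{X},D(R))\xrightarrow{\;j^{*}\;}D(G_x^{\delta})$ factors through the full subcategory $D(G_x)$. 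As both inclusions are fully faithful, the adjunction equivalence $\map(\Lan_j V,M)\simeq\map(V,j^{*}M)$ in the ambient categories restricts to a natural equivalence $\map_{\MV(X)}(\Lan_{BG_x}^{\int_G\simp{X}}V,M)\simeq\map_{D(G_x)}(V,\res{\int_G\simp{X}}{BG_x}M)$ for $V\in D(G_x)$ and $M\in\MV(X)$, which is the asserted adjunction.

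For cocompleteness I would show that $\MV(X)$ is closed under small colimits inside the (cocomplete) category $\Fun(\int_G\simp{X},D(R))$, which then also yields that colimits in $\MV(X)$ exist, are computed pointwise in $D(R)$, and that the inclusion preserves them. Let $(M_i)_{i\in I}$ be a small diagram of smooth functors with colimit $M$. For a simplex $x$ and $V\colon BG_x\to D(\C)$, applying $\map_{\int_G\simp{X}}(-,W)$ for $W=\Ran_{BG_x}^{\int_G\simp{X}}V^{\infty}$ and $W=\Ran_{BG_x}^{\int_G\simp{X}}V$ and using $M\simeq\colim_i M_i$ identifies the comparison map $\map_{\int_G\simp{X}}(M,\Ran_{BG_x}^{\int_G\simp{X}}V^{\infty})\to\map_{\int_G\simp{X}}(M,\Ran_{BG_x}^{\int_G\simp{X}}V)$ with the $I$-indexed limit of the corresponding maps for the $M_i$. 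Each of these is an equivalence by smoothness of $M_i$, and a limit of equivalences is an equivalence, so $M$ is smooth.

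For the last claim, $\res{\int_G\simp{X}}{BG_x}\colon\MV(X)\to D(G_x)$ is the restriction of $j^{*}$, which preserves all colimits (it has both adjoints), and the target $D(G_x)\subseteq D(G_x^{\delta})$ is closed under colimits because smooth representations are closed under colimits among all representations; hence the right adjoint of $\Lan_{BG_x}^{\int_G\simp{X}}$ preserves filtered colimits, and therefore the left adjoint sends compact objects of $D(G_x)$ to compact objects of $\MV(X)$, which also gives the last assertion of Lemma~\ref{smoothKan}. I do not expect a genuine obstacle here: the only step that is not purely formal is the colimit-closure of $\MV(X)$, and its crux is that smoothness is cut out by requiring certain maps $\map_{\int_G\simp{X}}(M,-)$ to be equivalences, so it is a condition contravariant in $M$ that turns colimits in $M$ into limits and is thus automatically stable under arbitrary colimits.
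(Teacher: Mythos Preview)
Your proof is correct and follows essentially the same approach as the paper: both restrict the ambient adjunction using Lemma~\ref{smoothKan} and Remark~\ref{smoothRest}, both show colimit-closure of $\MV(X)$ by noting that smoothness is a condition on maps out of $M$ and hence stable under colimits, and both deduce preservation of compact objects from the fact that the right adjoint (restriction) commutes with colimits. The only cosmetic difference is that the paper invokes Corollary~\ref{Ranres} for the last step whereas you argue directly that $j^*$ has both adjoints; the content is the same.
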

\begin{proof}
    Lemma \ref{smoothKan} and remark \ref{smoothRest} show that left Kan extension and restriction restrict to the full subcategories $D(G_x)$ and $\MV(X)$, and we obtain the desired adjunction.
    By definition $M \in \Fun(\int_G \simp{X} , D(R))$ is smooth if and only if the comparison map 
     $$
    \begin{tikzcd}
      \map_{\int_G \simp{X}} (M, \mathrm{Ran}_{BG_y}^{\int_G \simp{X}}V^\infty) \arrow[r] & \map_{\int_G \simp{X}}(M , \mathrm{Ran}_{BG_y}^{\int_G \simp{X}} V)   
    \end{tikzcd}
    $$
    is an equivalence for all $V: BG_y \to D(R)$. But $\map(-,-)$ is compatible with colimits in the first variable, hence a colimit of smooth objects in $\Fun(\int_G \simp{X}, D(R))$ is again smooth. Because $\MV(X)$ is a full subcategory this shows that the colimit of smooth objects in $\Fun(\int_G \simp{X}, D(R))$ also satisfies the universal property for the colimit in $\MV(X)$. In particular colimits in $\MV(X)$ exist and the inclusion $\MV(X) \to \Fun(\int_G \simp{X}, D(R))$ commutes with them.

    Since the restriction along $BG_x \to \int_G \simp{X}$ commutes with colimits in the entire functor category, it follows that the restriction to smooth objects also commutes with colimits.  
    In particular by Corollary \ref{Ranres} the right adjoint of  $\Lan_{BG_x}^{\int_G \simp{X}} : D(G_x) \to \MV(X)$ commutes with filtered colimits, hence it preserves compact objects.
\end{proof}
To invoke the adjoint functor theorem we need the following lemma. 
\begin{lemma}
    \label{MVcomp}
    Let $X$ be a $G$-semi-simplicial set. 
    The set of $\Lan_{BG_x}^{\int_G \simp{X}}V$ for simplices $x \in X$ and $V \in D(G_x)^\omega$ form a set of compact generators, in particular $\MV(X)$ is compactly generated.
\end{lemma}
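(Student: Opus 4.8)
The plan is to deduce the statement formally from the adjunction in Lemma \ref{Lancomp}. Two things need to be checked: that each $\Lan_{BG_x}^{\int_G \simp{X}}V$ with $V \in D(G_x)^\omega$ is a compact object of $\MV(X)$, and that this family generates $\MV(X)$ under colimits. The first is immediate: Lemma \ref{Lancomp} already records that $\Lan_{BG_x}^{\int_G \simp{X}}$ carries compact objects to compact objects, and $V \in D(G_x)^\omega$ is compact in $D(G_x)$ by definition, so $\Lan_{BG_x}^{\int_G \simp{X}}V$ is compact in $\MV(X)$.

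For the generation statement I would show that the family jointly detects zero objects, which in a presentable stable setting is equivalent to generating under colimits. So fix $M \in \MV(X)$ and suppose that $\map_{\MV(X)}(\Lan_{BG_x}^{\int_G \simp{X}}V, M) \simeq 0$ for every simplex $x \in X$ and every $V \in D(G_x)^\omega$. By the adjunction $\Lan_{BG_x}^{\int_G \simp{X}} \dashv \res{\int_G \simp{X}}{BG_x}$ of Lemma \ref{Lancomp}, this is the same as $\map_{D(G_x)}(V, \res{\int_G \simp{X}}{BG_x}M) \simeq 0$ for every compact $V \in D(G_x)$. Now $D(G_x)$ is compactly generated: the smooth representations $\ind{U}{G_x}R$, with $U$ ranging over the open subgroups of the profinite group $G_x$, are compact projective generators of $\rep{G_x}$, so $D(G_x)^\omega$ is a generating set. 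Hence $\res{\int_G \simp{X}}{BG_x}M \simeq 0$, i.e.\ $M(x) \simeq 0$ in $D(R)$, for every simplex $x$. Since equivalences in $\Fun(\int_G \simp{X}, D(R))$ are detected pointwise and $\MV(X)$ is a full subcategory, $M \simeq 0$. (As $D(G_x)^\omega$ is closed under shifts, this argument also yields the vanishing of the full mapping spectra, which is what one wants.)

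It remains to assemble these two facts. The category $\MV(X)$ is cocomplete by Lemma \ref{Lancomp}, and the $\Lan_{BG_x}^{\int_G \simp{X}}V$ form an essentially small family of compact objects that detects $0$; by the standard recognition of compactly generated categories via $\mathrm{Ind}$-completion (as in \cite[\S 5.3.5, \S 5.5]{HTT}) this forces $\MV(X) \simeq \mathrm{Ind}(\MV(X)^\omega)$, so $\MV(X)$ is compactly generated, in particular presentable, with the displayed explicit set of generators. I expect no single step to be genuinely hard; the only inputs that are not pure bookkeeping are the compact generation of the coefficient categories $D(G_x)$ and the presentability of $\MV(X)$. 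The latter can, if needed, be obtained independently by exhibiting $\MV(X)$ as the pullback $\Fun(\int_G \simp{X}, D(R)) \times_{\prod_x D(G_x^\delta)} \prod_x D(G_x)$ in $\mathrm{Pr}^L$, using the pointwise description of smoothness in Remark \ref{smoothRest}; the point I would be most careful to verify is precisely that this recognition criterion applies.
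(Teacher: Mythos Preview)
Your proof is correct and follows essentially the same approach as the paper: both use Lemma \ref{Lancomp} for compactness of the $\Lan_{BG_x}^{\int_G \simp{X}}V$, then reduce joint conservativity via the adjunction to the pointwise vanishing of $M(x)$, invoking compact generation of each $D(G_x)$. One small imprecision: you call $G_x$ profinite, but the standing hypothesis is only that $G_x$ is open in $G$; the compact generation of $D(G_x)$ (e.g.\ by the $\ind{U}{G_x}R$ for $U$ compact open) still holds, so this does not affect the argument.
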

\begin{proof}
    The category $D(G_x)$ is compactly generated for every $x \in X$. Let $C_x$ be the set of compact objects, in particular, $C_x$ generates $D(G_x)$. 
    Furthermore, every $x \in X$ induces a functor $BG_x \to \int_G \simp{X}$ and the left Kan extension along this functor preserves smooth, compact objects by \ref{Lancomp}.  
    Now let $C$ be the set that contains the image of $C_x$ under $D(D_x) \to MV(X)$ for all $x \in X$.
    We need to show that $C$ is jointly conservative. Let $M \in \MV(X)$ and $V_x \in C_x$. Then 
    \begin{equation*}
        \map_{\int_G \simp{X}}(\Lan_{BG_x}^{\int_G \simp{X}} V_x, M) \simeq \map_{D(G_x)}(V_x, M(x))
    \end{equation*}
    and $M(x)$ is zero if and only if the right-hand side is zero for all $V_x \in C_x$. 
    Hence, $M$ is zero in $\MV(X)$ if and only if it is pointwise zero if and only if the mapping spectra $\map_{\int_G \simp{X}}(\Lan_{BG_x}^{\int_G \simp{X}}V_x, M)$ are zero for all $x \in X$ and $V_x \in D(G_x)^\omega$. 
\end{proof}

\begin{remark}
    The same argument shows that the set of $\Lan_{BG_x}^{\int_G \simp{X}}V_x$ for $V_x \in D(G_x^\delta)^\omega$ form a set of compact generators of $\Fun (\int_G \simp{X}, D(R))$. This will allow us to use the adjoint functor theorem below.
\end{remark}

\begin{corollary}
    \label{smoothRes}
    The full inclusion $\MV(X) \to \Fun(\int_G \simp{X}, D(R))$ has a right adjoint $(-)^\infty: \Fun(\int_G \simp{X}, D(R)) \to \MV(X)$. In other words, $\MV(X)$ is a right Bousfield localization of $\Fun(\int_G \simp{X}, D(R))$.
\end{corollary}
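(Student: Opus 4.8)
The plan is to invoke the adjoint functor theorem for presentable $\infty$-categories. First I would record that $\Fun(\int_G \simp{X}, D(R))$ is presentable, being a functor category from a small category into the presentable category $D(R)$; by the remark following Lemma \ref{MVcomp} it is in fact compactly generated, with the set $\{\Lan_{BG_x}^{\int_G \simp{X}}V_x : x \in X,\, V_x \in D(G_x^\delta)^\omega\}$ of compact generators. Next I would observe that $\MV(X)$ is also presentable: by Lemma \ref{MVcomp} it is compactly generated, and by Lemma \ref{Lancomp} the inclusion $\MV(X) \hookrightarrow \Fun(\int_G \simp{X}, D(R))$ preserves all small colimits. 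Since the inclusion is a colimit-preserving functor between presentable $\infty$-categories, the adjoint functor theorem (\cite[5.5.2.9]{HTT}) produces a right adjoint, which I will denote $(-)^\infty$.

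It then remains only to name what this makes $\MV(X)$: a full subcategory of a presentable $\infty$-category whose inclusion admits a right adjoint is by definition a coreflective subcategory, i.e.\ a right Bousfield localization of $\Fun(\int_G \simp{X}, D(R))$. The fullness is immediate from the definition of $\MV(X)$ as a full subcategory, so no further verification is needed there.

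I do not anticipate a genuine obstacle here — the statement is essentially a bookkeeping consequence of the structural results already established in Lemmas \ref{smoothKan}, \ref{Lancomp} and \ref{MVcomp}. The only point requiring mild care is checking the hypotheses of the adjoint functor theorem precisely: one must know that $\MV(X)$ is presentable (not merely cocomplete), and for this the explicit set of compact generators from Lemma \ref{MVcomp} together with the fact that a compactly generated stable $\infty$-category is presentable does the job, while Lemma \ref{Lancomp} supplies colimit-preservation of the inclusion. An alternative, more hands-on route would be to construct $(-)^\infty$ directly as a pointwise operation — sending $M \in \Fun(\int_G \simp{X}, D(R))$ to the resolution obtained by replacing each value $M(x) \in D(G_x^\delta)$ by its largest smooth part via the coreflection $(-)^\infty \colon D(G_x^\delta) \to D(G_x)$ of the adjunction recalled earlier — and then verifying that this assignment is functorial on $\int_G \simp{X}$ and right adjoint to the inclusion; but the abstract argument via the adjoint functor theorem is cleaner and I would present that one.
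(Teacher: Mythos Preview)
Your proposal is correct and matches the paper's own proof essentially verbatim: both categories are compactly generated (by Lemma \ref{MVcomp} and the remark after it), the inclusion preserves colimits (Lemma \ref{Lancomp}), and the adjoint functor theorem yields the right adjoint. The only difference is that you spell out a few more words about presentability and mention an alternative pointwise construction, but the argument is the same.
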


\begin{proof}
    Both $\MV(X)$ and $\Fun(\int_G \simp{X}, D(R))$ are compactly generated by Lemma \ref{MVcomp} and the remark. Furthermore, the inclusion commutes with colimits by Lemma \ref{Lancomp}. Hence, by the adjoint functor theorem, there exists a right adjoint.
\end{proof}
The following is a standard result about Bousfield localizations. A proof can be found for instance in \cite[Corollary I.61.]{HebL}.
\begin{lemma}
    \label{cocompB}
    Let $L:\mathcal{C} \to \mathcal{D}$ be a left Bousfield localization with right adjoint $R$ and let $F: I \to \mathcal{D}$ be a diagram. If the right-hand side exists the following hold.
    \begin{equation*} 
            \lim_I F(i)= L(\lim_I RF(i)) \text{ and }\colim_I F(i)= L(\colim_I RF(i)) 
    \end{equation*}
    In particular if $\mathcal{C}$ is (co-) complete then $\mathcal{D}$ is (co-)complete.
    The analogous statement holds for right Bousfield localizations. 
\end{lemma}

\begin{corollary}
    \label{MVcolim}
    The category $\MV(X)$ is (co-) complete and stable. Colimits are computed pointwise and limits are the smoothening of the pointwise limits.
\end{corollary}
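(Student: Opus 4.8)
The plan is to deduce the statement formally from the right Bousfield localization of Corollary~\ref{smoothRes} together with the transfer principle of Lemma~\ref{cocompB}, adding one small argument to upgrade (co-)completeness to stability.

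First I would record the ambient facts. Since $R$ is regular Noetherian with $\Q\subseteq R$, the derived $\infty$-category $D(R)$ is stable and presentable, hence complete and cocomplete; therefore the functor category $\Fun(\int_G\simp{X},D(R))$ is stable and (co-)complete, with all limits and colimits computed pointwise. By Corollary~\ref{smoothRes} the inclusion $\iota\colon\MV(X)\hookrightarrow\Fun(\int_G\simp{X},D(R))$ is fully faithful with right adjoint $(-)^\infty$, and by Lemma~\ref{Lancomp} it preserves colimits, so $\MV(X)$ is a right Bousfield localization of a (co-)complete $\infty$-category. Applying the colocalization half of Lemma~\ref{cocompB} then gives at once that $\MV(X)$ is (co-)complete, with $\colim_I F\simeq(\colim_I\iota F)^\infty$ and $\lim_I F\simeq(\lim_I\iota F)^\infty$. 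Since $\iota$ preserves colimits and $(-)^\infty\circ\iota\simeq\id$, the colimit formula just says that the pointwise colimit is already smooth, so colimits in $\MV(X)$ are computed pointwise, as already observed in Lemma~\ref{Lancomp}. Dually $\lim_I\iota F$ is the pointwise limit, and $\lim_I F$ is its smoothening, which is the remaining assertion.

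Next I would establish stability. The full subcategory $\MV(X)\subseteq\Fun(\int_G\simp{X},D(R))$ contains the pointwise zero object, which is smooth because $0\in D(G_x)$ for every simplex $x$, and it is closed under colimits, hence under cofibres and suspensions, all formed pointwise. It is moreover closed under the loop functor: $\Omega$ is a finite limit, hence computed pointwise, and for $M\in\MV(X)$ one has $(\Omega M)(x)\simeq\Omega(M(x))\in D(G_x)$ because $D(G_x)$ is a stable subcategory of $D(G^\delta_x)$, so $\Omega M$ is again smooth by Remark~\ref{smoothRest}. Using that in any stable $\infty$-category $\fib(f)\simeq\Omega\,\cofib(f)$, it follows that $\MV(X)$ is closed under fibres too; a full subcategory of a stable $\infty$-category containing $0$ and closed under finite limits and finite colimits is itself stable, and these are then inherited pointwise, consistent with smoothening being the identity on pointwise finite limits.

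The one step that is not purely formal is this last closure under $\Omega$: a coreflective subcategory of a stable $\infty$-category need not be stable — connective spectra inside all spectra being the standard example — so one genuinely uses that smoothness is a pointwise condition and that each $D(G_x)$ is stable. Everything else is a direct application of Corollary~\ref{smoothRes}, Lemma~\ref{Lancomp}, and Lemma~\ref{cocompB}, and requires no new computation.
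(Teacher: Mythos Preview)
Your proof is correct and follows essentially the same approach as the paper: both deduce (co-)completeness from Corollary~\ref{smoothRes} via Lemma~\ref{cocompB}, and both argue stability by comparing fibre and cofibre sequences with those in the ambient functor category. Your treatment of stability is in fact more careful than the paper's: where the paper simply asserts that ``cofibre and fibre sequences agree since they agree in $\Fun(\int_G\simp{X},D(R))$'', you make explicit the non-formal step---closure of $\MV(X)$ under $\Omega$ via Remark~\ref{smoothRest} and stability of each $D(G_x)$---and correctly flag that coreflective subcategories of stable categories are not automatically stable.
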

\begin{proof}
   By \ref{cocompB} we only need to show that $\Fun(\int_G \simp{X}, D(\C))$ is complete and cocomplete.
   Since $D(R)$ is complete and cocomplete, limits and colimits in functor categories to $D(R)$ exist and are computed pointwise.
   This shows that the category $\Fun( \int_G \simp{X}, D(R))$ is (co-) complete and stable.  
   Furthermore, \ref{cocompB} shows that cofibre and fibre sequences agree since they agree in $\Fun(\int_G \simp{X}, D(R))$, hence $\MV(X)$ is stable.
\end{proof}
\begin{lemma}
    The $t$-structure on $D(R)$ induces a $t$-structure on $\MV(X)$. A resolution is (co-)connected if and only if it is pointwise (co-)connected.
\end{lemma}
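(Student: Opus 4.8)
The plan is to construct the $t$-structure on $\MV(X)$ by declaring the connective objects $\MV(X)_{\geq 0}$ to be those resolutions $M$ which are pointwise connective, i.e.\ $M(x) \in D(R)_{\geq 0}$ for every simplex $x \in X$, and dually $\MV(X)_{\leq 0}$ to be the pointwise coconnective resolutions. Since $\MV(X)$ is stable (Corollary \ref{MVcolim}), it suffices to check the three axioms of a $t$-structure: closure under the respective shifts, the vanishing $\map(M, N) \simeq 0$ for $M$ connective and $N[-1]$ coconnective, and the existence of truncation triangles. The orthogonality axiom is the easiest: for $M \in \MV(X)_{\geq 0}$ and $N \in \MV(X)_{\leq -1}$, one reduces via the compact generators $\Lan_{BG_x}^{\int_G \simp{X}} V$ of Lemma \ref{MVcomp} to the adjunction isomorphism $\map_{\int_G \simp{X}}(\Lan_{BG_x}^{\int_G\simp{X}} V, N) \simeq \map_{G_x}(V, N(x))$; taking $V$ in the heart of $D(G_x)$ and using that $N(x)$ is coconnective (of amplitude $\leq -1$) gives the vanishing, and since connective objects are generated under colimits and extensions by such $\Lan_{BG_x}V$ with $V \in D(G_x)_{\geq 0}$, we are done.

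The main point is the construction of the truncation triangles. For a resolution $M$, I would define $\tau_{\geq 0} M$ and $\tau_{\leq -1} M$ by applying the pointwise truncation functors of the standard $t$-structure on $D(R)$ to the underlying functor $\int_G \simp{X} \to D(R)$. The key thing to verify is that these pointwise truncations remain \emph{smooth}, i.e.\ land in $\MV(X)$. By Remark \ref{smoothRest}, smoothness of a resolution $M$ is equivalent to $\res{\int_G \simp{X}}{BG_x} M \in D(G_x) \subseteq D(G_x^\delta)$ for every simplex $x$, so the claim reduces to: the full subcategory $D(G_x) \subseteq D(G_x^\delta)$ is closed under the truncation functors of the standard $t$-structure on $D(G_x^\delta)$. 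This is true because smoothness of a $G_x$-representation is a condition that is checked on homology (a complex of $G_x^\delta$-modules is smooth iff each homology module is a smooth representation, since smooth representations form a Serre subcategory of all representations closed under colimits), and the standard truncation functors on the derived category only rearrange homology. Once this is established, $\tau_{\geq 0} M$ and $\tau_{\leq -1} M$ are objects of $\MV(X)$, the triangle $\tau_{\geq 0} M \to M \to \tau_{\leq -1} M$ is the pointwise truncation triangle (which lives in $\MV(X)$ since cofibre sequences in $\MV(X)$ are computed pointwise by Corollary \ref{MVcolim}), and the shift-closure axioms are immediate since they hold pointwise.

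The hardest step, and the one deserving the most care, is the verification that $D(G_x) \subseteq D(G_x^\delta)$ is closed under standard truncations — equivalently, that the smoothness condition in the definition of $\MV(X)$ is compatible with the $t$-structure. The cleanest route is to note that $D(G_x)$ is by construction the derived $\infty$-category of the Grothendieck abelian category $\rep{G_x}$ of smooth representations, and the forgetful functor $\rep{G_x} \to \rep{G_x^\delta}$ (equivalently $\mathrm{Mod}(R[G_x^\delta])$) is exact, fully faithful, and identifies $\rep{G_x}$ with a Serre subcategory closed under arbitrary coproducts; hence on derived categories $D(G_x)$ is identified with the full subcategory of $D(G_x^\delta)$ on complexes with smooth homology, which is manifestly stable under $\tau_{\geq n}$ and $\tau_{\leq n}$. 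Granting this, the statement of the lemma — that a resolution is (co)connective in the induced $t$-structure if and only if it is pointwise (co)connective — is exactly the definition we chose, and all remaining verifications are formal consequences of the pointwise nature of colimits and cofibre sequences in $\MV(X)$.
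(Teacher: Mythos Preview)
Your proposal is correct and follows essentially the same approach as the paper: define the $t$-structure pointwise and verify that the truncation functors preserve smoothness. The paper's argument is a bit more streamlined---it first gets the pointwise $t$-structure on the ambient functor category $\Fun(\int_G\simp{X},D(R))$ for free by observing that the $t$-structure adjunctions on $D(R)$ induce adjunctions after applying $\Fun(\int_G\simp{X},-)$, so no separate orthogonality check is needed, and then reduces to the same observation you make (kernels and cokernels of smooth representations are smooth) to conclude that the truncations restrict to $\MV(X)$.
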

\begin{proof}
    Any adjunction between functors $F: \mathcal{D}\to \mathcal{E}$ and $G: \mathcal{E} \to \mathcal{D}$ induces an adjunction on functor categories when applying $\Fun(\mathcal{C}, -)$. 
    In particular the adjunctions
    \[\begin{tikzcd}
        {D_{\geq 0}(R)} && {D(R)} && {D_{\leq 0}(R)}
        \arrow[""{name=0, anchor=center, inner sep=0}, shift left=3, from=1-1, to=1-3, curve={height=-6pt}]
        \arrow[""{name=1, anchor=center, inner sep=0}, "{\tau_{\geq 0}}", shift left=3, from=1-3, to=1-1, curve={height=-6pt}]
        \arrow[""{name=2, anchor=center, inner sep=0}, "{\tau_{\leq 0}}", shift left=3, from=1-3, to=1-5, curve={height=-6pt}]
        \arrow[""{name=3, anchor=center, inner sep=0}, shift left=3, from=1-5, to=1-3, curve={height=-6pt}]
        \arrow["\dashv"{anchor=center, rotate=-90}, draw=none, from=0, to=1]
        \arrow["\dashv"{anchor=center, rotate=-90}, draw=none, from=2, to=3]
    \end{tikzcd}\]
    that correspond to the $t$-structure on $D(R)$ induce adjunctions after $\Fun(\int_G \simp{X}, -)$ and a $t$-structure on $\Fun(\int_G \simp{X}, D(R))$ such that (co-)connective objects are the pointwise (co-)connective functors. 
    We only need to check that this adjunction is compatible with smooth objects.
    But this follows directly from the explicit pointwise construction of the truncations in $D(R)$ since kernels and cockerels of smooth representations are smooth again.
\end{proof}

The following lemma generalizes Lemma \ref{Lancomp}.
It is a variant of the induction-restriction adjunction for representations parametrized over semi-simplicial sets and enjoys many of the same properties.

\begin{lemma}
    Let $f: X \to Y$ be a map of $G$-semi-simplicial sets. Then left Kan extension and restriction along $f$ induce an adjunction
\[\begin{tikzcd}
	{MV(X)} && {MV(Y)}
	\arrow[""{name=0, anchor=center, inner sep=0}, "{\Lan_f}", shift left=2, from=1-1, to=1-3, curve={height=-6pt}]
	\arrow[""{name=1, anchor=center, inner sep=0}, "{\res{f}{}}", shift left=2, from=1-3, to=1-1, curve={height=-6pt}]
	\arrow["\dashv"{anchor=center, rotate=-90}, draw=none, from=0, to=1]
\end{tikzcd}\]
    and the left adjoint preserves compact objects.
\end{lemma}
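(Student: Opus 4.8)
The plan is to mimic the strategy used for $\Lan_{BG_x}^{\int_G\simp{X}}$ in Lemmas \ref{smoothKan} and \ref{Lancomp}, replacing the inclusion $BG_x \to \int_G\simp{X}$ by the functor $\int_G\simp{f}: \int_G\simp{X} \to \int_G\simp{Y}$ induced by $f$. First I would observe that $f$ induces a functor $\int_G\simp{f}$ on equivariant simplex categories compatible with the projections to $\Delta^{\op}_{\mathrm{inj}}\times BG$, and that restriction $\res{f}{}: \Fun(\int_G\simp{Y}, D(R)) \to \Fun(\int_G\simp{X}, D(R))$ admits a left adjoint $\Lan_f$ on the full functor categories, computed by the usual colimit formula. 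The key point is that $\res{f}{}$ preserves smoothness: by Remark \ref{smoothRest}, smoothness of $M \in \Fun(\int_G\simp{Y}, D(R))$ is tested on each $BG_y$, and for a simplex $x \in X$ mapping to $y=f(x) \in Y$ one has $G_x \subseteq G_{f(x)}$, so the restriction of $\res{f}{}M$ to $BG_x$ is the further restriction along $BG_x \to BG_{f(x)}$ of something in $D(G_{f(x)})$, which lands in $D(G_x)$ since restriction along an open inclusion preserves smooth representations. Hence $\res{f}{}$ restricts to a functor $\MV(Y) \to \MV(X)$.

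Next I would produce the left adjoint on the level of smooth objects by composing with the right Bousfield localization $(-)^\infty: \Fun(\int_G\simp{X}, D(R)) \to \MV(X)$ of Corollary \ref{smoothRes}; that is, I would show $\Lan_f$ on full functor categories carries smooth objects into smooth objects so that no smoothening is needed, or, failing that, define the adjoint as the composite $(-)^\infty \circ \Lan_f$. In fact the cleaner route is to check directly that $\Lan_f$ of a smooth functor is smooth: it suffices to check this on the generators $\Lan_{BG_x}^{\int_G\simp{X}}V$ with $V \in D(G_x)$, and by transitivity of left Kan extensions $\Lan_f \Lan_{BG_x}^{\int_G\simp{X}}V \simeq \Lan_{BG_{f(x)}}^{\int_G\simp{Y}}(\ldots)$ — more precisely, since $BG_x \to \int_G\simp{X} \xrightarrow{\int_G\simp{f}} \int_G\simp{Y}$ factors through $BG_{f(x)} \to \int_G\simp{Y}$ via the inclusion $BG_x \to BG_{f(x)}$, one gets $\Lan_f\Lan_{BG_x}^{\int_G\simp{X}}V \simeq \Lan_{BG_{f(x)}}^{\int_G\simp{Y}}\ind{G_x}{G_{f(x)}}V$, and $\ind{G_x}{G_{f(x)}}V$ is smooth, so by Lemma \ref{smoothKan} the result is smooth. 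Since smooth objects are closed under colimits (Lemma \ref{Lancomp}) and every object of $\MV(X)$ is a colimit of such generators (Lemma \ref{MVcomp}), $\Lan_f$ preserves smoothness, giving the adjunction $\Lan_f \dashv \res{f}{}$ between $\MV(X)$ and $\MV(Y)$.

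For the statement that $\Lan_f$ preserves compact objects, I would argue that its right adjoint $\res{f}{}$ preserves filtered colimits: colimits in both $\MV(X)$ and $\MV(Y)$ are computed pointwise (Corollary \ref{MVcolim}), and $\res{f}{}$ is given pointwise by $(\res{f}{}M)(x) = M(f(x))$, which manifestly commutes with all colimits. Hence $\res{f}{}$ is colimit-preserving, so $\Lan_f$ sends compact generators to compact objects, and since it preserves colimits it preserves all compact objects; alternatively one notes it suffices to check on the generators $\Lan_{BG_x}^{\int_G\simp{X}}V$ with $V \in D(G_x)^\omega$, whose image $\Lan_{BG_{f(x)}}^{\int_G\simp{Y}}\ind{G_x}{G_{f(x)}}V$ is compact by Lemma \ref{smoothKan} because compact induction $\ind{G_x}{G_{f(x)}}$ along an open inclusion preserves compact objects. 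The main obstacle is the verification that $\Lan_f$ preserves smoothness — everything else is formal — and the cleanest way around it is precisely the reduction to generators and the compatibility $\Lan_f\Lan_{BG_x}^{\int_G\simp{X}} \simeq \Lan_{BG_{f(x)}}^{\int_G\simp{Y}}\ind{G_x}{G_{f(x)}}$ together with the already-established Lemma \ref{smoothKan}.
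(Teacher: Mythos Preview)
Your proposal is correct and follows essentially the same approach as the paper: both reduce smoothness of $\Lan_f$ to the compact generators via the factorization $BG_x \to BG_{f(x)} \to \int_G\simp{Y}$ and Lemma~\ref{smoothKan}, and both verify smoothness of $\res{f}{}$ by the same factorization. The only cosmetic difference is in the compactness argument: you show $\res{f}{}$ preserves colimits directly from the pointwise description of colimits, whereas the paper instead exhibits a further right adjoint $(-)^\infty \circ \Ran_f$ to $\res{f}{}$; both arguments yield the same conclusion.
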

\begin{proof}
    We first show that the left Kan extension of smooth objects is again smooth.
    Recall from the proof of \ref{MVcomp} that $\MV(X)$ has a set of compact generators given by the left Kan extensions of compact objects from $D(G_x)$ along $BG_x \to \int_g \simp{X}$ for all simplices $x \in X$. 
    Consider the following commutative square and let $V_x \in D(G_x)$ be compact.
\[\begin{tikzcd}[ampersand replacement=\&]
	{\int_G \simp{X}} \&\& {\int_G \simp{Y}} \\
	\\
	{BG_x} \&\& {BG_{f(x)}}
	\arrow["f", from=1-1, to=1-3]
	\arrow[from=3-1, to=1-1]
	\arrow[from=3-3, to=1-3]
	\arrow[from=3-1, to=3-3]
\end{tikzcd}\]
    By the commutativity of the diagram, the left Kan extension of the upper composition is equivalent to the left Kan extension along the lower composition. 
    By left Kan extending $V_x$ along the lower map is precisely the induction $\ind{G_x}{G_{f(x)}}$ of $V_x$ to $G_{f(x)}$ and therefore again smooth and compact. Furthermore, Lemma \ref{smoothKan} shows that extending further to $\int_G \simp{Y}$ is still smooth.  
    This shows that $\Lan_f : \Fun(\int_G \simp{X}, D(R)) \to \Fun(\int_G \simp{X}, D(R))$ maps a set of smooth generators of $\MV(X)$ to smooth, compact objects in $\MV(Y)$. 
    But $\Lan_f$ also commutes with colimits, and the colimit of smooth objects is again smooth. 
    Since every object of $\MV(X)$ is a colimit of the compact generators $\Lan_f$ preserves smooth objects.
    In particular, it follows directly that $\Lan_f: \MV(X) \to \MV(Y)$ commutes with colimits, because colimits in $\MV(Y)$ are computed in $\Fun(\int_G \simp{Y}, D(R))$. 

    On the other hand, let $N \in \MV(Y)$. We have to show that 
     $$
    \begin{tikzcd}
      \map (\res{f}{}M, \Ran_{BG_x}^{\int_G\simp{X}} V_x^\infty) \arrow[r] & \map(\res{f}{}M , \Ran_{BG_x}^{\int_G \simp{X}} V_x)   
    \end{tikzcd}
    $$
    is an equivalence for all $x \in X$ and $V_x \in \Fun(BG_x, D(R))$.
    But by the universal property of right Kan extensions, we have 
    $$
    \map (\res{f}{}M, \Ran_{BG_x}^{\int_G \simp{X}}-) \simeq \map (\res{\int_G \simp{Y}}{BG_x}M, -)
    $$
    where we restrict along the composition $BG_x \to \int_G \simp{X} \to \int_G \simp{Y}$ on the right-hand side.
    By the commutativity, this is equivalent to restricting along $BG_x \to BG_{f(x)} \to \int_G \simp{Y}$ and therefore smooth.
    
    Lastly it follows formally that $\res{f}{}: \MV(Y) \to \MV(X)$ has a right adjoint 
    
\[\begin{tikzcd}[ampersand replacement=\&]
	{\MV(X)} \&\& {\Fun(\int_G \simp{Y}, D(R))} \&\& {\MV(Y).}
	\arrow["{\Ran_f}", from=1-1, to=1-3]
	\arrow["{(-)^\infty}", from=1-3, to=1-5]
\end{tikzcd}\]
    In particular this means that $\res{f}{}: \MV(Y) \to \MV(X)$ commutes with colimits and hence $\Lan_f: \MV(X) \to \MV(Y)$ preserves compact objects.    
\end{proof}
We can summarize the results we have established so far in the following way.
The formation of $\MV$ defines a functor $\MV : (ssSet^{BG})^{\infty} \to \mathrm{Pr}^L_{st, \omega}$ by $X \mapsto \MV(X)$ and induced maps by left Kan extensions.
Furthermore, limits and colimits can be computed in $\Fun(\int_G \simp{X}, D(R))$ followed by reflecting back to $\MV(X)$.

For any semi-simplicial set with $G$-action there is a canonical equivariant map to the terminal semi-simplicial set $*$ with the trivial $G$-action.
Note that by construction $\MV(*)=D(G)$.
Since this map will play an important role in the forthcoming sections we fix the following notation.
\begin{notation}
    Let $X$ be a semi-simplicial set with smooth $G$-action and let $X \to *$ be the canonical map.
    This induces a map 
\[\begin{tikzcd}
	{\MV(X)} && {D(G)}
	\arrow[from=1-1, to=1-3]
\end{tikzcd}\]
    that we will denote by $\alpha$.
    If there is more than one group involved will write $\alpha_G$ to avoid any confusion.
\end{notation}

So far we have only considered (maps between) $G$-semi-simplicial sets for actions for a fixed $G$. 
Similar to ordinary representation theory there are induction and restriction functors for any subgroup $H \subseteq G$ that preserves smooth objects. 

\begin{definition}
Let $H \subset G$ be an open subgroup. Then we define $\MV_H(X)$ to be the category of Mayer--Vietoris resolutions on $X$ with the restricted $H$-action.
In other words the full subcategory of $\Fun(\int_H \simp{X}, D(R))$ spanned by all smooth functors. 
\end{definition}
With the above definition, there is an adjunction between $\MV_H(X)$ and $\MV_G(X)$.
\begin{lemma}
    Let $(X \in ssSet^{BG})^\infty$ and let $H \subseteq G$ be an open subgroup.
    Then the functor $\int_H \simp{X} \to \int_G \simp{X}$ induces an adjunction 
\[\begin{tikzcd}
	{MV_H(X)} && {MV_G(X),}
	\arrow[""{name=0, anchor=center, inner sep=0}, "{\mathrm{ind}^G_H}", shift left=2, curve={height=-6pt}, from=1-1, to=1-3]
	\arrow[""{name=1, anchor=center, inner sep=0}, "{\mathrm{res}_G^H}", shift left=2, curve={height=-6pt}, from=1-3, to=1-1]
	\arrow["\dashv"{anchor=center, rotate=-90}, draw=none, from=0, to=1]
\end{tikzcd}\]
    such that $\ind{H}{G}$ preserves compact objects.
\end{lemma}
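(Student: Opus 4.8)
The plan is to mimic the proof of the previous lemma, with the map of $G$-semi-simplicial sets replaced by the functor $\phi\colon\int_H\simp{X}\to\int_G\simp{X}$ induced by the inclusion $H\subseteq G$; concretely $\phi$ is the identity on objects and sends a morphism $(\partial,h)$ to $(\partial,h)$ with $h$ now viewed in $G$. The two functors $\ind{H}{G}$ and $\res{G}{H}$ will be the restrictions of $\Lan_\phi$ and $\res{\phi}{}$ to the subcategories of smooth functors, so the statement reduces to showing that $\Lan_\phi$ and $\res{\phi}{}$ each preserve smoothness, plus the usual compactness bookkeeping. Note that the case $X=*$ recovers the classical compact-induction/restriction adjunction $D(H)\rightleftarrows D(G)$ from the beginning of the paper, and in particular left Kan extension along $BH_x\to BG_x$ agrees on smooth objects with compact induction $\ind{H_x}{G_x}$, where $H_x:=H\cap G_x$.

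For every simplex $x\in X$ the functor $\phi$ fits into a commuting triangle
\[\begin{tikzcd}
	{BH_x} && {BG_x} \\
	& {\int_G\simp{X}}
	\arrow["{B\iota}", from=1-1, to=1-3]
	\arrow[from=1-1, to=2-2]
	\arrow[from=1-3, to=2-2]
\end{tikzcd}\]
where $B\iota$ is induced by $H_x\hookrightarrow G_x$, the left diagonal is $\phi$ precomposed with the inclusion $BH_x\to\int_H\simp{X}$, and the right diagonal is the inclusion of the endomorphisms of $x$. Applying the pasting law for left Kan extensions to this triangle, for $V\in D(H_x)^\omega$ one gets
\[
\Lan_\phi\bigl(\Lan_{BH_x}^{\int_H\simp{X}}V\bigr)\;\simeq\;\Lan_{BG_x}^{\int_G\simp{X}}\bigl(\ind{H_x}{G_x}V\bigr).
\]
Here $\ind{H_x}{G_x}V$ is again smooth and compact (restriction along an open inclusion is colimit-preserving, so compact induction preserves compact objects), hence by Lemma \ref{smoothKan} the right-hand side is a smooth compact object of $\MV_G(X)$. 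Since $\Lan_\phi$ preserves colimits, smooth objects are closed under colimits in the ambient functor categories (where, by Lemma \ref{Lancomp}, colimits in $\MV$ are computed), and every object of $\MV_H(X)$ is a colimit of the generators of Lemma \ref{MVcomp}, it follows that $\Lan_\phi$ restricts to a colimit-preserving functor $\ind{H}{G}\colon\MV_H(X)\to\MV_G(X)$.

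For the right adjoint I would use Remark \ref{smoothRest}: for $M\in\MV_G(X)$ and a simplex $x$, the triangle above gives $\res{\int_H\simp{X}}{BH_x}\res{\phi}{}M\simeq\res{G_x}{H_x}\bigl(\res{\int_G\simp{X}}{BG_x}M\bigr)$, and restricting a smooth $G_x$-representation to the open subgroup $H_x$ stays smooth, so $\res{\phi}{}M$ lies in $\MV_H(X)$; write $\res{G}{H}$ for the resulting functor. Since $\Lan_\phi\dashv\res{\phi}{}$ on the full functor categories and both functors preserve the full subcategories of smooth functors, the adjunction restricts to $\ind{H}{G}\dashv\res{G}{H}$. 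Finally $\res{\phi}{}$ commutes with all colimits — restriction along any functor is computed pointwise — and colimits in $\MV_H(X)$ and $\MV_G(X)$ coincide with those in the ambient functor categories, so $\res{G}{H}$ is colimit-preserving; hence its left adjoint $\ind{H}{G}$ preserves compact objects. The only step requiring genuine care is the identification $\Lan_\phi(\Lan_{BH_x}^{\int_H\simp{X}}V)\simeq\Lan_{BG_x}^{\int_G\simp{X}}(\ind{H_x}{G_x}V)$, i.e.\ writing the triangle down precisely, invoking the pasting law, and recalling (from the $X=*$ case, or Corollary \ref{Ranres}) that left Kan extension along $BH_x\to BG_x$ is compact induction of smooth representations; everything else is the same argument with compact generators and pointwise colimits already used for maps of $G$-semi-simplicial sets.
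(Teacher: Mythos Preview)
Your proposal is correct and follows essentially the same approach as the paper: both arguments reduce to the commutative square relating $BH_x$, $BG_x$, $\int_H\simp{X}$, $\int_G\simp{X}$ and use that induction/restriction of smooth representations along open subgroups stays smooth. The paper only writes out the case of the right adjoint on generators and leaves the left adjoint as ``analogous'', whereas you spell out both directions and handle restriction via the pointwise smoothness criterion of Remark~\ref{smoothRest} rather than checking on generators; these are cosmetic differences.
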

\begin{proof}
   For the first part, we only need to show that left Kan extension and restriction along $\int_H \simp{X} \to \int_G \simp{X}$ 
\[\begin{tikzcd}[ampersand replacement=\&]
	{\Fun(\int_H \simp{X}, D(R))} \&\& {\Fun(\int_G \simp{X}, D(R))}
	\arrow[""{name=0, anchor=center, inner sep=0}, "\Lan", shift left=2, from=1-1, to=1-3]
	\arrow[""{name=1, anchor=center, inner sep=0}, "{\res{}{}}", shift left=2, from=1-3, to=1-1]
	\arrow["\dashv"{anchor=center, rotate=-90}, draw=none, from=0, to=1]
\end{tikzcd}\] 
   preserve smooth objects. 
   Since both functors are compatible with colimits it suffices to check this on a set of generators. We only do this for the right adjoint, the argument for the left adjoint is analogous.
   Fix a simplex $x\in X$. Then we have the following commutative diagram.
\[\begin{tikzcd}[ampersand replacement=\&]
	{\int_H \simp{X}} \&\& {\int_G \simp{X}} \\
	\\
	{BH_x} \&\& {BG_x}
	\arrow[from=3-1, to=1-1]
	\arrow[from=1-1, to=1-3]
	\arrow[from=3-1, to=3-3]
	\arrow[from=3-3, to=1-3]
\end{tikzcd}\]
    where $G_x$ and $H_x=G_x \cap H$ are open subgroups of $G$.
    Let $V \in D(G_x)^\omega$ and consider $\Lan_{BG}^{\int_G \simp{X}}V$. 
    We need to check that the restriction along the top composition is smooth. But by commutativity, this is equivalent to restricting along the bottom composition which yields
    $$\res{\int_G \simp{X}}{BH_x}\Lan_{BG}^{\int_G \simp{X}}V \simeq \res{BG_x}{BH_x} \res{\int_G \simp{X}}{BG_x} \Lan_{BG_x}^{\int_G \simp{X}}V \simeq \res{BG_x}{BH_x}V.$$
    Because restrictions of smooth representations are smooth the right-hand side is smooth. 

\end{proof}

For the remainder of this section, we will collect some technical lemmas that we will frequently use later.
The first one provides us with an easy way to produce fibre sequences of Mayer--Vietoris resolutions.
\begin{notation}
Let $i: Y \to X$ be a smooth $G$-semi-simplicial subset. This induces an adjunction 
\[\begin{tikzcd}
	{\MV(Y)} && {\MV(X)}
	\arrow[""{name=0, anchor=center, inner sep=0}, "{\Lan_i}", shift left=3, curve={height=-6pt}, from=1-1, to=1-3]
	\arrow[""{name=1, anchor=center, inner sep=0}, "{\res{i}{}}", shift left=3, curve={height=-6pt}, from=1-3, to=1-1]
	\arrow["\dashv"{anchor=center, rotate=-90}, draw=none, from=0, to=1]
\end{tikzcd}\]
For any $M \in \MV(X)$ we write $M_Y:= \Lan_i\res{i}{}M$.
This comes with a canonical map $M_Y \to M$.
If $X\setminus Y$ is the complement of a $G$-semi-simplicial subset we define $M_{X\setminus Y}$ as the cofibre of the canonical map $M_Y \to M$.
Furthermore, we write $M(Y):= \alpha(M_Y)$ and $M(X \setminus Y):= \alpha (M_{X \setminus Y})$.
\end{notation}

\begin{lemma}
    \label{fibseq}
    Let $Y$ be a smooth semi-simplicial subset $X$ that is preserved under the action of an open subgroup $H\subseteq G$. Then we have an induced map $M_Y \to M$ in $\MV_H(X)$ that is an equivalence of every simplex $y \in Y$ and $M_Y$ is zero outside $Y$. 
    Furthermore, it is part of a bifibre sequence  
\[\begin{tikzcd}[ampersand replacement=\&]
	{M_Y} \&\& M \&\& {M_{X \setminus Y}.}
	\arrow[from=1-1, to=1-3]
	\arrow[from=1-3, to=1-5]
\end{tikzcd}\] 
    where $M_{X \setminus Y}$ is equivalent to $M$ on $X \setminus Y$ and zero otherwise.
    In particular, this induces a bifibre sequence in $D(H)$ 
\[\begin{tikzcd}[ampersand replacement=\&]
	{M(Y)} \&\& {M(X)} \&\& {M({X \setminus Y})}
	\arrow[from=1-1, to=1-3]
	\arrow[from=1-3, to=1-5]
\end{tikzcd}\]
    after realization.

\end{lemma}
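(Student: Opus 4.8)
The plan is to reduce everything to the defining adjunction $\Lan_i \dashv \res{i}{}$ of \autoref{fibseq}'s setup, and to the fact established earlier that colimits in $\MV_H(X)$ are computed pointwise in $\Fun(\int_H \simp{X}, D(R))$. First I would analyze the counit $M_Y = \Lan_i \res{i}{} M \to M$ at a simplex $z \in X$. Using the colimit formula for left Kan extensions along the inclusion $\int_H \simp{Y} \to \int_H \simp{X}$ together with the computation of comma categories carried out in the proof of \autoref{smoothKan} and \autoref{Ranres} (the relevant comma category is a discrete set, indexed by face maps $\partial$ and cosets, and is empty unless $z$ admits a face in $Y$), one sees directly that $(M_Y)(z) \simeq M(z)$ when $z \in Y$ and $(M_Y)(z) \simeq 0$ when no face of $z$ lies in $Y$ — in particular $(M_Y)(z) = 0$ for $z \notin Y$, since $Y$ being a subcomplex means a simplex outside $Y$ can still have faces inside $Y$; wait, that is the subtlety. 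Actually $Y \subseteq X$ is a semi-simplicial \emph{subset}, so it is closed under taking faces, hence a simplex $z \notin Y$ has \emph{no} faces in $Y$ either, and the comma category is genuinely empty. This gives the first assertion: $M_Y \to M$ is an equivalence on every $y \in Y$, and $M_Y$ vanishes on $X \setminus Y$.

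Next I would define $M_{X \setminus Y}$ as the cofibre of $M_Y \to M$, exactly as in the preceding \textsc{Notation}. Since $\MV_H(X)$ is stable (\autoref{MVcolim}) and colimits — hence cofibres — are computed pointwise, the sequence
\[
\begin{tikzcd}[ampersand replacement=\&]
	{M_Y} \&\& M \&\& {M_{X \setminus Y}}
	\arrow[from=1-1, to=1-3]
	\arrow[from=1-3, to=1-5]
\end{tikzcd}
\]
is a bifibre sequence in $\MV_H(X)$ (cofibre and fibre sequences agree by \autoref{MVcolim}). Evaluating pointwise and using the first paragraph: at $y \in Y$ the map $M_Y(y) \to M(y)$ is an equivalence, so $M_{X\setminus Y}(y) \simeq 0$; at $z \in X \setminus Y$ we have $M_Y(z) \simeq 0$, so $M_{X \setminus Y}(z) \simeq M(z)$. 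This proves the claims about where $M_{X \setminus Y}$ is supported.

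Finally I would apply the functor $\alpha = \alpha_H \colon \MV_H(X) \to D(H)$ induced by $X \to *$. By construction $\alpha$ is a left adjoint (it is $\Lan$ along $\int_H \simp{X} \to \int_H \simp{*} = BH$, followed by the smoothening right Bousfield localization, or more simply it is colimit-preserving as the bottom map in a left Kan extension square), hence exact, hence preserves the bifibre sequence. Writing $M(Y) = \alpha(M_Y)$, $M(X) = \alpha(M)$, $M(X \setminus Y) = \alpha(M_{X \setminus Y})$ as in the notation, we obtain the desired bifibre sequence
\[
\begin{tikzcd}[ampersand replacement=\&]
	{M(Y)} \&\& {M(X)} \&\& {M(X \setminus Y)}
	\arrow[from=1-1, to=1-3]
	\arrow[from=1-3, to=1-5]
\end{tikzcd}
\]
in $D(H)$.

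The only genuinely delicate point is the pointwise computation of $\Lan_i \res{i}{} M$ in the first paragraph — specifically making sure the relevant comma category $\int_H\simp{Y} \times_{\int_H \simp{X}} (\int_H\simp{X})_{/z}$ is the expected discrete set and is empty exactly when $z \notin Y$; but this is precisely the content of the comma-category analysis already performed in the proofs of \autoref{smoothKan} and \autoref{Ranres}, transported along the inclusion of a face-closed subset, and the face-closedness of $Y$ is what makes emptiness for $z \notin Y$ automatic. Everything else is formal manipulation of stable categories and exact (colimit-preserving) functors.
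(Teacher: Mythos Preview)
Your proposal follows the same line as the paper's proof: analyze the counit $\Lan_i \res{i}{} M \to M$ pointwise via the left Kan extension formula, use full-faithfulness of $\int_H\simp{Y}\hookrightarrow\int_H\simp{X}$ to get the equivalence on $Y$, emptiness of the comma category outside $Y$, pointwise computation of the cofibre, and exactness of $\alpha$. The overall structure is correct and matches the paper.

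There is, however, one genuine slip in your comma-category analysis. Morphisms in $\int_H\simp{X}$ go from higher-dimensional simplices to lower-dimensional ones: a map $y \to z$ exhibits $z$ as a face of $gy$ for some $g\in H$, not the other way around. Hence the slice $\int_H\simp{Y}/z$ is non-empty precisely when $z$ is a face of some $H$-translate of a simplex of $Y$; it has nothing to do with whether faces \emph{of} $z$ lie in $Y$. Your attempted correction (``$Y$ is closed under faces, hence a simplex $z\notin Y$ has no faces in $Y$'') is false as stated --- a single vertex $Y$ is a face-closed subcomplex, yet an edge containing it has a face in $Y$. The correct argument is the one the paper gives: if $y\in Y$ and $g\in H$ then $gy\in Y$ by $H$-stability, and every face of $gy$ lies in $Y$ by face-closedness; so any $z$ receiving a map from $\int_H\simp{Y}$ already lies in $Y$, and contrapositively $z\notin Y$ forces the slice to be empty. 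Once you reverse the arrows in this step, the rest of your argument goes through unchanged.
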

\begin{proof}
    First we restrict $M: \int_G \simp{X} \to D(R)$ to $\int_H \simp{X}$.
   The resolution $M_Y$ is obtained by restricting $M$ to $\int_H \simp{Y}$ followed by left Kan extending back to $\int_H \simp{X}$ and the map $M_Y \to M$ is the counit of the restriction-extension adjunction. 
   Note that $\int_H \simp{Y}$ is the full subcategory of $\int_H \simp{X}$ spanned by the simplices of $Y$, hence the counit $M_Y \to M$ is an equivalence on all simplices of $Y$. 
   On the other hand, the pointwise formula for left Kan extensions allows us to compute the values of $M_Y$ outside $Y$ as the colimit 
   $$
   M_Y(x) = \colim_{(y\to x) \in \int_H   \simp{Y}/ x}M(y).
   $$
   But because $Y$ is a subcomplex and stable under $H$ this category is empty if $x \notin Y$ and hence the colimit is zero.
   By \ref{MVcolim} the cofibre of this map is computed pointwise, but since $M_Y \to M$ is an equivalence on $Y$ and zero otherwise we see that $M_{X\setminus Y}$ agrees with $M$ on $X \setminus Y$ and is zero everywhere else.
   The last claim follows because realization preserves colimits, in particular cofibre sequences.
\end{proof}

We will be particularly interested in computing the left Kan extension of resolutions along $X \to *$. 
In this case, it is sometimes useful to have an explicit formula for the extension.  
\begin{lemma}
    \label{RealFormula}
    Let $X$ be a semi-simplicial set with smooth $G$-action. Then for any $M \in \MV(X)^\heartsuit$ we can compute $\alpha(M)$ as the complex 

\[\begin{tikzcd}[ampersand replacement=\&]
	\ldots \&\& {\bigoplus_{[x] \in X_1/G}\ind{G_x}{G}M(x)} \&\& {\bigoplus_{[x] \in X_0/G}\ind{G_x}{G}M(x).}
	\arrow["{\sum_{i=0}^2(-1)^i\partial_i}", from=1-1, to=1-3]
	\arrow["{\sum_{i=0}^1(-1)^i\partial_i}", from=1-3, to=1-5]
\end{tikzcd}\]
In particular, we can compute the underlying complex of $\alpha(M)$ as  
\[\begin{tikzcd}[ampersand replacement=\&]
	\ldots \&\& {\bigoplus_{x \in X(1)}M(x)} \&\& {\bigoplus_{x \in X(0)}M(x).}
	\arrow["{\sum_{i=0}^2(-1)^i\partial_i}", from=1-1, to=1-3]
	\arrow["{\sum_{i=0}^1(-1)^i\partial_i}", from=1-3, to=1-5]
\end{tikzcd}\]
\end{lemma}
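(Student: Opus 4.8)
The plan is to identify $\alpha = \Lan_{(X \to *)}$ with the left Kan extension along $\int_G \simp{X} \to BG$ and then to unwind the pointwise colimit formula. First I would observe that the map $X \to *$ of $G$-semi-simplicial sets induces on categories of simplices precisely the projection $\int_G \simp{X} \to \int_G \simp{*} = BG$, since $*$ has a single simplex in degree $0$ (and none above, if we take $*$ to be the point viewed as a $0$-dimensional semi-simplicial set; if instead $\simp{*}$ has the degenerate-free description $\Delta^{op}_{\mathrm{inj}}$ I would still get that the fibre of the projection over the unique object records the simplices of $X$). The functor $\alpha$ is $\Lan$ along this projection, so for $M \in \MV(X)$, the underlying object of $\alpha(M)$ in $D(R)$ — i.e. after restricting the $G$-action away — is computed by the colimit of $M$ over $\int_G \simp{X}$, and the $G$-equivariant refinement is computed by the relative left Kan extension over $BG$.

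The key step is to make this colimit explicit using the geometric realization / bar-type resolution of a semi-simplicial object. Because $\int_G \simp{X} \to \Delta^{op}_{\mathrm{inj}} \times BG$ is a left fibration classified by the functor sending $([n], *) \mapsto X_n$ with its $G$-action, the colimit over $\int_G \simp{X}$ of $M$ decomposes, degree by degree, as $\bigoplus_{[x] \in X_n/G} \ind{G_x}{G} M(x)$: this is exactly the statement that a left Kan extension along a projection from a Grothendieck construction is computed fibrewise, and over a single simplex $x$ with stabilizer $G_x$ the contribution is $\Lan_{BG_x}^{BG} M(x) = \ind{G_x}{G} M(x)$ by the same reasoning as in the proof of the first lemma in the excerpt (cf. also Corollary \ref{Ranres}). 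Assembling these over the semi-simplicial structure of $X$, the colimit over all of $\int_G \simp{X}$ is the realization of the semi-simplicial object $[n] \mapsto \bigoplus_{[x] \in X_n/G} \ind{G_x}{G} M(x)$, which for a chain complex in the heart is precisely the total complex with differential $\sum_{i} (-1)^i \partial_i$, the alternating sum of the face maps. For $M \in \MV(X)^\heartsuit$ each term $M(x)$ is concentrated in degree $0$, so no spectral-sequence issues arise and the realization is literally this complex. The second displayed complex then follows by forgetting the $G$-action: $\ind{G_x}{G} M(x)$ restricted to the trivial group is $\bigoplus_{G/G_x} M(x)$, so $\bigoplus_{[x]\in X_n/G}\ind{G_x}{G}M(x)$ has underlying $R$-module $\bigoplus_{x \in X_n} M(x)$, where I write $X(n)$ for $X_n$.

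I would carry this out in the order: (i) identify $\alpha$ with $\Lan$ along $\int_G \simp{X} \to BG$; (ii) recall the fibrewise formula for $\Lan$ along a Grothendieck-construction projection, giving the degreewise terms $\bigoplus_{[x]\in X_n/G}\ind{G_x}{G}M(x)$; (iii) observe that the colimit over the full $\int_G\simp{X}$ is the realization of the resulting semi-simplicial object in $D(R)^{BG}$; (iv) note that for $M$ in the heart this realization is the explicit alternating-face-map complex; (v) forget the action to get the underlying complex. The main obstacle I anticipate is step (ii)–(iii): being careful that the relevant colimit really is the realization of the semi-simplicial diagram of induced representations and not merely its value in some truncation, and that taking $G$-orbits of simplices together with inducing from stabilizers correctly accounts for the non-free part of the $G$-action on $X_n$ — this is where the identity $\colim_{BG_x} \ind{G_x}{G}(-) $-type bookkeeping and the compatibility of $\Lan$ with the semi-simplicial face structure must be pinned down. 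Everything else is a routine unwinding of the colimit formula for left Kan extensions already used repeatedly in this section.
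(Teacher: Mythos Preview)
Your proposal is correct and follows essentially the same route as the paper: factor $\int_G\simp{X}\to BG$ through $\Delta^{\op}_{\mathrm{inj}}\times BG$, compute the first left Kan extension fibrewise to get $[k]\mapsto\bigoplus_{[x]\in X_k/G}\ind{G_x}{G}M(x)$, then realize the resulting semi-simplicial object via Dold--Kan. The paper makes your step (ii)--(iii) precise exactly where you flagged the obstacle, by invoking the Beck--Chevalley condition for the pullback of the left fibration $\int_G\simp{X}\to\Delta^{\op}_{\mathrm{inj}}\times BG$ along $[k]\times BG\hookrightarrow\Delta^{\op}_{\mathrm{inj}}\times BG$ (Lemma~\ref{BeckChav}, using that left fibrations are proper), which identifies the fibre as $(X_k)_{hG}\simeq\coprod_{[x]\in X_k/G}BG_x$ and hence the value of the intermediate Kan extension as the claimed sum of inductions.
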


Before we prove the formula we need the following auxiliary result.
\begin{lemma}[Joyal]
    \label{BeckChav}
    Consider a cartesian square of categories
\[\begin{tikzcd}
	A && B \\
	\\
	C && D
	\arrow["{g'}", from=1-1, to=1-3]
	\arrow["f", from=1-3, to=3-3]
	\arrow["{f'}"', from=1-1, to=3-1]
	\arrow["g"', from=3-1, to=3-3]
\end{tikzcd}\]
such that $f$ is proper. Then for any cocomplete category $E$ the induced square obtained by applying $\Fun(-, E)$ is adjointable, i.e. for any $F \in \Fun(B, E)$ the comparison map 
\[\begin{tikzcd}
	{\Lan_{f'} \res{g'}{}F} && {\res{g}{}\Lan_{f}F}
	\arrow[from=1-1, to=1-3]
\end{tikzcd}\]
is an equivalence.
\end{lemma}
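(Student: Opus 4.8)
The plan is to verify the comparison map objectwise and then reduce it to a cofinality statement about comma categories, which is exactly the phenomenon that properness of $f$ controls. Since a natural transformation of functors $C \to E$ is an equivalence precisely when it is so on each object, I would fix an object $c \in C$ and show that the induced map
\[
(\Lan_{f'}\res{g'}{}F)(c) \longrightarrow (\res{g}{}\Lan_f F)(c) = (\Lan_f F)(g(c))
\]
is an equivalence in $E$.

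By the pointwise formula for left Kan extensions (see \cite{HTT}), the right-hand side is the colimit of $F$ along the projection $B \times_D D_{/g(c)} \to B$, while the left-hand side is the colimit of $\res{g'}{}F = F \circ g'$ along the projection $A \times_C C_{/c} \to A$. Since the square is cartesian, $A \simeq B \times_D C$, and this identifies $A \times_C C_{/c}$ with $B \times_D C_{/c}$ (the fibre product formed using $C_{/c} \to C \xrightarrow{g} D$) compatibly with the projections to $B$. Unwinding the mate construction of the Beck--Chevalley transformation through these universal properties, one checks that the comparison map is the map on colimits induced by the functor $\Phi \colon B \times_D C_{/c} \to B \times_D D_{/g(c)}$ obtained by base change along $f$ from $g_{\ast}\colon C_{/c} \to D_{/g(c)}$, $(c' \to c) \mapsto (g(c') \to g(c))$.

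It then suffices to observe that both colimits are computed over the fibre $B_{g(c)} := f^{-1}(g(c))$, compatibly with $\Phi$. The inclusion of the terminal object $\{\id_c\} \hookrightarrow C_{/c}$ is cofinal; since $\id_c$ lies over $g(c) \in D$, its base change along $f$ is the inclusion $B_{g(c)} \hookrightarrow B \times_D C_{/c}$, which is again cofinal because $f$ is proper — base change along a proper functor carries cofinal functors to cofinal functors, \cite[\S 4.1.2]{HTT}. Hence the source colimit equals $\colim_{B_{g(c)}} F|_{B_{g(c)}}$. The same argument applied to $\{\id_{g(c)}\} \hookrightarrow D_{/g(c)}$ shows the target colimit is $\colim_{B_{g(c)}} F|_{B_{g(c)}}$ as well, and since $\Phi$ restricts to the identity on $B_{g(c)}$ under these identifications, the comparison map is an equivalence.

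The step that requires genuine care is the identification in the second paragraph, i.e.\ matching the abstract Beck--Chevalley/mate transformation with the concrete colimit comparison induced by $\Phi$; this is a diagram chase through the universal properties of $\Lan_f$, $\Lan_{f'}$ and the pointwise colimit formula, but it is bookkeeping rather than a conceptual obstacle. The remaining ingredients are formal — objectwise detection of equivalences, the pointwise Kan extension formula — or a direct appeal to the base-change property of proper morphisms in \cite[\S 4.1.2]{HTT}; indeed the statement may alternatively be read off directly from that reference.
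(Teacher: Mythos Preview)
Your argument is correct and gives a direct proof, but it is not the route taken in the paper. The paper's proof does not unwind the pointwise Kan extension formula or invoke cofinality at all; instead it reduces to the universal case $E=\mathcal{P}(B)$ by factoring $F$ through the Yoneda embedding, then cites Joyal's result \cite[Proposition~11.6]{JoyalL} that for $f$ proper the square of presheaf categories $\mathcal{P}(A),\mathcal{P}(B),\mathcal{P}(C),\mathcal{P}(D)$ is adjointable, and finally restricts back along $y_C:C\to\mathcal{P}(C)$ using that $\Lan_{y_C}\res{y_C}{}\to\id$ is an equivalence. So the paper treats the lemma as essentially a formal consequence of a black-box citation, whereas you reprove the content of that citation by hand via the characterisation of properness in terms of preservation of cofinal functors under base change. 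Your approach has the advantage of being self-contained and of making transparent \emph{why} properness is the right hypothesis (it is exactly what is needed to reduce both colimits to the fibre $B_{g(c)}$); the paper's approach is shorter and avoids the bookkeeping you flag in your last paragraph about identifying the abstract Beck--Chevalley transformation with the map induced by $\Phi$. One minor point: your reference to \cite[\S4.1.2]{HTT} for the base-change property of proper functors is slightly loose, since Lurie's conventions there are dual to Joyal's; the precise statement you use is the one from Joyal's notes (or equivalently Cisinski's book), which is also what the paper cites.
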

\begin{proof}
    Since $E$ is cocomplete every functor from $B$ to $E$ factors over the free cocompletion of $B$, i.e. the presheaf category $\mathcal{P}(B)$.
    This induces a commutative diagram 
\[\begin{tikzcd}
	A && B && E \\
	& {\mathcal{P}(A)} && {\mathcal{P}(B)} \\
	C && D \\
	& {\mathcal{P}(C)} && {\mathcal{P}(D)}
	\arrow[from=1-1, to=2-2]
	\arrow[from=1-3, to=2-4]
	\arrow[from=2-2, to=4-2]
	\arrow[from=2-4, to=4-4]
	\arrow[from=2-2, to=2-4]
	\arrow[from=4-2, to=4-4]
	\arrow["{f'}"{description, pos=0.7}, from=1-1, to=3-1]
	\arrow["{g'}"{description, pos=0.6}, from=1-1, to=1-3]
	\arrow["g"{description, pos=0.6}, from=3-1, to=3-3]
	\arrow["f"{description, pos=0.7}, from=1-3, to=3-3]
	\arrow[from=3-1, to=4-2]
	\arrow[from=3-3, to=4-4]
	\arrow["F"{description}, from=1-3, to=1-5]
	\arrow[from=2-4, to=1-5]
\end{tikzcd}\]
with maps induced by left Kan extension.
Because $f$ is proper the square of presheaf categories is adjointable by \cite[Proposition 11.6]{JoyalL}, hence  
\[\begin{tikzcd}
	{\Lan_{\mathcal{P}(f')} \res{\mathcal{P}(g')}{} \Lan \, F} && {\res{\mathcal{P}(g)}{} \Lan_{\mathcal{P}(f)} \Lan \, F}
	\arrow[from=1-1, to=1-3]
\end{tikzcd}\]
is an equivalence.
Pulling back along the Yoneda embedding $y_C: C \to \mathcal{P}(C)$ on both sides and using the commutativity together with the fact that $\Lan_{y_C}\res{y_C}{}  \to \mathrm{id}$ is an equivalence then proves the claim.
\end{proof}
Now we can proceed with the proof of \ref{RealFormula}.

\begin{proof}
    We only show the first assertion, the second claim follows with $G=\lbrace e \rbrace$.
    Given a functor $M: \int_G \simp{X} \to D(R)$ we want to compute the left Kan extension along $\int_G \simp{X} \to BG$. This can be factored as 
\[\begin{tikzcd}[ampersand replacement=\&]
	{\int_G \simp{X}} \&\& {\Delta^{op}_{inj}\times BG} \&\& BG,
	\arrow[from=1-1, to=1-3]
	\arrow[from=1-3, to=1-5]
\end{tikzcd}\]
    and we can compute the left Kan extension in two steps. We start by extending along $\int_G \simp{X} \to \Delta^{op}_{inj}\times BG$.
    Consider the following diagram 
\[\begin{tikzcd}[ampersand replacement=\&]
	F \&\& {\int_G \simp{X}} \&\& {D(R)} \\
	\\
	{[k]\times BG} \&\& {\Delta^{op}_{inj}\times BG}
	\arrow["{g'}"{description}, from=1-1, to=1-3]
	\arrow["f"{description}, from=1-3, to=3-3]
	\arrow["g"{description}, from=3-1, to=3-3]
	\arrow["{f'}"{description}, from=1-1, to=3-1]
	\arrow["M"{description}, from=1-3, to=1-5]
\end{tikzcd}\]
where $F$ is the pullback of $f$ and $g$.
Since $f:\int_G \simp{X} \to \Delta^{op}_{\mathrm{inj}} \times BG$ is a left fibration it is also proper \cite[Theorem 11.9.]{JoyalL}.
Then by \ref{BeckChav}, because $D(R)$ is cocomplete, the comparison map $\Lan_{f'}\res{g'}{}M \to \res{g}{}\Lan_{f}M$ is an equivalence. 
In other words, to understand the left Kan extension along $f$ evaluated at $[k]\times {*}$ including the $G$-action we need to compute the left Kan extension of the restriction along $f'$. 
The fibre $F$ is precisely the full subcategory of $\int_G \simp{X}$ containing all $k$-simplices.
That is, the objects are $k$-simplices of $X$, morphisms are given by the action of $G$, and the functor to $[k]\times BG$ is constant on objects and sends a morphism $(\id, g): x \to gx$ to $g$.  
This is in fact a groupoid and as such equivalent to $X(k)_{hG}\simeq \coprod_{x \in X(k)/G} BG_x$. 
Under this identification, $f'$ is the inclusion $BG_x \to BG$ on every factor.
But left Kan extending along $BG_x \to BG$ is precisely the induction of a $G_x$-module to a $G$-module, hence $\res{g}{}\Lan_fM \simeq \bigoplus_{[x] \in X_k/G}\ind{G_x}{G}M(x)$. 
The simplicial structure maps are given by the induction of the structure maps of $M$. 

In the second step, we compute the left Kan extension along $\Delta^{op}_{inj} \times BG \to BG$.
By adjoining the $BG$-factor to the right this is equivalent to computing realizations $\Fun(\Delta^{op}_{inj}, D(R)^{BG}) \to \Fun(BG, D(R))$.
But by Dold--Kan, this is precisely the Moore complex associated to a simplicial object in $(D(R)^{BG})^\heartsuit$ and hence in combination with the previous computation exactly what we claimed. 
\end{proof}

The previous lemma specializes in particular to the case where $X$ is $1$-dimensional.
We will use this case so frequently that it is worthwhile to state it explicitly.
\begin{corollary}
    \label{AssemFormula}
    Let $X$ be a $1$-dimensional semi-simplicial set with smooth $G$-action and let $M \in \MV(X)^\heartsuit$. Then 
    $\alpha (M)$ can be computed as the complex 
\[\begin{tikzcd}[ampersand replacement=\&]
	{\bigoplus_{[e] \in X_1/G} \ind{G_e}{G}M(e)} \&\& {\bigoplus_{[v] \in X_0/G} \ind{G_v}{G}M(v)}
	\arrow[from=1-1, to=1-3]
\end{tikzcd}\]
    and the differential is given by the difference of the induction of the maps $M(e) \to M(\partial_ie)$ with $i=0,1$.
    The underlying complex of $R$-modules can be realized as 
\[\begin{tikzcd}
	{\bigoplus_{e \in X_1}M(e)} && {\bigoplus_{v \in X_0}M(v)}
	\arrow[from=1-1, to=1-3]
\end{tikzcd}\]
    with differential given by the difference of the maps induced by the faces  $\partial_0$ and $\partial_1$.
\end{corollary}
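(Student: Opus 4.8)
The plan is to specialize Lemma~\ref{RealFormula} to the case at hand. Since $X$ is a one-dimensional semi-simplicial set, we have $X_n = \emptyset$ for every $n \geq 2$, so each term $\bigoplus_{[x]\in X_n/G}\ind{G_x}{G}M(x)$ of the Moore complex computing $\alpha(M)$ in Lemma~\ref{RealFormula} is the zero object for $n \geq 2$. Hence that complex collapses to the two-term complex
$$
\bigoplus_{[e]\in X_1/G}\ind{G_e}{G}M(e)\ \longrightarrow\ \bigoplus_{[v]\in X_0/G}\ind{G_v}{G}M(v)
$$
concentrated in homological degrees $1$ and $0$. By the description of the differentials in Lemma~\ref{RealFormula} (the simplicial structure maps are the inductions of those of $M$), the unique nonzero differential is $\sum_{i=0}^{1}(-1)^i\partial_i = \partial_0 - \partial_1$; on the summand indexed by an edge $e$ it is $\ind{G_e}{G}$ applied to the structure map $M(e)\to M(\partial_ie)$ of $M$ (followed by the canonical transition map into $\ind{G_{\partial_ie}}{G}M(\partial_ie)$ coming from $G_e \subseteq G_{\partial_i e}$), so it is exactly the difference of the induced face maps asserted in the statement.

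For the underlying complex of $R$-modules, I would rerun the argument with $G$ replaced by the trivial group, just as in the final line of the proof of Lemma~\ref{RealFormula}: this computes the image of $\alpha(M)$ under the forgetful functor $D(R)\to D(R)$, whereby the orbit sums and the induction functors disappear, leaving
$$
\bigoplus_{e\in X_1}M(e)\ \longrightarrow\ \bigoplus_{v\in X_0}M(v)
$$
with differential $\partial_0 - \partial_1$, i.e. the difference of the maps induced by the two face inclusions.

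I do not anticipate any genuine difficulty here: the corollary is essentially a direct reading-off of Lemma~\ref{RealFormula}. The two points that deserve a sentence are that one-dimensionality forces the Moore-complex terms in degrees $\geq 2$ to vanish, and that the degree-one differential $\sum_{i=0}^1(-1)^i\partial_i$ is literally $\partial_0-\partial_1$; both are immediate.
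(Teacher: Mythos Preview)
Your proposal is correct and matches the paper's approach exactly: the paper states this corollary immediately after Lemma~\ref{RealFormula} as an explicit specialization to the one-dimensional case, without giving a separate proof. Your observations that $X_n=\emptyset$ for $n\geq 2$ collapses the Moore complex and that the degree-one differential is $\partial_0-\partial_1$ are precisely what is needed.
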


Next, we give a precise description of the compact objects of $\MV(X)$. This will later allow us to compute the $K$-theory of this category.

\begin{lemma}
    \label{pointcomp}
    If the action of $G$ on $X$ is cocompact, then
    $M\in \MV(X)$ is compact if and only if for every $x\in X$ the restriction of $M$ along $BG_x \to \int_G \simp{X}$ is compact.
\end{lemma}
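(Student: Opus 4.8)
The plan is to prove both implications separately, using the compact generators of $\MV(X)$ identified in Lemma \ref{MVcomp}.

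\textbf{The easy direction.} Suppose $M \in \MV(X)$ is compact. Fix a simplex $x \in X$. The restriction functor $\res{\int_G \simp{X}}{BG_x} : \MV(X) \to D(G_x)$ is the right adjoint of $\Lan_{BG_x}^{\int_G \simp{X}}$ by Lemma \ref{Lancomp}, and by Corollary \ref{Ranres} this right adjoint commutes with filtered colimits (the relevant products indexed by $x/BG_x$ are finite). Hence $\res{\int_G \simp{X}}{BG_x} M$ is compact in $D(G_x)$. This direction does not even need cocompactness.

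\textbf{The hard direction.} Conversely, suppose $\res{\int_G \simp{X}}{BG_x} M$ is compact in $D(G_x)$ for every $x \in X$. We must show $\map_{\MV(X)}(M, -)$ commutes with filtered colimits. Since the action is cocompact, $X$ has finitely many $G$-orbits of simplices in each (finitely many, if $X$ is finite-dimensional — here the relevant dimension bound comes from $X$ being a tree, but in general one argues by skeleta) degree; choose orbit representatives $x_1, \dots, x_n$. The first step is to express $M$ as a finite colimit built from the objects $\Lan_{BG_{x_i}}^{\int_G \simp{X}}\bigl(\res{\int_G \simp{X}}{BG_{x_i}} M\bigr)$. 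Concretely, I would filter $X$ by its skeleta $\sk{k} X$, use the bifibre sequences of Lemma \ref{fibseq} for the inclusions $\sk{k-1} X \hookrightarrow \sk{k} X$, and identify the associated graded piece $M_{\sk{k} X \setminus \sk{k-1} X}$: by the pointwise computation in Lemma \ref{fibseq} together with Corollary \ref{Ranres}, this graded piece is a finite sum of objects of the form $\Lan_{BG_{x_i}}^{\int_G \simp{X}}\bigl(\res{}{} M\bigr)$ over the $k$-dimensional orbit representatives $x_i$. Each such object is compact in $\MV(X)$ by Lemma \ref{smoothKan}, because $\res{}{}M$ is compact in $D(G_{x_i})$ by hypothesis. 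Since $X$ is finite-dimensional (it is a one-dimensional tree, possibly after barycentric subdivision), this filtration is finite, so $M$ is a finite iterated extension of compact objects, hence compact. The main obstacle is making the "associated graded is a finite sum of $\Lan$'s of the restrictions of $M$" step precise: one needs that $M_{\sk{k}X \setminus \sk{k-1}X}$, which is $M$ on the open $k$-cells and zero elsewhere, is genuinely the coproduct $\bigoplus_{[x_i]} \Lan_{BG_{x_i}}^{\int_G \simp{X}}(\res{}{}M)$ and not merely something with the right pointwise values — this follows because left Kan extension along $\coprod_i BG_{x_i} \hookrightarrow \int_G \simp{X}$ restricted to the $k$-skeleton's open cells agrees with restriction there, and the counit is an equivalence on those cells while both sides vanish on lower skeleta.

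\textbf{Remark on the argument.} Alternatively, and perhaps more cleanly, one can avoid the skeletal filtration: the counit maps assemble into a canonical map $\bigoplus_{[x] \in X_k/G}\Lan_{BG_x}^{\int_G \simp{X}}\bigl(\res{}{}M\bigr) \to M$ ranging over all simplices, and one shows directly that $M$ is a finite colimit (a totalization of a finite semi-cosimplicial, equivalently a finite-length complex, since $\dim X < \infty$) of such terms, mirroring the explicit realization complex of Corollary \ref{AssemFormula} but one categorical level up inside $\MV(X)$ rather than after applying $\alpha$. Either way, cocompactness is used precisely to ensure each term is a \emph{finite} sum of compact objects, and finite-dimensionality ensures the whole presentation is finite, so that compactness is preserved under the finitely many extensions involved.
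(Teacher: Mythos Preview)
Your easy direction is correct and matches the paper exactly.

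The hard direction has a genuine gap. You claim that the associated graded piece $M_{\sk{k}X \setminus \sk{k-1}X}$ equals $\bigoplus_{[x_i]} \Lan_{BG_{x_i}}^{\int_G\simp{X}}\bigl(\res{}{}M\bigr)$, and justify this by asserting that ``both sides vanish on lower skeleta''. But the left Kan extension $\Lan_{BG_{x_i}}^{\int_G\simp{X}} M(x_i)$ does \emph{not} vanish on lower-dimensional simplices: Corollary~\ref{Ranres} computes its value at a face $y$ of $x_i$ as a nonzero sum of inductions $\bigoplus_{x_i/BG_y}\ind{G_{x_i}^{g^{-1}}}{G_y}M(x_i)^{g^{-1}}$. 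So the counit $\bigoplus_i \Lan_{BG_{x_i}} M(x_i) \to M_{\sk{k}X\setminus\sk{k-1}X}$ is an equivalence on $k$-cells, as you say, but has a nonzero fibre supported on the $(k-1)$-skeleton.

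The paper's proof repairs exactly this point. It runs the same skeletal induction, but at the top step it does not claim an identification; instead it takes the fibre sequence
\[
\bigoplus_{[x]\in X_n/G}\Lan_{BG_x}^{\int_G\simp{X}} M(x)\;\longrightarrow\; M_{X_n}\;\longrightarrow\;\mathrm{fib},
\]
observes that the first term is compact (finite sum of compacts, by cocompactness and the hypothesis on $M$), and then argues that $\mathrm{fib}$ is pointwise compact and supported on $\sk{n-1}X$, hence compact by the inductive hypothesis. Thus $M_{X_n}$ sits in a fibre sequence between two compacts and is itself compact. Your alternative ``totalization'' sketch could in principle be made to work, but as stated it suffers from the same issue: each term of the bar-type resolution is a sum of $\Lan$'s, and these overlap on lower skeleta, so one still needs an inductive argument to control the difference.
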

\begin{proof}
First, we show that restriction along $BG_x \to \int_G \simp{X}$ preserves compact objects.
Since restriction is left adjoint to right Kan extension it suffices to show that the latter commutes with filtered colimits.   
By \ref{MVcolim} colimits in $\MV(X)$ are computed pointwise. But remark \ref{Ranres} provides a pointwise formula for the right Kan extension which is compatible with colimits.
Therefore, compact objects in $\MV(X)$ are pointwise compact objects in $D(G_x)$.

Now for the other direction assume that all restrictions of $M$ are compact and let $n$ be the dimension of $X$ (since the action it cocompact $X$ has to be of finite dimension).
If $n=0$ then $\int_G \simp{X}$ is a finite coproduct of $BG_x$ and the claim is true.
Next, assume that that claim is true for all semi-simplicial sets up to dimension $n-1$. 
By \ref{fibseq} every $M \in \MV(X)$ is part of a bifibre sequence

\[\begin{tikzcd}
	{M_{\sk{n-1}}} && M && {M_{X_n},}
	\arrow[from=1-1, to=1-3]
	\arrow[from=1-3, to=1-5]
\end{tikzcd}\]
and $M_{\sk{n-1}}$ is the left Kan extension of a pointwise compact object on a semi-simplicial set of dimension $n-1$, hence by assumption compact in $\MV(X)$.
Because finite colimits of compact objects are compact again we only need to show that $M_{X_n}$ is compact. 

Again $M_{X_n}$ is part of a fibre sequence 

\[\begin{tikzcd}
	{\bigoplus_{[x] \in X_n/G} \Lan_{BG_x}^{\int_G \simp{X}} M(x)} && {M_{X_n}} && {\mathrm{fib}.}
	\arrow[from=1-3, to=1-5]
	\arrow[from=1-1, to=1-3]
\end{tikzcd}\]
The left map is the (direct sum) of the counit of the adjunction between left Kan extending and restricting along $BG_x \to \int_G \simp{X}$. In particular, since $BG_x$ is a full subcategory, it is an equivalence on $n$-simplices.
Furthermore, $M(x)$ was assumed to be compact, therefore $\Lan_{BG_x}^{\int_G \simp{X}} M(x)$ is also compact (and by the previous argument therefore pointwise compact).
Additionally, the coproduct is finite, because $G$ acts cocompactly on $X$, hence again compact. 
By Corollary \ref{MVcolim} the fibre of this map is computed pointwise. 
This shows that $\mathrm{fib}$ is zero on $n$-simplices and equal to $\bigoplus_{x \in X_n/G} \Lan_{BG_x}^{\int_G \simp{X}}M(x)$ on simplices of dimension less than $n$, therefore pointwise compact.
By our induction assumption, this shows that $\mathrm{fib}$ is compact in $\MV(X)$ and therefore $M_{X_n}$ is compact in $\MV(X)$.	
\end{proof}

If the stabilizers of the $G$-action are all compact, the description of the compact objects allows us to show that the $t$-structure on $\MV(X)$ descents to a $t$-structure on $\MV(X)^\omega$. This is where the regularity of $R$ comes into play. 
\begin{lemma}
    \label{tcomp}
    If $G$ acts with compact stabilizers the $t$-structure on $\MV(X)$ restricts to a $t$-structure on the full subcategory of compact objects $\MV(X)^\omega$ and the heart of this $t$-structure is Noetherian. 
\end{lemma}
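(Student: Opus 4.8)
The plan is to reduce the assertion to a statement about the stabilizers $G_x$, which are compact open subgroups of $G$. Two facts about $\MV(X)$ handle the reduction. First, the truncation functors for the $t$-structure on $\MV(X)$ are computed pointwise, so $(\tau_{\geq 0}M)(x)=\tau_{\geq 0}(M(x))$ and likewise for $\tau_{\leq 0}$. Second, a resolution is compact if and only if it is \emph{pointwise compact} (i.e.\ $M(x)\in D(G_x)^\omega$ for every $x$) \emph{and supported on finitely many $G$-orbits of simplices}: by Lemma \ref{MVcomp} every compact object lies in the thick subcategory generated by the $\Lan_{BG_x}^{\int_G\simp{X}}V$, each of which is visibly finitely supported and pointwise compact; conversely, if $M$ is pointwise compact with support inside a cocompact subcomplex $i\colon Y\hookrightarrow X$, then Lemma \ref{pointcomp} applied to $Y$ shows $M|_Y$ is compact in $\MV(Y)$, and the counit $\Lan_i(M|_Y)\to M$ is a pointwise equivalence, so $M\cong\Lan_i(M|_Y)$ is compact. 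Since truncation preserves finite support and — granting the claim below — pointwise compactness, $\tau_{\geq 0}$ and $\tau_{\leq 0}$ preserve $\MV(X)^\omega$, so the $t$-structure restricts. For the heart: an object of $(\MV(X)^\omega)^\heartsuit$ is finitely supported and has each $M(x)$ in the heart of $D(G_x)^\omega$, a finitely generated smooth $G_x$-representation; a chain of subobjects of such an $M$ is computed pointwise and is concentrated on the finitely many orbits of its support, where it is a chain of subobjects of a finitely generated smooth representation. So the heart is Noetherian once we know that the finitely generated smooth representations of a compact open subgroup form a Noetherian abelian category — again the claim below.

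So the whole content is the following statement for a compact open — hence profinite — subgroup $K\subseteq G$: the standard $t$-structure on $D(K)$ restricts to $D(K)^\omega$, and its heart, the category of finitely generated smooth $R$-representations of $K$, is Noetherian. I would filter $K$ by its open normal subgroups $N$, so that $\rep{K}=\bigcup_N\mathrm{Mod}(R[K/N])$ along the inflation functors, whose right adjoints are the fixed-point functors $(-)^N$; since $|K/N|$ is invertible in $R$ (here is where $\Q\subseteq R$ enters) these fixed-point functors are exact and commute with filtered colimits, from which $D(K)^\omega=\bigcup_N\perf{R[K/N]}$ compatibly with truncations. Now $|K/N|$ being invertible also makes $R[K/N]$ a separable $R$-algebra by Maschke; being moreover finite as an $R$-module over the regular Noetherian ring $R$, it is Noetherian, and it has finite global dimension — a separable $R$-algebra is projective over its enveloping algebra $R[K/N]\otimes_R R[K/N]^{\mathrm{op}}$, so change of rings gives $\mathrm{gl.dim}(R[K/N])\leq\mathrm{gl.dim}(R)<\infty$. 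For such a ring $\perf{R[K/N]}$ is the bounded derived category of finitely generated modules, the $t$-structure restricts to it (truncations of perfect complexes remain bounded with finitely generated cohomology, hence perfect), and its heart is $\mathrm{mod}(R[K/N])$. Taking the union over $N$ yields the claim for $D(K)^\omega$ and identifies its heart with $\bigcup_N\mathrm{mod}(R[K/N])$; this is Noetherian because each $\mathrm{mod}(R[K/N])$ is closed under subobjects in the union — a $K$-subrepresentation of an $R[K/N]$-module is killed by $N$, hence is an $R[K/N]$-module, hence is finitely generated over the Noetherian ring $R[K/N]$ — so an ascending chain of subobjects of a fixed object already lives in one $\mathrm{mod}(R[K/N])$ and terminates.

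The hard part — and the only place the hypotheses on $R$ are genuinely used — is exactly the regularity of the finite group algebras $R[K/N]$: without finite global dimension, truncations of perfect complexes need not be perfect, so the $t$-structure would fail to descend to $\MV(X)^\omega$. It is $\Q\subseteq R$ that makes $R[K/N]$ separable via Maschke, and the regularity of $R$ that then propagates to $R[K/N]$ and also supplies the Noetherian heart. Everything else — the passage from $D(K)$ to $\MV(X)$, and the bookkeeping with pointwise truncations and finite support — is formal.
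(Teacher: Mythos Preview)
Your proof is correct and follows the same strategy as the paper: reduce via pointwise truncation and the pointwise description of compactness to the claim that each $D(G_x)^\omega$ carries a bounded $t$-structure with Noetherian heart, then deduce this from $\Q\subseteq R$ and the regularity of $R$. The paper dispatches that last step by citing \cite[Theorem 7.2]{BL22}, whereas you unpack it directly via Maschke and separability of the finite group algebras $R[K/N]$; the content is the same.

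One place where you are more careful than the paper: the paper's proof invokes Lemma \ref{pointcomp} (compact $\Leftrightarrow$ pointwise compact), whose statement assumes the $G$-action on $X$ is cocompact, even though Lemma \ref{tcomp} itself carries no such hypothesis. Your characterization of compact objects as pointwise compact \emph{and} supported on finitely many $G$-orbits, together with the observation that such an object is left Kan extended from a cocompact subcomplex, removes this implicit assumption and makes the argument go through for arbitrary $X$.
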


\begin{proof}
    We only need to show that both truncation functors $\tau_{\geq 0}$ and $\tau_{\leq 0}$ preserve compact objects.
    By Lemma \ref{pointcomp} $M \in \MV(X)$ is compact if and only if all restrictions to stabilizers are compact.
    Furthermore, by definition of the $t$-structure on $\MV(X)$ both $\tau_{\geq 0}M$ and $\tau_{\leq 0}M$ are obtained by truncating pointwise.
    But since all $G_x$ are compact, the regularity of $R$ implies that for each $x \in X$ the category $D(G_x)$ is regular by \cite[Theorem 7.2]{BL22}.
    Hence, any finitely presented representation can be resolved by finitely many finitely generated projective representations. 
    In particular the truncations on $D(G_x)$ for $x\in X$ preserves compact objects and restrict to a $t$-structure on $D(G_x)^\omega$. 
    This shows that if $M$ was pointwise compact then $\tau_{\geq 0}M$ and $\tau_{\leq 0}M$ are pointwise compact and therefore by Lemma \ref{pointcomp} compact in $\MV(X)$. 
    Lastly, the heart of the induced $t$-structure is Noetherian for every $x\in X$ the category $D(G_x)^\omega$ is Noetherian again by \cite[Theorem 7.2]{BL22} and $\int_G \simp{X}$ has only finitely many isomorphism classes of objects since the $G$-action on $X$ is cocompact. 
    Hence, if any $M \in \MV(X)^\omega$ would admit an infinite non-trivial chain of subobjects this would induce a non-trivial chain for at least one $ x \in X$ and therefore contradict that $D(G_x)^\omega$ is Noetherian. 
\end{proof}
Lastly, we want to use the results so far to describe how $\alpha$ interacts with the $t$-structure on $\MV(X)^\omega$. 
\begin{lemma}
    \label{alphaconnective}
    Let $X$ be finite-dimensional and let $M\in \MV(X)$ be a Mayer--Vietoris resolution.
    \begin{enumerate}
        \item If $M \in M(X)_{ \geq n}$, then $\alpha(M) \in D(G)_{\geq n}$.
        \item If $M \in \MV(X)_{\leq n}$, then $\alpha(M) \in D(G)_{\leq (n+\dim(X))}$.
    \end{enumerate} 
\end{lemma}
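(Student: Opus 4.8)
The plan is to first pin down $\alpha(M)$ completely explicitly and then run a short bookkeeping argument with the $t$-structure on $D(G)$. I would start from the computation in the proof of Lemma~\ref{RealFormula}: it factors the left Kan extension $\int_G\simp X\to BG$ through $\Delta^{op}_{\mathrm{inj}}\times BG$, and its first step — via the Beck--Chevalley equivalence of Lemma~\ref{BeckChav} — identifies the intermediate object with the semi-simplicial object $Y_\bullet\colon\Delta^{op}_{\mathrm{inj}}\to D(G)$ given levelwise by $Y_k\simeq\bigoplus_{[x]\in X_k/G}\ind{G_x}{G}M(x)$, so that $\alpha(M)\simeq\colim_{\Delta^{op}_{\mathrm{inj}}}Y_\bullet$ is its geometric realization. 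The one subtlety I would flag is that this identification of $Y_\bullet$ is valid for \emph{arbitrary} $M\in\MV(X)$: the hypothesis $M\in\MV(X)^\heartsuit$ in Lemma~\ref{RealFormula} is used only in the final step to reinterpret the realization as a Moore complex via Dold--Kan. For general $M$ I would instead use the skeletal filtration of the realization. Since $\dim X=d<\infty$ we have $Y_k=0$ for $k>d$, so $\alpha(M)$ sits at the top of a finite tower $0=F_{-1}\to F_0\to\cdots\to F_d=\alpha(M)$ in $D(G)$ with $\cofib(F_{k-1}\to F_k)\simeq\Sigma^kY_k$ for $0\leq k\leq d$ (the $k$-simplices contribute free cells, and $|\Delta^k/\partial\Delta^k|\simeq S^k$).

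Given this, I would finish by pure $t$-structure bookkeeping. Compact induction is exact on the abelian categories of smooth representations, so each $\ind{G_x}{G}\colon D(G_x)\to D(G)$ is $t$-exact, and direct sums preserve both $D(G)_{\geq n}$ and $D(G)_{\leq n}$; hence if $M$ is pointwise $n$-connective then every $Y_k\in D(G)_{\geq n}$, and if $M$ is pointwise $n$-coconnective then every $Y_k\in D(G)_{\leq n}$. For (1): $\Sigma^kY_k\in D(G)_{\geq n+k}\subseteq D(G)_{\geq n}$, and since $D(G)_{\geq n}$ is closed under extensions the tower forces $\alpha(M)\in D(G)_{\geq n}$ (this half in fact survives for infinite-dimensional $X$, using that $D(G)_{\geq n}$ is also closed under filtered colimits). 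For (2): $\Sigma^kY_k\in D(G)_{\leq n+k}\subseteq D(G)_{\leq n+d}$ for every $0\leq k\leq d$, and $D(G)_{\leq n+d}$ is closed under extensions, so $\alpha(M)\in D(G)_{\leq n+d}$ — the loss of $\dim X$ in the bound comes precisely from the shifts $\Sigma^k$ in the cellular filtration, which is also why finite-dimensionality is essential here.

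The step I expect to be the main obstacle is the first one: upgrading Lemma~\ref{RealFormula} from the heart to an arbitrary $M$. The cleanest route is the one above — keep the Beck--Chevalley step verbatim and replace the concluding Dold--Kan identification by the skeletal filtration of a bounded semi-simplicial object in the stable category $D(G)$; for $\dim X\leq 1$, which is the case actually used later in the paper, one could instead argue directly from Corollary~\ref{AssemFormula}. I would also warn against trying to shortcut part~(1) by reducing a connective $M$ to the heart via colimits: $\MV(X)_{\geq n}$ is \emph{not} generated under colimits and extensions by its heart — already in spectra a connective object need not be an iterated extension of Eilenberg--MacLane objects — so the explicit realization formula is genuinely needed. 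Everything else ($t$-exactness of $\ind{G_x}{G}$, the shift arithmetic on $D(G)_{\geq n}$ and $D(G)_{\leq n}$, and closure of these subcategories under extensions and direct sums) is standard.
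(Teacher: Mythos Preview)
Your argument is correct and shares the same core idea as the paper's proof --- reduce via Beck--Chevalley and then bound (co)connectivity through a skeletal filtration --- but the bookkeeping differs in two ways worth noting. First, the paper immediately forgets the $G$-action: since connectivity in $D(G)$ is detected in $D(R)$, it pulls back along $\simp{X}\to\int_G\simp{X}$ and works with the colimit of $M$ over the \emph{non-equivariant} category of simplices, thereby bypassing any appeal to $t$-exactness of $\ind{G_x}{G}$. Second, for part~(2) the paper does not realize a semi-simplicial object over $\Delta^{op}_{\mathrm{inj}}$; instead it builds the colimit over $\simp{X}$ by attaching cells along the skeletal filtration of the nerve of $\simp{X}$ itself, using that $\simp{X}$ has no nondegenerate chains of length exceeding $\dim X$. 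Your route through the levelwise object $Y_\bullet$ with $Y_k=0$ for $k>\dim X$ and $\cofib(F_{k-1}\to F_k)\simeq\Sigma^kY_k$ is arguably cleaner --- the vanishing above degree $\dim X$ is immediate and the shift arithmetic is transparent --- while the paper's version has the virtue of never needing the intermediate identification of $Y_k$ or the $t$-exactness of induction. Both reach the same bound for the same structural reason.
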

\begin{proof} 
    For both claims observe that the connectivity of $\alpha(M) \in D(G)$ only depends on the underlying object in $D(R)$.
    But by \ref{BeckChav} this can be computed as the colimit of the restriction of $M$ along $\simp{X} \to \int_G \simp{X}$.
    Since by definition of the $t$-structure, the restriction of $M$ is pointwise in $D(R)_{\geq n}$ and the inclusion $D(R)_{\geq n} \to D(R)$ commutes with colimits the first claim follows immediately.
    For the second claim let $m:= \dim (X)$. 
    By the definition of $\simp{X}$, there are at most $m$ non-trivial composable morphisms in $\simp{X}$.
    Hence, all cells of dimension greater than $m$ are degenerate. 
    This implies that $\simp{X}$ is $m$-skeletal, and we can construct $\simp{X}$ in $m$ steps by iteratively attaching $k$-cells in the following way.
\[\begin{tikzcd}
	{\coprod_{\simp{X}_k} \partial \Delta^k} && {\sk{k-1}(\simp{X})} \\
	\\
	{\coprod_{\simp{X}_k} \Delta^k} && {\sk{k}(\simp{X}) }
	\arrow[from=1-1, to=1-3]
	\arrow[from=1-3, to=3-3]
	\arrow[from=1-1, to=3-1]
	\arrow[from=3-1, to=3-3]
\end{tikzcd}\]
    Here, $\simp{X}_k = \Hom_{\mathrm{sSets}}(\Delta^k, \simp{X})$ is the set of $k$-simplices in $\simp{X}$.
    By \cite[Propositions 4.3.26 and 4.3.27]{LaIntro}, this allows us to construct the colimit of a functor $M: \simp{X} \to D(R)_{\leq n}$ in $m$ steps by iteratively taking pushouts as follows. 
\[\begin{tikzcd}
	{\bigoplus_{\simp{X}_k}\colim_{\partial \Delta^k}M} && {\colim_{\sk{k-1}(\simp{X})}M} \\
	\\
	{\bigoplus_{\simp{X}_k} \colim_{\Delta^k}M} && {\colim_{\sk{k}(\simp{X})}M}
	\arrow[from=1-1, to=1-3]
	\arrow[from=1-1, to=3-1]
	\arrow[from=1-3, to=3-3]
	\arrow[from=3-1, to=3-3]
\end{tikzcd}\]
    By induction, we can assume that the claim is true for $k-1$.
    Furthermore, since $\Delta^k$ has a terminal object, the colimit of $M$ restricted to $\Delta^k$ is the evaluation of $M$ at this terminal vertex, in particular, it is in $D(R)_{\leq n}$.
    Then, by the long exact sequence of homotopy groups associated to a pushout, we see that the coconnectivity can increase by at most one. 
    Then the second part of the lemma follows by induction.
\end{proof}
\subsection{Relative resolutions}
There is also a version of $\MV(X)$ relative to a collection of full (co-)complete subcategories $\mathcal{D}_x \subseteq D(G_x)$ for every $x\in X$ that are compatible with the $G$-action.
This can be encoded by a full subcategory of smooth functors 
\[\begin{tikzcd}
	{\int_G X_n} && {D(R).}
	\arrow[from=1-1, to=1-3]
\end{tikzcd}\]
Note that here $\int_G X_n$ are precisely the homotopy orbits $(X_n)_{hG}$ of the $G$-action on the set $X_n$.
\begin{definition}
    An admissible collection is a collection $\mathcal{D} = \lbrace \mathcal{D}_n \rbrace _{n \in \N}$ of full, compactly generated, t-exact subcategories $\mathcal{D}_n$ of $\MV(X_n) \subseteq  \Fun(\int_G X_n, D(R))$ such that all inclusions $\mathcal{D}_n \to \MV(X_n)$ admit a right adjoint and preserve compact objects.
    Furthermore, we require that $\mathcal{D}_{n+1}$ is mapped to $\mathcal{D}_{n}$ by left Kan extension along every face map $\partial_i: X_{n+1} \to X_n$.
\end{definition}

\begin{remark}
    In particular such a collection $\mathcal{D}$ gives us a full subcategory $\mathcal{D}(x):=\mathcal{D}_n(x)$ of $D(G_x)$ for $x \in X_n$ by restricting along $BG_x \to (X_n)_{hG}$.
\end{remark}

\begin{definition}
    Let $\mathcal{D} $ be an admissible collection.
    Then we define $\MV(X, \mathcal{D})$ to be the full subcategory of $\MV(X)$ spanned by all $M \in \MV(X)$ such that $M(x) \in \mathcal{D}(x)$ for all $x\in X$. 
\end{definition}
\begin{lemma}
    \label{tcomprel}
    The $t$-structure on $\MV(X)$ restricts to a $t$-structure on $\MV(X, \mathcal{D})$. Furthermore, if all stabilizers are compact this $t$-structure descents to a $t$-structure on $\MV(X, \mathcal{D})^\omega$.
\end{lemma}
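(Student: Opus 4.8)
The plan is to reduce everything to the absolute case (Lemma \ref{tcomp}) together with the fact that the defining condition for $\MV(X,\mathcal D)$ is pointwise and the $\mathcal D_n$ are required to be $t$-exact. First I would show that the truncation functors $\tau_{\geq 0}$ and $\tau_{\leq 0}$ on $\MV(X)$ preserve the subcategory $\MV(X,\mathcal D)$. Since these truncations are computed pointwise (as in the proof of the $t$-structure on $\MV(X)$), it suffices to see that for each simplex $x \in X_n$ and each $M \in \MV(X,\mathcal D)$ the truncations $\tau_{\geq 0}M(x)$ and $\tau_{\leq 0}M(x)$ again lie in $\mathcal D(x) \subseteq D(G_x)$. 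This is exactly $t$-exactness of the inclusion $\mathcal D_n \hookrightarrow \MV(X_n)$, which was built into the definition of an admissible collection: a $t$-exact fully faithful inclusion is closed under the truncation functors of the ambient $t$-structure, so $\mathcal D(x)$ is closed under $\tau_{\geq 0}$ and $\tau_{\leq 0}$ of $D(G_x)$. Once the two truncation functors restrict to $\MV(X,\mathcal D)$, the axioms of a $t$-structure (the $\Hom$-vanishing between connective and coconnective parts, and the functorial triangle $\tau_{\geq 1}M \to M \to \tau_{\leq 0}M$) are inherited from $\MV(X)$ because $\MV(X,\mathcal D)$ is a full stable subcategory closed under the relevant truncations and extensions. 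This gives the first assertion.

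For the second assertion, assume all stabilizers $G_x$ are compact. I would argue exactly as in Lemma \ref{tcomp}: by Lemma \ref{pointcomp} an object $M \in \MV(X)$ is compact iff all its pointwise restrictions to $BG_x$ are compact in $D(G_x)$, and the same pointwise criterion cuts out the compact objects of $\MV(X,\mathcal D)$ once we know the inclusion $\MV(X,\mathcal D) \to \MV(X)$ preserves and detects compactness — which follows from the requirement that each $\mathcal D_n \hookrightarrow \MV(X_n)$ preserves compact objects and is a localization (so compactness can be detected pointwise in $\mathcal D(x) = \mathcal D_n(x) \subseteq D(G_x)$). Now, since $R$ is regular and each $G_x$ is compact, \cite[Theorem 7.2]{BL22} gives that $D(G_x)$ is regular, hence its truncation functors preserve compact objects and restrict to a $t$-structure on $D(G_x)^\omega$; moreover $\mathcal D(x)$ is a compactly generated $t$-exact localization of $D(G_x)$, so the truncations also preserve $\mathcal D(x)^\omega$. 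Combining this with the pointwise description of compactness shows that $\tau_{\geq 0}$ and $\tau_{\leq 0}$ send $\MV(X,\mathcal D)^\omega$ to itself, which is all that is needed for the restricted $t$-structure on $\MV(X,\mathcal D)^\omega$.

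The only genuinely non-formal input is the regularity of $D(G_x)$ for compact $G_x$ together with the fact that a $t$-exact compactly generated localization of a regular stable $\infty$-category with compatible $t$-structure again has truncations preserving compact objects; everything else is a routine transfer of the $t$-structure along a $t$-exact fully faithful inclusion closed under truncations. I expect the main (minor) obstacle to be bookkeeping: making sure that "admissible collection" really supplies $t$-exactness of the inclusions at the level needed — i.e. that $t$-exactness of $\mathcal D_n \hookrightarrow \MV(X_n)$, which is a condition on the subcategory of the $n$-simplices, indeed propagates to the pointwise statement $\tau_{\geq 0}, \tau_{\leq 0}$ preserving $\MV(X,\mathcal D)$ as a subcategory of $\MV(X)$. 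This is immediate from the remark after the definition of admissible collection, which identifies $\mathcal D(x)$ as the restriction of $\mathcal D_n$ along $BG_x \to (X_n)_{hG}$, but it is worth spelling out so that the compact refinement in the second part goes through cleanly.
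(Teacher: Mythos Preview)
Your proposal is correct and follows essentially the same route as the paper: both arguments use that the pointwise truncations preserve $\MV(X,\mathcal D)$ by $t$-exactness of the $\mathcal D_n$, and then for the compact refinement both reduce to the regularity of $D(G_x)$ for compact $G_x$ via \cite[Theorem 7.2]{BL22}. The only cosmetic difference is that the paper packages the second step by citing Lemma~\ref{tcomp} and the fact that $\MV(X,\mathcal D)^\omega = \MV(X,\mathcal D)\cap \MV(X)^\omega$, whereas you unwind that lemma's proof directly.
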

\begin{proof}
    Since all subcategories $\mathcal{D}_n$ are $t$-exact, the truncation functors on $\MV(X)$, which are computed pointwise, restrict to $\MV(X, \mathcal{D})$ and hence induce a $t$-structure. 
    By assumption all inclusions $\mathcal{D}_n \to \MV(X_n)$ preserve colimits and compact objects. 
    This implies that the compact objects in $\mathcal{D}_n$ are precisely the compact objects in $\MV(X_n)$ that lie in the essential image of the inclusion. 
    Because we assumed $R$ to be regular, Lemma \ref{tcomp} shows that the truncations restrict to $\MV(X)^\omega$.
    Hence, both $\tau_{\geq 0}$ and $\tau_{\leq 0}$ restrict to $\MV(X, \mathcal{D})^\omega$.
\end{proof}
Since all $\mathcal{D}(x)$ colocalization and hence (co-)complete and (co-)limits in $\MV(X)$ are computed pointwise we immediately obtain the following corollary.
\begin{corollary}
    The category $\MV(X, \mathcal{D})$ is (co-)complete.
\end{corollary}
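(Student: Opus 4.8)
The plan is to show that $\MV(X, \mathcal{D})$ is compactly generated; a compactly generated $\infty$-category is presentable and hence automatically (co-)complete, which is all the corollary asks. Along the way one also obtains the sharper statement that colimits in $\MV(X, \mathcal{D})$ agree with those of $\MV(X)$ and limits are the co-reflections of limits in $\MV(X)$.

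The only non-formal point is to exhibit a set of compact generators. The candidates are the objects $\Lan_{BG_x}^{\int_G \simp{X}}V$ for simplices $x \in X$ and $V \in \mathcal{D}(x)^\omega$: these are smooth and compact in $\MV(X)$ by Lemma~\ref{smoothKan}, and the claim is that they already lie in the full subcategory $\MV(X, \mathcal{D})$. This is exactly where the defining axiom of an admissible collection enters. By Corollary~\ref{Ranres} the restriction of $\Lan_{BG_x}^{\int_G \simp{X}}V$ along any $BG_y \to \int_G \simp{X}$ is the finite direct sum $\bigoplus_{x/BG_y}\ind{G_x^{g^{-1}}}{G_y}V^{g^{-1}}$, i.e.\ a sum of left Kan extensions of $V$ along $G$-translates of iterated face inclusions; since $\mathcal{D}$ is required to be stable under left Kan extension along each face map $\partial_i$ (and each $\mathcal{D}_n$ is $G$-equivariant by construction, being a full subcategory of a functor category on $\int_G X_n$), every summand lands in $\mathcal{D}(y)$, so $\Lan_{BG_x}^{\int_G \simp{X}}V \in \MV(X, \mathcal{D})$. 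Joint conservativity is then immediate from the identity $\map_{\int_G \simp{X}}(\Lan_{BG_x}^{\int_G \simp{X}}V, M)\simeq \map_{D(G_x)}(V, M(x))$ used in the proof of Lemma~\ref{MVcomp}, together with the fact that $\mathcal{D}(x)^\omega$ generates $\mathcal{D}(x)$: if all these mapping spectra vanish then $M(x)=0$ for every $x$, hence $M=0$. Therefore $\MV(X, \mathcal{D})$ is compactly generated, hence presentable, hence (co-)complete.

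For the sharper description one observes that each inclusion $\mathcal{D}(x) \hookrightarrow D(G_x)$ preserves colimits: it inherits a right adjoint from $\mathcal{D}_n \hookrightarrow \MV(X_n)$ by restriction along $BG_x \to \int_G X_n$, exactly as the restricted adjunction is produced in Lemma~\ref{Lancomp}. Since colimits in $\MV(X)$ are computed pointwise by Corollary~\ref{MVcolim}, a colimit of objects of $\MV(X, \mathcal{D})$ formed in $\MV(X)$ remains pointwise in the subcategories $\mathcal{D}(x)$, hence lies in $\MV(X, \mathcal{D})$; thus $\MV(X, \mathcal{D}) \hookrightarrow \MV(X)$ preserves colimits and, by the adjoint functor theorem, admits a right adjoint, exhibiting $\MV(X, \mathcal{D})$ as a right Bousfield localization of $\MV(X)$, so that Lemma~\ref{cocompB} gives the concrete descriptions of its limits and colimits. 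The main obstacle throughout is the membership claim $\Lan_{BG_x}^{\int_G \simp{X}}V \in \MV(X, \mathcal{D})$ — this is precisely what the face-map stability axiom of an admissible collection is there to guarantee — while everything else is formal once the admissible-collection axioms are fed in.
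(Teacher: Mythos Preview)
Your proof is correct, but it takes a longer route than the paper. The paper dispatches the corollary in a single sentence preceding it: since each $\mathcal{D}(x)$ is a colocalization of $D(G_x)$ (hence (co-)complete and closed under colimits in $D(G_x)$), and since (co-)limits in $\MV(X)$ are computed pointwise, a (co-)limit of objects in $\MV(X,\mathcal{D})$ formed in $\MV(X)$ remains pointwise in $\mathcal{D}(x)$, so stays in $\MV(X,\mathcal{D})$. No compact generation is invoked at this stage.

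What you do instead is essentially prove Lemma~\ref{RelLan} (the compact generators $\Lan_{BG_x}^{\int_G\simp{X}}V$ for $V\in\mathcal{D}(x)^\omega$ land in $\MV(X,\mathcal{D})$) and Lemma~\ref{MVLoc} (the right Bousfield localization), both of which appear in the paper \emph{after} the corollary, and then deduce (co-)completeness from presentability. This buys you a more self-contained and arguably more robust argument --- in particular your treatment of limits via Lemma~\ref{cocompB} is cleaner than the paper's terse ``(co-)limits are pointwise'', which for limits in $\MV(X)$ actually involves a smoothening step (Corollary~\ref{MVcolim}) that the paper glosses over here. The cost is that you are front-loading material the paper develops separately; for the bare corollary, the paper's pointwise argument is faster.
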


The category $\MV(X, \mathcal{D})$ has mostly the same properties as the unrestricted version $\MV(X)$.
\begin{lemma}
    \label{RelLan}
Let $\mathcal{D}$ be an admissible collection. Then $\mathcal{D}(x)$ is mapped to $\MV(X, \mathcal{D})$ by left Kan extension, and compact objects in $\mathcal{D}(x)$ are mapped to compact objects in $\MV(X, \mathcal{D})$.
This provides a set of compact generators and $\MV(X, \mathcal{D})$ is compactly generated.
\end{lemma}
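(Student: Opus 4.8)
The plan is to bootstrap everything from Lemma~\ref{smoothKan} and the explicit formula of Corollary~\ref{Ranres}. Fix a simplex $x \in X$ and an object $V \in \mathcal{D}(x) \subseteq D(G_x)$. Lemma~\ref{smoothKan} already tells us that $\Lan_{BG_x}^{\int_G \simp{X}}V$ is a smooth functor, i.e. lies in $\MV(X)$; the first step is to upgrade this to membership in $\MV(X, \mathcal{D})$, which by definition amounts to checking that its value at every simplex $y \in X$ lies in $\mathcal{D}(y)$. Here I would invoke Corollary~\ref{Ranres}: the restriction of $\Lan_{BG_x}^{\int_G \simp{X}}V$ to $BG_y$ is the \emph{finite} direct sum $\bigoplus_{x/BG_y}\ind{G_x^{g^{-1}}}{G_y}V^{g^{-1}}$, each summand of which is obtained from $V$ by transporting along a group element and then compactly inducing along a chain of elementary face maps. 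Since $\mathcal{D}$ is by hypothesis compatible with the $G$-action, the transport keeps us inside the collection; iterating the admissibility axiom that $\Lan_{\partial_i}$ carries $\mathcal{D}_{n+1}$ into $\mathcal{D}_n$ — and using that each $\mathcal{D}_n \subseteq \MV(X_n)$, being a coreflective colimit-closed subcategory, is closed under retracts, so the axiom descends to the individual induction maps $\ind{G_{x'}}{G_{\partial_i x'}}$ — then shows that each summand, hence the whole value, lies in $\mathcal{D}(y)$. Thus $\Lan_{BG_x}^{\int_G \simp{X}}V \in \MV(X, \mathcal{D})$.

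For the compactness statement, suppose $V$ is compact in $\mathcal{D}(x)$. By the admissibility hypothesis the inclusion $\mathcal{D}(x) \hookrightarrow D(G_x)$ preserves compact objects, so $V$ is compact in $D(G_x)$ and hence $\Lan_{BG_x}^{\int_G \simp{X}}V$ is compact in $\MV(X)$ by Lemma~\ref{smoothKan}. It remains to transfer this along the inclusion $\MV(X, \mathcal{D}) \hookrightarrow \MV(X)$. That inclusion is fully faithful and preserves colimits: colimits on both sides are computed pointwise (Corollary~\ref{MVcolim} and the discussion preceding this lemma), and each $\mathcal{D}(y) \subseteq D(G_y)$ is colimit-closed because the inclusions $\mathcal{D}_n \hookrightarrow \MV(X_n)$ are left adjoints. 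For a fully faithful colimit-preserving inclusion of compactly generated stable categories, an object of the subcategory whose image is compact in the ambient category is already compact in the subcategory — filtered colimits and mapping spectra are both computed in the ambient category — so $\Lan_{BG_x}^{\int_G \simp{X}}V$ is compact in $\MV(X, \mathcal{D})$.

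Finally, to see these objects generate I would reuse the argument of Lemma~\ref{MVcomp} essentially verbatim: for $M \in \MV(X, \mathcal{D})$ and $V \in \mathcal{D}(x)$ the adjunction of Lemma~\ref{Lancomp} together with full faithfulness of the relevant inclusions gives $\map_{\MV(X,\mathcal{D})}(\Lan_{BG_x}^{\int_G \simp{X}}V, M) \simeq \map_{D(G_x)}(V, M(x)) \simeq \map_{\mathcal{D}(x)}(V, M(x))$. As each $\mathcal{D}(x)$ is compactly generated, its compact objects jointly detect the zero object, so if all these mapping spectra vanish for $V$ ranging over $\mathcal{D}(x)^\omega$ then $M(x) = 0$ for every $x$, whence $M = 0$ in $\MV(X)$ and a fortiori in $\MV(X, \mathcal{D})$. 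Since $x$ runs over the set of simplices of $X$ and $V$ over a small skeleton of $\mathcal{D}(x)^\omega$, we obtain an honest set of compact generators, so $\MV(X, \mathcal{D})$ is compactly generated. The genuinely nonformal step is the first paragraph: unwinding Corollary~\ref{Ranres} and checking, orbit-wise, that the face-map stability axiom really does keep the value of the left Kan extension inside $\mathcal{D}(y)$; once that is in hand, the rest is a routine transfer of properties along fully faithful colimit-preserving inclusions, exactly as in the unrestricted case.
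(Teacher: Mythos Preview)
Your proof is correct and follows essentially the same route as the paper: compute the value of the left Kan extension at each simplex via Corollary~\ref{Ranres}, use the admissibility axioms to see that each summand lies in $\mathcal{D}(y)$, and then run the joint-conservativity argument of Lemma~\ref{MVcomp}. The paper is considerably terser---it simply asserts that ``$\mathcal{D}$ is closed under conjugation and induction'' and defers everything else to Lemma~\ref{MVcomp}---whereas you spell out the compactness transfer along the fully faithful colimit-preserving inclusion and the descent of the face-map axiom to individual inductions; these elaborations are welcome but not a different argument.
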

\begin{proof}
    Assume that $V \in D(x)^\omega$.
    We only need to show that $\Lan_{BG_x}^{\int_G \simp{X}}V$ is indeed in $\MV(X, \mathcal{D})$, then the proof is the same as in Lemma \ref{MVcomp}.
    For this we need to compute $(\Lan_{BG_x}^{\int_G \simp{X}}V)(y)$ for every $y \in X$.
    By Corollary \ref{Ranres} we have 
    $$
    (\Lan_{BG_x}^{\int_G \simp{X}}V)(y) = \bigoplus_{ x /BG_y} \ind{G_x^{g-1}}{G_y}V^{g^{-1}}.
    $$
    But by assumption, $\mathcal{D}$ is closed under conjugation and induction, hence $\Lan_{BG_x}^{\int_G \simp{X}}V \in \MV(X, \mathcal{D})$.
\end{proof}

\begin{lemma}
    \label{MVLoc}
    Let $\mathcal{D}$ be an admissible collection.
    Then $\MV(X, \mathcal{D})$ is a right Bousfield localization of $\MV(X)$ with right adjoint $\pr{\mathcal{D}}$.
    The right adjoint can be computed by pointwise applying the right adjoint $D(G_x) \to \mathcal{D}(x)$. 
\end{lemma}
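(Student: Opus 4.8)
The plan is to produce the right adjoint directly, by pointwise application of the right adjoints $\pr{\mathcal{D}(x)} : D(G_x) \to \mathcal{D}(x)$ guaranteed by the definition of an admissible collection, and then verify it lands in $\MV(X,\mathcal{D})$ and satisfies the universal property. First I would recall that $\MV(X,\mathcal{D})$ is compactly generated by \ref{RelLan} and that the inclusion $\MV(X,\mathcal{D}) \hookrightarrow \MV(X)$ preserves colimits (colimits in both categories are computed pointwise in $\Fun(\int_G \simp{X}, D(R))$, and each $\mathcal{D}_n$ is closed under colimits by assumption); hence the adjoint functor theorem immediately produces \emph{some} right adjoint $\pr{\mathcal{D}}$, so the content of the lemma is really the explicit pointwise formula. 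So the substantive step is: given $M \in \MV(X)$, define $\pr{\mathcal{D}}(M)$ to have underlying functor $x \mapsto \pr{\mathcal{D}(x)}(M(x))$, with structure maps along each $x \to y$ in $\int_G \simp{X}$ obtained from those of $M$ by functoriality of the localizations — here one uses that left Kan extension along $\partial_i$ sends $\mathcal{D}_{n+1}$ into $\mathcal{D}_n$, equivalently that the colocalizations are compatible with the face maps, which is exactly the stability condition baked into the definition of an admissible collection.

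Next I would check the two things that make this work. First, $\pr{\mathcal{D}}(M)$ is a smooth functor: smoothness is a pointwise condition by Remark \ref{smoothRest}, namely that $\pr{\mathcal{D}}(M)(x) = \pr{\mathcal{D}(x)}(M(x))$ lies in $D(G_x) \subseteq D(G_x^\delta)$, and this holds because $\mathcal{D}(x) \subseteq \MV(X_n)(x) \subseteq D(G_x)$ by definition. So $\pr{\mathcal{D}}(M) \in \MV(X,\mathcal{D})$. Second, the universal property: for $N \in \MV(X,\mathcal{D})$ we must show
\[
\map_{\MV(X)}(N, M) \simeq \map_{\MV(X,\mathcal{D})}(N, \pr{\mathcal{D}}(M)).
\]
Using the compact generators $\Lan_{BG_x}^{\int_G \simp{X}} V$ for $V \in \mathcal{D}(x)^\omega$ from \ref{RelLan}, both sides commute with colimits in $N$, so it suffices to check this when $N = \Lan_{BG_x}^{\int_G \simp{X}} V$. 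By the Kan extension adjunction this reduces to
\[
\map_{D(G_x)}(V, M(x)) \simeq \map_{\mathcal{D}(x)}(V, \pr{\mathcal{D}(x)}(M(x))),
\]
which is precisely the defining property of the right adjoint $\pr{\mathcal{D}(x)}$ to the inclusion $\mathcal{D}(x) \hookrightarrow D(G_x)$. Since $\MV(X,\mathcal{D}) \hookrightarrow \MV(X)$ is fully faithful, the unit $\pr{\mathcal{D}} \circ (\text{incl}) \to \id$ (on the subcategory) is an equivalence, so this is a right Bousfield localization.

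The only genuine subtlety — and the step I expect to require the most care — is making the assignment $x \mapsto \pr{\mathcal{D}(x)}(M(x))$ into an honest functor on $\int_G \simp{X}$ rather than just a collection of objects, i.e.\ coherently producing the structure maps. The clean way to handle this is not to build it by hand but to invoke the abstract adjoint functor theorem for the existence of $\pr{\mathcal{D}}$ (as above), and then separately identify its value pointwise: restriction along $BG_x \to \int_G \simp{X}$ has left adjoint $\Lan_{BG_x}^{\int_G \simp{X}}$ which preserves $\MV(X,\mathcal{D})$ by \ref{RelLan}, so the right adjoints compose, giving $\res{\int_G \simp{X}}{BG_x} \circ \pr{\mathcal{D}} \simeq \pr{\mathcal{D}(x)} \circ \res{\int_G \simp{X}}{BG_x}$ as functors $\MV(X) \to \mathcal{D}(x)$; evaluating at $M$ gives $\pr{\mathcal{D}}(M)(x) \simeq \pr{\mathcal{D}(x)}(M(x))$, which is the asserted pointwise formula and sidesteps any explicit coherence bookkeeping.
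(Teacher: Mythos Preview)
Your proposal is correct, and the argument you land on in the final paragraph is exactly the paper's proof: invoke the adjoint functor theorem for existence (using compact generation from \ref{RelLan} and colimit-preservation of the inclusion), then observe that the square of left adjoints with $\Lan_{BG_x}^{\int_G\simp{X}}$ commutes by \ref{RelLan}, so passing to right adjoints gives $\res{\int_G\simp{X}}{BG_x}\circ\pr{\mathcal{D}}\simeq\pr{\mathcal{D}(x)}\circ\res{\int_G\simp{X}}{BG_x}$. The middle portion of your plan (building the functor by hand and verifying the universal property on generators) is correct in spirit but, as you yourself note, is superseded by the cleaner argument; the paper skips it entirely.
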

\begin{proof} 
    Since both $\MV(X)$ and $\MV(X, \mathcal{D})$ are compactly generated and the inclusion commutes with colimits the existence of the right adjoint follows from the adjoint functor theorem.
    To explicitly compute the right adjoint let $\pr{\mathcal{D}}: \MV(X) \to \MV(X, \mathcal{D})$ and $\pr{\mathcal{D}(x)} : D(G_x) \to \mathcal{D}(x)$ be the right adjoints to the inclusions.
    By Lemma \ref{RelLan} the following square commutes. 
\[\begin{tikzcd}
	{\mathcal{D}(x)} && {D(G_x)} \\
	\\
	{\MV(X, \mathcal{D})} && {\MV(X)}
	\arrow["\Lan", from=1-3, to=3-3]
	\arrow[from=1-1, to=1-3]
	\arrow[from=3-1, to=3-3]
	\arrow["\Lan"', from=1-1, to=3-1]
\end{tikzcd}\]
    But then the associated square of right adjoints also commutes, hence 
    $$ \res{\int_G \simp{X}}{BG_x} \pr{\mathcal{D}} M \simeq \pr{\mathcal{D}(x)} \res{\int_G \simp{X}}{BG_x}M$$
    for all $M \in \MV(X)$.
\end{proof}

\begin{lemma}
    \label{relcolimit}
    Let $\mathcal{D}$ be an admissible collection such that for all $n$ the right adjoint to the inclusion $\mathcal{D}_n \to \MV(X_n)$ commutes with all colimits.
    Then $\pr{\mathcal{D}}: \MV(X) \to \MV(X, \mathcal{D})$ also commutes with all colimits.
\end{lemma}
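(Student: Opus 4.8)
The plan is to reduce the statement to the pointwise formulas already available, so that it becomes a formal diagram chase. First I would record that colimits in $\MV(X, \mathcal{D})$ are computed pointwise, exactly as in $\MV(X)$ (Corollary \ref{MVcolim}): the category $\MV(X, \mathcal{D})$ is cocomplete, and by Lemma \ref{MVLoc} the full inclusion $\MV(X, \mathcal{D}) \hookrightarrow \MV(X)$ is a left adjoint, its right adjoint being $\pr{\mathcal{D}}$; hence it preserves colimits, and since it is moreover fully faithful, colimits computed in $\MV(X, \mathcal{D})$ may be computed in $\MV(X)$, where they are pointwise. This is the only point of the argument that is not pure bookkeeping, and it is an instance of the general principle — already invoked for $\MV(X) \hookrightarrow \Fun(\int_G \simp{X}, D(R))$ in Lemma \ref{Lancomp} — that a cocomplete full subcategory whose inclusion admits a right adjoint is closed under colimits in the ambient category.

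Next I would unwind the hypothesis. For each $n$ we have $\int_G X_n \simeq (X_n)_{hG} \simeq \coprod_{[x] \in X_n/G} BG_x$, hence $\MV(X_n) \simeq \prod_{[x] \in X_n/G} D(G_x)$ and, by the definition of an admissible collection, $\mathcal{D}_n \simeq \prod_{[x]} \mathcal{D}(x)$ with $\mathcal{D}_n \hookrightarrow \MV(X_n)$ the product of the inclusions $\mathcal{D}(x) \hookrightarrow D(G_x)$. The right adjoint $\pr{\mathcal{D}_n}$ is therefore the product of the coreflectors $\pr{\mathcal{D}(x)} \colon D(G_x) \to \mathcal{D}(x)$, and since colimits in a product of categories are computed factorwise, the assumption that $\pr{\mathcal{D}_n}$ commutes with all colimits for every $n$ is equivalent to every $\pr{\mathcal{D}(x)}$ commuting with all colimits.

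Finally I would assemble the pieces. By Lemma \ref{MVLoc} the coreflector $\pr{\mathcal{D}} \colon \MV(X) \to \MV(X, \mathcal{D})$ is computed pointwise, i.e.\ $\res{\int_G \simp{X}}{BG_x} \circ \pr{\mathcal{D}} \simeq \pr{\mathcal{D}(x)} \circ \res{\int_G \simp{X}}{BG_x}$ for every simplex $x$. Given a diagram $F \colon I \to \MV(X)$, I would restrict the canonical comparison map $\colim_I \pr{\mathcal{D}} F \to \pr{\mathcal{D}}(\colim_I F)$ along each $BG_x$; using successively that restriction along $BG_x$ preserves colimits (colimits in $\MV(X)$ and in $\MV(X, \mathcal{D})$ being pointwise, by the first step), that $\pr{\mathcal{D}}$ is pointwise given by $\pr{\mathcal{D}(x)}$, and that each $\pr{\mathcal{D}(x)}$ preserves colimits (by the second step), one sees that the comparison map becomes an equivalence after restriction along every $BG_x$. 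Since a morphism in $\MV(X, \mathcal{D}) \subseteq \Fun(\int_G \simp{X}, D(R))$ is an equivalence as soon as it becomes one after restricting along all the $BG_x$, this proves that $\pr{\mathcal{D}}$ commutes with all colimits. The main (mild) obstacle is the pointwise description of colimits in $\MV(X, \mathcal{D})$ in the first step; everything after that is a routine chase with the pointwise formula of Lemma \ref{MVLoc}.
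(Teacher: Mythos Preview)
Your proposal is correct and follows essentially the same approach as the paper: reduce to a pointwise check using the commutative square of Lemma \ref{MVLoc} and the assumption that each $\pr{\mathcal{D}(x)}$ preserves colimits. You are somewhat more explicit than the paper in justifying why colimits in $\MV(X,\mathcal{D})$ are pointwise and in unpacking the hypothesis on $\mathcal{D}_n$ into a statement about each $\mathcal{D}(x)$, but the argument is the same.
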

\begin{proof}
   Let $F: I \to \MV(X)$ be a diagram in $\MV(X)$ and consider the colimits comparison map 
\[\begin{tikzcd}
	{\colim_I \pr{\mathcal{D}}F(i)} && {\pr{\mathcal{D}}\colim_IF(i).}
	\arrow[from=1-1, to=1-3]
\end{tikzcd}\]
    We need to check that this map is an equivalence.
    To see this we check that it is a pointwise equivalence. 
    The square
\[\begin{tikzcd}
	{\MV(X)} && {\MV(X, \mathcal{D})} \\
	\\
	{D(G_x)} && {\mathcal{D}(x)}
	\arrow[from=1-1, to=3-1]
	\arrow["{\pr{\mathcal{D}}}"', from=1-1, to=1-3]
	\arrow[from=1-3, to=3-3]
	\arrow["{\pr{\mathcal{D}(x)}}", from=3-1, to=3-3]
\end{tikzcd}\]
    is commutative by Lemma \ref{MVLoc}. 
    But the bottom composition commutes with all colimits for all $x \in X$ by assumption.
    This shows that the colimit comparison map is a pointwise equivalence.
\end{proof}

\begin{lemma}
    \label{relmaps}
    Let $f: X \to Y$ be a $G$-equivariant map $ \mathcal{D}^X $ and $ \mathcal{D}^Y$ admissible collections on $X$ and $Y$. If $\mathcal{D}^X$ is mapped to $\mathcal{D}^Y$ by left Kan extension, then $f$ induces a functor 
\[\begin{tikzcd}[ampersand replacement=\&]
	{\MV(X,  \mathcal{D}^X )} \&\& {\MV(Y, \mathcal{D}^Y ).}
	\arrow["{\Lan_f}", from=1-1, to=1-3]
\end{tikzcd}\]
\end{lemma}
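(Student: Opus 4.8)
The plan is to reuse the argument for the non-relative version of $\Lan_f$, keeping track of the pointwise subcategories, and reduce to a set of compact generators. First I would collect the facts already available: $\Lan_f\colon\MV(X)\to\MV(Y)$ exists, is a left adjoint, and commutes with all colimits (colimits in both categories are computed pointwise in the ambient functor categories by Corollary \ref{MVcolim}); $\MV(X,\mathcal{D}^X)$ is compactly generated by the objects $\Lan_{BG_x}^{\int_G\simp{X}}V$ for $x$ a simplex of $X$ and $V\in\mathcal{D}^X(x)^\omega$, by Lemma \ref{RelLan}; and $\MV(Y,\mathcal{D}^Y)\subseteq\MV(Y)$ is closed under colimits, since each $\mathcal{D}^Y(y)$ is a colocalization of $D(G_y)$ (hence closed under colimits) and colimits in $\MV(Y)$ are pointwise. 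Given these, it is enough to check that $\Lan_f$ carries each generator $\Lan_{BG_x}^{\int_G\simp{X}}V$ into $\MV(Y,\mathcal{D}^Y)$: every object of $\MV(X,\mathcal{D}^X)$ is a colimit of such generators, $\Lan_f$ preserves colimits, and the target is closed under them.

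To evaluate $\Lan_f$ on a generator I would use the commutative square
\[\begin{tikzcd}
	{\int_G \simp{X}} && {\int_G \simp{Y}} \\
	\\
	{BG_x} && {BG_{f(x)}}
	\arrow["f", from=1-1, to=1-3]
	\arrow[from=3-1, to=1-1]
	\arrow[from=3-3, to=1-3]
	\arrow[from=3-1, to=3-3]
\end{tikzcd}\]
together with the composability of left Kan extensions, exactly as in the proof of the corresponding statement for bare $G$-semi-simplicial sets. Left Kan extension of $V$ along the bottom map $BG_x\to BG_{f(x)}$ is the induction $\ind{G_x}{G_{f(x)}}V$, so
\[
\Lan_f\,\Lan_{BG_x}^{\int_G\simp{X}}V\;\simeq\;\Lan_{BG_{f(x)}}^{\int_G\simp{Y}}\ind{G_x}{G_{f(x)}}V .
\]
By hypothesis, $\mathcal{D}^X$ being carried to $\mathcal{D}^Y$ by left Kan extension along $f$ means precisely that $\ind{G_x}{G_{f(x)}}$ maps $\mathcal{D}^X(x)$ into $\mathcal{D}^Y(f(x))$, so $\ind{G_x}{G_{f(x)}}V\in\mathcal{D}^Y(f(x))$. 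Applying Lemma \ref{RelLan} to the admissible collection $\mathcal{D}^Y$ then yields $\Lan_{BG_{f(x)}}^{\int_G\simp{Y}}\ind{G_x}{G_{f(x)}}V\in\MV(Y,\mathcal{D}^Y)$, which is exactly what is needed. Since $\ind{G_x}{G_{f(x)}}$ also preserves compact objects, the same computation shows that the restricted functor sends compact objects of $\MV(X,\mathcal{D}^X)$ to compact objects of $\MV(Y,\mathcal{D}^Y)$, though that is not part of the statement.

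The argument is essentially formal once the reductions are in place; the step that needs genuine care is the identification $\Lan_f\,\Lan_{BG_x}^{\int_G\simp{X}}V\simeq\Lan_{BG_{f(x)}}^{\int_G\simp{Y}}\ind{G_x}{G_{f(x)}}V$, which is why I would route it through the commutative square rather than attempt a direct colimit computation of $\Lan_f$ on a general object of $\MV(X,\mathcal{D}^X)$. Everything else — compact generation of $\MV(X,\mathcal{D}^X)$, closure of $\MV(Y,\mathcal{D}^Y)$ under colimits, and the translation of the hypothesis into a statement about induction on a single simplex — has already been set up in the preceding lemmas.
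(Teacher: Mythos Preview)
Your proof is correct and follows essentially the same approach as the paper: reduce to the compact generators of Lemma \ref{RelLan} using that $\Lan_f$ commutes with colimits and $\MV(Y,\mathcal{D}^Y)$ is closed under them, then use the commutative square and composability of left Kan extensions to identify $\Lan_f$ on a generator with $\Lan_{BG_{f(x)}}^{\int_G\simp{Y}}\ind{G_x}{G_{f(x)}}V$, and conclude via the hypothesis together with Lemma \ref{RelLan}. The paper's version is simply more terse.
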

\begin{proof}
  Because $\Lan_f: \MV(X) \to \MV(Y)$ is compatible with colimits it suffices to prove the claim for a set of compact generators. 
  But the assumption precisely ensures that 
\[\begin{tikzcd}
	{\mathcal{D}^X(x)} && {\mathcal{D}^Y(f(x))} \\
	\\
	{\MV(X, \mathcal{D}^X)} && {\MV(Y, \mathcal{D}^Y)}
	\arrow["\Lan", from=1-1, to=1-3]
	\arrow[from=1-1, to=3-1]
	\arrow["{\Lan_f}", from=3-1, to=3-3]
	\arrow[from=1-3, to=3-3]
\end{tikzcd}\]
    commutes. In particular the set of compact generators from Lemma \ref{RelLan} is mapped to $\MV(Y, \mathcal{D}^Y)$.
\end{proof}

\subsection{Semi-simplicial structure on trees}
To apply the machinery that we developed so far to Bruhat--Tits trees for reductive groups of rank one, we need to prove that they are always semi-simplicial and that the $G$-action is compatible with the semi-simplicial structure.
This is true for any tree with transitive $G$-action on its edges.
We say that $G$ acts without inversion if for any $g\in G$ and $x\in X$ such that $gx=x$ all faces of $x$ are also fixed by $g$. 
The following lemma shows that we can always assume that $G$ acts without inversion.
\begin{lemma}
    Let $X$ be a tree with $G$-action such that $G$ acts transitive on the set of edges, and assume that the action of $G$ inverts edges.
    Then $G$ acts on the barycentric subdivision of $X$ without inversion and the action is still transitive on edges.
    Furthermore, if $G$ acts with compact stabilizers on $X$, then the action on the barycentric subdivision has compact stabilizers. 
\end{lemma}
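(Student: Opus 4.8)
The plan is to work directly with the barycentric subdivision $X'$ of the tree $X$ and to unravel what its simplices and the $G$-action look like. Recall that the vertices of $X'$ are the vertices of $X$ together with the barycenters (midpoints) of the edges of $X$, and the edges of $X'$ are the pairs $(v, m_e)$ where $v$ is an endpoint of the edge $e$ of $X$ and $m_e$ is its barycenter. Since $X$ is a tree, so is $X'$. The $G$-action on $X$ induces an action on $X'$: a group element $g$ sends a vertex $v$ to $gv$ and a barycenter $m_e$ to $m_{ge}$, and this clearly respects the edge relation. So $G$ acts on $X'$, and I would record this as the first step.

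Next I would verify that $G$ acts on $X'$ without inversion. An edge of $X'$ has the form $(v, m_e)$ with the two endpoints lying in different $G$-orbit-types: one is an ``old'' vertex of $X$ and the other is a barycenter. If $g$ stabilizes the edge $(v, m_e)$ as a set, then $g$ must either fix both endpoints or swap them; but it cannot swap them because $g\{v\}$ is again an old vertex and $g\{m_e\}$ is again a barycenter, so these two sets cannot be interchanged. Hence $g$ fixes both endpoints, which is exactly the statement that $G$ acts without inversion on $X'$. I would phrase this carefully in the language introduced just before the lemma (``for any $g \in G$ and $x \in X'$ with $gx = x$, all faces of $x$ are fixed by $g$''), noting that for a tree the only case to check is that of an edge.

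For transitivity on edges: given two edges $(v_1, m_{e_1})$ and $(v_2, m_{e_2})$ of $X'$, first use transitivity of the $G$-action on the edges of $X$ to find $h \in G$ with $h e_1 = e_2$, hence $h m_{e_1} = m_{e_2}$. Then $h v_1$ is one of the two endpoints of $e_2$. If $h v_1 = v_2$ we are done. Otherwise $h v_1$ is the other endpoint of $e_2$, and I need a group element that fixes $e_2$ (as a set) but swaps its two endpoints — this is precisely an element inverting the edge $e_2$, which exists by the hypothesis that $G$ inverts edges, and by transitivity on edges of $X$ every edge is inverted once one is. Composing with such an element moves $h v_1$ to $v_2$ while keeping $m_{e_2}$ fixed, giving the desired $G$-equivalence of edges of $X'$.

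Finally, for the statement about stabilizers: the stabilizer in $G$ of a vertex $v$ of $X$ (viewed in $X'$) is unchanged, so it is compact by hypothesis; the stabilizer of a barycenter $m_e$ is the set-stabilizer of the edge $e$ in $X$, which contains the pointwise stabilizer of $e$ with index at most $2$ (the quotient recording whether the edge is inverted), hence is compact because a finite-index overgroup of a compact open subgroup in a totally disconnected group is again compact open; and the stabilizer of an edge $(v, m_e)$ of $X'$ is the intersection $G_v \cap G_{m_e}$, which is closed inside the compact group $G_v$ and therefore compact. I expect the main (though still mild) obstacle to be the transitivity argument, where one must correctly invoke the inversion hypothesis to fix up the endpoint; the remaining points are essentially bookkeeping about the combinatorics of the barycentric subdivision and elementary facts about compact open subgroups of totally disconnected groups.
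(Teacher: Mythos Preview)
Your proof is correct and follows essentially the same approach as the paper: both argue that edges of the subdivision join an old vertex to a barycenter and hence cannot be inverted, both obtain transitivity by combining transitivity on the original edges with an inverting element, and both handle compactness of the barycenter stabilizer via the index-two inclusion of the pointwise edge stabilizer. Your write-up is in fact a bit more explicit than the paper's (you spell out the two-case transitivity argument and separately treat all three stabilizer types), but the underlying ideas are identical.
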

\begin{remark}
    The barycentric subdivision splits every edge $e$ into two edges $e_0$ and $e_1$, but the (pointwise) stabilizers do not change. That is $G_e=G_{e_0}=G_{e_1}$. 
    The stabilizer of the barycentre is precisely given by those elements that either fixed or interchanged the vertices of $e$.
\end{remark}
\begin{proof}
   If $g \in G$ interchanges the vertices of an edge $e$ that is split into $e_0$ and $e_1$ by the barycentric subdivision, then $g$ precisely interchanges $e_0$ and $e_1$. 
   By transitivity of the action on $X$, this holds for every edge $e$.
   Therefore, $G$ does still act transitively on edges.
   Furthermore, $G$ can not invert any edges of the subdivision as this would move the barycentre of an edge of $X$ to a vertex of $X$ before subdivision, which does not come from a simplicial map on $X$.
   For the last part, we only have to check that the stabilizer of the barycentre is compact. 
   Let $b$ be the barycentre of any edge $e$. 
   Then $G_b$ are precisely the elements that permute the boundary of the edge and $G_e$ is the subgroup of elements that leave the boundary unchanged. 
   Therefore, $G_e \subseteq G_b$ has index two, where the two cosets can be represented by the identity and any element that interchanges the vertices of the boundary.
   In particular, $G_b$ is compact if $G_e$ is compact.
\end{proof}

\begin{lemma}
    Let $X$ be a tree with $G$-action such that $G$ acts without inversion and transitively on edges.
    Then there is a semi-simplicial set with $G$-action $ssX$ such that the set $ssX_n$ contains precisely the $n$-simplices of $X$ with the induced $G$-action. Furthermore, we have $\partial_ix=f$ for some face map $\partial_i$ if and only if $f$ is a face of $x$ in $X$. 
\end{lemma}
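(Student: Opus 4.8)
The plan is to construct $ssX$ by hand. Since a tree is a one-dimensional simplicial complex, set $ssX_0$ to be the set of vertices of $X$ and $ssX_1$ the set of edges, each equipped with its given $G$-action, and put $ssX_n := \emptyset$ for $n \geq 2$. In these degrees the only potentially non-trivial face maps are $\partial_0, \partial_1 \colon ssX_1 \to ssX_0$, and every semi-simplicial identity $\partial_i\partial_j = \partial_{j-1}\partial_i$ only constrains the sets $ssX_n$ with $n \geq 2$, which are empty; functoriality of the resulting presheaf on $\Delta^{\mathrm{op}}_{\mathrm{inj}}$ is automatic for the same reason. Thus the entire content of the lemma is to produce $G$-equivariant maps $\partial_0, \partial_1$ sending an edge to its two endpoints.

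First I would fix a base edge $e_0 \in X_1$ with endpoints $v_0 \neq v_1$ and declare $\partial_0 e_0 := v_0$, $\partial_1 e_0 := v_1$. Because $G$ acts transitively on edges, every edge is of the form $e = g e_0$ for some $g \in G$, and I define $\partial_i e := g v_i$. The crucial point is well-definedness: if $g e_0 = g' e_0$ then $h := g^{-1}g'$ stabilizes $e_0$ as a set, so by the no-inversion hypothesis $h$ fixes the faces $v_0$ and $v_1$ of $e_0$, whence $g'v_i = g h v_i = g v_i$. Equivariance is then immediate, since for $g'' \in G$ and $e = g e_0$ we have $g''e = (g''g)e_0$ and therefore $\partial_i(g''e) = g''g v_i = g''\partial_i e$. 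This is exactly where the hypotheses are needed: an element inverting $e_0$ but not fixing its vertices would obstruct both well-definedness and equivariance, and transitivity is what lets the single choice made on $e_0$ propagate consistently to all of $X$.

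Finally I would match these abstract faces with the faces in $X$. Each $g \in G$ acts on $X$ by a simplicial automorphism, so it carries the endpoint set $\{v_0, v_1\}$ of $e_0$ bijectively onto the endpoint set of $e = g e_0$; hence $\{\partial_0 e, \partial_1 e\} = \{g v_0, g v_1\}$ is precisely the set of endpoints of $e$, and these are distinct because $v_0 \neq v_1$. Consequently a vertex $f$ equals $\partial_i e$ for some $i$ if and only if $f$ is a face (endpoint) of $e$ in $X$; for $0$-simplices the assertion is vacuous. By construction $ssX_n$ is, for every $n$, the set of $n$-simplices of $X$ with its given $G$-action, as claimed.

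The construction is elementary, so there is no serious obstacle; the one point that requires care is the well-definedness of this global orientation of edges, and it is precisely there that ``without inversion'' (together with transitivity on edges, for the propagation step) enters. Should one wish to drop transitivity, the same argument would go through orbit by orbit after choosing a representative edge and an orientation in each $G$-orbit.
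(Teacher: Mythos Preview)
Your proof is correct and follows essentially the same approach as the paper: fix a base edge, label its two vertices, and use transitivity to propagate this orientation to all edges, invoking the no-inversion hypothesis to secure well-definedness. Your write-up is in fact a bit more explicit than the paper's (you spell out equivariance and the vacuity of the semi-simplicial identities in degrees $\geq 2$), but the underlying argument is the same.
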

\begin{proof}
    Let $ssX_n$ be the set of $n$-simplices of $X$. 
    We only need to define the corresponding face maps $\partial_i$corresponding. 
    Fix any edge $e$ with stabilizer $G_e$. Then the boundary $\partial e$ has two elements $v_0$ and $v_1$, and we define $\partial_i = v_i$. 
    Since $G$ does act without inversion, this is compatible with the $G_e$ action on $e$, that is $\partial _i(ge)=\partial_ie$ for any $g \in G_e$.
    Let $e'$ be any other edge. 
    By assumption, there exists $g \in G$ such that $ge' = e$.
    Define $\partial_ie' = \partial_i(ge)$.
    This is well-defined since any two $g,g' \in G$ with $ge' = g'e'=e$ only differ by an element of $G_e$ and the latter is compatible with our definition of $\partial_i$. 
    It then follows immediately that this satisfies the desired properties.
\end{proof}
As a consequence, we obtain the following corollary for the Bruhat--Tits tree.
\begin{corollary}
\label{BruhatTitsssSet}
    Let $X$ be the Bruhat--Tits tree of a reductive, algebraic group of rank one.
    Then, after subdividing if $G$ inverts edges, $X$ admits the structure of a semi-simplicial set such that $G$ acts without inversion and compact stabilizers.
\end{corollary}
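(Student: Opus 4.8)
The plan is to recognise the Bruhat--Tits tree of a rank one reductive group as a special case of the abstract situation handled by the two preceding lemmas and then simply invoke them. First I would recall the input from Bruhat--Tits theory. For a reductive $F$-group $\mathbb{G}$ of rank one, the extended Bruhat--Tits building $X$ is one-dimensional and contractible, hence a tree, and $G=\mathbb{G}(F)$ acts on it by simplicial automorphisms \cite[Section 2.3]{TI79}; since $X$ is a model for $E_{\mathcal{C}op}$ \cite[Theorem 6.4.13]{LU05}, every simplex stabilizer $G_x$ is a compact open subgroup and the action is cocompact. Moreover $G$ acts transitively on the edges of $X$: an edge is a chamber of the building, its stabilizer contains an Iwahori subgroup, all Iwahori subgroups of $G$ are conjugate, and the affine Weyl group already acts transitively on the chambers of a fixed apartment, so every edge lies in the orbit of a fixed one.

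Granting this, the corollary follows in two steps. If the action inverts some edge --- as happens e.g. for $\mathrm{PGL}_2$ --- I would first pass to the barycentric subdivision $X'$ of $X$. By the first lemma of this subsection the $G$-action on $X'$ is then without inversion and still transitive on edges; the pointwise stabilizers of the simplices already present in $X$ are unchanged, and for a new barycentre $b$ of an edge $e$ one has $G_e\subseteq G_b$ of index two, so $G_b$ is again compact. Thus, after this possible subdivision, $X$ is a tree with a $G$-action that is transitive on edges and without inversion, which is precisely the hypothesis of the second lemma of this subsection. That lemma then produces a semi-simplicial set $ssX$ carrying the induced $G$-action, with $ssX_n$ the set of $n$-simplices of $X$ and with $\partial_i x = f$ exactly when $f$ is a face of $x$, which is the assertion of the corollary.

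I do not expect any real obstacle here: once the standard facts above are in place the corollary is bookkeeping on top of the two lemmas. The one point that genuinely uses the preceding development rather than a citation is the compactness of the barycentre stabilizers after subdivision, and this is already contained in the first lemma. The only subtlety worth flagging is the reading of ``rank one'': it must be interpreted so that the relevant building is genuinely a tree (relative semisimple rank one, with no Euclidean factor coming from the centre), which is exactly the standing hypothesis ``$E_{\mathcal{C}op}$ one-dimensional'' under which the paper operates.
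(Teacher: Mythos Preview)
Your proposal is correct and follows exactly the approach the paper intends: the corollary is stated there with no explicit proof beyond ``As a consequence'', since it is meant to follow immediately from the two preceding lemmas together with the standard Bruhat--Tits facts (tree structure, transitivity on chambers, compact open stabilizers) that are recalled in the introduction. Your write-up simply supplies these details, and the one substantive point you flag --- compactness of the barycentre stabilizers after subdivision --- is indeed already contained in the first of the two lemmas.
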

From now on we will not differentiate between the tree $X$ and its associated semi-simplicial set $ssX$ and in both cases just write $X$.

\section{The \texorpdfstring{$K$}{}-theory of \texorpdfstring{$G$}{}}
The goal of this section is to prove of formula for the $K$-theory spectrum of reductive algebraic groups of rank one as a colimit of compact open subgroups that arise as stabilizers of the corresponding Bruhat--Tits building.

To do this we proceed in two steps.
In the first subsection, we first compute the $K$-theory spectrum of $\MV(X, \mathcal{D})$ for any admissible collection $\mathcal{D}$. 
After this, we show that $D(G)$ is a left Bousfield localization of $\MV(X)$ and use this to construct an exact sequence of presentable, compactly generated stable infinity categories.
This part does not assume anything about the dimension of $X$.

In the second subsection we compute the kernel of the localization $\MV(X) \to D(G)$. 
This will crucially use that $X$ is a tree.
The key ideas of this proof are based on work by Waldhausen published in \cite{WH78a} and \cite{WH78b}.
Without any additional work, the same proof also allows us to compute the kernel of $\MV(X, \mathcal{D}) \to D(G)$ for any admissible collection. This we will need in section $4$.

\subsection{A fibre sequence of K-theory spectra}
We start this section with the computation of the $K$-theory spectrum of $\MV(X, \mathcal{D})^\omega$ for any admissible collection $\mathcal{D}$.
We will always assume that the $G$-action on the semi-simplicial set $X$ is smooth.
\begin{proposition}
    \label{KMV}
    Let $X$ be a finite-dimensional semi-simplicial set with cocompact smooth $G$-action and $\mathcal{D}=\lbrace \mathcal{D}_n \rbrace_{n \in \N}$ an admissible collection.
    Then 
    $$
    K(\MV(X, \mathcal{D})^\omega) \simeq \bigoplus_{[x] \in X/G} K(\mathcal{D}_{(x)}(x)^\omega).
    $$
    In particular 
    $$
    K(\MV(X)^\omega) \simeq \bigoplus_{[x] \in X/G} K(G_x).
    $$

\end{proposition}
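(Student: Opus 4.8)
The plan is to induct on $\dim X$ along the skeletal filtration $\sk{0}(X)\subseteq\sk{1}(X)\subseteq\dots\subseteq\sk{\dim X}(X)=X$, peeling off at each stage the contribution of the $n$-simplices by a split exact sequence of compactly generated stable $\infty$-categories. The base case $n=0$ is immediate: $\int_G\simp{\sk{0}(X)}$ is the groupoid $\int_G X_0\simeq\coprod_{[x]\in X_0/G}BG_x$, so $\MV(\sk{0}(X),\mathcal{D})\simeq\prod_{[x]\in X_0/G}\mathcal{D}_0(x)$, and since the action is cocompact this is a finite product, whose $K$-theory of compact objects is $\bigoplus_{[x]\in X_0/G}K(\mathcal{D}_0(x)^\omega)$.

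For the inductive step write $i\colon\sk{n-1}(X)\hookrightarrow\sk{n}(X)$. First I would check that $\Lan_i\colon\MV(\sk{n-1}(X),\mathcal{D})\to\MV(\sk{n}(X),\mathcal{D})$ is fully faithful — this holds because $\int_G\simp{\sk{n-1}(X)}$ is a \emph{full} subcategory of $\int_G\simp{\sk{n}(X)}$, so the unit $\id\to\res{i}{}\Lan_i$ is an equivalence — that it preserves compact objects, and that its essential image $\{M : M_{X_n}=0\}$ (notation of Lemma~\ref{fibseq}) is a localizing subcategory, which follows since $M\mapsto M_{X_n}$ preserves colimits; it is in fact generated by the compact generators $\Lan_{BG_x}^{\int_G\simp{\sk{n}(X)}}V$ attached to simplices $x$ of $\sk{n-1}(X)$. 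The next, and main, step is to identify the corresponding Verdier quotient. Its right orthogonal is the full subcategory $\mathcal{B}\subseteq\MV(\sk{n}(X),\mathcal{D})$ of resolutions vanishing on $\sk{n-1}(X)$, i.e.\ supported on the $n$-simplices; and because in $\int_G\simp{X}$ every face morphism from an $n$-simplex to a lower-dimensional simplex is non-split, restriction to the $n$-simplices is an equivalence $\mathcal{B}\xrightarrow{\ \sim\ }\prod_{[x]\in X_n/G}\mathcal{D}_n(x)$, with inverse given by extension by zero, which can be realised as $V\mapsto(\Lan_{X_n}V)_{X_n}$ for $\Lan_{X_n}$ the left Kan extension along the full inclusion $\int_G X_n\hookrightarrow\int_G\simp{\sk{n}(X)}$.

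Assembling this, I obtain an exact sequence in $\mathrm{Pr}^L_{st,\omega}$
\[
\MV(\sk{n-1}(X),\mathcal{D})\xrightarrow{\ \Lan_i\ }\MV(\sk{n}(X),\mathcal{D})\longrightarrow\prod_{[x]\in X_n/G}\mathcal{D}_n(x),
\]
and it splits: $\Lan_{X_n}V$ differs from its $n$-truncation $(\Lan_{X_n}V)_{X_n}$ only by an object of the localizing subcategory, so postcomposing $\Lan_{X_n}$ with the quotient functor yields an equivalence onto the quotient, hence — after inverting it — a section preserving compact objects. Applying the localizing invariant $K$ (using that $\Lan_i$ and $\Lan_{X_n}$ preserve compacts, and that by Lemma~\ref{tcomp} and Lemma~\ref{tcomprel} passing to compact objects is harmless here) turns this into a bifibre sequence
\[
K(\MV(\sk{n-1}(X),\mathcal{D})^\omega)\to K(\MV(\sk{n}(X),\mathcal{D})^\omega)\to\bigoplus_{[x]\in X_n/G}K(\mathcal{D}_n(x)^\omega),
\]
which splits, so that by the inductive hypothesis $K(\MV(\sk{n}(X),\mathcal{D})^\omega)\simeq\bigoplus_{[x]\in\sk{n}(X)/G}K(\mathcal{D}(x)^\omega)$. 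Taking $n=\dim X$ proves the formula, and the ``in particular'' is the special case $\mathcal{D}_n=\MV(X_n)$, for which $\mathcal{D}(x)=D(G_x)$ and hence $K(\mathcal{D}(x)^\omega)=K(D(G_x)^\omega)=K(G_x)$.

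The step I expect to be the real obstacle is the identification of the Verdier quotient $\MV(\sk{n}(X),\mathcal{D})/\MV(\sk{n-1}(X),\mathcal{D})$ with $\prod_{[x]\in X_n/G}\mathcal{D}_n(x)$, together with the construction of a compact-object-preserving section; everything else is bookkeeping with the skeletal filtration, the adjunctions $\Lan_f\dashv\res{f}{}$ from the previous subsections, and the formal properties of localizing invariants. As an alternative to producing the section one could instead argue that the connecting maps in the iterated fibre sequences vanish, but the section via $\Lan_{X_n}$ seems the cleanest route to the direct-sum formula.
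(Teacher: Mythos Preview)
Your proof is correct and follows essentially the same route as the paper: induction on $\dim X$ via the skeletal filtration, the Verdier sequence at each stage, and a splitting argument to deduce the direct-sum decomposition on $K$-theory by additivity. The only difference lies in how the splitting is obtained: you build a left section of the quotient by hand via $\Lan_{X_n}$, whereas the paper observes instead that the right adjoint $\res{i}{}$ of $\Lan_i$ preserves compact objects (Lemma~\ref{pointcomp}), and then invokes \cite[Lemma~A.2.5]{calmes2020hermitian} to conclude that the quotient functor likewise has a compact-preserving right adjoint, so the sequence is split. This sidesteps the explicit identification of the quotient and the construction of the section that you flagged as the potential obstacle --- both are fine, but the paper's route is shorter.
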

\begin{proof}
    The proof is by induction on the dimension of $X$.
    Recall from Lemma \ref{pointcomp} that $M \in \MV(X,  \mathcal{D})$ is compact precisely if all restrictions along $BG_x \to \int_G \simp{X}$ for $x \in X$ are compact in $\mathcal{D}(x) \subseteq D(G_x)$. 
    First, assume that $X$ has dimension $0$. 
    Then $\int_G \simp{X} \simeq X_{hG}$ which decomposes into a disjoint union of $BG_x$ with $x \in X_0/G$.
    If $M \in \MV(X, \mathcal{D})^\omega$ then it decomposes into a direct sum of compact objects in $\mathcal{D}^\omega(x) \subseteq D(G_x)$ and vice versa.
    Hence, the $K$-theory spectrum is a direct sum as claimed.

    Now assume that the claim is true up to dimension $n$ and $X$ has dimension $n+1$. 
    By Lemma \ref{fibseq} the inclusion $\sk{n}X \to X$ induces a fibre sequence 
\[\begin{tikzcd}
	{M_{\sk{n}X}} && M && {M_{X \setminus \sk{n}X}}
	\arrow[from=1-1, to=1-3]
	\arrow[from=1-3, to=1-5]
\end{tikzcd}\]
    for every $M \in \MV(X)$ and if $M \in \MV(X, \mathcal{D})$ then the fibre sequence is in $\MV(X, \mathcal{D})$. By \ref{pointcomp} the middle term is compact if and only if the outer terms are compact.
    The objects of the form $M_{X \setminus \sk{n}X}$ are precisely the resolutions that are only supported on $X_{n+1}$. 
    This induces a Verdier sequence 
\[\begin{tikzcd}
	{\MV(\sk{n}X, \mathcal{D}\vert \sk{n}X)} && {\MV(X, \mathcal{D})} && {\MV(X_{n+1}, \mathcal{D} \vert X_{n+1}).}
	\arrow["\Lan", from=1-1, to=1-3]
	\arrow["{\res{}{}}", from=1-3, to=1-5]
\end{tikzcd}\]
    Furthermore the left Kan extension $\MV(\sk{n} X) \to \MV(X)$ has a right adjoint given by restriction that also preserves compact objects by \ref{pointcomp}.
    Therefore, the restriction functor $\MV(X, \mathcal{D} ) \to \MV(X_{n+1}, \mathcal{D} \vert X_{n+1})$ also has a right adjoint that preserves compact objects by \cite[Lemma A.2.5.]{calmes2020hermitian}. 
    Hence, the sequence is split, and by additivity
    $$
    K(\MV(X, \mathcal{D})^\omega) \simeq K(\MV(\sk{n}X, \mathcal{D}\vert \sk{n}X)^\omega) \bigoplus K(\MV(X_{n+1}, \mathcal{D} \vert X_{n+1})^\omega).
    $$ 
    Now $\sk{n}X$ and $X_{n+1}$ are both of dimension less or equal than $n$ and the claim follows by the induction hypothesis.
\end{proof}

An object of $\MV(X)$ can be thought of as a collection of complexes of representations of compact open subgroups parametrized over $X$. 
These representations can then be glued together, taking the geometry of $X$ into account, to obtain an object of $D(G)$.
This gluing process is formalized by the induced map of $X \to *$ on $\MV(-)$ where the latter carries the trivial action. 
If the underlying space of $X$ is contractible we can obtain all objects of $D(G)$ this way as the next proposition shows.
\begin{proposition}
    \label{MVBous}
    Let $X$ be a $G$-semi-simplicial set such that the underlying semi-simplicial set is contractible.
    Then the functor $\alpha: \MV(X) \to D(G)$ induced by the $G$-map $X \to *$ is a left Bousfield localization.
\end{proposition}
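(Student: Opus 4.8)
The plan is to exhibit $\alpha \colon \MV(X) \to D(G)$ as a left Bousfield localization by producing a fully faithful right adjoint. Since $\MV(X)$ is presentable and $\alpha$ preserves colimits (colimits in $\MV(X)$ are computed in $\Fun(\int_G\simp{X},D(R))$, and the left Kan extension along $\int_G\simp{X}\to BG$ commutes with colimits), the adjoint functor theorem gives a right adjoint $\beta \colon D(G)\to \MV(X)$. It then remains to show the counit $\alpha\beta \to \id_{D(G)}$ is an equivalence, equivalently that $\beta$ is fully faithful. First I would identify $\beta$ explicitly: for $V\in D(G)$, the resolution $\beta(V)$ should be the "constant" smooth resolution, namely $x \mapsto \res{G}{G_x}V$ for every simplex $x\in X$, with the evident restriction maps along faces. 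One checks this is smooth (restrictions of smooth representations are smooth, cf.\ Remark \ref{smoothRest}) and that it is indeed right adjoint to $\alpha$: a map $\alpha(M)\to V$ in $D(G)$ is the same as a compatible family of $G_x$-equivariant maps $M(x)\to \res{G}{G_x}V$, by the colimit formula of Lemma \ref{RealFormula} together with the various induction--restriction adjunctions, which is precisely a map $M\to\beta(V)$ in $\MV(X)$.

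With $\beta$ in hand, the counit being an equivalence amounts to the statement that $\alpha\beta(V)\simeq V$ naturally. Unwinding, $\alpha\beta(V)$ is computed by Lemma \ref{RealFormula} (or Corollary \ref{AssemFormula} in the tree case) as the realization over $\int_G\simp{X}$ of the constant-on-$V$ diagram, which is $V$ tensored over $R$ with the simplicial chain complex $C_*(X;R)$ of the underlying semi-simplicial set $X$, with the $G$-action coming from the action on $X$ — more precisely, $\alpha\beta(V) \simeq V\otimes_R \Sigma^\infty_+ |X|$ in an appropriate sense. Since $\beta$ lands in resolutions that are pointwise just restrictions of $V$ with trivial "twisting," the relevant coefficient system is the constant one, so this computation reduces to the homology of the underlying space $|X|$. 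As $|X|$ is assumed contractible, $C_*(X;R)\simeq R$ concentrated in degree $0$, and hence $\alpha\beta(V)\simeq V$. Naturality in $V$ is clear from the construction, so the counit is an equivalence and $\alpha$ is a left Bousfield localization.

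The main obstacle is the bookkeeping in the middle step: one must verify carefully that $\alpha\beta(V)$ really is computed by the \emph{constant} (untwisted) coefficient system, so that contractibility of $|X|$ suffices, rather than some homology-with-local-coefficients that could see the $G$-action nontrivially. The point is that $\beta(V)$ assigns to each simplex the \emph{same} representation $V$ (restricted), and the face maps are the canonical ones, so in the formula of Lemma \ref{RealFormula} the differential on $\bigoplus_{[x]\in X_n/G}\ind{G_x}{G}\res{G}{G_x}V \simeq \bigoplus_{[x]\in X_n/G} R[G/G_x]\otimes_R V$ is exactly $(\text{simplicial differential of }X)\otimes_R \id_V$ after identifying $\bigoplus_{[x]\in X_n/G}R[G/G_x]$ with the degree-$n$ chains $R[X_n]$. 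Thus the whole complex is $C_*(X;R)\otimes_R V$ with $V$ carrying its given $G$-action and $C_*(X;R)$ carrying the permutation action, and contractibility of $|X|$ gives a chain homotopy equivalence $C_*(X;R)\simeq R$ of $R[G]$-complexes — here one uses that the augmentation $C_*(X;R)\to R$ is a quasi-isomorphism of complexes of (not necessarily projective, but flat) $R[G^\delta]$-modules, so tensoring with $V$ over $R$ preserves the equivalence. Once this identification is made the conclusion is immediate; before it, there is a genuine risk of conflating "constant coefficients" with "local coefficients," which is the subtlety to get right.
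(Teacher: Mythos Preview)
Your proof is correct and follows the same overall strategy as the paper: identify the right adjoint $\beta$ as restriction along $p\colon\int_G\simp{X}\to BG$ (the ``constant'' resolution $x\mapsto\res{G}{G_x}V$), then show the counit $\alpha\beta(V)\to V$ is an equivalence using contractibility of $|X|$ and conservativity of the forgetful functor $D(G)\to D(R)$.

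The presentation of the key step differs. The paper applies proper base change (Lemma~\ref{BeckChav}) directly to the pullback square
\[
\begin{tikzcd}
\simp{X} \ar[r]\ar[d,"p'"'] & \int_G\simp{X} \ar[d,"p"] \\
\ast \ar[r] & BG
\end{tikzcd}
\]
to conclude that the underlying object of $\alpha\beta(V)$ is $\colim_{\simp{X}}(\text{const }V)\simeq |X|\otimes V\simeq V$, avoiding any chain-level computation. You instead invoke the explicit formula of Lemma~\ref{RealFormula} together with the projection formula $\ind{G_x}{G}\res{G}{G_x}V\cong R[G/G_x]\otimes_R V$ to identify $\alpha\beta(V)$ with $C_*(X;R)\otimes_R V$. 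Both routes ultimately rest on the same base-change input (Lemma~\ref{RealFormula} is itself proved via Lemma~\ref{BeckChav}), so the difference is packaging rather than substance; the paper's version is a bit more streamlined since it bypasses the projection-formula identification.

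One small point to tighten: Lemma~\ref{RealFormula} is stated only for $M\in\MV(X)^\heartsuit$, so strictly speaking your identification $\alpha\beta(V)\simeq C_*(X;R)\otimes_R V$ is established only for $V\in D(G)^\heartsuit$. This is harmless---both $\alpha\beta$ and the identity preserve colimits (restriction along $p$ does by the proof of Lemma~\ref{Lancomp}), so checking the counit on the heart, or on any generating set, suffices---but you should say so explicitly. Also, the phrase ``chain homotopy equivalence \dots\ of $R[G]$-complexes'' is not quite right (the augmentation $C_*(X;R)\to R$ need not split $G$-equivariantly); what you actually use, and correctly state afterwards, is that it is a quasi-isomorphism of complexes of $R$-flat modules, so tensoring with $V$ over $R$ preserves it, and then conservativity of $D(G)\to D(R)$ finishes.
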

\begin{proof}
   Recall that the functor is given by the left Kan extension along $p:\int_G \simp{X} \to BG$.
   This has a right adjoint given by restriction. 
   Hence, we only need to show that $\res{p}{}: D(G) \to \MV(X)$ is fully faithful or equivalently that the counit $\Lan_p \res{p}{} \to \id$ of the adjunction is an equivalence.
   The forgetful functor $D(G) \to D(R)$ is conservative because it is given by evaluation an object of $D(G) \simeq Fun^\infty(BG, D(R))$ 
   on the single object of $BG$ and a natural transformation in is an equivalence if and only if it is an equivalence on every object.
   Therefore, it suffices to show that the counit is an equivalence after forgetting the $G$-action, i.e. restricting along $* \to BG$.
   The functor $p: \int_G \simp{X} \to BG$ is proper because it is the composition \cite[Proposition 11.4]{JoyalL} of the left fibration \cite[Theorem 11.9]{JoyalL} given by the unstraightening of $X: \Delta^{op}_{inj} \times BG \to Sets $ and the projection $\Delta^{\op}_{inj}\times BG \to BG$ \cite[Corollary 11.5]{JoyalL}. 
   Hence, by \ref{BeckChav} the pullback square 
\[\begin{tikzcd}
	{\simp{X}} && {\int_G \simp{X}} \\
	\\
	{*} && BG
	\arrow["p", from=1-3, to=3-3]
	\arrow[from=3-1, to=3-3]
	\arrow[from=1-1, to=1-3]
	\arrow["{p'}"', from=1-1, to=3-1]
\end{tikzcd}\]
    satisfies the Beck--Chevalley condition after applying $\Fun(-, D(R))$.
    This shows that
\[\begin{tikzcd}
	{\Lan_{p'} \res{p'}{} \res{*}{} \simeq\res{*}{} \Lan_p \res{p}{}} && {\res{*}{}}
	\arrow[from=1-1, to=1-3]
\end{tikzcd}\] 
    where $\Lan_{p'} \res{p'}{} \res{*}{} \to \res{*}{}$ is given by the counit $\Lan_{p'} \res{p'}{} \to \id$ applied to $\res{*}{}$.
    Therefore, we only need to check that  $\Lan_{p'} \circ \res{}{} \to \id$ is an equivalence.
    But left Kan extension to the point is precisely taking the colimit, and we only need to check that for every $V \in D(G)$ the canonical map 
\[\begin{tikzcd}
	{\colim _{\simp{X}}V} && V
	\arrow[from=1-1, to=1-3]
\end{tikzcd}\]
    is an equivalence. 
    This is induced by tensoring $V$ with the map of anima 
\[\begin{tikzcd}
	{\colim_{\simp{X}} *} && {*}.
	\arrow[from=1-1, to=1-3]
\end{tikzcd}\]
    But $\colim_{\simp{X}} *\simeq X$ and the map is an equivalence in $\An$, hence it induces an equivalence after tensoring with $V$.
 \end{proof}
 The previous proposition allows us to construct an exact sequence of presentable stable $\infty$-categories.
 \begin{definition}
    \label{DefMV0}
    Define $\MV_0(X, \mathcal{D})$ to be the kernel of $\MV(X, \mathcal{D}) \to D(G)$. 
 \end{definition}
 \begin{remark}
    We will later prove \ref{Kercompg} that $\MV_0(X, \mathcal{D})$ is again compactly generated and therefore in $Pr^{L}_{st, \omega}$. 
    In particular, we have an exact sequence 
    $$
    \begin{tikzcd}
        \MV_0(X) \arrow[r] & \MV(X) \arrow[r] & D(G)
    \end{tikzcd}
    $$
    in $Pr^L_{st, \omega}$.

    In general, for an admissible collection $\mathcal{D}$ we do not have a replacement for the Bousfield localization on the right-hand side in general. 
    If we define $D(G, \mathcal{D})$ to be the essential image of $\MV(X, \mathcal{D}) \to D(G)$ we again obtain an adjunction $\alpha \dashv \pr{\mathcal{D}} \circ \res{}{}$ with $\pr{\mathcal{D}}$ as in Lemma \ref{MVLoc}. 
    However, in general, the right adjoint does not need to be fully faithful.
\end{remark}
The above sequence induces a short exact sequence of $K$-theory spectra of the categories of compact objects in the following way.
\begin{remark}
    \label{RmPrComp}
    There is an equivalence of categories $Pr^{L }_{\omega, st}$ and $Cat^{\mathrm{perf}}_{\mathrm{ex}}$ between presentable, compactly generated, stable $\infty$-categories with colimit preserving functors that preserve compact objects and essentially small, idempotent complete, stable $\infty$-categories with exact functors, induced by taking compact objects and $\mathrm{Ind}$-completion \cite[5.5.7.8]{HTT} and \cite[Theorem A.3.11]{calmes2020hermitian}.
    Using this, we can apply $(-)^\omega$ to the above sequence to obtain an exact sequence of perfect small stable $\infty$-categories. 
    $K$-theory sends exact sequences of small stable $\infty$-categories to bifibre sequences of spectra, therefore we obtain the bifibre sequence
    $$
    \begin{tikzcd}
        \K{\MV_0(X)^\omega} \arrow{r} & \K{\MV(X)^\omega} \arrow{r} & \K{G}.
    \end{tikzcd}
    $$
\end{remark}

\subsection{Computation of the kernel of \texorpdfstring{$\alpha$}{}}
From now on we will assume that $X$ is a tree and that $G$ acts smoothly and transitively on edges. 
In addition, we will assume that all stabilizers are compact.
By Corollary \ref{BruhatTitsssSet} $X$ admits the structure of a semi-simplicial set such that these assumptions are satisfied, and we can apply the machinery we have developed so far.
We will show that $\MV_0(X, \mathcal{D})$ is compactly generated and prove the main result of this section which is the following theorem. 
\begin{theorem}
    \label{RelKer}
    Let $G$ be a reductive, algebraic group with a smooth action on a tree $X$ with compact stabilizers, that is transitive on edges and without inversion. Let $\mathcal{D}$ be an admissible collection.
    Then $K(\MV_0(X, \mathcal{D})^\omega) \simeq K(\mathcal{D}(e)^\omega) \bigoplus K(\mathcal{D}(e)^\omega)$ for any fixed edge $e\in X$.
\end{theorem}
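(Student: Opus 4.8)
The plan is to adapt Waldhausen's analysis of Mayer--Vietoris presentations over trees from \cite{WH78a, WH78b}: one writes down an explicit set of compact objects of $\ker\alpha$ coming from the fixed edge $e$, shows they generate (this is where contractibility of the tree enters and it simultaneously proves the still-owed compact generation statement \ref{Kercompg}), and then organizes them into a split Verdier sequence with two copies of $\mathcal{D}(e)$. Throughout, everything is phrased in terms of the generators $\Lan_{BG_e}^{\int_G\simp X}V$ with $V\in\mathcal{D}(e)$ and of operations --- compact induction and conjugation --- under which an admissible collection is closed, so the argument is uniform in $\mathcal{D}$ and in particular covers the absolute case $\alpha\colon\MV(X)\to D(G)$, as needed for Section 4.

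\textbf{Compact generators of the kernel.} Fix the edge $e$ with endpoints $\partial_0 e,\partial_1 e$, and for $V\in\mathcal{D}(e)^\omega$ and $\{i,j\}=\{0,1\}$ set
$$
E_i(V):=\cofib\!\left(\Lan_{BG_{\partial_j e}}^{\int_G\simp X}\!\big(\ind{G_e}{G_{\partial_j e}}V\big)\longrightarrow \Lan_{BG_e}^{\int_G\simp X}V\right),
$$
the map being the counit of the adjunction of Lemma \ref{Lancomp}. Using the pointwise formula of Corollary \ref{Ranres} one computes that $E_i(V)$ has value $V$ at $e$, value $\ind{G_e}{G_{\partial_i e}}V$ at $\partial_i e$ with structure map the unit, and (in the case of two vertex orbits) is zero on the orbit of $\partial_j e$; evaluating $\alpha$ by Corollary \ref{AssemFormula} then exhibits $\alpha(E_i(V))$ as the cone of an equivalence, so $E_i(V)\in\MV_0(X,\mathcal{D})$, and it is compact because $\Lan$ preserves compact objects (Lemma \ref{Lancomp}). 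The substantive point is the converse: given a compact $M\in\MV(X,\mathcal{D})$ with $\alpha(M)\simeq 0$, one resolves $M$ by finite complexes of sums of shifts of the $E_i(V)$, using the skeletal bifibre sequence of Lemma \ref{fibseq} to reduce to data on vertices and edges and then using that $X$ is a \emph{tree} --- i.e. that $\alpha$ is a left Bousfield localization by Proposition \ref{MVBous}, and that every edge is attached to its two endpoints and nothing else --- to cancel the vertex contributions against edge contributions. This establishes \ref{Kercompg}.

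\textbf{From generators to $K$-theory.} A direct mapping-spectrum computation (again via Corollary \ref{Ranres}, noting $E_i(W)$ is concentrated at the orbits of $e$ and $\partial_i e$) gives $\map_{\MV(X,\mathcal{D})}(E_i(V),E_i(W))\simeq\map_{G_e}(V,W)$, so $E_i\colon\mathcal{D}(e)\to\MV_0(X,\mathcal{D})$ is fully faithful. Let $\mathcal{A}_i\subseteq\MV_0(X,\mathcal{D})$ be the localizing subcategory generated by $\{E_i(V):V\in\mathcal{D}(e)^\omega\}$, so that $\mathcal{A}_i^\omega\simeq\mathcal{D}(e)^\omega$. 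One checks that $\mathcal{A}_0\cap\mathcal{A}_1=0$ (an acyclic resolution supported only on the edge orbit vanishes, by conservativity of compact induction) and, by the generation statement above, that $\mathcal{A}_0$ and $\mathcal{A}_1$ jointly generate $\MV_0(X,\mathcal{D})$; moreover $\mathcal{A}_0$ is a localizing subcategory whose Verdier quotient functor admits a section that is compatible with compact objects and whose essential image agrees with $\mathcal{A}_1$ up to idempotent completion. This produces a split Verdier sequence in $Pr^L_{st,\omega}$
$$
\mathcal{A}_0 \longrightarrow \MV_0(X,\mathcal{D}) \longrightarrow \mathcal{A}_1 ,
$$
and passing to compact objects and applying the localizing invariant $K$ yields
$$
K(\MV_0(X,\mathcal{D})^\omega)\simeq K(\mathcal{A}_0^\omega)\oplus K(\mathcal{A}_1^\omega)\simeq K(\mathcal{D}(e)^\omega)\oplus K(\mathcal{D}(e)^\omega).
$$

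\textbf{Main obstacle.} The crux is the two claims that use the tree: that the explicit ``one-sided'' resolutions $E_i(V)$ generate $\ker\alpha$, and that the two families $\mathcal{A}_0,\mathcal{A}_1$ fit into a \emph{split} Verdier sequence with the displayed third term. (The two subcategories are not orthogonal in general, so the splitting is not a semiorthogonal-decomposition formality; it has to be extracted from the geometry of how edges meet vertices in an edge-transitive tree action, following Waldhausen, and one must treat the one- and two-vertex-orbit cases with slightly different bookkeeping.) Everything else --- compactness of the $E_i(V)$, full faithfulness of $E_i$, $t$-exactness and regularity bookkeeping --- is routine given the earlier lemmas.
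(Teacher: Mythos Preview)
Your objects $E_i(V)$ are the right ones --- they match what the paper builds via the barycentric subdivision in Proposition~\ref{doubleedgeker} and Lemma~\ref{compc0} --- and your full-faithfulness computation $\map(E_i(V),E_i(W))\simeq\map_{G_e}(V,W)$ is correct. But the paper does \emph{not} produce a split Verdier sequence at the stable level; it instead equips $\MV_0(X,\mathcal{D})^\omega$ with a bounded $t$-structure with Noetherian heart (Lemma~\ref{tker}), invokes the non-connective Theorem of the Heart, and then applies Quillen D\'evissage in the heart using the ``nilpotence'' filtration $M^n$ of Lemmas~\ref{FiltKer}--\ref{DevReq} to reduce to the dense abelian subcategory $\mathcal{C}(\mathcal{D})^0\simeq\mathcal{D}(e)^\heartsuit\oplus\mathcal{D}(e)^\heartsuit$.

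Your approach has a genuine gap precisely at the step you flag as the main obstacle. A direct computation with Corollary~\ref{Ranres} gives
\[
\map_{\MV(X)}\bigl(E_0(V),E_1(W)\bigr)\;\simeq\;\fib\bigl(\map_{G_e}(V,W)\to\map_{G_e}(V,\res_{G_e}\ind_{G_e}^{G_{\partial_1 e}}W)\bigr)\;\simeq\;\map_{G_e}(V,W')[-1]
\]
with $W'=\mathrm{coker}(W\to\res\ind W)\neq 0$, and symmetrically $\map(E_1(W),E_0(V))\neq 0$. So neither semiorthogonality holds, $E_1(W)\notin\mathcal{A}_0^{\perp}$, and the section of the quotient $\MV_0\to\MV_0/\mathcal{A}_0$ does \emph{not} have essential image $\mathcal{A}_1$. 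To identify $\bigl(\MV_0/\mathcal{A}_0\bigr)^\omega$ with $\mathcal{D}(e)^\omega$ you would have to show that all the cross-extensions above cancel in the quotient, which is exactly the content absorbed by D\'evissage in the paper's argument; the phrase ``following Waldhausen'' does not supply this. Likewise, your generation sketch (``cancel vertex contributions against edge contributions'') is essentially the heart filtration of Lemma~\ref{FiltKer} in disguise and needs the $t$-structure to be made precise. In short: the ingredients are right, but replacing D\'evissage~$+$~Theorem of the Heart by a stable split Verdier sequence is not a formality here, and your proposal does not close that gap.
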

The arguments in this section will rely on the existence of a bounded $t$-structure on $\MV_0(X, \mathcal{D})^\omega$.
In particular, we want to apply the (non-connective) Theorem of the Heart and D\'evissage.
\begin{remark}
    The right adjoint of the functor $\alpha: \MV(X) \to D(G)$ preserves filtered colimits. 
    This implies that the right adjoint of the inclusion also preserves filtered colimits since it can be constructed as the fibre of the unit $\id \to \res{}{} \circ \alpha$, see \cite[Lemma A.2.5]{calmes2020hermitian}.
    Hence, the compact objects $\MV_0(X, \mathcal{D})^\omega$ are precisely the compact objects of $\MV(X, \mathcal{D})$ that are in the kernel of $\alpha$.
    In particular, we can use Lemma \ref{pointcomp} to check if an object is compact.
\end{remark}
\begin{lemma}
    \label{tker}
    Let $\mathcal{D}$ be an admissible collection.
    The $t$-structure on $\MV(X, \mathcal{D})$ restricts along the inclusion $\MV_0(X, \mathcal{D}) \to \MV(X, \mathcal{D})$ to a $t$-structure on $\MV_0(X, \mathcal{D})$. 
    In addition, this restricts further to a bounded $t$-structure on  $\MV_0(X, \mathcal{D})^\omega$ such that the heart $\MV_0(X, \mathcal{D})^{\omega, \heartsuit}$ is Noetherian.
\end{lemma}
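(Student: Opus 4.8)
The plan is to leverage the standard machinery for restricting $t$-structures to kernels of right-$t$-exact functors, combined with the already-established facts about $\MV(X,\mathcal{D})$. First I would observe that $\alpha\colon \MV(X,\mathcal{D}) \to D(G)$ is right $t$-exact: by Lemma~\ref{alphaconnective}(1) it preserves connective objects (indeed $\alpha(M) \in D(G)_{\geq n}$ whenever $M \in \MV(X,\mathcal{D})_{\geq n}$), so the full subcategory $\MV_0(X,\mathcal{D}) = \ker(\alpha)$ inherits a candidate $t$-structure whose connective part is $\MV_0(X,\mathcal{D})_{\geq 0} = \MV_0(X,\mathcal{D}) \cap \MV(X,\mathcal{D})_{\geq 0}$ and whose coconnective part is $\MV_0(X,\mathcal{D})_{\leq 0} = \MV_0(X,\mathcal{D}) \cap \MV(X,\mathcal{D})_{\leq 0}$. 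The point requiring proof is that truncation stays inside the kernel: given $M \in \MV_0(X,\mathcal{D})$, I must show $\tau_{\leq -1} M$ and $\tau_{\geq 0} M$ are again in $\ker(\alpha)$. Here I would use the fibre sequence $\tau_{\geq 0}M \to M \to \tau_{\leq -1}M$; applying $\alpha$ and using right $t$-exactness, $\alpha(\tau_{\geq 0}M)$ is connective while $\alpha(\tau_{\leq -1}M)$ is $(\dim X -1)$-coconnective by Lemma~\ref{alphaconnective}(2) combined with the dimension shift --- wait, more carefully: since $X$ is a tree, $\dim X = 1$, so if $M$ is $(\leq -1)$-truncated then $\alpha(M) \in D(G)_{\leq 0}$, and if $M$ is connective then $\alpha(M)$ is connective. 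The cleanest route is: $\alpha(M) = 0$, so the fibre sequence $\alpha(\tau_{\geq 0}M) \to 0 \to \alpha(\tau_{\leq -1}M)$ forces $\alpha(\tau_{\leq -1}M) \simeq \alpha(\tau_{\geq 0}M)[1]$; but the left side is coconnective (degrees $\leq 0$) and the right side is connective shifted up, i.e. in degrees $\geq -1$... this still leaves ambiguity, so the honest argument must use that $X$ is a tree to pin down $\alpha$ on $\MV_0$ more precisely, or alternatively appeal directly to the adjunction structure.

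A cleaner approach, which I would actually pursue: use that $\alpha$ is a Bousfield localization (Proposition~\ref{MVBous}) with fully faithful right adjoint $\res{p}{}$, so $\MV_0(X,\mathcal{D})$ is the kernel of a localization and the inclusion $\MV_0(X,\mathcal{D}) \hookrightarrow \MV(X,\mathcal{D})$ admits both adjoints on the relevant subcategories. Then $\MV_0(X,\mathcal{D})$ is a localizing and colocalizing subcategory, and for such subcategories the ambient $t$-structure restricts provided the subcategory is closed under truncations. To verify closure under truncation I would argue pointwise combined with the explicit complex of Corollary~\ref{AssemFormula}: for $M \in \MV(X,\mathcal{D})^\heartsuit$, $\alpha(M)$ is the two-term complex $\bigoplus_{[e]}\ind{G_e}{G}M(e) \to \bigoplus_{[v]}\ind{G_v}{G}M(v)$ concentrated in degrees $0,1$; so $\alpha(M) = 0$ in the heart forces this complex to be acyclic. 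For a general $M \in \MV_0(X,\mathcal{D})$, I would run a spectral sequence / filtration argument on the Postnikov tower of $M$: each $\pi_n M \in \MV(X,\mathcal{D})^\heartsuit$ contributes $\alpha(\pi_n M)$ (a two-term complex) to $\alpha(M)$, and the condition $\alpha(M)=0$ together with the degreewise structure should let me conclude that truncations of $M$ also map to $0$. This is where the tree hypothesis (giving length-$\leq 1$ complexes, hence a very short filtration) does the real work.

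For the second half --- boundedness on $\MV_0(X,\mathcal{D})^\omega$ and Noetherianity of the heart --- I would proceed as follows. Compact objects of $\MV_0(X,\mathcal{D})$ are, by the Remark preceding this lemma, exactly the compact objects of $\MV(X,\mathcal{D})$ lying in $\ker(\alpha)$; by Lemma~\ref{pointcomp} these are the pointwise-compact resolutions. Lemma~\ref{tcomprel} already gives that the $t$-structure on $\MV(X,\mathcal{D})$ restricts to $\MV(X,\mathcal{D})^\omega$ with Noetherian heart, using regularity of $R$ and \cite[Theorem 7.2]{BL22}. Intersecting with $\ker(\alpha)$: truncations of a compact object in $\ker(\alpha)$ are compact (by Lemma~\ref{tcomprel}) and in $\ker(\alpha)$ (by the first half), hence in $\MV_0(X,\mathcal{D})^\omega$; this gives the restricted $t$-structure. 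Boundedness is immediate since a pointwise-compact resolution on the finite-dimensional $X$ has bounded homotopy (each $D(G_x)^\omega$ has bounded-above-and-below objects as $D(G_x)$ is regular, and there are finitely many orbits). Noetherianity of $\MV_0(X,\mathcal{D})^{\omega,\heartsuit}$ follows because it is a full subcategory of the Noetherian abelian category $\MV(X,\mathcal{D})^{\omega,\heartsuit}$ closed under subobjects and quotients (any subobject in $\MV(X,\mathcal{D})^\heartsuit$ of an object killed by the exact functor $\alpha|_{\heartsuit}$ is again killed by it, since $\alpha$ on the heart is given by taking homology of the two-term complex, which is exact enough --- more precisely $\alpha|_{\heartsuit}$ is right exact, so one must check kernels of the induced map land in $\ker$, which again uses the explicit short complex).

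The main obstacle I anticipate is precisely the closure of $\MV_0(X,\mathcal{D})$ under the ambient truncation functors: naively, $\ker$ of a merely right-$t$-exact functor need not be closed under $\tau_{\leq -1}$, and the dimension-shift in Lemma~\ref{alphaconnective}(2) means $\alpha$ is not $t$-exact, only ``almost''. Making this rigorous will require genuinely using that $X$ is one-dimensional so that Corollary~\ref{AssemFormula} expresses $\alpha$ on the heart as a two-term complex, and then carefully tracking how $\alpha$ interacts with the Postnikov filtration of an object of $\MV_0$; the acyclicity of the two-term complexes $\alpha(\pi_n M)$ for all $n$ is what I'd need to extract, and translating $\alpha(M)=0$ into that statement is the technical heart. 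Everything else (existence of adjoints, compact generation deferred to Proposition~\ref{Kercompg}, Noetherianity, boundedness) follows formally from the results already assembled in the excerpt.
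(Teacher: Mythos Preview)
Your first approach is exactly the paper's proof, and you abandoned it prematurely due to a sign/shift slip. From the fibre sequence $\alpha(\tau_{\geq 0}M) \to 0 \to \alpha(\tau_{\leq -1}M)$ you get $\alpha(\tau_{\leq -1}M) \simeq \alpha(\tau_{\geq 0}M)[1]$. Now $\alpha(\tau_{\geq 0}M) \in D(G)_{\geq 0}$ by Lemma~\ref{alphaconnective}(1), so its shift $[1]$ lies in $D(G)_{\geq 1}$ (not $D(G)_{\geq -1}$ as you wrote --- suspension raises connectivity). On the other hand $\tau_{\leq -1}M \in \MV(X,\mathcal{D})_{\leq -1}$, so Lemma~\ref{alphaconnective}(2) with $\dim X = 1$ gives $\alpha(\tau_{\leq -1}M) \in D(G)_{\leq 0}$. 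An object simultaneously in $D(G)_{\geq 1}$ and $D(G)_{\leq 0}$ is zero, so both truncations lie in $\ker(\alpha)$. This is precisely the argument in the paper (phrased there via $\pi_{i+1}(\alpha(M_{<0})) \cong \pi_i(\alpha(M_{\geq 0}))$). There is no ambiguity and no need for your ``cleaner approach'' with Postnikov towers, spectral sequences, or the explicit two-term complex --- all of that is extra machinery solving a problem that was already solved in your first paragraph.

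For the second half your argument matches the paper's: truncations of compacts stay compact by Lemma~\ref{tcomprel} and stay in the kernel by the first part; boundedness is inherited from $\MV(X,\mathcal{D})^\omega$; and $\MV_0(X,\mathcal{D})^{\omega,\heartsuit}$ is Noetherian simply because it is a full abelian subcategory of the Noetherian $\MV(X,\mathcal{D})^{\omega,\heartsuit}$ (an ascending chain in the subcategory is an ascending chain in the ambient category). You do not need to verify closure under subobjects and quotients for this --- Noetherianity of a full subcategory closed under subobjects is immediate, and in fact just being a full subcategory suffices for the ascending chain condition to transfer.
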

\begin{proof}
    For the first part, we only need to show that the truncation functors $\tau_{\geq 0}$ and $\tau_{\leq 0}$ restrict to the kernel.
    Let $M \in \MV_0(X, \mathcal{D})$. 
    Then we have a bifibre sequence 
\[\begin{tikzcd}
	{M_{\geq 0}} && M && {M_{<0}}
	\arrow[from=1-3, to=1-5]
	\arrow[from=1-1, to=1-3]
\end{tikzcd}\]    
    in $\MV(X, \mathcal{D})$ and hence a fibre sequence 
\[\begin{tikzcd}
	{\alpha(M_{\geq 0})} && {\alpha(M)} && {\alpha(M_{<0}).}
	\arrow[from=1-3, to=1-5]
	\arrow[from=1-1, to=1-3]
\end{tikzcd}\]
    Since $\alpha(M)\simeq 0$ by assumption, we see that $\pi_{i+1}(\alpha(M_{<0})) \cong \pi_i(\alpha(M_{\geq 0 }))$.
    But by \ref{alphaconnective} we know that $\alpha(M_{\geq 0}) \in D(G)_{\geq 0}$.
    Furthermore, $\alpha(M_{<0})\in D(G)_{\leq 0}$ again by \ref{alphaconnective}.
    Then the above isomorphism of homotopy groups shows that $\alpha (M_{<0})=\alpha(M_{\geq 0})=0 $. 
    The claim that this restricts to a $t$-structure on compact objects follows immediately since both truncations preserve compact objects by $\ref{tcomprel}$. 
    Furthermore, since the $t$-structure on $\MV(X, \mathcal{D})^\omega$ is bounded we again have 
    $$
    \MV_0(X, \mathcal{D})^\omega = \bigcup_{a \leq b} \MV_0(X, \mathcal{D})^\omega_{[a,b]}.
    $$
    Lastly $\MV_0(X, \mathcal{D})^{\omega, \heartsuit}$ is Noetherian since it is a full subcategory of $\MV(X, \mathcal{D})^{\omega, \heartsuit}$.
    The latter is Noetherian since all $D(x)^{\omega, \heartsuit}$ are noetherian.
\end{proof}

\begin{remark}
    If the dimension of $X$ is greater than one it is not clear that the $t$-structure on $\MV(X, \mathcal{D})^\omega$ restricts to a (bounded) $t$-structure on $\MV_0(X, \mathcal{D})^\omega$. 
    In fact the existence of a bounded $t$-structure on $\MV_0(X, \mathcal{D})^\omega$ implies by an argument by Schlichting, adapted to stable $\infty$-categories in \cite{AGH19}, that $K_{-1}(\MV_0(X, \mathcal{D})^\omega)=0$. 
    Then the long exact sequence of homotopy groups yields that the Farrell--Jones assembly map with respect to the compact open subgroups of $G$ is surjective on $K_0$ which is highly non-trivial.
    However, as recently shown by Bartels and Lück in \cite{bartels2023algebraic}, the assembly map is an equivalence.
    Therefore, the surjectivity on $K_0$ is not an obstruction and the $t$-structure might still restrict to $\MV_0(X, \mathcal{D})^\omega$.
\end{remark}
Before we proceed we make the following observation. For any edge $e$ the complement $X \setminus e$ has two path components, each containing exactly one vertex of $e$.
If $x$ is a vertex of $e$, let $\Gamma_x(e)$ be the path component of $X \setminus e$ containing $x$.
Then $\Gamma_x(e)$ inherits an action of $G_e$ because $G$ acts without inverting edges. 
Furthermore, denote by $\Gamma_x^n(e)$ the intersection of the ball of radius $n$ around $x$ and $\Gamma_x(e)$. 
Here the distance between two vertices is the number of edges between them.
The distance between an edge $e$ and a vertex $v$ is the maximum of the distances between the vertices of $e$ and $v$. 
We obtain a chain of inclusions with $\colim_n \Gamma_x^n(e) = \Gamma_x(e)$. 

\begin{lemma}
    \label{FiltGamma}
    Let $M \in \MV(X, \mathcal{D})$ and let $e$ be an edge of $X$ with vertex $x$. Then 
    $\colim_n M_{\Gamma_x^n(e)} = M_{\Gamma_x(e)}$ in $\MV_{G_e}(X, \mathcal{D})$.
\end{lemma}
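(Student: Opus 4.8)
The plan is to reduce the statement to a pointwise computation and then read it off from the explicit description of $M_Y$ provided by Lemma \ref{fibseq}. Recall first that colimits in $\MV_{G_e}(X, \mathcal{D})$ are computed pointwise: they are computed pointwise in $\Fun(\int_{G_e}\simp{X}, D(R))$ by Lemma \ref{Lancomp}, the full subcategory of smooth functors is closed under colimits by Corollary \ref{MVcolim}, and each $\mathcal{D}(z) \subseteq D(G_z)$ is coreflective, hence closed under colimits in $D(G_z)$. (Note also that each $\Gamma_x^n(e)$ is $G_e$-stable, since $G$ acts without inversion, so $G_e$ fixes the vertex $x$ and preserves balls around it; thus each $M_{\Gamma_x^n(e)}$ indeed lies in $\MV_{G_e}(X,\mathcal{D})$, and the transition maps $M_{\Gamma_x^n(e)} \to M_{\Gamma_x^{n+1}(e)}$ are the ones induced, via Kan extension, by the inclusions of subcomplexes $\Gamma_x^n(e) \hookrightarrow \Gamma_x^{n+1}(e) \hookrightarrow X$, and are compatible with the maps to $M_{\Gamma_x(e)}$ and to $M$.) It therefore suffices to show that for every simplex $z \in X$ the canonical map
$$\colim_n M_{\Gamma_x^n(e)}(z) \longrightarrow M_{\Gamma_x(e)}(z)$$
is an equivalence in $D(R)$.

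Fix such a $z$ and distinguish two cases. If $z$ does not lie in $\Gamma_x(e)$, then $z$ is not a simplex of $\Gamma_x^n(e)$ for any $n$, so $M_{\Gamma_x^n(e)}(z) \simeq 0$ for all $n$ and $M_{\Gamma_x(e)}(z) \simeq 0$ by Lemma \ref{fibseq}, and both sides vanish. If $z$ lies in $\Gamma_x(e)$, then since $\colim_n \Gamma_x^n(e) = \Gamma_x(e)$ is an increasing exhaustion there is an $N$ (for instance the distance from $x$ to $z$) such that $z$ is a simplex of $\Gamma_x^n(e)$ for all $n \geq N$. By Lemma \ref{fibseq} the counit maps $M_{\Gamma_x^n(e)}(z) \to M(z)$ (for $n \geq N$) and $M_{\Gamma_x(e)}(z) \to M(z)$ are equivalences, and since the transition maps commute with these counits, the maps $M_{\Gamma_x^n(e)}(z) \to M_{\Gamma_x^{n+1}(e)}(z)$ are equivalences for $n \geq N$. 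The tail $\{\,n \geq N\,\} \subseteq \N$ is cofinal, so the colimit over $n$ agrees with the colimit over this tail, which is $M(z)$ as the diagram there consists of equivalences; this identification is compatible with the equivalence $M_{\Gamma_x(e)}(z) \xrightarrow{\ \sim\ } M(z)$, yielding the claimed equivalence.

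Both cases use only Lemma \ref{fibseq} together with the pointwise computation of colimits, so there is no serious obstacle here. The only point requiring mild care is the bookkeeping of the various counit maps --- i.e.\ that the filtration maps are genuinely induced by the inclusions of subcomplexes --- which is exactly what guarantees that the eventual transition maps are equivalences and that the colimit is correctly identified with $M_{\Gamma_x(e)}(z)$.
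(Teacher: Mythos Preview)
Your proof is correct and follows essentially the same approach as the paper: reduce to a pointwise computation using that colimits in $\MV_{G_e}(X,\mathcal{D})$ are computed pointwise, then observe that at each simplex the diagram is eventually constant (equal to $M(z)$ for $z\in\Gamma_x(e)$ via Lemma~\ref{fibseq}) or identically zero. Your write-up is in fact a bit more careful than the paper's about the $G_e$-stability of the $\Gamma_x^n(e)$ and the compatibility of the transition maps, but the underlying argument is the same.
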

\begin{proof}
    If $M \in MV(X, \mathcal{D})$, then $M_{\Gamma^n_x(e)} \in \MV_{G_e}(X, \mathcal{D})$. 
    Since by \ref{MVLoc} colimits in $\MV_{G_e}(X, \mathcal{D})$ can be computed in $\MV_{G_e}(X)$ it suffices to only consider the case $M \in \MV(X)$.
    The full inclusion $\MV(X) \to \Fun(\int_G \simp{X}\op, D(R))$ is a left adjoint \ref{smoothRes} and hence commutes with colimits.
    Therefore, we can compute the colimit of the diagram in $\Fun(\int_G\simp{X}^{op}, D(R))$.
    This is done pointwise.
    For any simplex in $\Gamma_x(e)$ there is an integer $N\geq 0$ such that $\Gamma_x^N(e)$ already contains the given simplex. Since the subcomplexes are contained in each other, this is also true for all $n\geq N$. Hence, for any simplex of $\Gamma_x(e)$ the colimit diagram becomes constant after finitely many stages and the value agrees with the value of $M_{\Gamma_x(e)}$. For any simplex outside $\Gamma_x(e)$ the same is also true since both sides are zero.
    This shows that $\colim_n M_{\Gamma_x^n(e)} = M_{\Gamma_x(e)}$. 
\end{proof}
Next, we introduce a decomposition of $M(e)$ for every $M \in \MV(X, \mathcal{D})$ given an edge $e \in X_1$.
Consider the chain of inclusions from before
\[\begin{tikzcd}
	{x = \Gamma^0_x(e)} && {\Gamma^1_x(e)} && {\Gamma^2_x(e)} && \ldots .
	\arrow[from=1-1, to=1-3]
	\arrow[from=1-3, to=1-5]
	\arrow[from=1-5, to=1-7]
\end{tikzcd}\]
This induces a diagram in $D(G_e)$ 
\begin{equation}    
    \label{NilFilt}
\begin{tikzcd}[ampersand replacement=\&]
	{M(e)} \\
	\\
	{M(\Gamma_x^0(e))} \&\& {M(\Gamma_x^1(e))} \&\& {M(\Gamma^2_x(e))} \&\& {\ldots }
	\arrow[from=3-1, to=3-3]
	\arrow[from=3-3, to=3-5]
	\arrow[from=3-5, to=3-7]
	\arrow[from=1-1, to=3-1]
	\arrow[from=1-1, to=3-3]
	\arrow[from=1-1, to=3-5]
\end{tikzcd}
\end{equation}
where the vertical maps are given by the map $M(e) \to M(x)$ followed by $M(x) \to M(\Gamma^n_x(e))$. Since $\alpha$ is compatible with colimits, the colimit of the lower row is $M(\Gamma_x(e))$.
For any edge $e$ we obtain by Lemma \ref{fibseq} a bifibre sequence of $G_e$-equivariant resolutions
\[\begin{tikzcd}[ampersand replacement=\&]
	{M_{\Gamma_{\partial_0e}(e)} \bigoplus M_{\Gamma_{\partial_1e}(e)}} \&\& M \&\& {M_e}.
	\arrow[from=1-1, to=1-3]
	\arrow[from=1-3, to=1-5]
\end{tikzcd}\]
From now on assume that $M \in \MV_0(X, \mathcal{D})^\heartsuit$. Then $\alpha(M) \simeq 0$ and the above sequence induces an equivalence 

\begin{equation}
\label{KerDecomp}   
\begin{tikzcd}[ampersand replacement=\&]
	\alpha(M_e)[1]\simeq {M(e)} \&\& {M(\Gamma_{\partial_0e}(e))\bigoplus M(\Gamma_{\partial_1e}(e)).}
	\arrow[from=1-1, to=1-3]
\end{tikzcd}
\end{equation}
In particular, the right-hand side is also in $\MV(X, \mathcal{D})^\heartsuit$.
Set $M_i(e) := \ker (M(e) \to M(\Gamma_{\partial_ie}(e))) \in \mathcal{D}(e)^\heartsuit \subseteq D(G_e)^\heartsuit = \rep{G_e}$.
Then $M(e) = M_0(e) \bigoplus M_1(e)$ and if $M(e)$ is compact then $M_0(e)$ and $M_1(e)$ are also compact.  
In that case  
\begin{equation*}
    \Map (M_i(e) , M(\Gamma_{\partial_ie}(e))) \simeq \colim_n \, \Map(M_i(e), M(\Gamma_{\partial_ie}^n(e))).
\end{equation*}
A property of filtered colimits in anima is that points that lie in the same path component of the colimit are already in the same path component at some finite stage.
This implies that already one of the compositions of the inclusion $M_i(e) \to M(e)$ with the vertical maps of \eqref{NilFilt} has to be homotopic to zero (and hence zero since $\Map(M_i(e), M(\Gamma^n_{\partial_ie}(e)))$ is discrete) at some finite stage. 
This allows us to define a chain of full subcategories of $\MV_0(X, \mathcal{D})^{\omega, \heartsuit}$.
\begin{definition}
For $n \geq 0$ let $\mathcal{C}(\mathcal{D})^n$ be the full subcategory of $\MV_0(X, \mathcal{D})^{\heartsuit}$ spanned by $M\in \MV(X, \mathcal{D})^{\heartsuit}$ such that for some edge $e$ (and hence every edge) both maps 
$M_i(e) \to M(\Gamma^n_{\partial_ie}(e))$ for $i=0,1$ are zero.
\end{definition}
\begin{remark}
    The previous argument also shows that on compact objects we have an equivalence $\colim_n\,  \mathcal{C}(\mathcal{D})^{n, \omega} = \MV_0(X, \mathcal{D})^{ \heartsuit, \omega}$. 
\end{remark}
Before we proceed with the computation of the $K$-theory spectrum we need the following technical lemma.
\begin{lemma}
    \label{lemmadiff}
    The map $M(e) \to M(\Gamma^n_{\partial_ie}(e))$ in \eqref{KerDecomp} factors into the composition of the structure map $M(e) \to M(\partial_ie)$ and the map induced from the inclusion $M(\partial_ie) \to M(\Gamma^n_{\partial_ie}(e))$.
\end{lemma}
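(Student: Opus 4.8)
The plan is to realise the map $M(e)\to M(\Gamma^n_{\partial_ie}(e))$ as (a component of) the connecting map of a Mayer--Vietoris bifibre sequence, and then to compute that connecting map by comparing with the corresponding sequence for the closed edge $\bar e:=e\cup\{\partial_0e\}\cup\{\partial_1e\}$, where everything can be written down explicitly. Since $G$ acts without inversion, $G_e$ fixes $\partial_0e$ and $\partial_1e$ and hence preserves each path component $\Gamma_{\partial_ie}(e)$, so all the subcomplexes below are $G_e$-stable and we work throughout in $\MV_{G_e}(X)$. Put $Z^n:=\bar e\cup\Gamma^n_{\partial_0e}(e)\cup\Gamma^n_{\partial_1e}(e)$, and note that $\{\partial_ie\}\subseteq\Gamma^n_{\partial_ie}(e)$ (indeed $\Gamma^0_{\partial_ie}(e)=\{\partial_ie\}$), that $\bar e$ and $Z^n$ have the same complement of their respective vertex-subcomplexes, namely the single open edge $e$, and that $\alpha_{G_e}(M)\simeq 0$ by the running assumption $M\in\MV_0(X,\mathcal{D})^\heartsuit$. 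By \eqref{NilFilt}, \eqref{KerDecomp} and Lemma \ref{fibseq}, the map in question is the $i$-th component, after applying $\alpha_{G_e}$, of the connecting map of the bifibre sequence of Lemma \ref{fibseq} attached to $\Gamma^n_{\partial_0e}(e)\sqcup\Gamma^n_{\partial_1e}(e)\hookrightarrow Z^n$.

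Next I would use functoriality. The inclusion $\bar e\hookrightarrow Z^n$ is a map of pairs which is the identity on the common complement (the open edge $e$). Since $M\mapsto M_{(-)}$, and hence the bifibre sequence of Lemma \ref{fibseq}, is functorial in such inclusions, one obtains a morphism of bifibre sequences in $\MV_{G_e}(X)$ from the one attached to $\{\partial_0e\}\sqcup\{\partial_1e\}\hookrightarrow\bar e$ to the one attached to $\Gamma^n_{\partial_0e}(e)\sqcup\Gamma^n_{\partial_1e}(e)\hookrightarrow Z^n$: on the first terms it is the subcomplex-inclusion map $M_{\{\partial_0e\}}\oplus M_{\{\partial_1e\}}\to M_{\Gamma^n_{\partial_0e}(e)}\oplus M_{\Gamma^n_{\partial_1e}(e)}$, on the middle terms it is $M_{\bar e}\to M_{Z^n}$, and on the last terms it is an equivalence, since by the pointwise description in Lemma \ref{fibseq} both are $M(e)$ on the open edge and zero elsewhere. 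Naturality of connecting maps then expresses the connecting map of the bottom sequence as this subcomplex-inclusion map precomposed with the connecting map of the top sequence; applying $\alpha_{G_e}$ turns the subcomplex-inclusion map into exactly the map $M(\partial_ie)\to M(\Gamma^n_{\partial_ie}(e))$ ``induced from the inclusion'' of the statement.

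It then remains to compute the connecting map of the closed-edge sequence $M_{\{\partial_0e\}}\oplus M_{\{\partial_1e\}}\to M_{\bar e}\to M_e$. Applying Corollary \ref{AssemFormula} $G_e$-equivariantly (inside $\bar e$ there is one $G_e$-orbit of edges and two of vertices, all with stabiliser $G_e$), $\alpha_{G_e}(M_{\bar e})$ is the two-term complex $M(e)\xrightarrow{(\partial_0,-\partial_1)}M(\partial_0e)\oplus M(\partial_1e)$, the term $\alpha_{G_e}(M_{\{\partial_0e\}}\oplus M_{\{\partial_1e\}})$ is $M(\partial_0e)\oplus M(\partial_1e)$ concentrated in degree $0$, and the map between them is the canonical map onto the cofibre of $(\partial_0,-\partial_1)$. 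Rotating the cofibre triangle of $(\partial_0,-\partial_1)$ then identifies $\alpha_{G_e}(M_e)$ with $M(e)[1]$, which reproves \eqref{KerDecomp}, and identifies its connecting map with $(\partial_0,-\partial_1)[1]$. Combined with the previous step, the $i$-th component of the connecting map we started from is $\pm$ the structure map $M(e)\to M(\partial_ie)$ followed by the inclusion-induced map $M(\partial_ie)\to M(\Gamma^n_{\partial_ie}(e))$, which is the assertion; the sign $(-1)^i$ is irrelevant for the only use made of the lemma (that these composites eventually vanish). Passing to the colimit over $n$ via Lemma \ref{FiltGamma} recovers the statement for the map appearing literally in \eqref{KerDecomp}.

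I expect the only genuine difficulty to be one of coherence and bookkeeping rather than of ideas: making sure the comparison of bifibre sequences is functorial in the $\infty$-categorical sense (so the naturality square for connecting maps commutes coherently, not merely in the homotopy category), that the rotation equivalence $\mathrm{cofib}\bigl(B\to\mathrm{cofib}(f)\bigr)\simeq A[1]$ is the canonical one and is compatible with connecting maps, and that the orbit- and stabiliser-bookkeeping in Corollary \ref{AssemFormula} is arranged so that it really outputs the complex $M(e)\xrightarrow{(\partial_0,-\partial_1)}M(\partial_0e)\oplus M(\partial_1e)$ with the correct face maps.
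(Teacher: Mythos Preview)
Your proposal is correct and follows essentially the same route as the paper. The only cosmetic difference is that the paper works with one vertex at a time, reducing immediately to the half-closed edge $e\cup\{\partial_ie\}$ and asserting that the general $n$ ``follows by post composing with the inclusion to $M(\Gamma^n_{\partial_ie}(e))$'', whereas you treat both vertices simultaneously via $\bar e$ and make the naturality square of bifibre sequences explicit; the underlying computation of the connecting map via \ref{AssemFormula} and a rotation of the resulting two-term complex is identical.
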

\begin{proof}
    It suffices to show the case $n=0$, the other cases follow by post composing with the inclusion to $M(\Gamma^n_{\partial_ie}(e))$.
    For this consider the fibre sequence 
\[\begin{tikzcd}[ampersand replacement=\&]
	{M({\partial_ie})} \&\& {M({e \cup \partial_ie})} \&\& {M(e)}.
	\arrow[from=1-1, to=1-3]
	\arrow[from=1-3, to=1-5]
\end{tikzcd}\]
 By (the $G_e$-equivariant version of) \ref{AssemFormula} the middle complex is equivalent to $M(e) \to M(\partial_ie)$ with differential given by the structure map. With that we obtain 
\[\begin{tikzcd}[ampersand replacement=\&]
	0 \&\& {M(e)} \&\& {M(e)} \\
	{M(\partial_ie)} \&\& {M(\partial_ie)} \&\& 0
	\arrow["{\partial_i}", from=1-3, to=2-3]
	\arrow[from=1-1, to=2-1]
	\arrow[from=1-1, to=1-3]
	\arrow["\id"{description}, from=2-1, to=2-3]
	\arrow["\id"{description}, from=1-3, to=1-5]
	\arrow[from=2-3, to=2-5]
	\arrow[from=1-5, to=2-5]
\end{tikzcd}\]
and taking cofibre once yields
\[\begin{tikzcd}[ampersand replacement=\&]
	{M(e)} \&\& {M(e)} \&\& {M(\partial_ie)} \\
	{M(\partial_ie)} \&\& 0 \&\& 0.
	\arrow["{\partial_i}", from=1-1, to=2-1]
	\arrow["\id"{description}, from=1-1, to=1-3]
	\arrow[from=2-1, to=2-3]
	\arrow[from=1-3, to=2-3]
	\arrow["{\partial_i}"{description}, from=1-3, to=1-5]
	\arrow[from=2-3, to=2-5]
	\arrow[from=1-5, to=2-5]
\end{tikzcd}\] 
Hence we see that the differential in the cofibre sequence is precisely given by the structure map.
\end{proof}
Now we can define a filtration for every $M \in \MV_0(X, \mathcal{D})^{\omega, \heartsuit}$ that will later allow us to apply Quillen's D\'evissage theorem.
Note that the heart $\MV(X, \mathcal{D})^\heartsuit$ is an (ordinary) Abelian category, therefore we do not need to worry about coherencies when constructing the filtration.
\begin{construction}
    Fix $M \in \MV_0(X, \mathcal{D})^{\heartsuit}$. 
    We use the above decomposition to define a filtration $M^n$ of $M$ by first specifying the value of $M^n$ on every simplex for every $n \geq 0$.
    By definition $M_i(e)$ is the kernel of $M(e) \to M(\Gamma_{\partial_ie}(e))$.
    Analogously define $M_i^n(e) := \ker (M(e) \to M(\Gamma_{\partial_ie}^n(e)))$ and $M^n(e):= M_0^n(e) \bigoplus M^n_1(e)$. 
    For a vertex $v \in X$, let $B_{n+1}(v)$ be the ball of radius $n$ around $v$ and define 
    $M^n(v):= \ker (M(v) \to M(B_{n+1}(v)))$.  
    Note that these are again in $\mathcal{D}(e)^\heartsuit$ resp.\ $\mathcal{D}(v)^\heartsuit$ and compact if $M$ was compact. 
    Next, we check that the structure maps of $M$ restrict to $M^n(e)$ and $M^n(v)$ and therefore define a resolution $M^n$.
    \begin{lemma}
        \label{FiltKer}
        For any morphism $e \to v$ in $\int_G \simp{X}$ the induced map $M(e) \to M(v)$ restricts to a map $M^n(e) \to M^n(v)$.
        In particular $M^n$ with the restricted maps is in $\MV(X, \mathcal{D})^{\heartsuit}$.
        If $M$ was compact, then $M^n \in \MV(X, \mathcal{D})^{\heartsuit, \omega}$. 
        Furthermore, there are maps $M^n \to M^{n+1}$ such that $\colim \, M^n = M$.
    \end{lemma}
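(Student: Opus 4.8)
The plan is to establish the four assertions in turn. The only real content is that the structure maps of $M$ restrict to $M^n$; granting that, the remaining claims follow formally from the results already set up. Since $G$ acts transitively on edges and all the subcomplexes entering the construction — the half-balls $\Gamma^m_{\partial_i e}(e)$ and the balls $B_{m+1}(v)$ — are transported along by the $G$-action, it suffices to work with the fixed edge $e$, with vertices $v_0=\partial_0 e$ and $v_1=\partial_1 e$. Because $G$ acts without inversion, $G_e$ fixes $e$, $v_0$ and $v_1$, hence preserves each half-ball $\Gamma^n_{v_i}(e)$ and therefore the kernels $M^n_i(e)$, so that $M^n(e)=M^n_0(e)\oplus M^n_1(e)$ is a $G_e$-subrepresentation of $M(e)$ (the sum being direct by \eqref{KerDecomp}); similarly $G_{v_i}$ preserves $B_{n+1}(v_i)$ and hence $M^n(v_i)$. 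So the content is the restriction of the two face maps $M(e)\to M(v_i)$.

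I will check that $M(e)\xrightarrow{\partial_0} M(v_0)$ sends $M^n(e)$ into $M^n(v_0)$; the case of $\partial_1$ is symmetric. Write an element of $M^n(e)$ as $x_0+x_1$ with $x_j\in M^n_j(e)$. For $x_0\in M^n_0(e)=\ker(M(e)\to M(\Gamma^n_{v_0}(e)))$, Lemma \ref{lemmadiff} factors this map as $M(e)\xrightarrow{\partial_0}M(v_0)\to M(\Gamma^n_{v_0}(e))$, the second arrow coming from $\{v_0\}\hookrightarrow\Gamma^n_{v_0}(e)$; as $\{v_0\}\hookrightarrow\Gamma^n_{v_0}(e)\hookrightarrow B_{n+1}(v_0)$, the map $M(v_0)\to M(B_{n+1}(v_0))$ factors through $M(\Gamma^n_{v_0}(e))$, in which $\partial_0 x_0$ already vanishes, so $\partial_0 x_0\in M^n(v_0)$. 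For $x_1\in M^n_1(e)=\ker(M(e)\to M(\Gamma^n_{v_1}(e)))$, the same lemma shows that $\partial_1 x_1$ vanishes in $M(\Gamma^n_{v_1}(e))$. Now apply Corollary \ref{AssemFormula} to the closed edge $\bar e$: the object $M(\bar e)$ is the two-term complex $M(e)\to M(v_0)\oplus M(v_1)$ whose differential is the difference of the face maps, so in $\pi_0 M(\bar e)$ the image of $\partial_0 x_1$ under the $v_0$-inclusion coincides with the image of $\partial_1 x_1$ under the $v_1$-inclusion — the two differ by the differential applied to $x_1$. Since $\bar e$ and $\Gamma^n_{v_1}(e)$ both lie inside $B_{n+1}(v_0)$, it follows that the image of $\partial_0 x_1$ in $M(B_{n+1}(v_0))$ equals the image of $\partial_1 x_1$ along $M(v_1)\to M(\Gamma^n_{v_1}(e))\to M(B_{n+1}(v_0))$, which is zero; hence $\partial_0 x_1\in M^n(v_0)$.

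Consequently $M^n$ is a well-defined functor $\int_G\simp{X}\to D(R)$ — no coherence conditions need to be checked, since $\MV(X,\mathcal{D})^\heartsuit$ is an ordinary abelian category. Its values are the pointwise kernels already observed to lie in $\mathcal{D}(e)^\heartsuit$, resp. $\mathcal{D}(v)^\heartsuit$ (using $t$-exactness of the inclusions $\mathcal{D}(x)\hookrightarrow D(G_x)$ together with closure of the admissible collection under induction and finite colimits, which puts the finite realisations $M(\Gamma^n_{v_i}(e))$ and $M(B_{n+1}(v))$ into $\mathcal{D}(e)$ resp. $\mathcal{D}(v)$), and it is smooth by Remark \ref{smoothRest}; thus $M^n\in\MV(X,\mathcal{D})^\heartsuit$. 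If $M$ is compact then it is pointwise compact by Lemma \ref{pointcomp}, the pointwise kernels are finitely presented because the hearts $D(G_x)^{\omega,\heartsuit}$ are Noetherian by Lemma \ref{tcomp}, and so $M^n\in\MV(X,\mathcal{D})^{\heartsuit,\omega}$ again by Lemma \ref{pointcomp}.

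Finally, the inclusions $\Gamma^n_{\partial_i e}(e)\subseteq\Gamma^{n+1}_{\partial_i e}(e)$ and $B_{n+1}(v)\subseteq B_{n+2}(v)$ enlarge the defining kernels, producing monomorphisms $M^n\hookrightarrow M^{n+1}$ compatible with the inclusions into $M$. To compute the colimit, recall that colimits in $\MV(X,\mathcal{D})$ are pointwise (Corollary \ref{MVcolim}), that filtered colimits are exact in the relevant Grothendieck abelian hearts, and that a class dying in a filtered colimit already dies at a finite stage. Hence on the edge $e$, using Lemma \ref{FiltGamma}, $\colim_n M^n_i(e)=\ker\bigl(M(e)\to\colim_n M(\Gamma^n_{\partial_i e}(e))\bigr)=\ker\bigl(M(e)\to M(\Gamma_{\partial_i e}(e))\bigr)=M_i(e)$, so $\colim_n M^n(e)=M_0(e)\oplus M_1(e)=M(e)$ by \eqref{KerDecomp}; and on a vertex $v$, since the balls exhaust $X$, $\alpha$ preserves colimits, and $\alpha(M)\simeq 0$, we get $\colim_n M(B_{n+1}(v))\simeq\alpha(M)\simeq 0$, so $\colim_n M^n(v)=\ker(M(v)\to 0)=M(v)$. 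Therefore $\colim_n M^n=M$. The one genuine obstacle in all of this is the argument of the second paragraph, and within it the summand $x_1$, where the computation must be routed through the complex of the closed edge and its face maps; everything else is bookkeeping with results already in hand.
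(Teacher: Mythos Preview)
Your argument is correct and follows essentially the same route as the paper: the group-action case is handled by transport of the subcomplexes, the summand $x_0$ goes through Lemma~\ref{lemmadiff} and the inclusion $\Gamma^n_{v_0}(e)\subset B_{n+1}(v_0)$, and for the summand $x_1$ you use the same Mayer--Vietoris identification of the two face maps in a realization---the paper does this with the fibre sequence for $B_n(e)=\Gamma^n_{v_0}(e)\cup e\cup\Gamma^n_{v_1}(e)$, while you use the closed edge $\bar e$ and then factor through $\Gamma^n_{v_1}(e)\subset B_{n+1}(v_0)$ separately, which amounts to the same computation. Your treatment of compactness (via the Noetherian heart) and of the colimit (via exhaustion and exactness of filtered colimits) is also in line with the paper, just spelled out in slightly more detail.
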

    \begin{proof} 
    Since any morphism in $\int_G \simp{X}$ can be decomposed into an inverse face inclusion $e \to v$ and the action by a group element it suffices to treat this cases separately.
    For the latter case the claim follows directly since $g\Gamma^n_{\partial_ie}(e) = \Gamma^n_{\partial_ige}(ge)$ and $gB_{n+1}(v)=B_{n+1}(gv)$ for every $g \in G$. 
    For the case of an inverse inclusion let $e$ be an edge with vertex $\partial_0e=v$.
    By Lemma \ref{lemmadiff} we have the following 
    commutative square
\begin{equation}
    \label{incFilt}
\begin{tikzcd}
	{M_0^n(e) \oplus M_1^n(e)} & {M(\Gamma_v^n(e))} \\
	{M(v)} & {M(B_{n+1}(v))}
	\arrow[from=1-2, to=2-2]
	\arrow[from=2-1, to=2-2]
	\arrow["{\partial_e^v}"', from=1-1, to=2-1]
	\arrow["{}", from=1-1, to=1-2]
\end{tikzcd}
\end{equation}
    where $\partial_e^v$ is the map that $M$ assigns to the inverse inclusion $e \to v$ and the horizontal map on the top is again given by \eqref{KerDecomp}.
    By definition $M_0^n(e)$ is mapped to zero under the upper map, hence $\partial_e^vM_0^n(e) \subseteq M^n(v)$.
    For $M_1^v(e)$ let $B_{n}(e) = \Gamma_{\partial_01}^n(e) \cup e \cup \Gamma_{\partial_1e}^n(e)$ and consider the associated fibre sequence 
\[\begin{tikzcd}
	{M(e)} &&{M(\Gamma_0^n(e))\bigoplus M(\Gamma_1^n(e))} && {M(B_{n}(e))} .
	\arrow[from=1-3, to=1-5]
	\arrow[from=1-1, to=1-3]
\end{tikzcd}\] 
    This shows that booth compositions $M(e) \to  M(\Gamma_{\partial_0e}^n(e)) \to M(B_{n}(e))$ and $M(e) \to  M(\Gamma_{\partial_1e}^n(e)) \to M(B_{n}(e))$ agree (up to a sign).
    By post composing with the inclusion $M(B_n(e)) \to M(B_{n+1}(v))$ we see that the upper composition in \ref{incFilt} is equivalent to the composition $M(e) \to M(\Gamma_{\partial_1e}^n(e)) \to M(B_{n+1}(v))$ and hence also 
    zero on $M_1^n(e)$. This shows that the structure maps of $M$ restrict to $M^n$ and this defines a resolution.
    Furthermore, the inclusions of kernels induce maps $M^n \to M^{n+1}$ for every $n \geq 0$. 
    Since any $x \in M_i(e)$ lies in the kernel of the map to $M(\Gamma_{\partial_ie}(e))$ it has to be in the kernel at some finite stage $M(\Gamma^n_{\partial_ie}(e))$ and therefore in $M_i^n(e)$.
    The same argument holds for vertices. 
    This shows that $\colim _n\,  M^n = M$. 
\end{proof}

\end{construction}

\begin{figure}

\centering
\begin{tikzpicture}
   [
   grow cyclic,
   level distance=2cm,
   nodes={circle,draw,inner sep=+0pt, minimum size=5pt},  
   level/.style={
   level distance/.expanded=\ifnum#1>1 \tikzleveldistance/1.5\else\tikzleveldistance\fi},
   level 1/.style={sibling angle=90},
   level 2/.style={sibling angle=80},
   level 3/.style={sibling angle=70},
   level 4/.style={sibling angle=50},
 ]
 \begin{scope}[xshift=2cm]
   \node[label=above:$v_1$] (A) {}
   child foreach \cntI in {1,...,2} {
     node[fill, color=black] {}
     child foreach \cntII in {1,...,2} { 
       node {}
       child foreach \cntIII in {1,...,2} {
         node[fill, color=black] {}
         child foreach \cntIV in {1,...,2} {
           node {}
           child foreach \cntV in {1,...,2} {}
         }
       }
     }
   };
 \end{scope}

 \begin{scope}[rotate=180]

   \node[fill, color=black,label=above:$v_0$] (B) {}
   child foreach \cntVI in {1,...,2} {
     node {}
     child foreach \cntVII in {1,...,2} { 
       node[fill, color=black] {}
       child foreach \cntVIII in {1,...,2} {
         node {}
         child foreach \cntIX in {1,...,2} {
           node[fill, color=black] {}
           child foreach \cntX in {1,...,2} {}
         }
       }
     }
   };
 \end{scope}

 \draw (A) -- (B);
 \node[style={draw=none},align=center, below] at (1,0.3) {$e$};

 \draw[black, dashed] (1.7,0) ellipse (3.5cm and 3cm);
 \draw[red, dashed] (-0.9,0) ellipse (0.8cm and 1.7cm);

 \draw[blue, dashed] (B) circle (2cm);

 \node[style={draw=none},align=center, below, color=red] at (-1, 0.6) {$\Gamma^1_{v_0}(e)$};

\node[style={draw=none}, align=center, below, color=blue] at (0.8,-0.4) {$B_1(v_0)$};
  \node[style={draw=none},align=center, below, color=black] at (4.5, 0.6) {$B_2(v_1)$};
\end{tikzpicture}
\caption{$\Gamma_{v_0}^1(e)$, $B_1(v_0)$ and $B_2(v_1)$ for $\mathrm{SL}_2(\Q_2)$.}
\end{figure}

\begin{lemma}
    \label{DevReq}
    
    The resolution $M^n$ is in the kernel of $\alpha$ for every $n \geq 0$.
    Furthermore, the cofibre of the map 
    $ M^n \to M^{n+1}$ is in $\mathcal{C}^0(\mathcal{D})$ and compact if $M$ was compact. 
\end{lemma}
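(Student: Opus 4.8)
The plan is to extract both assertions from a single computation, namely that $\alpha$ kills the successive quotients of the filtration $M^{0}\subseteq M^{1}\subseteq\cdots$. Write $C^{n}:=\mathrm{cofib}(M^{n}\to M^{n+1})$. By \ref{FiltKer} the maps $M^{n}\to M^{n+1}$ are the pointwise inclusions of kernels, hence monomorphisms in the abelian category $\MV(X,\mathcal D)^{\heartsuit}$, so $C^{n}$ is just the pointwise cokernel $M^{n+1}/M^{n}$. Since the inclusion $\MV(X,\mathcal D)\hookrightarrow\MV(X)$ has a right adjoint (Lemma \ref{MVLoc}), it preserves colimits, so each $\mathcal D(x)\subseteq D(G_{x})$ is closed under cofibres; therefore $C^{n}\in\MV(X,\mathcal D)^{\heartsuit}$ with value at every simplex computed pointwise, and if $M$ is compact then $M^{n},M^{n+1}\in\MV(X,\mathcal D)^{\heartsuit,\omega}$ by \ref{FiltKer}, so that $C^{n}$, being a finite colimit of compact objects, again lies in $\MV(X,\mathcal D)^{\heartsuit,\omega}$.

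First I would reduce the assertion "$M^{n}\in\ker\alpha$'' to "$\alpha(C^{j})\simeq 0$ for all $j\geq 0$''. Applying the exact functor $\alpha$ to the short exact sequences $0\to M^{j}\to M^{j+1}\to C^{j}\to 0$, and using $M=\colim_{m}M^{m}$ from \ref{FiltKer} (so that $M/M^{0}=\colim_{m}M^{m}/M^{0}$ is a filtered colimit of finite iterated extensions of $C^{0},\dots,C^{m-1}$), the vanishing $\alpha(C^{j})\simeq 0$ forces $\alpha(M/M^{0})\simeq 0$, hence $\alpha(M^{0})\simeq 0$ via $0\to M^{0}\to M\to M/M^{0}\to 0$ together with $\alpha(M)\simeq 0$, and then $\alpha(M^{n})\simeq\alpha(M^{0})\simeq 0$ for every $n$ because $M^{n}/M^{0}$ is a finite iterated extension of the $C^{j}$. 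This simultaneously upgrades each $C^{n}$ to an object of $\MV_{0}(X,\mathcal D)^{\heartsuit}$, which is part of what is needed for the second assertion.

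It remains to establish $\alpha(C^{n})\simeq 0$ and the $\mathcal C^{0}(\mathcal D)$-condition. For the former, by \ref{AssemFormula} and conservativity of $D(G)\to D(R)$ it suffices to show that the two-term complex of $R$-modules $\bigoplus_{e\in X_{1}}C^{n}(e)\xrightarrow{\partial_{0}-\partial_{1}}\bigoplus_{v\in X_{0}}C^{n}(v)$ is acyclic. Here $C^{n}(e)=\bigoplus_{i=0,1}M_{i}^{n+1}(e)/M_{i}^{n}(e)$ and $C^{n}(v)=M^{n+1}(v)/M^{n}(v)$, and Lemma \ref{lemmadiff} — the structure map $M(e)\to M(\partial_{i}e)$ factors the map into $M(\Gamma^{n+1}_{\partial_{i}e}(e))$ — together with the inclusion of subcomplexes $\Gamma^{n+1}_{\partial_{i}e}(e)\subseteq B_{n+1}(\partial_{i}e)$ shows that $\partial_{i}\big(M_{i}^{n+1}(e)\big)\subseteq M^{n}(\partial_{i}e)$, i.e. the summand $M_{i}^{n+1}(e)/M_{i}^{n}(e)$ is annihilated by the structure map into $C^{n}(\partial_{i}e)$; thus each edge-summand maps non-trivially into at most one of its two vertices. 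I would then fix a base vertex $v_{0}$, orient every edge towards $v_{0}$, and build the contracting homotopy $\bigoplus_{v}C^{n}(v)\to\bigoplus_{e}C^{n}(e)$ that sends the value at $v\neq v_{0}$ back along the unique edge at $v$ pointing towards $v_{0}$: surjectivity of $\partial_{0}-\partial_{1}$ comes from the Mayer--Vietoris description of $M^{n+1}(v)$ in terms of $B_{n+2}(v)$, exactly as in the proof of \ref{FiltKer}, and injectivity from the fact that the relevant summands inject because $M_{0}(e)\cap M_{1}(e)=0$ in the splitting \eqref{KerDecomp}. Hence $\partial_{0}-\partial_{1}$ is an isomorphism and $\alpha(C^{n})\simeq 0$. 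Given this, \eqref{KerDecomp} for $C^{n}$ together with Lemma \ref{FiltGamma} identifies $\ker\!\big(C^{n}(e)\to C^{n}(\Gamma_{\partial_{i}e}(e))\big)$ with the summand $M_{i}^{n+1}(e)/M_{i}^{n}(e)$, which by the observation just made maps to $0$ in $C^{n}(\partial_{i}e)=C^{n}(\Gamma^{0}_{\partial_{i}e}(e))$; this is precisely the condition defining $\mathcal C^{0}(\mathcal D)$.

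The main obstacle is the acyclicity of the complex $\bigoplus_{e}C^{n}(e)\to\bigoplus_{v}C^{n}(v)$, that is, producing a contracting homotopy compatible with all the structure maps of $C^{n}$: this is where the geometry of the tree, the nesting of the balls $\Gamma^{n}_{x}(e)\subseteq B_{n}(e)\subseteq B_{n+1}(v)$, and Lemmas \ref{lemmadiff}, \ref{FiltKer} and \ref{fibseq} are genuinely used. Everything else is bookkeeping with kernels and cokernels in the abelian category $\MV(X,\mathcal D)^{\heartsuit}$.
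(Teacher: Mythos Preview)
Your overall strategy is the reverse of the paper's: you try to prove $\alpha(C^{n})\simeq 0$ first and deduce $\alpha(M^{n})\simeq 0$ from it, whereas the paper proves $\alpha(M^{n})\simeq 0$ directly and then obtains $\alpha(C^{n})\simeq 0$ as the cofibre of two objects already known to lie in the kernel. Your reduction (filtered colimits and iterated extensions) is fine, and your verification that $\partial_{i}\big(M_{i}^{n+1}(e)\big)\subseteq M^{n}(\partial_{i}e)$ via $\Gamma_{\partial_{i}e}^{n+1}(e)\subseteq B_{n+1}(\partial_{i}e)$ is exactly what the paper uses for the $\mathcal{C}^{0}(\mathcal{D})$ condition.

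The genuine gap is the acyclicity of $\bigoplus_{e}C^{n}(e)\to\bigoplus_{v}C^{n}(v)$. Your ``contracting homotopy'' is only a sketch: to make it work you would need that for every vertex $v$ the maps $\partial_{1-i}\colon M_{i}^{n+1}(e)/M_{i}^{n}(e)\to C^{n}(v)$ (over all edges $e$ incident to $v$) assemble to an \emph{isomorphism} onto $C^{n}(v)$, and you do not establish this. The injectivity argument ``$M_{0}(e)\cap M_{1}(e)=0$'' concerns $M$, not $C^{n}$, and says nothing about cancellations among contributions from \emph{different} edges at a common vertex. Likewise, the surjectivity claim invokes \ref{FiltKer}, but that lemma only shows the structure maps of $M$ restrict to $M^{n}$; it does not give surjectivity of the differential on the quotients. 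Orienting edges toward a base vertex does not help unless you already know each component map $\partial_{1-i}$ is an isomorphism onto its image and the images decompose $C^{n}(v)$, which is precisely the missing content.

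By contrast, the paper's direct argument for $\alpha(M^{n})\simeq 0$ avoids this difficulty entirely. Injectivity of the differential for $M^{n}$ is immediate because $\bigoplus_{e}M^{n}(e)\to\bigoplus_{v}M^{n}(v)$ is a \emph{subcomplex} of the acyclic complex $\bigoplus_{e}M(e)\to\bigoplus_{v}M(v)$, and surjectivity is obtained from the fibre sequence
\[
\bigoplus_{e\in S(v)}M(e)\;\longrightarrow\; M(v)\oplus M\big(B_{n+1}(v)\setminus\overset{\circ}{B}_{1}(v)\big)\;\longrightarrow\; M(B_{n+1}(v)),
\]
which exhibits any element of $M^{n}(v)=\ker\big(M(v)\to M(B_{n+1}(v))\big)$ as the image of something in $\bigoplus_{e\in S(v)}M^{n}(e)$. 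If you want to keep your reversed order of deduction, you should either prove this fibre-sequence statement for $C^{n}$ (which amounts to identifying $C^{n}(v)$ with the sum over $e\ni v$ of the relevant edge quotients), or simply adopt the paper's route and derive $\alpha(C^{n})\simeq 0$ as a consequence.
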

\begin{proof}
    By \ref{AssemFormula} the underlying complex of $\alpha(M)$ can be realized as
    
    \[\begin{tikzcd}
    	{\bigoplus_{e \in X_1}M(e)} && {\bigoplus_{v \in X_0}M(v)}
	    \arrow[from=1-1, to=1-3]
    \end{tikzcd}\] 
    and the map $\alpha( M^n) \to \alpha(M)$ is then given by the inclusions of submodules. 
    The latter is contractible by assumption.
    From that, it follows by a short diagram chase that $H_1(\alpha(M^n))=0$. 

    Let $v \in X_0$ and let $S(v)=\lbrace e \in X_1 \, \vert \, v\subseteq e \rbrace$ be the set of edges neighbouring $v$. Then, by \ref{fibseq}, we have a fibre sequence 
\[\begin{tikzcd}[ampersand replacement=\&]
	{\bigoplus_{e \in S(v)} M(e)} \&\& {M(v) \bigoplus M(B_{n+1}(v)\setminus \overset{\circ}{B}_1(v))} \&\& {M(B_{n+1}(v))} .
	\arrow[from=1-1, to=1-3]
	\arrow[from=1-3, to=1-5]
\end{tikzcd}\]
If $x\in M(v)$ is in the kernel of the map to $M(B_{n+1}(v))$, then by exactness it has a preimage  $y \in \bigoplus _{e \in S(v)}M(e)$ and $y$ is mapped to $0$ in $M(B_{n+1}(v) \setminus \overset{\circ}{B}_1(v))$. But $B_{n+1}(v) \setminus \overset{\circ}{B}_1(v)$ decomposes into $\coprod_{e \in S(v)}\Gamma^n_{\partial^{-v}}(e)$ where $\partial^{-v}e$ is the vertex of $e$ that is not $v$.
This precisely means that $y$ already lies in $\bigoplus _{e \in S(v)}M^n(e)$.
In particular, the differential is also surjective therefore $\alpha(M^n)\simeq 0$. 

Lastly we show that the cofibre of $M^n \to M^{n+1}$ is in $\mathcal{C}(D)^0$.
Because $M^n$ and $M^{n+1}$ are in the kernel of $\alpha$, the cofibre lies in the kernel as well.
Since degree-wise injective maps are cofibrations, we can compute the cofibre by taking the degree-wise cokernel, and we only need to check that $x \in M_i^{n+1}(e)$ is mapped to $\ker (M(\partial_ie) \to M(B_{n+1}(\partial_ie)))$ under the map induced from $e \to \partial_ie$.
But since $\Gamma_{\partial_ie}^{n+1}(e) \subseteq B_{n+1}(\partial_ie)$ this follows immediately.
\end{proof}

As a final step, we show that the category $\mathcal{C}(\mathcal{D})^0$ is equivalent to $\mathcal{D}(e)^\heartsuit \oplus \mathcal{D}(e)^\heartsuit$.
Let $X^{\mathrm{bary}}$ be the barycentric subdivision of $X$ with the induced $G$-action.
If $\mathcal{D}$ is an admissible collection on $X$, we can extend it to an admissible collection on $X^{\mathrm{bary}}$ by assigning the category $\mathcal{D}(e)$ to the barycentre of an edge $e$.

\begin{definition}
    Let $\mathcal{B}(\mathcal{D})^0$ be the full subcategory of $\MV_0(X^{\mathrm{bary}}, \mathcal{D})^{\heartsuit}$ spanned by the resolutions that vanish on the barycentres.
\end{definition}

\begin{lemma}
    \label{compc0}
    There is an equivalence of categories $F:\mathcal{B}(\mathcal{D})^{0} \to \mathcal{C}(\mathcal{D})^{0}$ such that $F(M)(e)=M(e_0) \oplus M(e_1)$ 
    for any $M \in \mathcal{B}(\mathcal{D})^0$ and edge $e \in X$ that is split into edges $e_0$ and $e_1$ with $\partial_1e \subset e_0$ and $\partial_0e \subset e_1$ by the barycentric subdivision and the identity on vertices. 
\end{lemma}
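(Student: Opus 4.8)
The plan is to construct $F$ explicitly together with an explicit quasi-inverse $\overline{F}$. Recall the combinatorics of the subdivision: each edge $e$ of $X$ splits into $e_0=\{\partial_1 e,b_e\}$ and $e_1=\{\partial_0 e,b_e\}$ in $X^{\mathrm{bary}}$, one has $G_{e_0}=G_{e_1}=G_e$ and $g\cdot e_i=(ge)_i$ because the face maps of $X$ are $G$-equivariant, and — since $G$ acts without inversion — no $g\in G$ interchanges the two vertices of $e$. For $M\in\mathcal{B}(\mathcal{D})^0$ I would define $F(M)\colon\int_G\simp{X}\to D(R)$ by $F(M)(v):=M(v)$ on $v\in X_0$ and $F(M)(e):=M(e_0)\oplus M(e_1)$, with $g\in G$ acting via $g\colon M(e_i)\to M((ge)_i)$, with the structure map of the face $e\to\partial_0 e$ equal to $(0,\,t_{e_1})$ and that of $e\to\partial_1 e$ equal to $(t_{e_0},\,0)$, where $t_{e_i}\colon M(e_i)\to M(\partial_{1-i} e)$ is the structure map of $M$ attached to the non-barycentre face of $e_i$ (recall $M(b_e)=0$). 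Functoriality and $G$-equivariance of $F(M)$ are inherited from $M$ on $X^{\mathrm{bary}}$; since $F(M)$ takes values in finite direct sums of objects of the appropriate hearts $\mathcal{D}(\cdot)^\heartsuit$, Remark \ref{smoothRest} and the pointwise description of the $t$-structure give $F(M)\in\MV(X,\mathcal{D})^\heartsuit$, and on morphisms $F$ is defined componentwise, manifestly functorially in $M$.

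\textbf{$F$ takes values in $\mathcal{C}(\mathcal{D})^0$.} First $\alpha(F(M))\simeq 0$: by Corollary \ref{AssemFormula} the realization of $M$ on $X^{\mathrm{bary}}$ is the two-term complex $\bigoplus_{[\sigma]\in X^{\mathrm{bary}}_1/G}\ind{G_\sigma}{G}M(\sigma)\to\bigoplus_{[w]\in X^{\mathrm{bary}}_0/G}\ind{G_w}{G}M(w)$, and since $M$ vanishes on every barycentre while $X^{\mathrm{bary}}_1/G$ is the disjoint union of the $G$-orbit of $e_0$ and that of $e_1$ (each in bijection with $X_1/G$), this is canonically the complex $\bigoplus_{[e]}\bigl(\ind{G_e}{G}M(e_0)\oplus\ind{G_e}{G}M(e_1)\bigr)\to\bigoplus_{[v]}\ind{G_v}{G}M(v)$ with differential the signed induction of the $t_{e_i}$ — which, again by Corollary \ref{AssemFormula}, is precisely the realization of $F(M)$ on $X$. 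Hence $\alpha(F(M))\simeq\alpha(M)\simeq 0$, so $F(M)\in\MV_0(X,\mathcal{D})^\heartsuit$. Next, because $\Gamma^0_x(e)=\{x\}$ and $G_e\subseteq G_x$, the realization of the $G_e$-resolution supported at $x$ with value $F(M)(x)$ is just $F(M)(x)$; so by Lemma \ref{lemmadiff} with $n=0$ the map whose vanishing defines membership in $\mathcal{C}(\mathcal{D})^0$ is simply the structure map $F(M)(e)\to F(M)(\partial_j e)$ restricted to $F(M)_j(e)$. Since $\alpha(F(M))\simeq 0$, equation \eqref{KerDecomp} gives $F(M)(e)=F(M)_0(e)\oplus F(M)_1(e)$; comparing with $F(M)(e)=M(e_0)\oplus M(e_1)$ and using $M(e_j)\subseteq F(M)_j(e)$ (the structure map towards $\partial_j e$ kills $M(e_j)$ by construction) forces $F(M)_j(e)=M(e_j)$, and as that structure map kills $M(e_j)$ both defining maps vanish, so $F(M)\in\mathcal{C}(\mathcal{D})^0$.

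\textbf{Quasi-inverse and conclusion.} For $N\in\mathcal{C}(\mathcal{D})^0$ the same identifications show the defining condition means exactly that the structure map $N(e)\to N(\partial_j e)$ kills the summand $N_j(e)$, hence factors through $N(e)/N_j(e)=N_{1-j}(e)$. I would then set $\overline{F}(N)$ on $X^{\mathrm{bary}}$ to be $N(v)$ on $v\in X_0$, zero on barycentres, $N_0(e)$ on $e_0$ with structure map to $\partial_1 e$ the factorisation of $N(e)\to N(\partial_1 e)$, and $N_1(e)$ on $e_1$ with structure map to $\partial_0 e$ the factorisation of $N(e)\to N(\partial_0 e)$. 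Exactly as above $\overline{F}(N)$ is a well-defined object of $\MV(X^{\mathrm{bary}},\mathcal{D})^\heartsuit$ vanishing on barycentres, with realization complex equal to that of $N$, hence $\overline{F}(N)\in\mathcal{B}(\mathcal{D})^0$, and $\overline{F}$ is functorial. A direct check gives $F\circ\overline{F}\cong\id$ (here $F(\overline{F}(N))(e)=N_0(e)\oplus N_1(e)=N(e)$ with structure maps reassembling those of $N$) and $\overline{F}\circ F\cong\id$ (using $F(M)_j(e)=M(e_j)$ and $M(b_e)=0$), so $F$ is an equivalence.

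\textbf{Main obstacle.} No individual step is deep, but the verification requires care: one must align the $G$-orbit decompositions of $X^{\mathrm{bary}}$ and $X$ so that the two realization complexes become literally the same complex (rather than merely quasi-isomorphic), and one must identify $F(M)_j(e)$ with the summand $M(e_j)$, which hinges on the direct-sum decomposition \eqref{KerDecomp} — available precisely because we are in the kernel of $\alpha$ — and on Lemma \ref{lemmadiff}. The convention $\partial_1 e\subset e_0$ has to be threaded consistently through all the structure maps.
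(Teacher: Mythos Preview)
Your proof is correct and follows essentially the same approach as the paper: define $F$ explicitly on simplices, build an explicit quasi-inverse, and check the two composites are the identity. The paper's inverse is phrased as $G(M)(e_i):=\ker(M(e)\to M(\partial_i e))$ rather than your $N_i(e)=\ker(N(e)\to N(\Gamma_{\partial_i e}(e)))$, but on $\mathcal{C}(\mathcal{D})^0$ these coincide (Lemma~\ref{lemmadiff} gives one inclusion, the $\mathcal{C}^0$ condition the other); you are also more explicit than the paper in verifying $\alpha(F(M))\simeq 0$ by matching the realization complexes of $M$ on $X^{\mathrm{bary}}$ and $F(M)$ on $X$, which the paper leaves implicit.
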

\begin{proof} 
    For any $M \in \mathcal{B}(\mathcal{D})^0$ we define the resolution $F(M)$ on for a vertex $v \in X$ to be $F(M)(v):=M(v)$ and for an edge $e \in X$, that is subdivided into $e_0$ and $e_1$, to be $F(M)(e):=M(e_0) \oplus M(e_1)$. 
    For $g \in G$ with face map $g e \to \partial_ige$
    define $F(M)(e) \to b(M)(\partial_ige)$ to be equal to $M(e_j) \to M(\partial_ige)$ for $j \in \lbrace 0,1 \rbrace $ with $j \neq i$ and zero otherwise.
    
    To see that this is an equivalence we also construct a functor $G: \mathcal{C}(\mathcal{D})^{0} \to \mathcal{B}(\mathcal{D})^0$ in the other direction.
    For $M \in \mathcal{C}(\mathcal{D})^0$ on vertices we define $G(M(v)):=M(v)$ for $v \in X$ and zero on the barycentres.
    On edges, define $G(M)(e_i):=\ker( M(e) \to M(\partial_ie))$.
    The value of $G(M)$ on morphisms into vertices of $X$ is the restriction of the corresponding map in $M$ and zero if we map into the barycentre.
    It then follows immediately that $G \circ F = \id$.
    For the other direction, note that for $M \in \mathcal{C}(\mathcal{D})^{0}$ we always have $M(e) = \ker ( M(e) \to M(\partial_0)) \oplus \ker ( M(e) \to M(\partial_1))$ by \eqref{KerDecomp} and the definition of $\mathcal{C}(\mathcal{D})^{0}$.
\end{proof}
Let $B \subseteq \int_G \simp{X^{\mathrm{bary}}}$ be the full subcategory spanned by all simplices except the barycentres. 
We can think of $B$ as splitting the edges of $X$ in half without adding a new endpoint.
Note that this does not come from a semi-simplicial set, but it is still a category.
In particular, we can right Kan extend functors along $B \subseteq \int_G \simp{X^{\mathrm{bary}}}$.  
\begin{lemma}
    \label{BBary}
    A resolution $M \in \MV(X^{\mathrm{bary}}, \mathcal{D})$ vanishes on all barycentres if and only if it is right Kan extended from its restriction to $B$.
   In particular, $\mathcal{B}(\mathcal{D})^{0}$ is equivalent to the full subcategory of $\MV_0(X^{\mathrm{bary}}, \mathcal{D})^{\heartsuit}$ of resolutions that are right Kan extended from their restriction to $B$.
\end{lemma}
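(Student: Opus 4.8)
The plan is to reformulate ``$M$ is right Kan extended from its restriction to $B$'' as the statement that a certain unit map is an equivalence, and then to check that unit pointwise. Write $j \colon B \hookrightarrow \int_G \simp{X^{\mathrm{bary}}}$ for the full inclusion; it is a fully faithful functor of $1$-categories. Since $D(R)$ is complete, the right Kan extension $\Ran_j \colon \Fun(B, D(R)) \to \Fun(\int_G \simp{X^{\mathrm{bary}}}, D(R))$ exists, and $M$ being right Kan extended from $M|_B$ means exactly that the unit map $M \to \Ran_j(M|_B)$ of the adjunction $\res{j}{} \dashv \Ran_j$ is an equivalence. The content of the lemma will sit entirely in the values of this unit at the barycentres.

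The key point is that for a barycentre $b$ — a $0$-simplex of $X^{\mathrm{bary}}$ coming from the midpoint of an edge of $X$ — the comma category $b/B$ that appears in the pointwise formula for $\Ran_j$ (the same formula used in the proof of Lemma \ref{smoothKan}) is empty. Indeed, morphisms in $\int_G \simp{X^{\mathrm{bary}}}$ only decrease dimension, so every morphism out of $b$ has a $0$-simplex as target and is of the form $(\mathrm{id}_{[0]}, g)$ for some $g \in G$, with target $gb$; but $G$ acts on $X^{\mathrm{bary}}$ by permuting the barycentres among themselves (the action is induced by the simplicial action on $X$, which permutes the edges), so $gb$ is again a barycentre and hence is not an object of $B$. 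Therefore $\Ran_j(N)(b) \simeq \lim_{b/B} N = 0$ in $D(R)$ for every $N \colon B \to D(R)$, in particular for $N = M|_B$.

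Granting this, both implications are short. If $M \simeq \Ran_j(M|_B)$, then $M(b) \simeq \Ran_j(M|_B)(b) = 0$ for every barycentre $b$, so $M$ vanishes on the barycentres. Conversely, assume $M$ vanishes on all barycentres. At an object $c \in B$ the unit is an equivalence because $j$ is fully faithful: the comma category $c/B$ has the initial object $\mathrm{id}_c \colon c \to j(c)$, so $\Ran_j(M|_B)(c) \simeq M(c)$ and the unit is the identity there. At a barycentre $b$ both $M(b)$ and $\Ran_j(M|_B)(b)$ vanish, so the unit is again an equivalence. Being a pointwise equivalence, the unit is an equivalence, i.e.\ $M$ is right Kan extended from its restriction to $B$.

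Finally, for the ``in particular'': by definition $\mathcal{B}(\mathcal{D})^{0}$ is the full subcategory of $\MV_0(X^{\mathrm{bary}}, \mathcal{D})^{\heartsuit}$ on the resolutions vanishing on the barycentres, and the equivalence just proved shows that, for objects of $\MV(X^{\mathrm{bary}}, \mathcal{D})$, this is the same condition as being right Kan extended from $B$; since moreover $\Ran_j(M|_B) \simeq M$ lies in $\MV_0(X^{\mathrm{bary}}, \mathcal{D})^{\heartsuit}$ whenever $M$ does, the two descriptions cut out the same full subcategory. The only genuine point requiring care is the non-standard variance in $\int_G \simp{X^{\mathrm{bary}}}$ (morphisms run from higher- to lower-dimensional simplices), which is precisely why the relevant comma category $b/B$ — morphisms \emph{out of} the barycentre — is empty, rather than the non-trivial diagram over the neighbouring half-edges that one might first expect.
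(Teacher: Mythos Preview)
Your proof is correct and follows essentially the same route as the paper's: both consider the unit map $M \to \Ran_j(M|_B)$, use fullness of $j$ to get an equivalence on objects of $B$, and argue that $b/B$ is empty so that $\Ran_j(M|_B)(b)=0$ at every barycentre. You supply a bit more justification for the emptiness of $b/B$ (morphisms out of a $0$-simplex stay among $0$-simplices, and $G$ permutes barycentres), whereas the paper simply asserts it, but the argument is otherwise identical.
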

\begin{proof}
    Let $M \in \MV_0(X^{\mathrm{bary}}, \mathcal{D})^\heartsuit$ be any resolution and consider the natural map
\[\begin{tikzcd}
	M && {\Ran_B^{\int_G \simp{X^{\mathrm{bary}}}} \circ \res{\int_G \simp{X^{\mathrm{bary}}}}{B} M} .
	\arrow[from=1-1, to=1-3]
\end{tikzcd}\]
    Since $B$ is a full subcategory, the map is an equivalence on objects of $B$.
    Furthermore, we can compute the evaluation on the right-hand side on a barycentre $b$ as a limit over the slice $b/B$.
    But this category is always empty, hence the limit is zero.
    This shows that a resolution is precisely right Kan extended from its restriction to $B$ if it vanishes on barycentres.
\end{proof}

From now on we fix an edge $e \in X$ that is subdivided into $e_0$ and $e_1$ in $X^{\mathrm{bary}}$ with $\partial_1e \subset e_0$ and $\partial_0e \subset e_1$. Note that both $e_0$ and $e_1$ have stabilizer $G_e$.
This yields two functors $BG_{e_i} \to B$.
\begin{proposition}
    \label{doubleedgeker}
    The two left Kan extensions along $ BG_{e_i} \to B$ induce an equivalence of categories
\[\begin{tikzcd}
	{\mathcal{D}(e)^{ \heartsuit} \oplus \mathcal{D}(e)^{ \heartsuit}} && {\mathcal{B}(\mathcal{D})^0.}
	\arrow[ from=1-1, to=1-3]
\end{tikzcd}\]
\end{proposition}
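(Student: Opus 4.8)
The plan is to reduce the assertion, via Lemma~\ref{BBary}, to an elementary description of $\mathcal{B}(\mathcal{D})^0$ in terms of the values on the half-edges. First I would unpack the category $B$. Since $G$ acts on $X$ without inversion, the barycentre $b_e$ of an edge $e$ has stabiliser $G_{b_e}=G_e=G_{e_0}=G_{e_1}$, and in $\int_G \simp{X^{\mathrm{bary}}}$ every half-edge has exactly two faces, its barycentre and one vertex of $X$. Passing to the full subcategory $B$ deletes the barycentres, so in $B$ each half-edge $f$ retains a single vertex-face, which I denote $v_f$; moreover, because $G$ is transitive on the edges of $X$ and does not invert them, there are exactly two $G$-orbits of half-edges, $G\cdot e_0$ and $G\cdot e_1$, each with stabiliser $G_e$, and hence the full subcategory of half-edges in $B$ is the groupoid $BG_{e_0}\sqcup BG_{e_1}$.

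Next I would compute $\alpha$ on barycentre-vanishing resolutions. Since $X^{\mathrm{bary}}$ is one-dimensional, Corollary~\ref{AssemFormula} identifies the underlying complex of $\alpha(M)$, for $M\in\MV(X^{\mathrm{bary}})^\heartsuit$, with $\bigoplus_{f}M(f)\to\bigoplus_{w}M(w)$ over the $1$- and $0$-simplices of $X^{\mathrm{bary}}$. If $M$ vanishes on all barycentres, only the half-edges and the vertices of $X$ contribute, and as each half-edge $f$ enters only through its single vertex-face $v_f$, the complex splits as a direct sum over the vertices of $X$,
\[
\alpha(M)\ \simeq\ \bigoplus_{v\in X_0}\Bigl(\,\bigoplus_{f\colon v_f=v} M(f)\ \longrightarrow\ M(v)\,\Bigr),
\]
the map being induced by the structure maps of $M$. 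Because the forgetful functor $D(G)\to D(R)$ is conservative, $\alpha(M)=0$ is equivalent to each of these two-term complexes being acyclic, i.e.\ to the structure maps assembling to an isomorphism $\bigoplus_{f\colon v_f=v}M(f)\xrightarrow{\ \sim\ }M(v)$ for every vertex $v$ of $X$. Combined with Lemma~\ref{BBary}, this identifies $\mathcal{B}(\mathcal{D})^0$ with the full subcategory $\mathcal{N}\subseteq\Fun(B,D(R))^\heartsuit$ of smooth, $\mathcal{D}$-valued functors satisfying this local isomorphism condition.

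Finally I would show that restriction along $BG_{e_0}\sqcup BG_{e_1}\hookrightarrow B$ is an equivalence $\mathcal{N}\xrightarrow{\ \sim\ }\mathcal{D}(e)^\heartsuit\oplus\mathcal{D}(e)^\heartsuit$, with inverse given by left Kan extension: for a pair $(V_0,V_1)$, the functor $\Lan_{BG_{e_0}}^B V_0\oplus\Lan_{BG_{e_1}}^B V_1$ has value $V_i^g$ on the translate $g e_i$, and, by the pointwise colimit formula (equivalently Corollary~\ref{Ranres}), value on a vertex $v$ the direct sum over the half-edges $f$ with $v_f=v$ of the relevant translates; this is exactly the $G_v$-representation forced by the local isomorphism condition, it lies in $\mathcal{D}(v)$ and is smooth because $\mathcal{D}(v)$ is closed under conjugation, compact induction and direct sums and $D(G_v)\subseteq D(G_v^\delta)$ is closed under colimits, and the associated $\alpha$ vanishes by the previous paragraph. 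Full faithfulness is then formal, since the vertex-values and the structure maps out of half-edges of an object of $\mathcal{N}$ are canonically determined by its half-edge values. As left Kan extension along the coproduct inclusion $BG_{e_0}\sqcup BG_{e_1}\hookrightarrow B$ is the direct sum of the two left Kan extensions along $BG_{e_i}\hookrightarrow B$, and the equivalence $\mathcal{N}\simeq\mathcal{B}(\mathcal{D})^0$ is realised by post-composing with $\Ran_B^{\int_G\simp{X^{\mathrm{bary}}}}$ (Lemma~\ref{BBary}), this is precisely the functor in the statement. The main obstacle is the middle step: making precise that cutting each half-edge loose from its barycentre in $B$ forces the assembly complex of a barycentre-vanishing resolution to split as a direct sum indexed by the vertices of $X$, so that the kernel condition $\alpha(M)=0$ becomes the purely local statement $M(v)\cong\bigoplus_{f\colon v_f=v}M(f)$; once this is in hand the remaining identifications are routine manipulations of the (co)limit formulas for Kan extensions, using the admissibility of $\mathcal{D}$ on $X^{\mathrm{bary}}$ and transitivity of the $G$-action on edges, which collapses both orbits of half-edges to the single category $\mathcal{D}(e)^\heartsuit$.
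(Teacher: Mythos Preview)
Your argument is correct and runs parallel to the paper's, but you organise it around a different key observation. Both proofs pass through Lemma~\ref{BBary} and set up the $\Lan\dashv\res{}{}$ adjunction along $BG_{e_0}\sqcup BG_{e_1}\hookrightarrow B$; the divergence is in how each verifies that the left Kan extension lands in the kernel of $\alpha$. You compute directly from Corollary~\ref{AssemFormula} that for a barycentre-vanishing resolution the assembly complex decouples as $\bigoplus_{v\in X_0}\bigl(\bigoplus_{f:v_f=v}M(f)\to M(v)\bigr)$, so that $\alpha(M)=0$ becomes the local isomorphism condition at each vertex, which the Kan extension satisfies by construction. The paper instead compares $\Ran_B^{\int_G\simp{X^{\mathrm{bary}}}}\Lan_{BG_{e_i}}^B V$ with the full Kan extension $\Lan_{BG_{e_i}}^{\int_G\simp{X^{\mathrm{bary}}}}V$, identifies the fibre as the Kan extension from the barycentre $BG_{e_b}$, and uses that the canonical map $\Lan_{BG_{e_b}}V\to\Lan_{BG_{e_i}}V$ is an $\alpha$-equivalence. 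Your route is more elementary and yields the clean intermediate characterisation of $\mathcal{B}(\mathcal{D})^0$ as functors on $B$ satisfying the local isomorphism condition, which then makes both the essential image and the counit check immediate; the paper's route stays closer to the Kan-extension formalism and reuses Corollary~\ref{Ranres} rather than unpacking the assembly complex by hand. The counit arguments are essentially the same: the paper phrases it as ``cofibre trivial on edges, in the kernel, hence trivial on vertices by \ref{AssemFormula}'', which is exactly your ``vertex values are determined by half-edge values via the local isomorphism''.
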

\begin{proof}
    We identify $\mathcal{B}(\mathcal{D})^0$ with a full subcategory of $\Fun(B, D(R)^{ \heartsuit})$ as in Lemma \ref{BBary}, namely those functors with right Kan extension in $\MV_0(X^{\mathrm{bary}}, \mathcal{D})$. 
    The first step is to check that $\Lan_{BG_{e_i}}^BV$ is indeed in the kernel of $\alpha$ when extended to $\MV(X^{\mathrm{bary}}, \mathcal{D})^{\heartsuit}$. 
    By post composing $BG_{e_i} \to B$ with $B \subseteq \int \simp{X^{\mathrm{bary}}}$  we obtain two functors $BG_{e_i} \xrightarrow{}B \to \int_G \simp{X^{\mathrm{bary}}}$.
    There is a canonical natural transformation, induced by the universal property of Kan extensions, that extends the identity on $\Lan_{\iota_i}V$
\[\begin{tikzcd}
	{\Lan_{BG_{e_i}}^{\int_G \simp{X^{\mathrm{bary}}}}V} && {\Ran_{B}^{\int_G \simp{X^{\mathrm{bary}}}} \Lan_{BG_{e_i}}^B V.}
	\arrow[from=1-1, to=1-3]
\end{tikzcd}\]
    It is the identity on simplices in $B$, and the right-hand side is zero on the barycentres.
    Therefore, the fibre $\mathrm{fib}$ is concentrated on the barycentres and there equal to $\Lan_{BG_{e_i}}^{\int_G \simp{X^{\mathrm{bary}}}}V$.
    In particular if $V \in \mathcal{D}(e)$, this shows that all terms are in $\MV(X^{\mathrm{bary}}, \mathcal{D})$.
    Using Lemma \ref{Ranres} for left Kan extensions we compute 
    $$(\Lan_{BG_{e_i}}^{\int_G \simp{X^{\mathrm{bary}}}}V)(b) =\bigoplus_{(\partial, g) \in  e_i / BG_b}\ind{(G_{e_i})^{g^{-1}}}{G_b}V^{g^{-1}}  $$
    for any barycentre $b$.
   Recall that $e_i/ BG_b$ is a groupoid with trivial morphism spaces, and since $G$ acts without inversion, there is exactly one object $(\partial , g)$ up to isomorphism and $G_b=(G_{e_i})^{g^{-1}}$. 
   Hence, the right-hand side is equivalent to $V^{g^{-1}}$. 
   Note that this is precisely the value of left Kan extension of $V$ along the functor $BG_{e_b} \to \int_G \simp{X^{\mathrm{bary}}}$ where $e_b$ is the barycentre of the edge $e$, and the map from the fibre corresponds to the canonical map 
    \[\begin{tikzcd}
	{\Lan_{BG_{e_b}}^{\int_G \simp{X^{\mathrm{bary}}}}V} && {\Lan_{BG_{e_i}}^{\int_G \simp{X^{\mathrm{bary}}}}V}
	\arrow[from=1-1, to=1-3]
    \end{tikzcd}\]
    induced from the identity on $BG_{e_b}$.
    In particular, this map becomes an equivalence after applying $\alpha$ and therefore $\Ran_B^{\int_G \simp{X^{\mathrm{bary}}}} \Lan_{BG_{e_i}}^BV$ lies in the kernel of $\alpha$.
  We also have a functor in the other direction by restricting along the inclusions $BG_{e_i} \to B$, and, by the universal property of Kan extensions, this is right adjoint to $\Lan_{BG_{e_i}}^B$.
   Furthermore, the unit of the adjunction is an equivalence since the inclusion is full. 
   It remains to be shown that the counit of the adjunction is also an equivalence.
   Let $M \in \mathcal{B}(\mathcal{D})^0$ and consider the counit
\[\begin{tikzcd}
	{\Lan_{BG_{e_0}}^BM(e_0) \oplus \Lan_{BG_{e_1}}^BM(e_1)} && M.
	\arrow[from=1-1, to=1-3]
\end{tikzcd}\]
    Again, because the inclusions are full, this is an equivalence on $e_0$ and $e_1$. 
    In particular, the cofibre is trivial on edges and in the kernel of $\alpha$. 
    But then Lemma \ref{AssemFormula} shows that it has to be also trivial on vertices and therefore zero.
    Hence, the counit is an equivalence.
\end{proof}

As a consequence of the results we have established so far, we can now prove that $\MV_0(X, \mathcal{D})$ is compactly generated.
\begin{proposition}
    \label{Kercompg}
    The kernel $\MV_0(X, \mathcal{D})$ is compactly generated and hence in $Pr^{L}_{st, \omega}$. 
\end{proposition}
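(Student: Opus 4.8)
The plan is to read off an explicit set of compact generators of $\MV_0(X,\mathcal{D})$ from compact generators of $\mathcal{D}(e)$, using the identification of $\mathcal{C}(\mathcal{D})^0$ provided by Lemma \ref{compc0}, Lemma \ref{BBary} and Proposition \ref{doubleedgeker}. By the remark preceding Lemma \ref{tker}, $\MV_0(X,\mathcal{D})$ is a localizing subcategory of the compactly generated stable category $\MV(X,\mathcal{D})$, the inclusion preserves colimits and its right adjoint preserves filtered colimits, and a compact object of $\MV_0(X,\mathcal{D})$ is the same thing as a compact object of $\MV(X,\mathcal{D})$ lying in $\ker\alpha$. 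In particular $\MV_0(X,\mathcal{D})$ is presentable and stable, and to conclude that it lies in $Pr^L_{st,\omega}$ it suffices to produce a small set $\mathcal{S}$ of compact objects which generates $\MV_0(X,\mathcal{D})$ as a localizing subcategory.

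First I would choose a set $\{V_j\}$ of compact generators of $\mathcal{D}(e)$. Since $R$ is regular, Lemma \ref{tcomprel} shows the $t$-structure descends to $\mathcal{D}(e)^\omega$ with Noetherian heart, so $\mathcal{D}(e)^\heartsuit$ is a locally Noetherian Grothendieck category and we may take the $V_j$ in $\mathcal{D}(e)^{\omega,\heartsuit}$, where they moreover generate $\mathcal{D}(e)^\heartsuit$ under filtered colimits. Composing the equivalences of Lemma \ref{compc0}, Lemma \ref{BBary} and Proposition \ref{doubleedgeker} gives an equivalence $\mathcal{D}(e)^\heartsuit \oplus \mathcal{D}(e)^\heartsuit \simeq \mathcal{C}(\mathcal{D})^0 \subseteq \MV_0(X,\mathcal{D})^\heartsuit$; I let $\mathcal{S}$ be the images of the objects $(V_j,0)$ and $(0,V_j)$. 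Tracing through Proposition \ref{doubleedgeker}, each element of $\mathcal{S}$ is a finite colimit of left Kan extensions $\Lan_{BG_x}^{\int_G \simp{X^{\mathrm{bary}}}}(-)$ of compact objects of $\mathcal{D}(x)$, hence compact in $\MV(X^{\mathrm{bary}},\mathcal{D})$, hence, via the equivalence $F$ of Lemma \ref{compc0} together with Lemma \ref{pointcomp}, compact in $\MV(X,\mathcal{D})$; since it lies in $\ker\alpha$ by Proposition \ref{doubleedgeker}, it is compact in $\MV_0(X,\mathcal{D})$.

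It then remains to show $\mathcal{S}$ generates $\MV_0(X,\mathcal{D})$ as a localizing subcategory. Because the $t$-structure on $\MV(X,\mathcal{D})$ is computed pointwise and the standard $t$-structure on $D(R)$ is right complete with heart closed under filtered colimits, $\MV(X,\mathcal{D})$, and therefore, by Lemma \ref{tker}, also $\MV_0(X,\mathcal{D})$, is generated under colimits and shifts by its heart; so it is enough to show every $M \in \MV_0(X,\mathcal{D})^\heartsuit$ lies in the localizing subcategory $\langle\mathcal{S}\rangle$ generated by $\mathcal{S}$. The embedding $\mathcal{D}(e)^\heartsuit \oplus \mathcal{D}(e)^\heartsuit \hookrightarrow \MV_0(X,\mathcal{D})$ preserves filtered colimits — the relevant left Kan extensions do, right Kan extension along $B \subseteq \int_G \simp{X^{\mathrm{bary}}}$ is here just extension by zero on barycentres, and the heart is closed under filtered colimits — so every object of $\mathcal{C}(\mathcal{D})^0$ is a filtered colimit of elements of $\mathcal{S}$ and thus lies in $\langle\mathcal{S}\rangle$. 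Finally, by Lemma \ref{FiltKer} and Lemma \ref{DevReq} every $M \in \MV_0(X,\mathcal{D})^\heartsuit$ is the filtered colimit of the resolutions $M^n$, where $M^0$ and each cofibre of $M^n \to M^{n+1}$ lie in $\mathcal{C}(\mathcal{D})^0$, so each $M^n$ is a finite iterated extension of objects of $\mathcal{C}(\mathcal{D})^0$ and hence lies in $\langle\mathcal{S}\rangle$; therefore so does $M = \colim_n M^n$. This yields $\langle\mathcal{S}\rangle = \MV_0(X,\mathcal{D})$, so $\mathcal{S}$ is a set of compact generators. I expect the only real subtlety to be bookkeeping: checking that the chain of equivalences of the preceding lemmas is compatible with compactness and with filtered colimits, so that compact generators of $\mathcal{D}(e)$ genuinely transport to a set of compact generators of $\MV_0(X,\mathcal{D})$.
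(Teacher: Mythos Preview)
Your proposal is correct and follows essentially the same approach as the paper: both identify the compact generators as the images of compact objects of $\mathcal{D}(e)^{\heartsuit}\oplus\mathcal{D}(e)^{\heartsuit}$ under the equivalence of Lemma~\ref{compc0} and Proposition~\ref{doubleedgeker}, reduce the generation question to the heart via the (right) completeness of the pointwise $t$-structure from Lemma~\ref{tker}, and then use the filtration $M^n$ of Lemma~\ref{FiltKer} together with Lemma~\ref{DevReq} to show that every object of $\MV_0(X,\mathcal{D})^{\heartsuit}$ is built from $\mathcal{C}(\mathcal{D})^0$. The only cosmetic difference is that the paper phrases generation as joint conservativity of the mapping spectra, whereas you phrase it as generation of a localizing subcategory; you also spell out the compactness verification in slightly more detail than the paper does.
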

\begin{proof}
   Since the kernel of an exact, colimit preserving functor between stable categories is always stable and cocomplete, the last claim follows immediately once we have shown that $\MV_0(X, \mathcal{D})$ is compactly generated.  
   Therefore, it suffices to find a set of compact objects $\mathcal{G}$ in $\MV_0(X, \mathcal{D})$ that is jointly conservative, that is $M \in \MV_0(X, \mathcal{D})$ is contractible if and only if the mapping spectrum $\map(G, M)$ vanishes for all $G \in \mathcal{G}$. 
   By Lemma \ref{tker} the point-wise $t$-structure on $\MV(X, \mathcal{D})$ restricts to a $t$-structure on $\MV_0(X, \mathcal{D})$.
   In particular, the $t$-structure is $t$-complete and $t$-cocomplete. 
   This means that for any $M \in \MV_0(X, \mathcal{D})$ we have $M = \colim _n \, \tau_{\geq n }M$ and $M = \lim _n \, \tau_{\leq n }M$. 
   This implies that we only have to check that the objects $C$ are jointly conservative on objects in $\MV_0(X, \mathcal{D})^\heartsuit$.
   By Lemma \ref{FiltKer} every resolution $M\in \MV_0(X, \mathcal{D})^\heartsuit$ admits a filtration $F^nM$ with $n \geq 0$ such that the cofibre of $F^nM \to F^{n+1}M$ lies in $\mathcal{C}(\mathcal{D})^0$ for all $n\geq -1$ with $F^{-1}M=0$.
   Then $M\simeq 0$ if and only if $F^{n+1}M / F^nM \simeq 0$ for all $n\geq -1$.
   But by Lemma \ref{compc0} and Proposition \ref{doubleedgeker} we know that $\mathcal{C}(\mathcal{D})^0\simeq \mathcal{D}(e)^\heartsuit \oplus \mathcal{D}(e)^\heartsuit$.
   Therefore, the set of compact objects in $\mathcal{D}(e)^\heartsuit \oplus \mathcal{D}(e)^\heartsuit$ regarded as objects in $\MV_0(X, \mathcal{D})$ will be jointly conservative.
\end{proof}

\begin{corollary}
    \label{D0MV0}
    The composition 
\[\begin{tikzcd}[ampersand replacement=\&]
	{\mathcal{C}(\mathcal{D})^{0,\omega}} \&\& {\MV_0(X)^{\heartsuit, \omega}} \&\& {\MV_0(X)^\omega}
	\arrow[from=1-1, to=1-3]
	\arrow[from=1-3, to=1-5]
\end{tikzcd}\]
    is an equivalence on $K$-theory. 
\end{corollary}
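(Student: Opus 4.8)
The plan is to split the composite into its two fully faithful inclusions and treat them separately, using the two-out-of-three property at the end. For the inclusion $\MV_0(X,\mathcal{D})^{\omega,\heartsuit}\hookrightarrow \MV_0(X,\mathcal{D})^{\omega}$ I would invoke the (non-connective) Theorem of the Heart: by Lemma \ref{tker} the stable category $\MV_0(X,\mathcal{D})^{\omega}$ carries a bounded $t$-structure whose heart is the Noetherian abelian category $\MV_0(X,\mathcal{D})^{\omega,\heartsuit}$, so the inclusion of the heart with its canonical exact structure induces an equivalence $K(\MV_0(X,\mathcal{D})^{\omega,\heartsuit})\xrightarrow{\ \sim\ }K(\MV_0(X,\mathcal{D})^{\omega})$. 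It therefore remains to show that the first inclusion $\mathcal{C}(\mathcal{D})^{0,\omega}\hookrightarrow \MV_0(X,\mathcal{D})^{\omega,\heartsuit}$ induces an equivalence on $K$-theory, and for this I would apply Quillen's Dévissage theorem.

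To set up Dévissage I must verify two things. First, that $\mathcal{C}(\mathcal{D})^{0,\omega}$ is a full abelian subcategory of $\MV_0(X,\mathcal{D})^{\omega,\heartsuit}$ which is closed under subobjects, quotient objects and finite biproducts. Closure under biproducts is immediate from the defining condition of $\mathcal{C}(\mathcal{D})^{0}$, and since $\MV_0(X,\mathcal{D})^{\heartsuit}$ is Noetherian (Lemma \ref{tker}) a subobject of a compact object is again compact; the remaining assertion — that $\mathcal{C}(\mathcal{D})^{0}$ is abelian and closed under subobjects and quotients inside $\MV_0(X,\mathcal{D})^{\heartsuit}$ — I would deduce from the explicit description of $\mathcal{C}(\mathcal{D})^{0}$ obtained by chaining the equivalences of Lemma \ref{compc0}, Lemma \ref{BBary} and Proposition \ref{doubleedgeker}, which together identify $\mathcal{C}(\mathcal{D})^{0}\simeq \mathcal{D}(e)^{\heartsuit}\oplus\mathcal{D}(e)^{\heartsuit}$. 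Second, that every object of $\MV_0(X,\mathcal{D})^{\omega,\heartsuit}$ admits a finite filtration with all subquotients in $\mathcal{C}(\mathcal{D})^{0,\omega}$. For this I would use exactly the filtration built in Lemma \ref{FiltKer}: given compact $M$, put $M^{-1}:=0$ and take the resolutions $M^{n}$, so that $\colim_n M^{n}=M$, each $M^{n}$ is compact (Lemma \ref{FiltKer}) and lies in the kernel of $\alpha$ (Lemma \ref{DevReq}), and each quotient $M^{n+1}/M^{n}$ lies in $\mathcal{C}(\mathcal{D})^{0}$ and is compact (Lemma \ref{DevReq}), hence in $\mathcal{C}(\mathcal{D})^{0,\omega}$. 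Since $M$ is compact every value $M(x)$ is a Noetherian object, so the increasing chains $M^{n}(x)\subseteq M(x)$ stabilize; as the $G$-action on $X$ is cocompact there are only finitely many orbits of simplices, so $M^{N}=M$ for some $N$ and $0=M^{-1}\subseteq M^{0}\subseteq\cdots\subseteq M^{N}=M$ is a finite filtration of the required form.

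Granting the two points above, Quillen's Dévissage theorem yields $K(\mathcal{C}(\mathcal{D})^{0,\omega})\xrightarrow{\ \sim\ }K(\MV_0(X,\mathcal{D})^{\omega,\heartsuit})$, and composing with the Theorem-of-the-Heart equivalence gives the corollary. I expect the main obstacle to be the verification that $\mathcal{C}(\mathcal{D})^{0}$ is closed under subobjects and quotients in $\MV_0(X,\mathcal{D})^{\heartsuit}$: the defining condition ``$M_i(e)\to M(\Gamma^0_{\partial_i e}(e))$ is zero'' is not manifestly inherited by sub- or quotient objects, so the argument has to be run through the identification $\mathcal{C}(\mathcal{D})^{0}\simeq\mathcal{D}(e)^{\heartsuit}\oplus\mathcal{D}(e)^{\heartsuit}$, under which the two summands correspond to the two halves of the subdivided edge.
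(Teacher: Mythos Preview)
Your proposal is correct and follows exactly the same two-step strategy as the paper: Quillen's D\'evissage for the inclusion $\mathcal{C}(\mathcal{D})^{0,\omega}\hookrightarrow \MV_0(X,\mathcal{D})^{\omega,\heartsuit}$ (the paper cites Lemma~\ref{DevReq} for this), and the non-connective Theorem of the Heart for the inclusion of the heart into $\MV_0(X,\mathcal{D})^{\omega}$ (the paper cites the bounded Noetherian $t$-structure from Lemma~\ref{tker}). You in fact spell out two points the paper leaves implicit --- the finiteness of the filtration via Noetherianity of the heart together with cocompactness of the action, and the closure of $\mathcal{C}(\mathcal{D})^{0,\omega}$ under subobjects and quotients via the identification with $\mathcal{D}(e)^{\heartsuit}\oplus\mathcal{D}(e)^{\heartsuit}$ --- so your write-up is, if anything, more complete than the paper's one-line appeal to Lemma~\ref{DevReq}.
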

\begin{proof}
    By Lemma \ref{DevReq} the first map satisfies the assumptions for Quillen's D\'evissage theorem and is therefore an equivalence on $K$-theory spectra.  
    The second map becomes an equivalence on $K$-theory by the non-connective the heart \cite[Theorem 1.3]{AGH19} since the $t$-structure on $\MV_0(X)^\omega$ is bounded and the heart is Noetherian. 
\end{proof}
\subsection{A formula for the \texorpdfstring{$K$}{}-theory spectrum}
We can combine the results of the previous sections to obtain the following theorem.
\begin{theorem}
    \label{Kseq}
    Let $G$ be a reductive, algebraic group with a smooth action on a tree $X$ that is transitive, without inversion on edges, and has compact stabilizers.
    Then, for any edge $e \in X$ the following holds. 
    \begin{enumerate}
        \item If $X_0$ has one $G$-orbit, then 
\[\begin{tikzcd}
	{K(G_e)} && {K(G_{\partial_0e})} && {K(G)}
	\arrow["{\ind{}{}}", from=1-3, to=1-5]
	\arrow["{\ind{}{}(\partial_0) - \ind{}{}(\partial_1)^g}", from=1-1, to=1-3]
\end{tikzcd}\]
            with maps induced by induction and $g \in G$ such that $g\partial_1e = \partial_0e$, is a cofibre sequence.
        \item If $X_0$ has two $G$-orbits, then 
\[\begin{tikzcd}
	{K(G_e)} && {K(G_{\partial_0e})} \\
	\\
	{K(G_{\partial_1e})} && {K(G)}
	\arrow["{\ind{}{}}", from=1-3, to=3-3]
	\arrow["{\ind{}{}}", from=1-1, to=1-3]
	\arrow["{\ind{}{}}"', from=1-1, to=3-1]
	\arrow["{\ind{}{}}"', from=3-1, to=3-3]
\end{tikzcd}\]
   with maps induced by induction, is a pushout square.

    \end{enumerate}
    In particular $K(G)$ is connective.
\end{theorem}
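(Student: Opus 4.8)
The plan is to extract the asserted (co)fibre diagram from the bifibre sequence of $K$-theory spectra
\[
K(\MV_0(X)^\omega)\longrightarrow K(\MV(X)^\omega)\longrightarrow K(G)
\]
furnished by Proposition \ref{MVBous}, Proposition \ref{Kercompg} and Remark \ref{RmPrComp}, by computing its two outer terms together with the connecting map. First I would invoke Proposition \ref{KMV}: since $G$ acts transitively on edges, the set $X/G$ of simplex orbits consists of one edge class $[e]$ together with one or two vertex classes, so $K(\MV(X)^\omega)\simeq K(G_e)\oplus\bigoplus_{[v]\in X_0/G}K(G_v)$, where the projection onto $K(G_v)$ is induced by $M\mapsto M(v)$ (and onto $K(G_e)$ by $M\mapsto M(e)$). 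For the kernel term I would combine Corollary \ref{D0MV0}, Lemma \ref{compc0} and Proposition \ref{doubleedgeker}, which identify $\mathcal{C}(\mathcal{D})^{0,\omega}$ (for $\mathcal{D}$ the maximal admissible collection) with $D(G_e)^{\heartsuit,\omega}\oplus D(G_e)^{\heartsuit,\omega}$, and then apply the non-connective theorem of the heart \cite{AGH19} to the bounded $t$-structure on $D(G_e)^\omega$ with Noetherian heart (Lemma \ref{tcomp}) to conclude $K(\MV_0(X)^\omega)\simeq K(G_e)\oplus K(G_e)$ (this is also Theorem \ref{RelKer}).

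The heart of the argument is the identification of the connecting map $\partial\colon K(G_e)\oplus K(G_e)\to K(G_e)\oplus\bigoplus_{[v]}K(G_v)$. I would trace the explicit compact generators: the two summands of the source correspond, under the equivalences above, to $V\in D(G_e)^{\heartsuit,\omega}$ left Kan extended along $BG_{e_0}\to B$ resp. $BG_{e_1}\to B$, right Kan extended to $\int_G\simp{X^{\mathrm{bary}}}$, and transported by $F$ into $\mathcal{C}(\mathcal{D})^0\subseteq\MV_0(X)^\heartsuit$. Using the pointwise formula of Corollary \ref{Ranres} for these left Kan extensions and Lemma \ref{lemmadiff} for the structure maps, the resulting resolution $N_V^{(0)}$ for the first summand satisfies $N_V^{(0)}(e)\simeq V$, while $N_V^{(0)}(\partial_0 e)$ is the induction of a $G$-conjugate of $V$ to $G_{\partial_0 e}$ realising the map ``$\ind(\partial_1)^g$'' of the theorem, and $N_V^{(0)}$ vanishes on the other vertex orbit when there is one; symmetrically $N_V^{(1)}(e)\simeq V$ and $N_V^{(1)}(\partial_0 e)\simeq\ind{G_e}{G_{\partial_0 e}}V$, realising ``$\ind(\partial_0)$''. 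Since the summand projections read off exactly the values $M(e)$ and $M(v)$, this shows that $\partial$ has edge-row $(\id,\id)$ and vertex-row $(\ind(\partial_1)^g,\ind(\partial_0))$ in the one-orbit case and $\left(\begin{smallmatrix}0&\ind(\partial_0)\\\ind(\partial_1)&0\end{smallmatrix}\right)$ in the two-orbit case (up to sign and the harmless conjugation bookkeeping). I expect this bookkeeping — in particular deciding, via the no-inversion hypothesis and the combinatorics of the barycentric subdivision, which half-edges at a vertex lie in the $G$-orbit of $e_0$ and which in that of $e_1$ — to be the main obstacle.

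The remaining steps are formal. Performing elementary change-of-basis operations on source and target of $\partial$, one uses the invertible edge-row $(\id,\id)$ to cancel the $K(G_e)$-summand of the target; the cofibre then becomes $\mathrm{cofib}\big(K(G_e)\xrightarrow{\ind(\partial_0)-\ind(\partial_1)^g}K(G_{\partial_0 e})\big)$ in the one-orbit case and the pushout $K(G_{\partial_0 e})\sqcup_{K(G_e)}K(G_{\partial_1 e})$ along the induction maps in the two-orbit case. Since the outer map $K(\MV(X)^\omega)\to K(G)$ restricts on each $K(G_v)$ to $\alpha$, i.e.\ to $\ind{G_v}{G}$, this is exactly the asserted cofibre sequence resp.\ pushout square. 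Finally, $K(\MV(X)^\omega)\simeq\bigoplus_{[x]}K(G_x)$ and $K(\MV_0(X)^\omega)\simeq K(G_e)^{\oplus 2}$ are connective, since each $D(G_x)^\omega$ carries a bounded $t$-structure with Noetherian heart and hence has vanishing negative $K$-theory; a cofibre of a map of connective spectra is connective, so $K(G)$ is connective as well.
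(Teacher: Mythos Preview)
Your proposal is correct and follows essentially the same route as the paper: the same bifibre sequence, the same identifications of $K(\MV(X)^\omega)$ and $K(\MV_0(X)^\omega)$, and the same tracking of the connecting map through the induction functors. The only cosmetic difference is in the final simplification step: where you cancel the $K(G_e)$ summand by Gaussian elimination on the matrix of $\partial$, the paper instead rearranges the bifibre sequence into a pullback square and reads off the desired cofibre sequence as the fibre of the vertical map using the diagonal/anti-diagonal $K(G_e)\xrightarrow{\Delta}K(G_e)\oplus K(G_e)\xrightarrow{\nabla}K(G_e)$; these are the same manipulation.
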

\begin{proof}
    By definition of $\MV_0(X)$ \ref{DefMV0}, Proposition \ref{Kercompg} and Proposition \ref{MVBous}, there is an exact sequence of categories 
\[\begin{tikzcd}
	{\MV_0(X)} && {\MV(X)} && {D(G)}
	\arrow[from=1-1, to=1-3]
	\arrow[from=1-3, to=1-5]
\end{tikzcd}\]
    and hence a bifibre sequence of $K$-theory spectra.
    Since the action of $G$ on $X$ is transitive on edges Proposition \ref{KMV} yields  
    $$
    K(\MV(X)^\omega) \simeq \left(\bigoplus_{[v] \in X_0/G} K(G_{v})\right) \bigoplus K(G_e), 
    $$
    and we can always assume that $v \subset  e$.
    Corollary \ref{D0MV0} shows
    $$
    K(\MV_0(X)) \simeq K(G_e) \bigoplus K(G_e)
    $$ 
    such that the maps into $K(\MV(X)^\omega)$ are induced by induction.
    We can rearrange the fibre sequence to a pullback square as shown in the bottom part of the following diagram, where $\nabla$ is the anti-diagonal and the middle arrow is induced by the induction along the face maps $\partial_i$ from $G_e$ to $G_{\partial_ie}$.
\[\begin{tikzcd}
	{K(G_e)} && {K(G_e)} \\
	\\
	{K(G_e) \bigoplus K(G_e)} && \bigoplus_{[v] \in X_0/G}K(G_v) \\
	\\
	{K(G_e)} && {K(G)}
	\arrow["{\partial_0 + \partial_1}", from=3-1, to=3-3]
	\arrow[from=5-1, to=5-3]
	\arrow[from=3-3, to=5-3]
	\arrow["\nabla"{description}, from=3-1, to=5-1]
	\arrow["{=}"{description}, from=1-1, to=1-3]
	\arrow["\Delta"{description}, from=1-1, to=3-1]
	\arrow[ from=1-3, to=3-3]
\end{tikzcd}\]
    Note that if $\partial_0e = g\partial_1e$ for some $g\in G$, then this induces an equivalence $K(G_{\partial_0e}) \simeq K(G_{\partial_1e})$ that we use to ensure that both inductions above have the same target.
    Since it is a pullback square the fibres of the vertical maps are equivalences.
    But the fibre of the left-hand map is $K(G_e)$, where $\Delta$ is the diagonal with a negative sign on one factor, hence also the fibre on the right. 
    Then the sequence on the right-hand side is a fibre sequence, hence a cofibre sequence.
    Unraveling this sequence depending on the number of $G$-orbits of $X_0$ yields the desired formulas.

    Lastly using Lemma \ref{tcomp} applied to $*$ with the trivial $G_x$-action we see that $D(G_e)^\omega$ admits a bounded $t$-structure with Noetherian heart. 
    Then by \cite[Theorem 1.2]{AGH19} the spectrum $K(G_x)$ is connective. 
    Hence, $K(G)$ is the pushout of connective spectra and therefore connective.
\end{proof}

\begin{remark}
    We want to close this section with a remark about groups with reduced Bruhat--Tits building of rank one.
    If we fix a central character, the same method can be adapted to compute the $K$-theory for groups that act without inversion on a tree such that all stabilizers are compact modulo centre and the action is transitive on edges.
    For example, this hypothesis is satisfied if the reduced Bruhat--Tits building is a tree.
    In that case, we have to require that all Mayer--Vietoris resolutions take values in the derived category $D(G_x, \chi)$ with coefficients in a regular with ring $R$ with $\Q \subseteq R$ and fixed central character $\chi$.
    This is again equivalent to modules over a certain regular Hecke-algebra by \cite[Theorem 7.2]{BL22}.
    Both, Kan extension and restriction, preserve this condition, and all arguments can be carried out without modification. 
    Note that the resolutions with a fixed central character do not define an admissible collection because $D(G_x, \chi)$ is not a full subcategory of $D(G_x)$. 
    In particular, we can not simply apply the machinery we have developed for relative resolutions.
\end{remark}

\section{The \texorpdfstring{$K$}{}-theory of principal series Bernstein blocks}
The goal of this section is to describe the $K$-theory of all principal series Bernstein blocks for split reductive groups of rank one in terms of compact open subgroups. 
A crucial ingredient to achieve this is the existence of types based on the work of Roche in \cite{RO98}. 
The section is divided into two subsections. 

In the first subsection, we will recall some results about the existence and construction of types.
Roche also provides a complete description of the set of intertwiners for principal series types which will be a key ingredient in the proofs. 

In the second subsection, we prove the formula for the $K$-theory spectra of Bernstein blocks. 
This will be done by first defining an admissible collection of subcategories associated to a type. 
Then we again construct a Verdier sequence such that we can compute the $K$-theory of two of the three terms. 
The main task to achieve this will be to show that the relative version of $\alpha$ that we define is again a left Bousfield localization. 
This will occupy the majority of the second part. 

\subsection{A recollection of the construction of principal series types}
In this section, we recall the construction of certain types for principal series Bernstein blocks over $\C$ done by Roche in \cite{RO98}.
The results of this section are due to Roche or direct consequences of such. 
Let $\mathbb{G}$ be a connected, split reductive algebraic group over a non-Archimedean local field $F$ with maximal $F$-split torus $\mathbb{T}$. 
Write $T$ for the $F$-rational points of $\mathbb{T}$. This has a unique maximal compact subgroup $\mathbb{T}(\mathcal{O}_F)$ which we denote by $T^0$.  
Depending on the group we have to make some assumptions on the residual characteristic of the field $F$.
Let $\Phi$ be the root system associated to $(G,T)$ and let $p$ be the residual characteristic of $F$. 
If $\Phi$ is irreducible we require that $p$ satisfies the following requirements depending on the type of $\Phi$ \cite{RO98}[before Theorem 4.15].
\begin{itemize}
    \item If $\Phi=A_n$ then $p>n+1$,
    \item if $\Phi=B_n, C_n ,D_n$ then $p \neq 2$, 
    \item if $\Phi=F_4$ then $p \neq 2,3$,
    \item if $\Phi=G_2, E_6$ then $p \neq 2,3,5$,  
    \item if $\Phi = E_7, E_8$ then $p \neq 2,3,5,7$.
\end{itemize}
Otherwise, if $\Phi$ is not irreducible we need to exclude all primes corresponding to the irreducible factors.
These restrictions are not optimal, in particular, if $G$ is $\mathrm{GL}_n(F)$ or $\mathrm{SL}_n(F)$ we do not need any additional assumptions on the residual characteristic \cite{RO98}[remark 4.14].
Lastly, we also fix a positive set of roots $\Phi^+$.

\begin{remark}
    \label{splitrank1gps}
    We will mostly be interested in root systems associated to reductive groups of rank one.
    In that case $X^*(T)=\Hom(T, \mathbb{G}_m)$ has dimension one and there is either no root or only one pair of roots $\pm \alpha$.
    A choice of positive roots then corresponds to choosing either $\alpha$ or $-\alpha$.
    The classification of connected, split reductive groups tells us that there are precisely three groups of rank one. These are $\mathbb{G}_m$, $\mathrm{SL_2}$ and $\mathrm{PGL}_2$.
\end{remark}
The choice of a positive set of roots also provides us with a standard apartment together with a distinguished simplex of maximal dimension that we will need in the next subsection. 
Furthermore, this specifies an Iwahori subgroup $I$ as a finite index subgroup in the stabilizer of this simplex and all types will be constructed as characters on open subgroups of $I$.
For the construction of principal series types, we start with a smooth character $\chi: T^0 \to \C^\times$. 

\begin{theorem}[Roche]
    \label{TypeRoche}
    Let $\chi: T^0 \to \C^\times$ be a smooth character. There is a compact open subgroup $J_\chi \subseteq I$ containing $T^0$ and an extension of $\chi$ to a character $\rho_\chi: J_\chi \to \C^\times$ which is a type.
\end{theorem}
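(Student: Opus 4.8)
The plan is to reduce Theorem \ref{TypeRoche} to the explicit construction that Roche carries out, so the bulk of the work is bookkeeping along the Bruhat--Tits building rather than a new idea. First I would recall the reduction to the case where $\Phi$ is irreducible: a character $\chi: T^0 \to \C^\times$ and the root datum decompose along the irreducible factors, and both $J_\chi$ and $\rho_\chi$ are built factor by factor, so it suffices to treat each irreducible $\Phi$ separately under the stated hypothesis on $p$. Since we ultimately only need rank one (see Remark \ref{splitrank1gps}), the relevant cases are $\Phi = \varnothing$ (the torus $\mathbb{G}_m$, where $J_\chi = T^0$ and $\rho_\chi = \chi$ trivially works) and $\Phi = A_1$ (for $\mathrm{SL}_2$ and $\mathrm{PGL}_2$), where the condition $p > n+1 = 3$, i.e.\ $p \neq 2$, is what is being invoked.

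Next I would produce the subgroup $J_\chi$. Roche's recipe attaches to $\chi$ a set of ``relevant'' affine roots: for each root $\alpha \in \Phi$ one measures the depth at which $\chi$ fails to be trivial on the image of the coroot $\alpha^\vee$ restricted to $T^0$, equivalently the conductor of the character $\chi \circ \alpha^\vee$ of $\mathcal{O}_F^\times$. Using the Chevalley--Steinberg presentation of $G$ one then defines $J_\chi$ to be the subgroup generated by $T^0$ together with the affine root subgroups $U_{\alpha + k}$ for $k$ at least this relevant level; this is a compact open subgroup contained in a chosen Iwahori $I$ (one has to pin down the apartment and alcove as indicated before the theorem statement so that the generators sit inside $I$). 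That $J_\chi$ is a group, and compact open, follows from the commutator relations among affine root subgroups together with the restriction on $p$, which guarantees that the structure constants appearing in those commutators are units and do not create unexpected elements of smaller depth; this is precisely where the exclusion of small primes enters and is the only genuinely delicate point.

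Then I would extend $\chi$ to a character $\rho_\chi$ of $J_\chi$. On $T^0$ it is $\chi$; on each affine root subgroup $U_{\alpha+k}$ with $k$ strictly above the relevant level it is forced to be trivial (these are pro-$p$, hence have no nontrivial $\C^\times$-valued character once we are past the jump), and at the boundary level one uses the value of $\chi$ on the torus element coming from $\alpha^\vee$ to read off the required (necessarily trivial, for $\C^\times$ targets) value; the commutator relations again show this is consistent and multiplicative, so $\rho_\chi$ is well-defined. Finally, to see $(J_\chi, \rho_\chi)$ is a type for $\rep{G}_{[T,\rho]}$ with $\rho = \chi$ viewed as an unramified twist class, I would invoke Roche's two main computations: first, that $\ind{J_\chi}{G}\rho_\chi$ lies in the block $\rep{G}_{[T,\chi]}$ — shown by checking via Frobenius reciprocity and Jacquet-module computations that every irreducible subquotient has cuspidal support in $[T,\chi]$ — and second, that it is a compact generator, which follows from Roche's explicit description of the intertwining algebra $\mathcal{H}(G,\rho_\chi) = \End(\ind{J_\chi}{G}\rho_\chi)$ as an affine Hecke algebra and the Bushnell--Kutzko criterion identifying $\rep{G}_{[T,\chi]}$ with $\mathrm{Mod}(\mathcal{H}(G,\rho_\chi))$. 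The main obstacle throughout is verifying the group and character axioms for $J_\chi$ and $\rho_\chi$ from the affine commutator relations, which is exactly the step that needs the hypotheses on the residual characteristic; everything else is a citation of Roche's results or a formal consequence of the Bushnell--Kutzko theory of types.
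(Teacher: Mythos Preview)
The paper does not prove this theorem at all: it is stated with the attribution ``(Roche)'' and is simply recalled from \cite{RO98} as background, with no proof given. Your proposal is therefore not competing with any argument in the paper; it is a sketch of Roche's own construction, which is the right thing to point to.

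As a sketch of Roche's argument your outline is broadly accurate --- the subgroup $J_\chi$ is indeed built from $T^0$ and affine root subgroups at levels governed by the conductors $c_\alpha$ of $\chi \circ \alpha^\vee$ (the paper itself alludes to this via the function $f_\chi$ just below Lemma~\ref{IwahoridecompType} in the discussion around the proof that $J_\chi = J_{\chi^\omega}$), the extension $\rho_\chi$ is trivial on the unipotent parts, and the type property is established through Roche's Hecke-algebra computation and the Bushnell--Kutzko machinery. One small correction: for $\Phi = A_1$ the bound is $p > n+1 = 2$, not $3$; and as the paper notes in the remark immediately after the list of excluded primes, for $\mathrm{SL}_n$ and $\mathrm{GL}_n$ no restriction on $p$ is actually needed.
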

\begin{remark}
    \label{RemLeviType}
    Let $\widetilde{\chi}: T \to \C^\times$ be any extension of $\chi$.
    This determines a Bernstein block of $G$. Any other extension of $\chi$ only differs by twisting with an unramified character, therefore all extensions determine the same Bernstein block and $\rho_\chi$ is a type for this block by \cite[Theorem $7.0$]{RO98}.
\end{remark}
\begin{lemma}[Roche]
    \label{IwahoridecompType}
    The group $J_\chi$ has a decomposition $J_\chi=U'T^0\overline{U'}$ and $\rho_\chi$ is trivial on $U'$ and $\overline{U'}$.
\end{lemma}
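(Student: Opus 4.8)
The plan is to unwind Roche's explicit construction of the pair $(J_\chi,\rho_\chi)$ from \cite{RO98} and read the asserted structure off of it; in the present rank one setting the whole thing also reduces to a direct $2\times 2$ matrix computation. First, the case $\mathbb{G}=\mathbb{G}_m$ is vacuous, since then there are no roots, $J_\chi=T^0$ and $U'=\overline{U'}$ are trivial, so by Remark \ref{splitrank1gps} we may assume $\Phi=\{\pm\alpha\}$, i.e. $\mathbb{G}$ is $\mathrm{SL}_2$ or $\mathrm{PGL}_2$. Write $U=U_\alpha$ for the unipotent radical of the Borel attached to $\Phi^+$ and $\overline{U}=U_{-\alpha}$ for the opposite one. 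Roche attaches to $\chi$ the conductor $c\ge 0$ of the character $\chi\circ\alpha^{\vee}$ of $\mathcal{O}_F^{\times}$ obtained by restricting $\chi$ along the coroot, and, relative to the base point of the standard apartment used to specify $I$, defines $U'$ and $\overline{U'}$ as the members $U_{\alpha,n(\alpha)}$, $U_{-\alpha,n(\alpha)}$ of the Moy--Prasad filtrations of the root subgroups at a level $n(\alpha)$ governed by $c$ (roughly $\lceil c/2\rceil$, with the usual shift between the $\alpha$- and $-\alpha$-sides forced by the position of the base point). These are compact open subgroups of $U_{\pm\alpha}$ contained in $I$, and one puts $J_\chi:=U'T^0\overline{U'}$ as a subset of $G$; the lemma is then the assertion that this is a group and that the prescription ``$\chi$ on $T^0$, trivial on $U'$ and $\overline{U'}$'' defines a character on it — which by Theorem \ref{TypeRoche} is the type $\rho_\chi$.

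I would first check that $J_\chi$ is a subgroup. Each of $U'$, $T^0$, $\overline{U'}$ is a subgroup, and $T^0$ normalizes $U'$ and $\overline{U'}$ since it normalizes each root subgroup via $\alpha$ (a unit-valued character on $T^0$) and hence preserves the filtrations; so closure of $J_\chi$ under multiplication reduces to the commutator containment $[\overline{U'},U']\subseteq U'\,T^0\,\overline{U'}$. This is the Chevalley commutator relation for the rank one group, and in coordinates $[u_{-\alpha}(y),u_{\alpha}(x)]$ with $x,y\in\varpi^{\,n(\alpha)}\mathcal{O}_F$ has its two unipotent components again in $U_{\pm\alpha,n(\alpha)}$ and its torus component in $\alpha^{\vee}(1+\varpi^{\,2n(\alpha)}\mathcal{O}_F)$ — a one-line identity in $\mathrm{SL}_2(F)$, with $\mathrm{PGL}_2$ entirely analogous. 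Since $n(\alpha)$ was chosen so that $2n(\alpha)\ge c$, this torus factor lies in $\ker(\chi\circ\alpha^{\vee})\subseteq T^0$; in particular $[\overline{U'},U']\subseteq U'\,T^0\,\overline{U'}$ and $J_\chi$ is a group. Moreover $\overline{U}\times T\times U\to G$ is injective on the big cell, so the product map $U'\times T^0\times\overline{U'}\to J_\chi$ is a bijection; in particular $J_\chi\cap T=T^0$.

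It remains to produce $\rho_\chi$ and verify its properties. Using the unique factorization $g=u\,t\,\overline{u}$ just obtained, set $\rho_\chi(u\,t\,\overline{u}):=\chi(t)$; this is a well-defined function restricting to $\chi$ on $T^0$ and \emph{trivial on $U'$ and on $\overline{U'}$} by construction, so the only thing to check is multiplicativity, and this is exactly where the choice of level pays off. To rewrite $g_1g_2=(u_1t_1\overline{u}_1)(u_2t_2\overline{u}_2)$ in factored form one moves $\overline{u}_1$ past $u_2$ using the commutator relation above, picking up corrections in $U'$, in $\alpha^{\vee}(1+\varpi^{\,2n(\alpha)}\mathcal{O}_F)$ and in $\overline{U'}$, and then conjugates the resulting unipotent factors by $t_1$ and $t_1t_2$, which keeps them inside $U'$ and $\overline{U'}$; one arrives at $g_1g_2=u'\,(t_1t_2z)\,\overline{u'}$ with $z\in\ker(\chi\circ\alpha^{\vee})$, so $\rho_\chi(g_1g_2)=\chi(t_1t_2)=\rho_\chi(g_1)\rho_\chi(g_2)$. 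Thus $\rho_\chi$ is a character of $J_\chi$ with the stated triviality, proving the lemma. The \emph{main obstacle} is entirely bookkeeping: fixing the Moy--Prasad indexing conventions and the base point so that $U',\overline{U'}\subseteq I$ and the big-cell factorization genuinely restricts to $J_\chi$, and then tracking the torus component of $[\overline{U'},U']$ precisely enough against the conductor $c$ to land in $\ker(\chi\circ\alpha^{\vee})$ — which is precisely how Roche pins down the level $n(\alpha)$. Once that is set up, both the group property of $J_\chi$ and the multiplicativity of $\rho_\chi$ drop out of the same commutator identity.
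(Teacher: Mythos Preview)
The paper provides no proof of this lemma: it is stated with the attribution ``(Roche)'' and simply quoted from \cite{RO98} without argument, exactly as Theorem \ref{TypeRoche} and Lemma \ref{RocheIntertwiner} are. So there is nothing in the paper to compare your argument against beyond the implicit ``see \cite{RO98}''.

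That said, what you have written is a faithful rank one specialization of Roche's actual construction, and it is correct. Your identification of $U'$ and $\overline{U'}$ as root-subgroup filtration members at level governed by the conductor $c$ of $\chi\circ\alpha^{\vee}$ matches the function $f_\chi$ recalled later in the paper (before Lemma 4.9) and the explicit $\mathrm{SL}_2$ description in Example \ref{TypesSL}, where the two levels $\lfloor n/2\rfloor$ and $\lfloor(n+1)/2\rfloor$ visibly sum to at least $n$. The heart of your argument --- that the torus component of $[\overline{U'},U']$ lands in $\alpha^{\vee}(1+\varpi^{2n(\alpha)}\mathcal{O}_F)\subseteq\ker(\chi\circ\alpha^{\vee})$, which simultaneously gives the group property of $J_\chi$ and the multiplicativity of $\rho_\chi$ --- is exactly the mechanism Roche uses (in arbitrary rank it is the same Chevalley commutator calculation, organized via a concave function on the roots). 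One small point: your phrase ``roughly $\lceil c/2\rceil$, with the usual shift'' is slightly imprecise; the paper and Roche use $\lfloor c/2\rfloor$ on the positive side and $\lfloor(c+1)/2\rfloor$ on the negative side, and it is worth being exact here since the inequality $\lfloor c/2\rfloor+\lfloor(c+1)/2\rfloor\ge c$ is what makes the torus correction land in the kernel.
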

\begin{example}
    \label{TypesSL}
    Let us explicitly apply the above construction to obtain types for the principal series blocks of $\mathrm{SL}_2(\mathbb{Q}_p)$. 
    A maximal torus is given by $\lbrace \mathrm{diag}(z, z^{-1}) \mid z \in \mathbb{Q}_p^\times \rbrace $ and $T^0$ are all elements in $T$ with $z \in \mathbb{Z}_p^\times$.
    Now let $\chi: \Z_p^\times \to \mathbb{C}^\times$ be a smooth character and let $n$ be minimal such that $1 + n\mathbb{Z}^p$ is contained in the kernel of $\chi$. 
    Then $\chi$ extends to a character $\rho_\chi$ on 
    \begin{equation*}
       J_\chi = 
       \begin{pmatrix}
        \Z^\times_p &  p^{\lfloor n/2 \rfloor }\Z_p \\
        p^{\lfloor (n+1)/2 \rfloor} \Z_p & \Z^\times_p
       \end{pmatrix} \cap \mathrm{SL}_2(\mathbb{Q}_p)
    \end{equation*} 
    that is trivial away from the diagonal (here $\lfloor x \rfloor$ denotes the largest integer smaller than $x$).
    If we start with any smooth character $\chi$ of $T= \Q^\times_p$ we can restrict it to $T^0$ and apply the above construction. This way we obtain a type for every principal series Bernstein block of $\mathrm{SL}_2(\Q_p)$ (see remark \ref{RemLeviType} and \cite[example $3.5$]{RO98}).
\end{example}
Recall that the extended Weyl group of $G$ is $W = N(T)/T^0$. 
In particular, we can take an element $\omega \in W$ and conjugate a character $\chi: T^0 \to \C^\times$. 
\begin{definition}
    For a character $\chi : T^0 \to \C^\times$ we define 
    $ W_\chi \coloneqq \lbrace \omega \in W \mid \chi^\omega = \chi \rbrace $ and $N(T)_\chi \coloneqq \lbrace \omega \in N(T) \mid \chi ^\omega = \chi \rbrace$.
\end{definition}
\begin{lemma}[Roche]
    \label{RocheIntertwiner}
    Let $\chi: T^0 \to \C^\times$ be a character with associated type $\rho_\chi$. Then $\mathcal{I}(\rho_\chi) \coloneq \lbrace g \in  G \, \vert \, \Hom_{J_{\chi}^g \cap J_\chi}(\rho^g_\chi , \rho_\chi)\neq 0 \rbrace =  J_\chi W_\chi J_\chi$ are the intertwiners of $\rho_\chi$ with itself. 
 \end{lemma}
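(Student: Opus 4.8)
The plan is to reproduce Roche's argument from \cite[Section 7]{RO98}, of which this statement is essentially the main combinatorial output, and only to sketch its skeleton here. The formal backbone is that $\mathcal{I}(\rho_\chi)$ is, by its very definition, a union of double cosets $J_\chi g J_\chi$, and that $J_\chi W_\chi J_\chi = J_\chi N(T)_\chi J_\chi$ because $T^0 \subseteq J_\chi$. So it suffices to decide, double coset by double coset, which ones support a non-zero intertwiner, and to show that these are precisely the ones meeting $N(T)_\chi$.

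For the inclusion $J_\chi W_\chi J_\chi \subseteq \mathcal{I}(\rho_\chi)$, I would fix a lift $\dot\omega \in N(T)_\chi$ of $\omega \in W_\chi$ and show $\dot\omega \in \mathcal{I}(\rho_\chi)$. Since $\dot\omega$ normalizes $T$ it fixes $T^0$ setwise, so $T^0 \subseteq J_\chi^{\dot\omega} \cap J_\chi$. Using the Iwahori decomposition $J_\chi = U'T^0\overline{U'}$ of Lemma \ref{IwahoridecompType} and the corresponding decomposition of the conjugate $J_\chi^{\dot\omega}$, I would check that $J_\chi^{\dot\omega} \cap J_\chi$ inherits an Iwahori-type decomposition whose torus part is $T^0$ and whose unipotent parts are contained in those of $J_\chi$ and $J_\chi^{\dot\omega}$ respectively. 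On those unipotent parts both $\rho_\chi$ and $\rho_\chi^{\dot\omega}$ are trivial (again by Lemma \ref{IwahoridecompType} and its conjugate), while on $T^0$ we have $\rho_\chi^{\dot\omega}|_{T^0} = \chi^\omega = \chi = \rho_\chi|_{T^0}$; hence the two characters agree on all of $J_\chi^{\dot\omega} \cap J_\chi$, so $\Hom_{J_\chi^{\dot\omega} \cap J_\chi}(\rho_\chi^{\dot\omega}, \rho_\chi)$ is one-dimensional and $\dot\omega \in \mathcal{I}(\rho_\chi)$. Since $\mathcal{I}(\rho_\chi)$ is a union of $J_\chi$-double cosets, the full inclusion follows.

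For the reverse inclusion $\mathcal{I}(\rho_\chi) \subseteq J_\chi W_\chi J_\chi$, the first step is to use $T^0 \subseteq J_\chi \subseteq I$ together with the decomposition $G = \bigsqcup_w I\dot w I$ over the Iwahori--Weyl group $N(T)/T^0$ to place each double coset $J_\chi g J_\chi$ inside a unique $I\dot w I$. The point I expect to be essentially formal is the last mile: once one knows that $g$ may be taken in $N(T)$, one has $T^0 = (T^0)^g \subseteq J_\chi^g \cap J_\chi$, and restricting a non-zero element of $\Hom_{J_\chi^g \cap J_\chi}(\rho_\chi^g, \rho_\chi)$ to $T^0$ gives a non-zero element of $\Hom_{T^0}(\chi^g, \chi)$, forcing $\chi^g = \chi$, i.e. the image of $g$ in $N(T)/T^0$ lies in $W_\chi$ and $g \in J_\chi W_\chi J_\chi$.

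The main obstacle is exactly the step that I deliberately did not do formally above: showing that $\mathcal{I}(\rho_\chi) \cap I\dot w I$ is non-empty only for $w$ in the image of $N(T)_\chi$, and that in that case it is the single double coset $J_\chi \dot w J_\chi$. This is not formal and is where the restrictions on the residual characteristic enter; it relies on Roche's explicit construction of $J_\chi$ out of $T^0$ and congruence subgroups of the root groups calibrated to the conductor of $\chi$, and on the root-by-root computation showing that a non-zero intertwiner coming from an element of $I\dot w I$ forces the conductor datum of $\chi$ to be $w$-invariant, hence $w \in W_\chi$. Since this is precisely the content of \cite{RO98}, I would cite it for this step rather than reprove it, and then assemble the two inclusions to obtain $\mathcal{I}(\rho_\chi) = J_\chi W_\chi J_\chi$.
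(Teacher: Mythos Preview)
The paper does not prove this lemma at all: it is stated as a result of Roche and attributed to \cite{RO98} without any argument, so there is no ``paper's own proof'' to compare against. Your sketch is a faithful outline of Roche's argument (the easy inclusion via the Iwahori decomposition of $J_\chi$ and the hard inclusion via the conductor/root-group analysis under the residual characteristic hypotheses), and you correctly identify that the non-formal step must ultimately be cited from \cite{RO98}; in that sense your proposal is exactly what the paper does, only more verbosely.
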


We can also apply the above theorem to the conjugated character $\chi^\omega: T^0 \to \C^\times$ to obtain another type $\rho_{\chi^\omega}: J_{\chi^\omega} \to \C^\times$. 
\begin{remark}
    Note that the conjugated character $\chi^\omega: T^0 \to \C^\times$ only depends on the image of $\omega$ in the finite Weyl group $N(T)/T$ since conjugation with elements in $T$ acts trivially.
\end{remark}
\begin{lemma}
    Let $\chi : T^0 \to \C^\times $ be a smooth character and $\omega \in W$. If $T$ has rank $1$ then $J_\chi = J_{\chi^\omega}$.
\end{lemma}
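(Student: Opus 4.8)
The plan is to trace through Roche's explicit construction of $J_\chi$ in \cite{RO98} and observe that the only input depending on $\chi$ is a collection of conductors which is insensitive to Weyl conjugation once $T$ has rank one.

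First I would recall that $J_\chi$ is built out of the fixed maximal compact subgroup $T^0$, the fixed positive system $\Phi^+$, and, for each root $a\in\Phi$, the conductor $n_\chi(a)$ of the smooth character $\mathcal{O}_F^\times\to\C^\times$, $t\mapsto\chi(a^\vee(t))$, obtained by precomposing $\chi$ with the coroot $a^\vee$: concretely, in the Iwahori decomposition $J_\chi=U'T^0\overline{U'}$ of Lemma \ref{IwahoridecompType} the congruence level to which the root subgroup $U_a$ is truncated is a function of $n_\chi(a)$ and of the sign of $a$ alone. Checking this against \cite{RO98} is the one piece of bookkeeping the argument rests on; everything else is formal. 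In particular, if $\Phi=\emptyset$, i.e.\ $G=\mathbb{G}_m$ by Remark \ref{splitrank1gps}, then $J_\chi=T^0$ for every $\chi$ and the claim is immediate, so I may assume $G\in\{\mathrm{SL}_2,\mathrm{PGL}_2\}$ and $\Phi=\{\pm\alpha\}$.

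The key step is to show that $n_{\chi^\omega}(a)=n_\chi(a)$ for every $a\in\Phi$ and every $\omega\in W$. By the remark preceding the statement, $\chi^\omega$ depends only on the image $\bar\omega$ of $\omega$ in the finite Weyl group $W_0=N(T)/T$, and for $t\in\mathcal{O}_F^\times$ a direct computation with a lift of $\bar\omega$ gives that $\chi^\omega(a^\vee(t))$ equals $\chi$ evaluated at $(\bar\omega^{-1}a)^\vee(t)$. Since $T$ has rank one, $X^*(T)\otimes\Q$ is one-dimensional and spanned by $\alpha$, so $W_0=\{1,s_\alpha\}$ acts on it by $\pm 1$; dually $\bar\omega^{-1}a\in\{a,-a\}$ and hence $(\bar\omega^{-1}a)^\vee\in\{a^\vee,-a^\vee\}$. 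Thus $\chi^\omega\circ a^\vee$ is either $\chi\circ a^\vee$ or its inverse, and a character of $\mathcal{O}_F^\times$ has the same conductor as its inverse, whence $n_{\chi^\omega}(a)=n_\chi(a)$.

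Finally I would conclude: $\chi$ and $\chi^\omega$ feed exactly the same data into Roche's construction — the same $T^0$, the same $\Phi^+$, and the same conductors — so $J_\chi=J_{\chi^\omega}$. The only genuine obstacle is the verification in the first paragraph, namely extracting from \cite{RO98} that $J_\chi$ depends on $\chi$ only through the conductors $n_\chi(a)$; once that is in hand the rank-one hypothesis does the rest by forcing $W_0$ to act by $\pm 1$ on coroots.
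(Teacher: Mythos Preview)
Your proposal is correct and follows essentially the same approach as the paper: both arguments reduce to the fact that $J_\chi$ depends on $\chi$ only through the function $f_\chi$ built from the conductors $c_\alpha$ of $\chi\circ\alpha^\vee$, and then observe that in rank one the finite Weyl group acts by $\pm 1$, so $\chi^\omega\circ\alpha^\vee$ is either $\chi\circ\alpha^\vee$ or its inverse, which has the same conductor. The paper phrases the second step slightly differently (arguing that $\chi$ and $\chi^\omega$ have the same kernel directly, hence so do their compositions with any coroot), whereas you compute $\chi^\omega\circ a^\vee=\chi\circ(\bar\omega^{-1}a)^\vee$ and use $\bar\omega^{-1}a=\pm a$; these are the same observation viewed from two angles.
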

\begin{remark}
    The proof of this lemma requires some details about the construction of $J_\chi$ that we did not recall. For a complete treatment see \cite[section $3$]{RO98}. 
\end{remark}
\begin{proof}
    For any root $\alpha \in \Phi$ we denote by $c_\alpha$ the conductor of the composition $\chi \circ \alpha^\vee$ where $\alpha^\vee$ is the dual root to $\alpha$. 
    The construction of $J_\chi$ depends only on the function $f_\chi: \Phi \to \Z$ that is defined as 
    
    \begin{equation*}
        f_\chi(\alpha) =
        \begin{cases}
        \lfloor {c_\alpha/2}\rfloor  &\text{ if }\alpha \in \Phi^+ \\
        \lfloor {(c_\alpha +1)/2} \rfloor & \text{ if }\alpha \in \Phi^-.
        \end{cases}
    \end{equation*} 
   If $T$ has rank $1$, then $\mathrm{Aut}(T)$ has $2$ elements ($t \mapsto t$ and $t \mapsto t^{-1}$).
   But $N(T)$ acts by conjugation on $T$ with kernel $C(T)$, hence $W$ maps injectively to $\mathrm{Aut}(T)$. Therefore, $W$ has at most $2$ elements, and conjugating with either element does not change the kernel. 
   This implies that $\chi$ and $\chi^\omega$ have the same kernel and hence also the pre-composition with any dual root $\alpha^\vee$.
   In other words $f_\chi = f_{\chi^\omega}$ and therefore $J_\chi = J_{\chi^\omega}$. 
\end{proof}

\begin{lemma}
    \label{chiinter}
    Let $\omega \in W$.
    Then any element of $ \omega W_\chi $ intertwines $\rho_\chi$ and $\rho_{\chi^\omega}$. 
\end{lemma}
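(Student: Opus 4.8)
The plan is to verify the intertwining condition directly from Roche's structural description of $\rho_\chi$, using crucially that $\rho_\chi$ and $\rho_{\chi^\omega}$ are one--dimensional. Fix a representative $n \in N(T)$ of an element of $\omega W_\chi$. Up to multiplying $n$ by an element of $T^0 \subseteq J_\chi$ --- which changes neither $J_\chi^n$ nor $\rho^n_\chi|_{J_\chi^n}$, since $\rho_\chi$ is a character of $J_\chi$ --- we may assume $n = n_\omega n_w$ with $n_\omega$ a representative of $\omega$ and $n_w$ a representative of some $w \in W_\chi$. By the preceding lemma $J_\chi = J_{\chi^\omega}$, so the $\Hom$--space to be shown nonzero is $\Hom_{J_\chi \cap J_\chi^{n}}(\rho^{n}_\chi, \rho_{\chi^\omega})$; as both sides are characters, this amounts to showing that $\rho^{n}_\chi$ and $\rho_{\chi^\omega}$ restrict to the same character of $J_\chi \cap J_\chi^{n}$.

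The first step is to reduce this comparison to the individual pieces of an Iwahori--type factorization. Since $n \in N(T)$ normalizes $T^0$ and permutes the two root subgroups of the rank one group, conjugating the decomposition $J_\chi = U'T^0\overline{U'}$ of Lemma~\ref{IwahoridecompType} shows that $J_\chi^{n}$ has a factorization of the same shape, and hence that $J_\chi \cap J_\chi^{n} = A\,T^0\,\overline A$ with $A \subseteq U_\alpha \cap J_\chi = U'$ and $\overline A \subseteq U_{-\alpha}\cap J_\chi = \overline{U'}$. Granting this, it suffices to check that the two characters agree on $T^0$, on $A$, and on $\overline A$ separately. On $T^0$ I would compute, for $x \in T^0$,
\[
\rho^{n}_\chi(x) = \chi\bigl(n^{-1}xn\bigr) = \chi^{w}\bigl(n_\omega^{-1}xn_\omega\bigr) = \chi\bigl(n_\omega^{-1}xn_\omega\bigr) = \chi^\omega(x) = \rho_{\chi^\omega}(x),
\]
where the crucial middle equality is exactly the relation $\chi^{w} = \chi$ defining $w \in W_\chi$, and the outer equalities use that $\rho_\chi$ and $\rho_{\chi^\omega}$ restrict to $\chi$ and $\chi^\omega$ on $T^0$ by Theorem~\ref{TypeRoche}. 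On $A$ and $\overline A$: both $\rho_\chi$ and $\rho_{\chi^\omega}$ are trivial on $U'$ and $\overline{U'}$ by Lemma~\ref{IwahoridecompType} (applied to $\chi$ and to $\chi^\omega$, using $J_{\chi^\omega}=J_\chi$), and for $u$ in one of these unipotent parts the element $n^{-1}un$ again lies in $J_\chi$ and inside a single root subgroup, so $\rho^{n}_\chi(u) = \rho_\chi(n^{-1}un) = 1 = \rho_{\chi^\omega}(u)$. This yields $\rho^{n}_\chi = \rho_{\chi^\omega}$ on all of $J_\chi \cap J_\chi^{n}$, and hence the lemma.

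The main obstacle is the factorization $J_\chi \cap J_\chi^{n} = A\,T^0\,\overline A$ --- equivalently, the assertion that equality of the two characters can be tested on the torus part and the root--subgroup parts. In the rank one setting at hand this is elementary: the two root subgroups and $T^0$ meet pairwise trivially, and the reordering corrections that arise land in the kernel of $\chi$ by the choice of the conductors $c_\alpha$; all of this is implicit in the construction of $J_\chi$ recalled in \cite[\S3]{RO98}. Everything else is a one--line manipulation, so the genuine content of the lemma is concentrated in the identity $\chi^{w} = \chi$ on $T^0$.
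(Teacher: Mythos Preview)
Your proof is correct and follows essentially the same approach as the paper: use the Iwahori-type decomposition $J_\chi = U'T^0\overline{U'}$ from Lemma~\ref{IwahoridecompType}, observe that both characters are trivial on the unipotent pieces, and verify directly on $T^0$ that conjugation by an element of $\omega W_\chi$ carries $\chi$ to $\chi^\omega$ via the defining relation $\chi^w = \chi$. The paper's version is terser---it simply asserts that the two characters agree on the intersection without isolating the factorization $J_\chi \cap J_\chi^{n} = A\,T^0\,\overline A$ as a separate step---whereas you explicitly flag this factorization as the one nontrivial input and defer it to Roche; this is a difference in exposition, not in mathematical content.
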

\begin{proof}
    By \ref{IwahoridecompType} the group $J_\chi$ admits an Iwahori decomposition $J_\chi=U'_\chi T\overline{U'}_\chi$ such that $\rho_\chi$ and is trivial on $U'_\chi$ and $\overline{U'}_\chi$ and equal to $\chi$ on $T^0$.
    Similarly, for $\rho_{\chi^\omega}$. 
    Then for any $\omega^{-1}g \in \omega^{-1}W_\chi$ the character $(\rho_{\chi^\omega})^{\omega^{-1}g}$ is defined on $J_{\chi^\omega}^{\omega^{-1}g} = (U'_{\chi^\omega})^{\omega^{-1}g}(T^0)^{\omega^{-1}g}(\overline{U'}_{\chi^\omega})^{\omega^{-1}g}$ where it is trivial outside $(T^0)^{\omega^{-1}g}=T^0$ and equal to $(\chi^\omega)^{\omega^{-1}g}=\chi$ on $T^0$. 
    In particular $\rho_\chi$ and $(\rho_{\chi^\omega})^{\omega^{-1}g}$ agree on the intersection $J_\chi \cap J_{\chi^\omega}^{\omega^{-1}g}$, hence $\omega^{-1}g$ in an intertwiner. 
\end{proof}

\begin{lemma}
    Let $\chi: T^0  \to \C^\times$ be a smooth character and $\omega \in W$. Then $\rho_\chi$ and $\rho_{\chi^\omega}$ are types for the same Bernstein block.
\end{lemma}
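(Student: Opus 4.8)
The plan is to reduce the statement to Bernstein's characterization of types via their intertwiners. Recall from \autoref{RemLeviType} that $\rho_\chi$ is a type for the Bernstein block $[T, \widetilde\chi]$ determined by an arbitrary extension $\widetilde\chi : T \to \C^\times$ of $\chi$, and that this block is independent of the chosen extension since any two extensions differ by an unramified character. Likewise $\rho_{\chi^\omega}$ is a type for the block $[T, \widetilde{\chi^\omega}]$. So the whole statement is equivalent to the assertion that $[T, \widetilde\chi] = [T, \widetilde{\chi^\omega}]$ in $\Omega(G)$, i.e.\ that the pairs $(T, \widetilde\chi)$ and $(T, \widetilde{\chi^\omega})$ are equivalent under the relation defining Bernstein's decomposition.

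First I would lift $\omega \in W = N(T)/T^0$ to an element $n \in N(T)$. Conjugation by $n$ sends $T$ to $T$ (so the Levi subgroup is unchanged, taking $g = n$ and $L_1^g = L_2 = T$) and sends the character $\widetilde\chi$ to the character $\widetilde\chi^{\,n}$ on $T$. The point is that $\widetilde\chi^{\,n}$ restricts on $T^0$ to $\chi^n = \chi^\omega$ (using that conjugation by $T^0$ is trivial on characters of $T^0$, so the class of $\omega$ is all that matters, as noted in the remark preceding \autoref{chiinter}). Hence $\widetilde\chi^{\,n}$ and $\widetilde{\chi^\omega}$ are two extensions of $\chi^\omega$ to $T$, so they differ by an unramified character $\mu \in \mathfrak{X}(T)$: we have $\widetilde\chi^{\,n} = \mu \cdot \widetilde{\chi^\omega}$. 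Setting $g = n$ and this $\mu$, the pair $(T, \widetilde\chi)$ is by definition equivalent to $(T, \widetilde{\chi^\omega})$, so they lie in the same Bernstein block.

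The only subtlety is cuspidality of the inducing datum: Bernstein's equivalence relation is defined on pairs $(L, \rho)$ with $\rho$ an irreducible \emph{cuspidal} representation of $L$, and here $L = T$ is a torus, for which every smooth character is automatically cuspidal (there are no proper Levi subgroups, so the Jacquet restriction condition is vacuous), and irreducibility is clear. So the pairs $(T, \widetilde\chi)$ and $(T, \widetilde{\chi^\omega})$ genuinely lie in $C(G)$, and the argument above exhibits the required $g$ and $\chi$-twist. I do not anticipate a real obstacle here; the content is entirely bookkeeping with Roche's remark \autoref{RemLeviType} and the definition of $\Omega(G)$, and the main thing to get right is simply that the $W$-conjugate of an extension of $\chi$ is again an extension of $\chi^\omega$, which follows from the invariance of conjugation-by-$T^0$ on characters of $T^0$.
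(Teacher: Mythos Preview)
Your proposal is correct and follows essentially the same approach as the paper: both arguments pick an extension $\widetilde\chi$ of $\chi$ to $T$, observe that its $\omega$-conjugate is an extension of $\chi^\omega$, and conclude that the two cuspidal pairs are inertially equivalent (hence determine the same Bernstein block), invoking \autoref{RemLeviType} for the identification of the blocks with the types. Your version is slightly more explicit about the unramified twist and the cuspidality of torus characters, but the substance is identical; note only that your opening sentence about ``Bernstein's characterization of types via their intertwiners'' is a bit misleading, since the actual argument never touches intertwiners.
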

\begin{proof}
Let $\widetilde{\chi}: T \to \C^\times$ be any extension of $\chi$.
Then $(\widetilde{\chi})^\omega$ is an extension of $\chi^\omega$.
Since $\widetilde{\chi}$ and $(\widetilde{\chi})^\omega$ only differ by conjugation they are both in the same inertia class and hence determine the same Bernstein block.
Since the types associated types $\rho_\chi$ resp.\ $\rho_{\chi^\omega}$ correspond to the Bernstein blocks of $\widetilde{\chi}$ resp.\ $(\widetilde{\chi})^\omega$ they determine the same block.
\end{proof}

\subsection{The colimit associated to a principal series Bernstein block}
In this section, $X$ will be the Bruhat--Tits tree of a connected split reductive algebraic group $G$ of rank one.
By the classification of connected split reductive groups, these are (the $F$-points of) either $\mathbb{G}_m$, $\mathrm{PGL}_2$ or $\mathrm{SL}_2$.
Furthermore, we will continue to use the notation from the previous section. 
In particular, we still fix a root system $\Phi$, a set of positive roots $\Phi^+$ and restrict the residual characteristic accordingly.
From now on we will only work with complex representations, that is, $R= \C$.
By \ref{BruhatTitsssSet} we can again assume that $G$ acts without inversion and compact stabilizers on $X$. 
Furthermore, let $I$ be the standard Iwahori subgroup associated to the set of positive roots. Then $I$ is canonically contained in the stabilizer of an edge $e$ (before subdivision) as a finite index open subgroup, and we fix this edge for the rest of this section. 
If we have to subdivide $X$ to ensure that the action does not invert edges then $e$ is split into $e_0$ and $e_1$ and $G_e=G_{e_0}=G_{e_1}$. We fix $e_0$ and write again $e$ instead of $e_0$. 

Let $(J, \rho_\chi)$ be a type associated to a smooth character $\chi: T^0 \to \C^\times$ as before. 
This gives us a functor $\rho_\chi: BJ \to D(\C)$.
Because $J \subseteq I\subseteq G_e$ we obtain a functor
\[\begin{tikzcd}
	BJ && BG_e && {\int_G \simp{X}.}
	\arrow[from=1-1, to=1-3]
	\arrow[from=1-3, to=1-5]
\end{tikzcd}\]
By left Kan extending this induces a smooth functor 
$$
\mathrm{L}\rho_\chi^e: \int_G \simp{X}\op \to D(\C).
$$ 
With this notation, we define a collection of subcategories.
\begin{definition}
    \label{defrel}
    Let $\mathcal{D}_n(\chi)$ be the smallest full stable subcategory of $\MV(X_n)$  that is closed under colimits and contains $(\mathrm{L}\rho_{\chi^\omega}^e)_{X_n}$ for all $\omega \in W$. 
    We define $\mathcal{D}(\chi):= \lbrace \mathcal{D}_n (\chi) \rbrace_{n \in \N}$.
\end{definition}

\begin{notation}
    \label{NotationRelCat}
    From now on, we will use the notation $D(G_x, \rho_\chi):= \mathcal{D}(\chi)(x)$ with associated projection $\pr{\chi} := \pr{\mathcal{D}(\chi)}$.
    Furthermore, let $D(G, \rho_\chi)$ be the Bernstein block corresponding to the type $\rho_\chi$.
\end{notation}
For each $x\in X$, we can give an explicit description for the category $D(G_x, \rho_\chi) \subseteq D(G_x)$.

\begin{lemma}
    \label{Dchi}
    For every $x\in X$, the category $D(G_x, \rho_\chi)$  is the smallest full stable subcategory of $D(G_x)$ that contains $\ind{(J_{\chi^\omega})^g}{G_x}(\rho_{\chi^\omega})^g$ for all $\omega \in W$ and $g \in G$ such that $x \subseteq ge$ and is closed under colimits.
\end{lemma}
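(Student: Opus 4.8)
The plan is to unwind the definition of $\mathcal{D}(\chi)(x)$ and feed it into the restriction formula of Corollary~\ref{Ranres}. First I would put the generators of the admissible collection into a more tractable shape: since left Kan extension is transitive along $BJ\to BG_e\to\int_G\simp{X}$ and $\Lan_{BJ}^{BG_e}\rho_{\chi^\omega}\simeq\ind{J}{G_e}\rho_{\chi^\omega}$, we obtain $\mathrm{L}\rho_{\chi^\omega}^e\simeq\Lan_{BG_e}^{\int_G\simp{X}}\bigl(\ind{J}{G_e}\rho_{\chi^\omega}\bigr)$, where $J=J_\chi=J_{\chi^\omega}$ by the rank-one identification of the groups $J_{\chi^\omega}$ established in the previous subsection.

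Next I would evaluate this resolution on an arbitrary simplex $x$. By Corollary~\ref{Ranres}, $\res{\int_G\simp{X}}{BG_x}\mathrm{L}\rho_{\chi^\omega}^e$ is a \emph{finite} direct sum, indexed by the (finitely many) isomorphism classes of morphisms $e\to x$ in $\int_G\simp{X}$ — equivalently by pairs consisting of a face of $e$ together with a $G_x$-orbit of elements $g\in G$ carrying that face to $x$, so that $x\subseteq ge$ — of terms which, by transitivity of compact induction and its compatibility with conjugation, are of the form $\ind{(J_{\chi^\omega})^g}{G_x}(\rho_{\chi^\omega})^g$. Using that conjugation by an element of a group acts trivially on isomorphism classes of its representations, I would check that $\ind{(J_{\chi^\omega})^g}{G_x}(\rho_{\chi^\omega})^g$ depends, up to isomorphism, only on the orbit of $g$; hence the direct summands of $\res{}{BG_x}\mathrm{L}\rho_{\chi^\omega}^e$ exhaust, up to isomorphism, all of the representations $\ind{(J_{\chi^\omega})^g}{G_x}(\rho_{\chi^\omega})^g$ with $x\subseteq ge$.

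It then remains to match the two subcategories. By the remark following the definition of an admissible collection, $D(G_x,\rho_\chi)=\mathcal{D}(\chi)(x)$ is obtained from $\mathcal{D}_n(\chi)$ by restricting along $BG_x\to(X_n)_{hG}$, which is just the projection of $\MV(X_n)\simeq\prod_{[y]\in X_n/G}D(G_y)$ onto the $[x]$-th factor; since this projection preserves colimits and finite direct sums and $\mathcal{D}_n(\chi)$ is generated under colimits and shifts by the $(\mathrm{L}\rho_{\chi^\omega}^e)_{X_n}$, it follows that $D(G_x,\rho_\chi)$ is the smallest stable subcategory of $D(G_x)$ that is closed under colimits and contains $\res{}{BG_x}\mathrm{L}\rho_{\chi^\omega}^e$ for all $\omega\in W$. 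Combining this with the previous step gives both inclusions of the lemma: each $\res{}{BG_x}\mathrm{L}\rho_{\chi^\omega}^e$ is a finite colimit of representations $\ind{(J_{\chi^\omega})^g}{G_x}(\rho_{\chi^\omega})^g$ with $x\subseteq ge$, hence lies in the subcategory they generate; conversely each such representation is a retract of some $\res{}{BG_x}\mathrm{L}\rho_{\chi^\omega}^e$, and a stable subcategory closed under countable colimits is closed under retracts (realize the retract as the sequential colimit of the associated idempotent), so it lies in $D(G_x,\rho_\chi)$.

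The only genuinely delicate point is the bookkeeping in the second step: one must match the indexing set of morphisms $e\to x$ in $\int_G\simp{X}$, and the conjugates produced by Corollary~\ref{Ranres}, with the set of $g\in G$ satisfying $x\subseteq ge$, keeping the conjugation conventions on $(J_{\chi^\omega})^g$ and $(\rho_{\chi^\omega})^g$ consistent throughout. Everything else is formal, and beyond the rank-one equality $J_\chi=J_{\chi^\omega}$ no input about the fine structure of the types enters.
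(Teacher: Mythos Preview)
Your proposal is correct and follows essentially the same route as the paper: reduce to the product decomposition $\MV(X_n)\simeq\prod_{[y]}D(G_y)$, then apply Corollary~\ref{Ranres} to compute $\res{}{BG_x}\mathrm{L}\rho_{\chi^\omega}^e$ as a finite direct sum indexed by $e/BG_x$, and identify the summands with the inductions $\ind{(J_{\chi^\omega})^g}{G_x}(\rho_{\chi^\omega})^g$ for $x\subseteq ge$. The paper spends its remaining effort on computing the cardinality of $e/BG_x$ explicitly (which feeds into Remark~\ref{numberinds}), whereas you spell out the retract argument for passing from the direct sums to their summands; both are minor elaborations of the same core computation.
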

\begin{remark}
    \label{numberinds}
    The number of distinct $\ind{(J_{\chi^\omega})^g}{G_x}( \rho_{\chi^\omega})^g$ in the lemma is finite. 
    The proof will show that for a fixed $\omega$ there is, up to isomorphism, exactly one if $x$ is an edge.
    If $x$ is a vertex there are two precisely if both vertices of $e$ lie in the same $G$-orbit and one otherwise.   
\end{remark}
\begin{proof}
    Since $(X_n)_{hG}$ decomposes into a disjoint union of $BG_x$'s, the category $\MV(X_n)$ is equivalent to a product of $D(G_x)$'s by choosing a representative $x$ for every $G$-orbit and restrict along the inclusions $BG_x \to (X_n)_{hG}$.
    Hence, $D(G_x, \chi)$ is the smallest subcategory of $D(G_x)$ that is closed under colimits and contains $(\Lan \rho_{\chi^\omega}^e)_{X_n}(x)$ for all $\omega \in W$, and we only need to compute the evaluation of the left Kan extension to prove the claim. 
    This can be done by use of Corollary \ref{Ranres} which yields 
    $$  
L\rho^e_{\chi^\omega}(x) \simeq \bigoplus _{(\partial, g) \in e/BG_x} \ind{(J_{\chi^\omega})^g}{G_x}(\rho_{\chi^\omega})^g.
    $$ 
    But when $X$ is the Bruhat--Tits tree, we can say more about $e/BG_x$.
    Recall that $e/BG_x$ is a groupoid with trivial automorphism spaces, therefore it is a set. 
    Note that $e/BG_x$ is empty if $\dim(x)>\dim(e)$, by definition of $\int_G \simp{X}$. If $\dim(e)= \dim(x)$, it has up to isomorphism exactly one object given by $(\id, g)$ with $g \in G$ such that $gx=y$, because $G$ acts transitively on edges.
    Hence, the only non-trivial case is $\dim(x)<\dim(e)$. 
    If $(\partial,g), (\partial',g') \in e/BG_x$ then $ g \partial(e) =x= g'\partial'(e)$ or equivalently $\partial(e) = \partial '(g^{-1}g'e)$.
    Note that there can only be an isomorphism between $(\partial , g)$ and $(\partial ' , g')$ if $\partial = \partial '$.
    If the latter is the case, then $g^{-1}g' \in G_x$ and $g^{-1}g'$ induces an isomorphism.
    Therefore, $BG_x/ e$ has two elements precisely if $\partial_0(e)$ and $\partial_1(e)$ are in the same $G$-orbit and one element otherwise.
    \end{proof}
Using that all $G_x$ are compact, we also obtain the following.
\begin{lemma}
    \label{relpwcolim}
    For every $n$, the right adjoint to the inclusion $\mathcal{D}(\chi)_n \to \MV(X_n)$ commutes with all colimits and preserves compact objects. 
\end{lemma}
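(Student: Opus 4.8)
The plan is to reduce the statement to a pointwise assertion about the stabilizers $G_x$ and then exploit that, over $\C$, the representation theory of the compact groups $G_x$ is semisimple. Since $X_n$ is a discrete $G$-set, the homotopy orbit $\int_G X_n = (X_n)_{hG}$ is the disjoint union of the $BG_x$ over a set of orbit representatives $[x]\in X_n/G$, so $\MV(X_n)\simeq\prod_{[x]\in X_n/G}D(G_x)$, and under this identification $\mathcal{D}(\chi)_n$ is the product $\prod_{[x]}D(G_x,\rho_\chi)$ of the pointwise subcategories of Lemma \ref{Dchi}, with inclusion the product of the inclusions; by Lemma \ref{MVLoc} its right adjoint is $\prod_{[x]}\pr{\mathcal{D}(x)}$. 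As the $G$-action on the tree $X$ is cocompact, $X_n/G$ is finite, and in a finite product colimits and compact objects are computed componentwise. Hence it suffices to show that each $\pr{\mathcal{D}(x)}\colon D(G_x)\to D(G_x,\rho_\chi)$ preserves all colimits and compact objects.

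Next I would use that $G_x$ is a compact open subgroup of $G$, hence profinite, together with $R=\C$ being a semisimple field: as recalled in the introduction, $\rep{G_x}$ is then semisimple, every irreducible representation is finite dimensional, and $D(G_x)\simeq\prod_{[\pi]\in\mathrm{Irr}(G_x)}D(\C)$, the equivalence sending a complex to the tuple of its $\pi$-isotypic multiplicity complexes. By Lemma \ref{Dchi}, $D(G_x,\rho_\chi)$ is the localizing subcategory of $D(G_x)$ generated by the objects $\ind{(J_{\chi^\omega})^g}{G_x}(\rho_{\chi^\omega})^g$, of which there are only finitely many by Remark \ref{numberinds} and each of which is a finite dimensional smooth $G_x$-representation because $(J_{\chi^\omega})^g$ has finite index in $G_x$. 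Under the decomposition above each generator is therefore supported on a finite set of irreducibles; let $S\subseteq\mathrm{Irr}(G_x)$ be the (finite) union of these supports. Since $D(\C)$ has no nonzero proper localizing subcategory and each generator, being finite dimensional, is the finite direct sum of its isotypic summands and hence contains each of them as a retract, the localizing subcategory they generate is exactly the full subcategory $\prod_{\pi\in S}D(\C)\subseteq\prod_{[\pi]\in\mathrm{Irr}(G_x)}D(\C)$ of tuples supported on $S$. So $D(G_x,\rho_\chi)$ is this ``sub-product'' and the inclusion is extension by zero.

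Finally I would observe that the right adjoint to the extension-by-zero functor $\prod_{\pi\in S}D(\C)\hookrightarrow\prod_{[\pi]\in\mathrm{Irr}(G_x)}D(\C)$ is the projection onto the $S$-coordinates, so $\pr{\mathcal{D}(x)}$ is this projection; a projection in a product preserves all colimits (computed componentwise), and since a tuple is compact precisely when each component is perfect and all but finitely many of them vanish, it also preserves compact objects — finishing the proof. The step where care is genuinely needed is the appeal to semisimplicity: for a localizing subcategory generated by compact objects of the ambient category the right adjoint to the inclusion always preserves colimits (the localization is smashing), but it need not preserve compact objects in general — for instance the kernel of $D(\Z)\to D(\Z[1/p])$ is generated by the compact object $\Z/p$, yet the associated colocalization carries $\Z$ to a non-perfect complex — so the argument really does rely on working over $\C$ with $G_x$ compact.
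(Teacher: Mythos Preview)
Your proof is correct and follows essentially the same route as the paper: reduce to the pointwise statement via the decomposition $\MV(X_n)\simeq\prod_{[x]}D(G_x)$, invoke semisimplicity of $\rep{G_x}$ over $\C$ to identify $D(G_x)\simeq\prod_{\mathrm{Irr}(G_x)}D(\C)$ and $D(G_x,\rho_\chi)$ with a sub-product, and conclude that the right adjoint is a coordinate projection. Your additional remarks on the finiteness of $S$ and the $D(\Z)$ counterexample are extra but harmless; the core argument is the same.
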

\begin{proof}
   Since $\int_G \simp{X_n} = (X_n)_{hG}$ decomposes into a disjoint union of $BG_x$ it suffices to check the claim for all inclusions $D(G_x, \rho_\chi) \to D(G_x)$. 
    For this, first note that, since $G_x$ is compact, every representation decomposes into irreducibles and every irreducible has endomorphism ring equal to $\C$ by Schur's lemma. 
    Hence, on the level of categories, this implies
    $$
    D(G_x) \simeq \prod_{\mathrm{Irr}(G_x)}D(\C).
    $$
    Then $D(G_x, \rho_\chi)$ corresponds precisely to the product indexed over the irreducibles appearing in $D(G_x, \rho_\chi)$, and the right adjoint to the inclusion is precisely the projection 
\[\begin{tikzcd}
	{\prod_{\mathrm{Irr}(G_x)}D(\C)} && {\prod_{\mathrm{Irr}(D(G_x, \chi))}D(\C).}
	\arrow[from=1-1, to=1-3]
\end{tikzcd}\]
    In particular, it commutes with all colimits and preserves compact objects.
\end{proof}
This immediately yields the following corollary.
\begin{corollary}
    \label{Dchicomp}
        The category of compact objects  $D(G_x, \rho_\chi)^\omega$ is the smallest subcategory of $D(G_x)$ that contains 
        $\ind{(J_{\chi^\omega})^g}{G_x}(\rho_{\chi^\omega})^g$ for all $\omega \in W$ and $g \in G$ such that $x \subseteq ge$
         and is closed under finite colimits and retracts.
\end{corollary}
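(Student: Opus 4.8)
The plan is to deduce this from Lemma \ref{Dchi}, Lemma \ref{relpwcolim}, and the standard fact that a localizing subcategory of a compactly generated stable $\infty$-category which is generated by a \emph{set of compact objects of the ambient category} is itself compactly generated, with its full subcategory of compact objects equal to the thick subcategory generated by those objects (see e.g. \cite[Section 7.2.4]{HA}).

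First I would record that the objects $\ind{(J_{\chi^\omega})^g}{G_x}(\rho_{\chi^\omega})^g$ occurring in the statement are compact in $D(G_x)$. Since $x \subseteq ge$ we have $(J_{\chi^\omega})^g \subseteq (G_e)^g = G_{ge} \subseteq G_x$, so $(J_{\chi^\omega})^g$ is an open (hence finite-index) subgroup of the compact group $G_x$; compact induction along $B(J_{\chi^\omega})^g \to BG_x$ is left adjoint to restriction, which preserves filtered colimits, so it preserves compact objects, and the character $(\rho_{\chi^\omega})^g$ is compact in $D((J_{\chi^\omega})^g)$. Hence each such object lies in $D(G_x)^\omega$, and by Remark \ref{numberinds} there are only finitely many of them up to equivalence. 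Next, the computation in the proof of Lemma \ref{Dchi} exhibits $\L\rho^e_{\chi^\omega}(x)$ as a finite direct sum of these objects, so by Definition \ref{defrel} the category $D(G_x,\rho_\chi)=\mathcal{D}(\chi)(x)$ is precisely the localizing subcategory of $D(G_x)$ generated by them. Consequently $D(G_x,\rho_\chi)\simeq \mathrm{Ind}(\mathcal{T})$, where $\mathcal{T}\subseteq D(G_x)^\omega$ is the thick subcategory generated by the $\ind{(J_{\chi^\omega})^g}{G_x}(\rho_{\chi^\omega})^g$, and $D(G_x,\rho_\chi)^\omega\simeq\mathcal{T}$.

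To match the stated formulation it then remains to identify $\mathcal{T}$, computed intrinsically, with $\mathcal{T}$ viewed inside $D(G_x)$: this is exactly the assertion that the inclusion $D(G_x,\rho_\chi)\hookrightarrow D(G_x)$ preserves compact objects, which is part of Lemma \ref{relpwcolim}. Since a thick subcategory of a stable $\infty$-category is by definition the smallest full subcategory closed under finite (co)limits, shifts and retracts containing the given generators, this yields the description in the corollary. I do not expect a serious obstacle: the only non-formal ingredient is the compactness of the generating objects (needed so that $\mathrm{Ind}(\mathcal{T})\to D(G_x)$ is fully faithful), and the rest is a routine application of the compact-generation formalism already set up in the paper; the one point to be careful about is the bookkeeping from the proof of Lemma \ref{Dchi} identifying $\L\rho^e_{\chi^\omega}(x)$ with the explicit finite direct sum of compactly induced characters, so that the generating set is exactly the one listed.
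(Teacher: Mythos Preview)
Your argument is correct, but it takes a genuinely different route from the paper's. The paper argues directly from the semisimplicity of $D(G_x)$ for compact $G_x$: using the product decomposition $D(G_x)\simeq\prod_{\mathrm{Irr}(G_x)} D(\C)$ from the proof of Lemma~\ref{relpwcolim}, the subcategory $D(G_x,\rho_\chi)$ is identified with the sub-product over those irreducibles that occur as summands of the inductions $\ind{(J_{\chi^\omega})^g}{G_x}(\rho_{\chi^\omega})^g$; compactness on either side then just means being supported on finitely many factors with perfect entries, and the description as the smallest subcategory closed under finite colimits and retracts is immediate. Your approach instead invokes the general principle that a localizing subcategory generated by compact objects of the ambient category has as its compacts precisely the thick subcategory on those generators; this is more robust (it does not use that $G_x$ is compact or that $R=\C$), and in fact makes your final appeal to Lemma~\ref{relpwcolim} redundant, since preservation of compact objects by the inclusion is already part of that general statement. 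Both approaches are short; the paper's is more hands-on, while yours would transport unchanged to settings without semisimplicity.
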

\begin{proof}
    By \ref{relpwcolim}, the right adjoint to inclusion $D(G_x, \rho_\chi) \to D(G_x)$ commutes with all colimits, in particular an object in $D(G_x, \rho_\chi)$ is compact if and only if it is compact in $D(G_x)$. 
    But the latter are precisely the finite colimits of irreducibles of $G_x$. 
\end{proof}
Using the above description we can prove that $\mathcal{D}(\chi)$, with the restricted $t$-structures from $\MV(X_n)$, is admissible.
\begin{lemma}
    The collection $\mathcal{D}(\chi)$ is admissible.
\end{lemma}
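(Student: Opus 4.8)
The plan is to verify the three requirements in the definition of an admissible collection separately, reducing each of them to a statement about the individual categories $D(G_x, \rho_\chi)$ that were already analyzed in Lemma \ref{Dchi}, Lemma \ref{relpwcolim} and Corollary \ref{Dchicomp}. The first thing I would record is that $\int_G X_n = (X_n)_{hG}$ is a disjoint union of classifying groupoids $BG_x$ over representatives of the (finitely many, by cocompactness) $G$-orbits of $n$-simplices, so that $\MV(X_n) \simeq \prod_{[x] \in X_n/G} D(G_x)$. Decomposing each defining generator $(\mathrm{L}\rho_{\chi^\omega}^e)_{X_n}$ of $\mathcal{D}_n(\chi)$ into its components and computing them by Corollary \ref{Ranres} exactly as in the proof of Lemma \ref{Dchi}, one sees that under this equivalence $\mathcal{D}_n(\chi)$ corresponds to the subproduct $\prod_{[x]} D(G_x, \rho_\chi)$; in particular membership in $\mathcal{D}_n(\chi)$ is the pointwise condition that $M(x) \in D(G_x, \rho_\chi)$ for all $x \in X_n$. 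This reduces the first two conditions to the corresponding statements for each inclusion $D(G_x, \rho_\chi) \hookrightarrow D(G_x)$.

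For those I would invoke the description established in the proof of Lemma \ref{relpwcolim}: since $G_x$ is compact and $R = \C$, Schur's lemma identifies $D(G_x)$ with $\prod_{\pi \in \mathrm{Irr}(G_x)} D(\C)$ and $D(G_x, \rho_\chi)$ with the subproduct over the irreducibles occurring in it. A subproduct of this form is automatically closed under the truncation functors of the standard $t$-structure, which are computed factorwise, so the inclusion is $t$-exact; and its right adjoint is the evident projection, which by Lemma \ref{relpwcolim} commutes with all colimits and preserves compact objects. Compact generation follows because the objects $\ind{(J_{\chi^\omega})^g}{G_x}(\rho_{\chi^\omega})^g$ with $x \subseteq ge$ are compact in $D(G_x)$ — being inductions of the compact objects $(\rho_{\chi^\omega})^g$ along inclusions of open subgroups — and, by Corollary \ref{Dchicomp} and Lemma \ref{Dchi}, they generate $D(G_x, \rho_\chi)$ together with its subcategory of compact objects; since the right adjoint to the inclusion preserves colimits they remain compact in $D(G_x, \rho_\chi)$. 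Passing to the finite product over $X_n/G$ then gives that $\mathcal{D}_n(\chi)$ is compactly generated and $t$-exact in $\MV(X_n)$ with a compact-object-preserving right adjoint.

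The remaining point is compatibility with the face maps. Given a face map $\partial_j \colon X_{n+1} \to X_n$, I would note that the induced functor $\Lan_{\partial_j} \colon \MV(X_{n+1}) \to \MV(X_n)$ preserves colimits, so by the pointwise criterion of the first paragraph it suffices to show that $\bigl(\Lan_{\partial_j}(\mathrm{L}\rho_{\chi^\omega}^e)_{X_{n+1}}\bigr)(x)$ lies in $D(G_x, \rho_\chi)$ for every $x \in X_n$ and every $\omega \in W$. Unwinding the pointwise formula for $\Lan_{\partial_j}$ together with Corollary \ref{Ranres} for the values of $(\mathrm{L}\rho_{\chi^\omega}^e)_{X_{n+1}}$ presents this object as a direct sum of terms of the shape $\ind{G_y}{G_x}\ind{(J_{\chi^\omega})^g}{G_y}(\rho_{\chi^\omega})^g \simeq \ind{(J_{\chi^\omega})^g}{G_x}(\rho_{\chi^\omega})^g$, where $y$ runs over $(n+1)$-simplices whose $j$-th face is $G$-conjugate to $x$ and $g \in G$ satisfies $y \subseteq ge$; since then $x$ is a face of a $G$-translate of $y \subseteq ge$, each such term is $\ind{(J_{\chi^\omega})^g}{G_x}(\rho_{\chi^\omega})^g$ for a suitable $g$ with $x \subseteq ge$, hence one of the generating objects of $D(G_x, \rho_\chi)$ recorded in Lemma \ref{Dchi}. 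This shows $\Lan_{\partial_j}(\mathrm{L}\rho_{\chi^\omega}^e)_{X_{n+1}} \in \mathcal{D}_n(\chi)$ and therefore that $\mathcal{D}_{n+1}(\chi)$ is carried into $\mathcal{D}_n(\chi)$.

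I expect the only genuine bookkeeping to be in the last step: matching the index set produced by the pointwise left-Kan-extension formula along $\partial_j$ against that of Corollary \ref{Ranres}, and checking that the face-incidence and conjugation data line up so that every summand really is one of the allowed objects $\ind{(J_{\chi^\omega})^g}{G_x}(\rho_{\chi^\omega})^g$ with $x \subseteq ge$. An equivalent and perhaps cleaner route there is to observe that $\Lan_{\partial_j}(\mathrm{L}\rho_{\chi^\omega}^e)_{X_{n+1}}$ is a $G$-equivariant direct summand of the generator $(\mathrm{L}\rho_{\chi^\omega}^e)_{X_n}$ itself, and then use that $\mathcal{D}_n(\chi)$, being cocomplete, is idempotent complete. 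Everything else is an immediate consequence of the already established Lemmas \ref{Dchi}, \ref{relpwcolim} and Corollary \ref{Dchicomp}.
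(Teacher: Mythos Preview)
Your proposal is correct and follows essentially the same approach as the paper: both reduce the conditions in the definition of admissibility to the pointwise statements about $D(G_x,\rho_\chi)$ supplied by Lemma~\ref{Dchi}, Lemma~\ref{relpwcolim} and Corollary~\ref{Dchicomp}. The paper's proof is much terser --- it simply cites these lemmas and notes that ``by Lemma~\ref{Dchi} induction maps $\mathcal{D}_{n+1}(\chi)$ into $\mathcal{D}_n(\chi)$'' --- whereas you spell out the face-map compatibility by explicitly unwinding the pointwise formula for $\Lan_{\partial_j}$ and matching the resulting summands against the generators of Lemma~\ref{Dchi}; this is exactly the computation underlying the paper's one-line citation, and your alternative summand observation is a valid shortcut for the same step.
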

\begin{proof}
   By construction and Lemma \ref{Dchi} and Lemma \ref{relpwcolim} all $\mathcal{D}_n(\chi)$ are full cocomplete, stable, $t$-exact subcategories (with respect to the restricted $t$-structure of $MV(X_n)$) and the inclusion commutes with colimits and preserves compact objects. 
   In particular, it admits a right adjoint.
   Lastly Lemma \ref{Dchicomp} shows that $D(G_x,\rho_ \chi)$ is compactly generated, by Lemma \ref{relpwcolim} that the inclusion $D(G_x,\rho_ \chi)\to D(G_x)$ preserves compact objects and and by Lemma \ref{Dchi} induction maps $\mathcal{D}_{n+1}(\chi)$ into $\mathcal{D}_n(\chi)$.
\end{proof}
The admissible collection $\mathcal{D}(\chi)$ gives rise to a full subcategory of $\MV(X)$.
\begin{definition}
    We define $\MV(X, \rho_\chi)$ to be the full subcategory $\MV(X, \mathcal{D}(\chi))$ of $\MV(X)$ that is associated to the admissible collection $\mathcal{D}(\chi)$.
\end{definition}

\begin{corollary}
    \label{MVchicomp}
    The category $\MV(X, \rho_\chi)$ is compactly generated and the full inclusion $\MV(X, \rho_\chi) \to \MV(X)$ has a colimit preserving right adjoint. In particular $\MV(X, \rho_\chi)$ is both complete and cocomplete 
    Furthermore, the left Kan extensions of $\ind{J_{\chi^\omega}}{G_x} \rho_{\chi^\omega}$ to $\int_G \simp{X}$ for $\omega \in W$ form a set of compact generators.
    The compact objects in $\MV(X, \rho_\chi)$ are precisely the compact objects in $\MV(X)$ that lie in $\MV(X, \rho_\chi)$.
\end{corollary}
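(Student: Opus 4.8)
The plan is to deduce every assertion from the general theory of $\MV(X,\mathcal{D})$ for an admissible collection, specialised to $\mathcal{D}=\mathcal{D}(\chi)$ (admissibility having just been established), combined with the explicit descriptions of $D(G_x,\rho_\chi)$ and $D(G_x,\rho_\chi)^\omega$ from Lemma \ref{Dchi} and Corollary \ref{Dchicomp} and the pointwise cocontinuity of the projections from Lemma \ref{relpwcolim}. For compact generation I would invoke Lemma \ref{RelLan}: since $\mathcal{D}(\chi)$ is admissible, $\MV(X,\rho_\chi)=\MV(X,\mathcal{D}(\chi))$ is compactly generated, with a set of compact generators given by the left Kan extensions $\Lan_{BG_x}^{\int_G\simp{X}}V$ for $x\in X$ and $V\in D(G_x,\rho_\chi)^\omega$. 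To cut this down to the claimed generating set I would feed in Corollary \ref{Dchicomp}, which exhibits $D(G_x,\rho_\chi)^\omega$ as generated, under finite colimits and retracts, by the objects $\ind{(J_{\chi^\omega})^g}{G_x}(\rho_{\chi^\omega})^g$; as left Kan extension is exact and preserves retracts, it then suffices to keep the left Kan extensions of these. Finally, using transitivity of left Kan extension — factoring $BJ_{\chi^\omega}^g\to BG_x\to\int_G\simp{X}$ and noting that conjugation by $g$ is an equivalence $BG_e\xrightarrow{\sim}BG_{ge}$ compatible with the inclusions into $\int_G\simp{X}$, with pointwise values matching by Corollary \ref{Ranres} — I would identify each such object with $\mathrm{L}\rho_{\chi^\omega}^e$, the left Kan extension of $\ind{J_{\chi^\omega}}{G_e}\rho_{\chi^\omega}$ off the fixed edge $e$, obtaining the finite generating set $\{\mathrm{L}\rho_{\chi^\omega}^e \mid \omega\in W\}$ (recall $W$ has at most two elements by Remark \ref{splitrank1gps}).

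For the adjunction I would apply Lemma \ref{MVLoc} with $\mathcal{D}=\mathcal{D}(\chi)$: the full inclusion $\MV(X,\rho_\chi)\hookrightarrow\MV(X)$ admits a right adjoint $\pr{\chi}$, computed pointwise by the right adjoints $D(G_x)\to D(G_x,\rho_\chi)$. By Lemma \ref{relpwcolim} these commute with all colimits, hence $\pr{\chi}$ commutes with all colimits by Lemma \ref{relcolimit}. Since $\MV(X)$ is complete and cocomplete by Corollary \ref{MVcolim}, Lemma \ref{cocompB} shows $\MV(X,\rho_\chi)$ is complete and cocomplete as well; moreover the inclusion, being a left adjoint in this colocalization, preserves all colimits, so colimits in $\MV(X,\rho_\chi)$ are simply computed in $\MV(X)$.

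For the final statement I would argue directly. As $\iota\colon\MV(X,\rho_\chi)\to\MV(X)$ preserves all colimits and its right adjoint $\pr{\chi}$ preserves filtered colimits, $\iota$ preserves compact objects; hence any object compact in $\MV(X,\rho_\chi)$ is compact in $\MV(X)$ and, trivially, lies in $\MV(X,\rho_\chi)$. Conversely, if $M\in\MV(X,\rho_\chi)$ is compact in $\MV(X)$, then for any filtered diagram $(N_i)$ in $\MV(X,\rho_\chi)$ one has $\colim_i\iota N_i\simeq\iota\colim_i N_i$ (colimits being computed in $\MV(X)$), and applying $\map_{\MV(X)}(M,-)$ together with the adjunction identifies $\map_{\MV(X,\rho_\chi)}(M,\colim_i N_i)$ with $\colim_i\map_{\MV(X,\rho_\chi)}(M,N_i)$, so $M$ is compact in $\MV(X,\rho_\chi)$.

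Essentially all of this is routine bookkeeping with lemmas already in place; the one step needing genuine care is the identification in the first paragraph of the reduced generating set, i.e. tracking the conjugated induced types $\ind{(J_{\chi^\omega})^g}{G_x}(\rho_{\chi^\omega})^g$ through the transitivity of left Kan extension and verifying that each of them arises, up to equivalence, from a single left Kan extension off the fixed edge $e$.
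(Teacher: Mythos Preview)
Your approach matches the paper's almost exactly: compact generation via Lemma \ref{RelLan}, the right adjoint via Lemma \ref{MVLoc}, its cocontinuity via Lemmas \ref{relcolimit} and \ref{relpwcolim}, (co)completeness via Lemma \ref{cocompB}, and the identification of compact objects via preservation of compacts by the inclusion. All of that is fine.

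There is, however, a genuine error in your reduction of the generating set. You claim that every $\Lan_{BG_x}^{\int_G\simp{X}}\bigl(\ind{(J_{\chi^\omega})^g}{G_x}(\rho_{\chi^\omega})^g\bigr)$ is equivalent to $\mathrm{L}\rho_{\chi^\omega}^e$, the left Kan extension off the fixed edge $e$. This fails when $x$ is a vertex. Transitivity of left Kan extension only tells you that this object equals $\Lan_{B(J_{\chi^\omega})^g}^{\int_G\simp{X}}(\rho_{\chi^\omega})^g$ along the composite $B(J_{\chi^\omega})^g \to BG_x \to \int_G\simp{X}$, and this functor sends the unique object to the \emph{vertex} $x\in\int_G\simp{X}$, not to any edge. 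Since there are no morphisms from vertices to edges in $\int_G\simp{X}$, Corollary \ref{Ranres} shows that $\Lan_{BG_v}^{\int_G\simp{X}}W$ vanishes on all edges for any vertex $v$, whereas $\mathrm{L}\rho_{\chi^\omega}^e$ does not; so the two cannot be equivalent. More to the point, the set $\{\mathrm{L}\rho_{\chi^\omega}^e \mid \omega\in W\}$ does not generate $\MV(X,\rho_\chi)$: for any nonzero $N\in\MV(X,\rho_\chi)$ supported only on vertices one has
\[
\map\bigl(\mathrm{L}\rho_{\chi^\omega}^e,\,N\bigr)\;\simeq\;\map_{G_e}\bigl(\ind{J}{G_e}\rho_{\chi^\omega},\,N(e)\bigr)\;=\;0,
\]
so these objects fail to detect zero.

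The statement of the corollary (and its proof via Lemma \ref{RelLan}) should be read with $x$ ranging over the simplices, i.e.\ over orbit representatives $e,\partial_0 e,\partial_1 e$, not just the fixed edge. With that reading, your use of Corollary \ref{Dchicomp} to cut down from all of $D(G_x,\rho_\chi)^\omega$ to the inductions $\ind{J_{\chi^\omega}}{G_x}\rho_{\chi^\omega}$ is correct, and no further reduction is needed (or possible).
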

\begin{proof}
    This follows directly from Lemma \ref{RelLan} and \ref{MVLoc} together with Lemma \ref{cocompB}. 
    The claim that the right adjoint commutes with colimits follows from Lemma \ref{relcolimit} and Lemma \ref{relpwcolim}.
    Lastly, since the right adjoint commutes with colimits, the inclusion preserves compact objects, and any object in $\MV(X, \rho_\chi)$ that is compact in $\MV(X)$ was already compact in $\MV(X, \rho_\chi)$.
\end{proof}

 With these definitions in place, we obtain for every principal series block a relative version of the functor $\alpha: \MV(X) \to D(G)$.

\begin{lemma}
    The functor $\alpha : \MV(X) \to D(G)$ restricts to a functor 
\[\begin{tikzcd}
	{\MV(X, \rho_\chi)} && {D(G,\rho_\chi)}
	\arrow[from=1-1, to=1-3]
\end{tikzcd}\]
\end{lemma}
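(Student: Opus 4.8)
The plan is to exploit that $\alpha = \Lan_p$ along $p\colon \int_G\simp{X}\to BG$ is colimit-preserving and to reduce the claim to the explicit compact generators of $\MV(X,\rho_\chi)$ provided by Corollary \ref{MVchicomp}. First I would note that the Bernstein block $D(G,\rho_\chi)\subseteq D(G)$ is a direct factor of $D(G)$ under the Bernstein decomposition, hence stable under all colimits. Combined with the fact that $\alpha$ preserves colimits, this shows that the full subcategory of those $M\in\MV(X,\rho_\chi)$ with $\alpha(M)\in D(G,\rho_\chi)$ is closed under colimits, so it is enough to verify the assertion on a generating set.

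Next I would compute $\alpha$ on the generators. By Corollary \ref{MVchicomp} the objects $\Lan_{BG_x}^{\int_G\simp{X}}\bigl(\ind{J_{\chi^\omega}}{G_x}\rho_{\chi^\omega}\bigr)$, for $x$ an orbit representative (which we may take to be a face of the fixed edge $e$, so that $J_{\chi^\omega}\subseteq I\subseteq G_e\subseteq G_x$) and $\omega\in W$, generate $\MV(X,\rho_\chi)$ under colimits. Using transitivity of left Kan extensions, $\alpha\circ\Lan_{BG_x}^{\int_G\simp{X}}$ is the left Kan extension along $BG_x\to BG$, i.e.\ compact induction $\ind{G_x}{G}$; hence $\alpha$ carries this generator to $\ind{G_x}{G}\ind{J_{\chi^\omega}}{G_x}\rho_{\chi^\omega}\simeq\ind{J_{\chi^\omega}}{G}\rho_{\chi^\omega}$.

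Finally I would invoke Roche's results: by Theorem \ref{TypeRoche} and Remark \ref{RemLeviType} the pair $(J_{\chi^\omega},\rho_{\chi^\omega})$ is a type for the principal series block attached to any extension of $\chi^\omega$ to $T$, and by the last lemma of Section 4.1 this is the same Bernstein block as the one determined by $\rho_\chi$, namely $D(G,\rho_\chi)$. The defining property of a type then gives $\ind{J_{\chi^\omega}}{G}\rho_{\chi^\omega}\in D(G,\rho_\chi)$, which completes the argument.

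I do not expect a genuine obstacle here: once one has the explicit set of compact generators of $\MV(X,\rho_\chi)$ and the fact that all the $\rho_{\chi^\omega}$ are types for the same block, the statement is essentially formal. The only point requiring a little care is checking that the inclusions $J_{\chi^\omega}\subseteq G_x$ hold for each orbit representative $x$, which follows since every such $x$ is a face of $e$ and $J_{\chi^\omega}\subseteq G_e$. The substantive work of the section — proving that this restricted $\alpha$ is a left Bousfield localization — is a separate and much harder matter, handled in the remainder of the section.
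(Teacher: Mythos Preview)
Your argument is correct and is essentially the same as the paper's: the paper invokes Lemma \ref{relmaps} (whose proof is precisely the reduction to compact generators you carry out) together with Lemma \ref{Dchi} to verify its hypothesis, which amounts to the same computation $\alpha(\Lan_{BG_x}^{\int_G\simp{X}}\ind{J_{\chi^\omega}}{G_x}\rho_{\chi^\omega})\simeq\ind{J_{\chi^\omega}}{G}\rho_{\chi^\omega}\in D(G,\rho_\chi)$. You have simply unpacked inline what the paper packages into the general Lemma \ref{relmaps}.
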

\begin{proof}
    By Lemma \ref{Dchi} we see that $\mathcal{D}(\chi)$ is mapped to $D(G, \rho_\chi)$.
    Therefore, the claim follows from Lemma \ref{relmaps}.
\end{proof}

\begin{lemma}
    The functor $\MV(X, \chi) \to D(G, \chi)$ has a right adjoint $\pr{\chi} \circ \res{}{}$.
\[\begin{tikzcd}[ampersand replacement=\&]
	{\MV(X,\rho_\chi)} \&\& {D(G,\rho_\chi)}
	\arrow[""{name=0, anchor=center, inner sep=0}, "\alpha", shift left=2, from=1-1, to=1-3, curve={height=-6pt}]
	\arrow[""{name=1, anchor=center, inner sep=0}, "{\pr{\chi} \circ \res{}{}}", shift left=2, from=1-3, to=1-1, curve={height=-6pt}]
	\arrow["\dashv"{anchor=center, rotate=-90}, draw=none, from=0, to=1]
\end{tikzcd}\]
\end{lemma}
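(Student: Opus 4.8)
The plan is to exhibit the restricted functor $\alpha\colon \MV(X,\rho_\chi)\to D(G,\rho_\chi)$ as a composite of functors each admitting a right adjoint, and then take the composite of those right adjoints. First I would collect the three adjunctions involved. The inclusion $\iota\colon\MV(X,\rho_\chi)\hookrightarrow\MV(X)$ has right adjoint $\pr{\chi}$ by Corollary \ref{MVchicomp} (which rests on Lemma \ref{MVLoc}). The functor $\alpha\colon\MV(X)\to D(G)$ is, by construction, left Kan extension along $\int_G\simp{X}\to BG$, and hence has the right adjoint $\res{}{}$ given by restriction. Finally, since $D(G)=\prod_{[L,\rho]\in\Omega(G)}D(G)_{[L,\rho]}$ is a product of stable categories, the inclusion $j\colon D(G,\rho_\chi)\hookrightarrow D(G)$ of the block as a factor is fully faithful and admits a right adjoint, namely the projection onto that factor.

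Next I would use the previous lemma, which says exactly that $\alpha$ sends $\MV(X,\rho_\chi)$ into $D(G,\rho_\chi)$; this yields the commuting square $j\circ\alpha=\alpha\circ\iota$ (writing $\alpha$ also for its restriction). Then for $M\in\MV(X,\rho_\chi)$ and $N\in D(G,\rho_\chi)$ I would run the chain of natural equivalences
\begin{align*}
\Map_{D(G,\rho_\chi)}(\alpha M,N)&\simeq\Map_{D(G)}(j\alpha M,jN)\simeq\Map_{D(G)}(\alpha\iota M,jN)\\
&\simeq\Map_{\MV(X)}(\iota M,\res{}{}jN)\simeq\Map_{\MV(X,\rho_\chi)}(M,\pr{\chi}\res{}{}jN),
\end{align*}
using successively full faithfulness of $j$, the identity $j\alpha=\alpha\iota$, the adjunction $\alpha\dashv\res{}{}$, and the adjunction $\iota\dashv\pr{\chi}$. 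Since $\pr{\chi}$ takes values in $\MV(X,\rho_\chi)$, this exhibits $N\mapsto\pr{\chi}(\res{}{}N)$ as a right adjoint of $\alpha\colon\MV(X,\rho_\chi)\to D(G,\rho_\chi)$, which is the claim; here $\res{}{}$ is to be read as applied to $N$ regarded as an object of $D(G)$ along $j$, the minor abuse implicit in the notation $\pr{\chi}\circ\res{}{}$ of the statement.

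There is no real obstacle here: the result is purely formal once the ingredients above are in place. The only points one must not skip are that the block $D(G,\rho_\chi)$ sits inside $D(G)$ as a fully faithful product factor — so that the first two $\Map$-sets agree — and that $\pr{\chi}$ genuinely lands in $\MV(X,\rho_\chi)$, which is Corollary \ref{MVchicomp}. Note also that this lemma only asserts the existence and shape of the right adjoint; whether it is fully faithful, i.e.\ whether $\alpha$ is a left Bousfield localization in the relative setting, is a separate and substantially harder question, taken up later in this section.
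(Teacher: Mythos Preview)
Your proof is correct and takes essentially the same approach as the paper: the paper's proof is the one-liner ``compositions of left adjoints are again left adjoint with right adjoint functor given by the composition of the right adjoints,'' and you have simply unpacked this, making explicit the three adjunctions $\iota\dashv\pr{\chi}$, $\alpha\dashv\res{}{}$, and the block inclusion/projection, together with the identity $j\alpha=\alpha\iota$ from the preceding lemma.
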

\begin{proof}
    This follows completely formal since compositions of left adjoints are again left adjoint with right adjoint functor given by the composition of the right adjoints.
\end{proof}
Again we want to show that this adjunction is a left Bousfield localization. 
This will be the main task for most of this section.
The claim is equivalent to the counit of the adjunction being an equivalence.
To show this, first note that $\ind{J}{G}\rho_\chi$ is a compact generator of $D(G, \rho_\chi)$ and the counit  
\[\begin{tikzcd}
    \label{relcounit}
	{\alpha \circ \pr{\chi} \circ \, \res{}{}} && \id
	\arrow[from=1-1, to=1-3]
\end{tikzcd}\]
is compatible with colimits.
Hence, we only need to check that the counit is an equivalence for $\ind{J}{G}\rho_\chi$.
By remark \ref{splitrank1gps} there are essentially two cases for our proposes, namely $\mathbb{G}_m$ where the ordinary Weyl group is trivial, and $\mathrm{PGL}_2$ and $\mathrm{SL}_2$ where the Weyl group is cyclic of order two.
In principle, we could apply the same strategy for both cases.
However, the case of $\mathbb{G}_m$ also admits a straightforward proof that omits most technicalities, and we will therefore include it here.
\begin{figure}
    \centering
\begin{tikzpicture}[
  grow cyclic,
  level distance=1.3cm,
  level/.style={
    level distance/.expanded=\ifnum#1>1 \tikzleveldistance/1\else\tikzleveldistance\fi,
    nodes/.expanded={\ifodd#1 fill=none\else fill=none\fi}
  },
  level 1/.style={sibling angle=180},
  level 2/.style={sibling angle=0},
  level 3/.style={sibling angle=0},
  level 4/.style={sibling angle=0},
  nodes={circle,draw,inner sep=+0pt, minimum size=5pt},
  ]
\path[rotate=90]
  node {}
  child foreach \cntI in {1,...,2} {
    node {}
    child foreach \cntII in {1,...,1} { 
      node {}
      child foreach \cntIII in {1,...,1} {
        node {}
        child foreach \cntIV in {1,...,1} {
          node {}
          child foreach \cntV in {1,...,1} {}
        }
      }
    }
  };
\end{tikzpicture}
\caption{Bruhat--Tits building of $T= \mathbb{G}_m(F)$.}
\end{figure}
\begin{lemma}
    For $T = \mathbb{G}_m(F)$, the counit \ref{relcounit} is an equivalence and $\MV(X, \rho_\chi) \to D(T, \rho_\chi)$ is a left Bousfield localization. 
\end{lemma}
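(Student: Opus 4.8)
The plan is to reduce the statement to the absolute case, Proposition \ref{MVBous}, exploiting that $T=\mathbb{G}_m(F)$ is abelian. Recall that the Bruhat--Tits tree of $\mathbb{G}_m(F)$ is a line, in particular contractible, and that $T$ acts on it through $T/T^0\cong\Z$ by translations, so that every stabilizer equals $T^0$. Since the finite Weyl group $N(T)/T$ is trivial and conjugation of $T^0$ by elements of the abelian group $T$ is trivial, we have $\chi^\omega=\chi$, $J_{\chi^\omega}=J_\chi=T^0$ and $\rho_{\chi^\omega}=\rho_\chi=\chi$ for all $\omega\in W$. Hence, by Lemma \ref{Dchi}, for every simplex $x\in X$ the category $D(T_x,\rho_\chi)=D(T^0,\rho_\chi)$ is the smallest full stable cocomplete subcategory of $D(T^0)$ containing $\ind{T^0}{T^0}\chi=\chi$; under the decomposition $D(T^0)\simeq\prod_{\mathrm{Irr}(T^0)}D(\C)$ this is precisely the $\chi$-summand, i.e.\ the full subcategory of complexes all of whose homology representations are $\chi$-isotypic.

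The key observation is the following. Let $V\in D(T,\rho_\chi)$, i.e.\ a complex of smooth $T$-representations whose homology lies in the Bernstein block $\rep{T}_{[T,\widetilde\chi]}$ for some extension $\widetilde\chi$ of $\chi$. Because $T$ is abelian, every smooth $T$-representation in this block is $\chi$-isotypic upon restriction to $T^0$: it decomposes over $T^0$ into isotypic pieces, each of which is a $T$-subrepresentation lying in the block of the corresponding character of $T^0$, so only the $\chi$-piece survives. Therefore the restriction $\res{p}{}V$ of $V$ along $p\colon\int_T\simp{X}\to BT$ lies pointwise in $D(T^0,\rho_\chi)$, hence already in $\MV(X,\rho_\chi)$. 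Consequently the right adjoint $\pr{\chi}\circ\res{}{}$ of the relative functor $\alpha\colon\MV(X,\rho_\chi)\to D(T,\rho_\chi)$ agrees on $D(T,\rho_\chi)$ with the absolute restriction functor $\res{p}{}\colon D(T)\to\MV(X)$, because $\pr{\chi}$ acts as the identity on objects of $\MV(X,\rho_\chi)$; and the relative $\alpha$ is just the restriction of the absolute $\alpha$ to the full subcategory $\MV(X,\rho_\chi)$. Thus the relative counit is exactly the restriction of the absolute counit $\alpha\circ\res{p}{}\to\id_{D(T)}$ to the full subcategory $D(T,\rho_\chi)\subseteq D(T)$.

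It then remains to invoke Proposition \ref{MVBous}: since $X$ is contractible, the absolute counit $\alpha\circ\res{p}{}\to\id_{D(T)}$ is an equivalence. Restricting along the fully faithful (hence conservative) inclusion $D(T,\rho_\chi)\hookrightarrow D(T)$ shows that the relative counit is an equivalence as well, so $\pr{\chi}\circ\res{}{}$ is fully faithful and $\MV(X,\rho_\chi)\to D(T,\rho_\chi)$ is a left Bousfield localization. There is no genuinely hard step here: the only input beyond formal bookkeeping is the abelianness of $T$, used to see that restricting a block representation to $T^0$ automatically produces a $\chi$-isotypic object. In particular, unlike the $\mathrm{SL}_2$ and $\mathrm{PGL}_2$ cases treated afterwards, one needs neither the intertwiner computation of Lemma \ref{RocheIntertwiner} nor the filtration argument indicated in the introduction.
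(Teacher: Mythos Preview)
Your proof is correct and follows essentially the same approach as the paper: both arguments observe that for $T=\mathbb{G}_m(F)$ every stabilizer is $T^0=J_\chi$, verify that restriction of objects in $D(T,\rho_\chi)$ to $T^0$ already lands in the $\chi$-isotypic subcategory so that $\pr{\chi}\circ\res{}{}=\res{}{}$, and then invoke Proposition~\ref{MVBous}. The only cosmetic difference is that the paper checks this on the compact generator $\ind{T^0}{T}\rho_\chi$ via an explicit Mackey computation, whereas you argue it for all objects at once via the isotypic decomposition; both are valid.
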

\begin{proof}
    Recall that the Bruhat--Tits building of $T$ is constructed as $X_*(T) \otimes_\Z \R$, where $X_*(T)$ is a free Abelian group of rank one with $T$-action.
    In particular, this induces a simplicial structure with vertices $X_*(T)$ that is compatible with the $T$-action.
    More precisely, any $t \in T$ acts by translating by the negative of its valuation $-\mathrm{val}(t)$.
    In particular, all simplices have the same stabilizer $\mathcal{O}_F^\times = T^0$.
    Furthermore, for any type $\rho_\chi: J_\chi  \to \C^\times$, the group $J_\chi$ has to be equal to $T^0$ since it contains $T^0$ by Lemma \ref{IwahoridecompType} and $T^0$ is maximal among compact open subgroups of $T$.
    Lemma \ref{Dchi} shows that the associated admissible collection is constant and all simplices have the same projection $\pr{\chi}: D(G_x)  \to D(G_x, \rho_\chi)$.
    Applying this to $\ind{T^0}{T} \rho_\chi $ using Macky's formula, we compute 
   \begin{equation*}
    \res{T}{T^0}(\ind{T^0}{T} \rho_\chi) = \bigoplus _{t \in T^0 \backslash T / T^0} \ind{T^0 \cap (T^0)^t}{T^0} (\rho_\chi)^t = \bigoplus _{T^0 \backslash T / T^0} \rho_\chi.
   \end{equation*} 
   This shows that $\res{}{}(\ind{T^0}{T}\rho_\chi)$ is already in $\MV(X, \rho_\chi)$ and therefore $\pr{\chi} \circ \res{}{}(\ind{T^0}{T}\rho_\chi) = \res{}{}(\ind{T^0}{T} \rho_\chi)$. 
   Then $\alpha : \MV(X, \rho_\chi) \to D(T, \rho_\chi)$ is a Bousfield localization by Proposition \ref{MVBous}.
\end{proof}
For simplicity, we will from now on assume that $G$ has non-trivial (finite) Weyl group, as this covers both remaining cases.
In particular, this implies that the Bruhat--Tits tree has two orbits of vertices after possibly subdividing.
\begin{remark}
    Since we do know all connected split reductive groups of rank one, we can explicitly say in which cases we have to subdivide. 
    For $\mathrm{SL}_2(F)$, the action does not invert edges, hence we do not have to subdivide.
    However, the action of $\mathrm{PGL}_2(F)$ on its Bruhat--Tits tree does indeed invert edges, therefore we will use the subdivided tree in this case.
\end{remark}
In these cases, we can also give a geometric description of the extended Weyl group.
\begin{lemma}
    \label{Weyl}
    The extended Weyl group $W$ is freely generated by the reflections along the vertices of any edge in the (subdivided) standard apartment associated to the maximal torus.
    In particular, it acts free and transitive on the edges of the (subdivided) standard apartment.
    Furthermore, both generators are mapped to non-trivial elements in the finite Weyl group $N(T)/T$. 
\end{lemma}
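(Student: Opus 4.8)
The plan is to identify the extended Weyl group $W=N(T)/T^0$ concretely with a group of simplicial automorphisms of the standard apartment $A$, which --- after the subdivision of \ref{BruhatTitsssSet}, if necessary --- is a simplicial line, and then to read off all three assertions.

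First I would recall from Bruhat--Tits theory \cite{TI79} that $N(T)$ stabilizes $A$ and that $T^0$ acts trivially on $A$, so that the $N(T)$-action on $A$ descends to a simplicial action of $W$; choosing coordinates, $A\cong\R$ with vertex set $\Z$, and every simplicial isometry of $\R$ has the form $x\mapsto\pm x+c$. To pin down the image I would use the exact sequence
\begin{equation*}
    1\longrightarrow T/T^0\longrightarrow W\longrightarrow {}^0W\longrightarrow 1,
\end{equation*}
where ${}^0W:=N(T)/T$ is the finite Weyl group. Since $\Phi$ has rank one (\ref{splitrank1gps}), ${}^0W\cong\Z/2$, and its non-trivial element acts on $X_*(T)\otimes\R=A$ by $-1$; hence every element of $W$ lying over it acts on $A$ as an orientation-reversing isometry $x\mapsto-x+c$, that is, as a reflection, while the kernel $T/T^0$ maps injectively, via the valuation (cocharacter) map, onto the subgroup of orientation-preserving --- i.e. translation --- elements of the image of $W$. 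This translation subgroup is thus infinite cyclic of index $2$ in the image of $W$; and since there are two $G$-orbits of vertices (after subdivision), translations preserve vertex type, so in coordinates where our fixed edge $e$ has vertices $0$ and $1$ this subgroup is generated by $x\mapsto x+2$ (this last point uses that an element of $T$ of valuation $1$ translates $A$ by exactly one fundamental edge). Consequently the reflections in $W$ form the non-trivial coset, their fixed points run exactly over the vertices of $A$, and each vertex $v$ carries a unique non-trivial element $s_v\in W$ fixing it, with $\mathrm{Stab}_W(v)=\{1,s_v\}$.

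Now I would conclude. Writing $s_0,s_1$ for the reflections at the two vertices of $e$, we have $s_1s_0\colon x\mapsto x+2$, which generates the translation subgroup, so $W=\langle s_0,s_1\rangle$. A group generated by two distinct involutions is dihedral, and here it surjects onto the infinite dihedral group generated by the images of $s_0,s_1$ in $\mathrm{Aut}^{\mathrm{simp}}(A)$; being Hopfian, $W$ is therefore itself infinite dihedral, $W\cong\Z/2\ast\Z/2$ with the two free factors generated by $s_0$ and $s_1$, and its action on $A$ is faithful. For this action $(s_1s_0)^k$ and $s_0(s_1s_0)^k$ carry $e$ onto every edge of $A$, so the action on edges of $A$ is transitive; and the only non-identity simplicial isometry of $\R$ stabilizing $[0,1]$ setwise is the reflection at $1/2$, which is not in $W$, because $W$ contains no edge inversion after subdivision (\ref{BruhatTitsssSet}) --- equivalently, because its composite with $s_0$ is the translation by $1$, which is not in the translation subgroup. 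Hence $\mathrm{Stab}_W(e)=\{1\}$, the action on edges is free, and by transitivity the two vertex reflections of any edge likewise freely generate $W$. Finally, $s_0$ and $s_1$ are reflections, not translations, so they do not lie in $T/T^0=\ker(W\to{}^0W)$ and therefore map to the non-trivial element of $N(T)/T$.

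The step I expect to be the main obstacle is the geometric normalization in the second paragraph --- identifying $T/T^0$ with the index-$2$ translation lattice of $W$ (rather than with the full lattice of vertices) and verifying that, after subdivision, the reflections of $W$ sit precisely at the vertices of $A$ --- since this is exactly where $\mathrm{PGL}_2$, whose action inverts an edge before subdivision, differs from $\mathrm{SL}_2$, and it is what turns ``transitive on edges'' into ``free and transitive on edges''.
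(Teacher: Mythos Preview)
Your proof is correct and reaches the same conclusion, but via a genuinely different organizing principle than the paper. The paper invokes the Iwahori--Matsumoto style decomposition $W = W^{\mathrm{aff}} \rtimes \Omega$ (citing \cite[Proposition 6.6.3]{KP23}), where $W^{\mathrm{aff}}$ is already known to act simply transitively on alcoves and $\Omega$ is the set-wise stabilizer of the base alcove; it then splits into the two cases $\Omega=1$ (no inversion, e.g.\ $\mathrm{SL}_2$) and $\vert\Omega\vert=2$ (inversion, e.g.\ $\mathrm{PGL}_2$), and in the second case observes that after subdivision $\omega_0$ and the barycentre reflection $b$ generate $W$ since $\omega_1=b\omega_0 b$. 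You instead work from the short exact sequence $1\to T/T^0\to W\to N(T)/T\to 1$ and analyze the action on $A\cong\R$ in coordinates, identifying $T/T^0$ with the translation lattice and the nontrivial coset with reflections, then use the two-orbit condition on vertices to pin down the translation lattice as $2\Z$. Your approach is more self-contained and avoids quoting the $W^{\mathrm{aff}}\rtimes\Omega$ structure theorem; the paper's approach is shorter once that theorem is granted and makes the role of the subdivision (namely, absorbing $\Omega$) more transparent.

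One small imprecision worth tightening: the parenthetical ``an element of $T$ of valuation $1$ translates $A$ by exactly one fundamental edge'' is not literally true for $\mathrm{SL}_2$ (where the generator of $T/T^0$ translates by two edges, i.e.\ one coroot), though it is true for $\mathrm{PGL}_2$ before subdivision. Your actual argument does not need this: the containment of the translation lattice in $2\Z$ comes from vertex-type preservation, and faithfulness of the $W$-action on $A$ (which you get from injectivity of $T/T^0\hookrightarrow\mathrm{Isom}(A)$ together with nontriviality of the ${}^0W$-action) already forces the translation lattice to be all of $2\Z$, since $T/T^0\cong\Z$. The Hopfian step is then a pleasant alternative route to the same faithfulness conclusion. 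For the final assertion, the paper gives the same argument you do: $T/T^0\cong\Z$ has no $2$-torsion, so the reflections cannot lie in it.
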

\begin{proof}
    Let $e$ be any edge of the standard apartment associated to $T$.
    By \cite[Proposition 6.6.3]{KP23}, applied to the case where $G$ is split, there is a decomposition
\begin{equation*}
    W = W^{\mathrm{aff}} \rtimes \Omega 
\end{equation*}
    where $W^{\mathrm{aff}}$ is the affine Weyl group, which is generated by the reflections at the vertices of $e$, and $\Omega = W \cap \hat{G_e}$, where $\hat{G_e}$ are those elements of $G$ that leave $e$ invariant as a set, i.e. fix $e$ or interchange its vertices. 
    In particular, $W^{\mathrm{aff}}$ always acts free and transitive on the set of edges of $\mathcal{A}$, and $\Omega$ contains at most two elements.
    If $G$ acts without inversion on $X$, then $W$ acts without inversion on $\mathcal{A}$ and $\Omega$ is trivial.
    On the other hand, if $G$ inverts any edge, then, by transitivity, there exists $g \in G$ that inverts $e$.
    Furthermore, $G$ acts transitively on the set of apartments, and we can map $g \mathcal{A}$ back to $\mathcal{A}$ while fixing the intersection $g\mathcal{A} \cap \mathcal{A}$. 
    Therefore, we can assume that $g$ fixes $\mathcal{A}$, hence $g \in N(T)$ by \cite[Corollary 7.4.9]{BT72} and $g$ represents an element in the Weyl group.
    This shows that if $G$ inverts an edge, the Weyl group also inverts edges in $\mathcal{A}$. 
    Hence, $\Omega$ is non-trivial and generated by the element that interchanges the vertices of $e$, or equivalently the reflection at the barycentre of $e$.
    But then, $W$ acts free transitive after barycentric subdivision on $\mathcal{A}$.
    Now, let $\omega_i$ be the reflections at the vertices of $e$ and let $b$ be the reflection at the barycentre of $e$. 
    These are a set of generators for $W$, but $\omega_1 = b \omega_0 b$. 
    Therefore, $\omega_0$ and $b$ already generate $W$.
    Lastly, note that the claim that both generators map to non-trivial elements in $N(T)/T$ is equivalent to the assertion that they are not contained in $T/T^0$. 
    But $T/T^0 \simeq \Z$, therefore it has no elements of order two and can not contain the reflections.
\end{proof}
Let $M:=L\rho^e_\chi$ be the left Kan extension 
of $\rho_\chi: BJ \to D(\C)$ along the inclusion $BJ \to \int_G\simp{X}$. Then, by transitivity of left Kan extensions, $\alpha (M) \simeq \ind{J}{G}\rho_\chi$.   
Note that since the compact generator is in the image of $\alpha$, the counit evaluated at $\alpha (M)$ is part of the diagram 
\[\begin{tikzcd}
	{\alpha (M)} && {\alpha \circ \pr{\chi} \circ \res{}{} \circ  \alpha (M)} && {\alpha (M),}
	\arrow[from=1-1, to=1-3]
	\arrow[from=1-3, to=1-5]
\end{tikzcd}\]
where the composite is the identity by the triangle identities of the adjunction.
But then, showing that the right-hand map is an equivalence is equivalent to showing that the left-hand map is an equivalence, and the latter will be the goal for the remaining part of the section.
The strategy to do this will be to interpolate between $M$ and $\pr{\chi} \circ \res{}{} \circ \alpha (M)$ with a filtration $F^\chi_nM$ such that $F^\chi_0=M$ and $\colim_nF^\chi_nM = \pr{\chi} \circ \res{}{} \circ \alpha (M)$.
We then show that all filtration quotients become contractible after applying $\alpha$, and hence the filtration becomes constant after $\alpha$. In particular, $\alpha(M) = \alpha \circ \pr{\chi} \circ \res{}{} \circ \alpha (M)$.
We will first start by constructing a filtration without taking the character $\chi$ into account. 
For ease of notation we define $L:=G_e$ and $K_i := G_{\partial_i e}$.
\begin{construction}
    Let $V \in D(L)^\heartsuit$ be any representation and for ease of notation we will write $M:= \Lan_{BL}^{\int_G \simp{X}}V$.
    The goal is to construct a filtration of $\res{}{} \circ \alpha(M) = \res{}{}( \ind{L}{G}V$). 
    For any simplex $x \in X$ (of dimension zero or one) we define $E^n_x$ to be the set of all edges with distance less or equal $n$ to $x$. 
    Then $E^{n}_x \subset E^{n+1}_x$, and if $e$ is an edge with vertex $v$ we have $E^n_e \subset E^n_v$.  
    Let $M_{E^n_x}\in \MV_{G_x}(X)$ be the resolution that is equal to $M$ on $E^n_x$ and zero otherwise. Applying $\alpha$ yields a sequence 

\[\begin{tikzcd}[ampersand replacement=\&]
	\alpha \left( M_{E^0_x}\right) \& \alpha \left( M_{E^1_x} \right) \&  \alpha \left( M_{E^2_x}\right) \& \alpha \left( M_{E^3_x} \right) \& \ldots
	\arrow[from=1-1, to=1-2]
	\arrow[from=1-2, to=1-3]
	\arrow[from=1-3, to=1-4]
	\arrow[from=1-4, to=1-5]
\end{tikzcd}\]
    in $D(G_x)$, and every term maps canonically to $\alpha \left( M_{E^\infty_x}\right) \simeq \res{G}{G_x} \ind{J}{G}\rho_\chi[1]$, where the equivalence holds by Lemma \ref{AssemFormula}.  
    Note that $\alpha \left(M_{E^n_x}\right)[-1]$ is again in the heart by Lemma \ref{AssemFormula}. 
    Define $F_nV(x):= \alpha \left(M_{E^n_x}\right)[-1]$. 
    The inclusion $E^n_e \subseteq E^n_v$ for $v \subset e$ induces a map 
\[\begin{tikzcd}[ampersand replacement=\&]
	{F^nV(e)} \&\& {F^nV(v).}
	\arrow[from=1-1, to=1-3]
\end{tikzcd}\]
    Together, this defines a functor $F_nV : \int_G \simp{X} \to  D(\C)^\heartsuit$. Furthermore, the inclusions $E^n_x \to E^{n+1}_x$ induce natural transformations 
    $F_{n}V \to F_{n+1}V$. 

\end{construction}
The following properties of the filtration are a direct consequence of the construction.
\begin{lemma}
    \label{FiltProp}
    Let $V \in D(L)^\heartsuit$. Then $F_0V = \Lan_{BL}^{\int_G \simp{X}}V$ and $\colim_n F_nV = \res{}{} \circ \alpha  (F_0V)$.
    Furthermore, the canonical map $F_0V \to \colim_nF_nV$ is the unit of the adjunction $ \alpha \dashv \res{}{}$.
\end{lemma}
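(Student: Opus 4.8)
The plan is to unwind the definition of $F_nV$ and read off all three statements from the explicit formulas for $\alpha$ on $1$-dimensional complexes (Corollary \ref{AssemFormula}), for left Kan extensions (Corollary \ref{Ranres}), and from base change (Lemma \ref{BeckChav}). Write $M := \Lan_{BL}^{\int_G \simp{X}}V$; since $V \in D(L)^\heartsuit$ and induction from open subgroups together with conjugation are $t$-exact, Corollary \ref{Ranres} shows $M \in \MV(X)^\heartsuit$, so Corollary \ref{AssemFormula} applies to $M$ and to each $M_{E^n_x}$. For the first claim, observe that $E^0_x$ consists only of edges — the edges incident to $x$ if $x$ is a vertex, and $\{x\}$ itself if $x$ is an edge — so $M_{E^0_x}$ is concentrated in simplicial degree one and Corollary \ref{AssemFormula} gives $F_0V(x) = \alpha_{G_x}(M_{E^0_x})[-1] \simeq \bigoplus_{[e'] \in E^0_x/G_x}\ind{G_{e'}\cap G_x}{G_x}M(e')$. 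On the other hand Corollary \ref{Ranres} computes $M(x)$ as a sum over the groupoid $e/BG_x$, whose isomorphism classes — using transitivity of $G$ on edges and absence of inversions — are precisely the $G_x$-orbits of edges having $x$ as a face, i.e. $E^0_x/G_x$, and the summands match. Checking that the structure maps of $F_0V$ (induced by $E^0_e \subseteq E^0_v$) correspond to those of $M$ under this identification yields $F_0V = \Lan_{BL}^{\int_G \simp{X}}V$.

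For the second claim, colimits in $\MV_{G_x}(X)$ are pointwise (Corollary \ref{MVcolim}) and $\alpha_{G_x}$ preserves colimits, so $\colim_n F_nV(x) = \alpha_{G_x}(\colim_n M_{E^n_x})[-1]$. Since $X$ is a connected tree, $\bigcup_n E^n_x = X_1$, and because the $E^n_x$ increase the colimit stabilizes at every simplex, giving $\colim_n M_{E^n_x} = M_{X_1}$, the part of $M$ supported on all of $X_1$. As $X_1$ is a single $G$-orbit, Corollary \ref{AssemFormula} gives $\alpha_G(M_{X_1}) \simeq \ind{L}{G}V[1] = \alpha_G(M)[1]$, and base change along the cartesian square $\int_{G_x}\simp{X} \to \int_G\simp{X}$, $BG_x \to BG$ (Lemma \ref{BeckChav}, using that $\int_G\simp{X} \to BG$ is proper) identifies $\alpha_{G_x}(M_{X_1}) \simeq \res{G}{G_x}\alpha_G(M_{X_1})$. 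Hence $\colim_n F_nV(x) \simeq \res{G}{G_x}\ind{L}{G}V = (\res{}{}\circ\alpha(F_0V))(x)$, naturally in $x$ since on both sides the transition maps are the evident inclusions of the edge-indexed summands, so $\colim_n F_nV = \res{}{}\circ\alpha(F_0V)$.

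For the last statement, the canonical map $F_0V \to \colim_n F_nV$ is at each $x$ the map $\alpha_{G_x}(M_{E^0_x})[-1] \to \alpha_{G_x}(M_{X_1})[-1]$ induced by the inclusion $M_{E^0_x}\hookrightarrow M_{X_1}$, which under the above identifications is the inclusion of the summands indexed by $E^0_x/G_x$ into those indexed by $X_1/G_x$. Writing $\iota\colon BL \hookrightarrow \int_G\simp{X}$ for the inclusion and $p\colon \int_G\simp{X}\to BG$ for the structure map, so that $\alpha = \Lan_p$ and $M = \Lan_\iota V$, the unit of $\Lan_\iota \dashv \res{\iota}{}$ at $V$ is the inclusion of the $eL$-summand of $\res{G}{L}\ind{L}{G}V$; by compatibility of units under the composition $p \circ \iota$ (which is the inclusion $BL \to BG$), the unit $M \to \res{p}{}\Lan_p M = \res{}{}\circ\alpha(M)$ is, pointwise, exactly this inclusion of summands. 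Thus the canonical map is the unit, as claimed.

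The argument is entirely formal; the only real work is bookkeeping — matching up the indexing data (the groupoids $e/BG_x$, the double cosets $G_x\backslash G/L$, and the $G_x$-orbits of edges through $x$) and verifying that every identification is natural in $x \in \int_G\simp{X}$. The geometric inputs (the tree is connected, $G$ is transitive on edges and acts without inversion) and the categorical inputs cited above then combine to give the statement without further difficulty.
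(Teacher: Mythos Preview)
Your proposal is correct and follows essentially the same approach as the paper. For the first claim the paper uses the universal property of the left Kan extension to produce a comparison map $\Lan_{BL}^{\int_G\simp{X}}V \to F_0V$ and then checks it is an isomorphism at $e$ and $\partial_i e$, whereas you identify both sides directly via the indexing bijection $e/BG_x \leftrightarrow E^0_x/G_x$; the paper's route is slightly slicker because it avoids having to verify separately that the structure maps match. For the second and third claims the arguments are the same, though you make the appeal to base change (Lemma~\ref{BeckChav}) explicit where the paper leaves it implicit. One small slip: in your third paragraph the unit of $\Lan_\iota \dashv \res{\iota}{}$ at $V$ is the identity $V \to V$ (since $\iota$ is fully faithful), not the inclusion into $\res{G}{L}\ind{L}{G}V$; what you mean is the unit of $\Lan_{p\circ\iota}\dashv\res{p\circ\iota}{}$, and with that correction the argument goes through.
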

\begin{proof}
    We first show that $F_0V = \Lan_{BL}^{\int_G \simp{X}}V$. 
    By construction, both sides agree on edges and the universal property of the left Kan extension yields a map $\Lan_{BL}^{\int_G \simp{X}}V \to F_0V$. 
    By Lemma \ref{Ranres} we can explicitly calculate the right-hand side. 
    This shows that on $\partial_ie$ the left Kan extension is $(\Lan_{BL}^{\int_G \simp{X}}V)(\partial_ie) = \ind{L}{K_i}V$ and the map induced from $e \to \partial_ie$ is the canonical one.
    But on the other hand we have $F_0V(e)=\alpha(E_e^0)[-1]=V$ and $F_0V(\partial_ie)=\alpha (E_{\partial_ie}^0)[-1] = \ind{L}{K_i}V$.
    By \ref{AssemFormula} the inclusion $E_e^0 \to E_{\partial_ie}^0$ corresponds to the map 
      \[\begin{tikzcd}
	V && {\bigoplus_{g \in L \backslash K / L}\ind{L \cap L^g}{L}V^g}
	\arrow[from=1-1, to=1-3]
\end{tikzcd}\] 
    that includes $V$ as the summand associated to $1 \in L \backslash K /L$ where we identify $K/L$ with the $K$-orbit of $e$.
    Hence, $F_0V = \Lan_{BL}^{\int_G \simp{X}}V$.

    To show that $\colim_n F_nV  \simeq \res{}{} \circ \alpha (F_0V)$, note that 
    colimits in $\MV(X)$ are computed pointwise. For every simplex $x$ the set $E^n_x$ converges to the set of all edges for $n \to \infty$ and with that $\colim F^nV(x) = \res{G}{G_x}\alpha_G(M_{E^\infty_x})[-1]= \res{G}{G_x} \ind{L}{G}\rho_\chi$ and all maps are identities.

    For the last claim we use the adjunction $\Lan_{BL}^{\int_G \simp{X}} \dashv \res{\int_G \simp{X}}{BL}$.  
    Since $F_0 V = \Lan_{BL}^{\int_G \simp{X}}V$, this implies that we only need to check that both maps agree after evaluation on $e$.
    The map induced from the filtration corresponds to the map $V =\alpha (E_e^0)[-1] \to \alpha (E_e^\infty)[-1]= \ind{L}{G}V$ induced by the inclusion of $L$-sets $E_e^0 \to E_e^\infty$.
    But by \ref{AssemFormula} this is precisely the map 

\[\begin{tikzcd}
	V && {\bigoplus_{g \in L \backslash G / L}\ind{L \cap L^g}{L}V^g}
	\arrow[from=1-1, to=1-3]
\end{tikzcd}\] 
    that maps $V$ onto the summand corresponding to $1 \in G$ when identifying $G/L$ with the set of edges.
    For the second map, it follows formally that this is the canonical map $V \to \res{G}{G_e} \ind{G_e}{G}V$.
    Hence, both maps are the same. 
\end{proof}

 Let us note the following properties of the quotients of this filtration.
\begin{lemma}
    \label{Filtquot}
    Let $V \in D(L)^\heartsuit$ be a smooth representation and let $e$ be an edge with vertices $v_i:= \partial_ie$. Then for $n \geq 0$ the complex 
    $$
    \alpha (F_{n+1} V/F_n V) 
    $$
    is equivalent to the two-term complex induced by the map
    $$
    \begin{tikzcd}
      \ind{G_e}{G}(F_{n+1}V/F_nV)(e) \arrow[r] & \ind{G_{v_1}}{G}(F_{n+1}V/F_nV)(v_1) \oplus \ind{G_{v_2}}{G}(F_{n+1}V/F_nV)(v_2) 
    \end{tikzcd}
    $$
    This can be explicitly computed by the complex 
    $$
    \begin{tikzcd}
        0 \arrow[r] & \ind{L \cap L^{g_1}}{G}V^{g_1} \oplus \ind{L \cap L^{g_2}}{G}V^{g_2} \arrow[r] & \ind{K_1 \cap L^{g_1}}{G}V^{g_1} \oplus \ind{K_2 \cap L^{g_2}}{G}V^{g_2} \arrow[r] & 0  
    \end{tikzcd}
    $$
    with $g_0, g_1$ such that $g_0e$ and $g_1e$ represent the two $L$-orbits  of edges with distance $n+1$ to $e$. Then $K_i \cap L^{g_i} = L \cap L^{g_i}$ for $i=0,1$ and the differential is the identity.
\end{lemma}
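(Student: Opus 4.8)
The plan is to read off the first claimed description directly from Lemma~\ref{AssemFormula}, and then to identify the three representations $N(e),N(v_0),N(v_1)$ (writing $N:=F_{n+1}V/F_nV$ and $v_i:=\partial_ie$) by unwinding the construction of the filtration $F_\bullet V$.

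First I would record the explicit shape of the filtration. By construction $M_{E^m_x}$ is the resolution equal to $M$ on the edges at distance $\le m$ from $x$ and zero on every other simplex, in particular zero on all vertices; so Lemma~\ref{AssemFormula} computes $\alpha_{G_x}(M_{E^m_x})$ as the one-term complex $\bigoplus_{[f]\in E^m_x/G_x}\ind{(G_x)_f}{G_x}M(f)$ sitting in degree $1$, whence $F_mV(x)=\alpha_{G_x}(M_{E^m_x})[-1]=\bigoplus_{[f]\in E^m_x/G_x}\ind{(G_x)_f}{G_x}M(f)$, a direct sum of induced representations over the $G_x$-orbits of edges within distance $m$ of $x$. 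In particular each $F_mV(x)$ lies in the heart, the transition $F_nV(x)\to F_{n+1}V(x)$ is the inclusion of a subsum, and $N$ is the pointwise cokernel, so $N\in\MV(X)^\heartsuit$. Since $X$ is $1$-dimensional with one $G$-orbit of edges (represented by $e$) and, the Weyl group being non-trivial, two $G$-orbits of vertices (represented by $v_0,v_1$), Lemma~\ref{AssemFormula} then yields exactly the first displayed complex, with differential $\ind{}{}(\partial_0)-\ind{}{}(\partial_1)$.

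Next I would read off from the formula above that $N(x)=\bigoplus_{[f]}\ind{(G_x)_f}{G_x}M(f)$, now summed over the $G_x$-orbits of edges at distance \emph{exactly} $n+1$ from $x$; by Corollary~\ref{Ranres} each $M(f)$ is just a $G$-translate $V^{g}$ of $V$, viewed as a $G_f$-representation for a $g$ with $ge=f$ (no induction occurs, as $gG_eg^{-1}=G_f$), and $(G_x)_f=G_x\cap G_f$. The geometric input is then: (i) the edges at distance $n+1$ from $e$ split according to the two components $\Gamma_{v_0}(e),\Gamma_{v_1}(e)$ of $X\setminus e$, and $L=G_e$ acts transitively within each, so choosing a representative $g_ie\in\Gamma_{v_j}(e)$ with $j\neq i$ gives $N(e)=\ind{L\cap L^{g_0}}{L}V^{g_0}\oplus\ind{L\cap L^{g_1}}{L}V^{g_1}$; (ii) $K_i=G_{v_i}$ acts transitively on each sphere about $v_i$, the edge $g_ie$ is at distance exactly $n+1$ from $v_i$ while $g_je$ ($j\neq i$) is at distance $n$ from $v_i$ and hence already lies in $E^n_{v_i}$, giving $N(v_i)=\ind{(K_i)_{g_ie}}{K_i}V^{g_i}$; (iii) the choice of $g_i$ is made so the geodesic from $v_i$ to $g_ie$ passes through $e$, which (no inversion) forces $G_{v_i}\cap G_{g_ie}=G_e\cap G_{g_ie}$, i.e. $K_i\cap L^{g_i}=L\cap L^{g_i}$. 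Feeding these into the complex of the previous paragraph and using induction in stages ($\ind{L}{G}\ind{L\cap L^{g_i}}{L}=\ind{L\cap L^{g_i}}{G}$, and likewise for $K_i$) turns it into the asserted two-term complex; and the structure map $N(e)\to N(v_i)$ coming from $\partial_i$ annihilates the $g_j$-summand and is the identity on the $g_i$-summand — a map $\ind{L\cap L^{g_i}}{K_i}V^{g_i}\to\ind{K_i\cap L^{g_i}}{K_i}V^{g_i}$ between equal groups — so that $\ind{}{}(\partial_0)-\ind{}{}(\partial_1)$ is the identity up to sign and a relabelling. (The upshot, which is the use to which the lemma is put, is that $\alpha_G(N)\simeq 0$.)

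I expect the geometric bookkeeping of the third paragraph to be the main obstacle: the orbit counts (one $L$-orbit of edges inside each $\Gamma_{v_i}(e)$ at each radius; a single $K_i$-orbit on each sphere about $v_i$), the precise off-by-one relating ``distance $n+1$ from $e$'' to ``distance $n+1$ from $v_i$'' which puts $g_je$ inside $E^n_{v_i}$, and the check that the face maps act by the identity on the surviving summand. All of these are standard features of the action of a rank-one reductive group on its Bruhat--Tits tree; granting them, everything else is formal from Lemma~\ref{AssemFormula}, Corollary~\ref{Ranres}, and the explicit description of $F_\bullet V$.
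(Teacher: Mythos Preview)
Your proposal is correct and follows essentially the same route as the paper: compute the pointwise quotient $N(x)=\alpha_{G_x}(M_{E^{n+1}_x\setminus E^n_x})[-1]$, count the $L$- and $K_i$-orbits of edges in the relevant annuli (two for $e$, one for each $v_i$), and use the geodesic-through-$e$ argument to get $K_i\cap L^{g_i}=L\cap L^{g_i}$. The paper justifies the orbit count via transitivity of the $G$-action on pairs (apartment, edge) from \cite[Section 7.4]{BT72}, whereas you invoke it as a standard building fact; otherwise the arguments match.
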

\begin{proof}
    We start by noting that the natural transformation $F_n V \to F_{n+1}V$ is by construction pointwise injective. 
    Hence, the cofibre is again pointwise concentrated in degree $0$ and therefore in the heart.
    Now the first part follows directly from \ref{AssemFormula}.

    For the second part, we need to 
    compute the cofibre $F_{n+1}/F_n$ at $v_1$, $v_2$ and $e$ together with the induced maps. 
    Again, this is computed pointwise.
    For a simplex $x$, the cofibre is given by 
    $\alpha (M_{ E^{n+1}_x}) / \alpha (M_{E^n_x})$ which is 
    equivalent to $\alpha (M_{E^{n+1}_x \setminus E^n_x})$.
    To compute the latter we want to apply the $L$-equivariant version of \ref{AssemFormula}. 
    First, consider the edge $e$.
    Note that for $n\geq 0$ there are two $L$-orbits of edges with distance exactly $n+1$ to $e$. 
    This is because by \cite{BT72}[Section 7.4.] $G$ acts transitively on pairs $(\mathcal{A}', e')$ where $\mathcal{A}'$ is an apartment containing $e'$. In particular, $L$ acts transitively on the set of apartments containing $e$ and we can represent any $L$-orbit by an edge in a fixed apartment $\mathcal{A}$. Since $\mathcal{A}$ is one-dimensional and $G$ acts without inversion, there are exactly two $L$-orbits of edges with distance $n+1$ to $e$.
    Let $e_1$ and $e_2$ be representatives of the two orbits. 
    Then there exist $g_1, g_2 \in G$ such that $g_ie=e_i$ for $i=1,2$
    and the $L$-stabilizer of $e_i$ is $L \cap L^{g_i}$. 
    Lastly by Lemma \ref{Ranres} we can compute $\res{L^{g_i}}{L \cap L^{g_i}}M(e_i) = \res{L^{g_i}}{L \cap L^{g_i}}V^{g_i}$. 
    This yields an equivalence 
    $$\alpha_L(M_{E^{n+1}_e \setminus E_e^n})[-1] \simeq \ind{L\cap L^{g_1}}{L}V^{g_1} \oplus \ind{L \cap L^{g_2}}{L}V^{g_2}.$$

    For $v_i$, there is exactly one $K_i$ orbit of edges with distance $n+1$ and this orbit contains precisely one of the two edges $e_1$ and $e_2$. 
    Assume that it contains $e_i$.
    Then by the same argument as before 
    $$
    \alpha_{K_i}(M_{E^{n+1}_{v_i} \setminus E_{v_i}^n})[-1]= \ind{K_i\cap L^{g_i}}{K_i}V^{g_i}. 
    $$
    For the differential, note that $L \cap L^{g_i} \subseteq K_i \cap L^{g_i}$.
    Furthermore, $e$ is part of the geodesic connecting $v_i$ and $e_i$. 
    Any $g\in G$ that fixes $v_i$ and $e_i$ also fixes the whole geodesic between them, in particular, it fixes $e$ and hence $g \in L$.
    Therefore, $L \cap L^{g_i} =K_i \cap L^{g_i}$ and the induced map from the inclusion $E_e \to E_{v_i}$ is the canonical map $\ind{L \cap L^{g_i}}{L}V^{g_i} \to \ind{K_i \cap L^{g_i}}{K_i}V^{g_i}$.
\end{proof}  

This immediately implies the following corollary.

\begin{corollary}
    For any $V \in D(L)^\heartsuit$ the quotient $F_{n+1}V / F_nV$ is in the kernel of $\alpha$. 
\end{corollary}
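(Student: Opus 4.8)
The plan is to deduce the statement directly from the explicit description of $\alpha(F_{n+1}V/F_nV)$ given in Lemma \ref{Filtquot}. That lemma identifies $\alpha(F_{n+1}V/F_nV)$ with the two-term complex
$$
0 \longrightarrow \ind{L \cap L^{g_1}}{G}V^{g_1} \oplus \ind{L \cap L^{g_2}}{G}V^{g_2} \longrightarrow \ind{K_1 \cap L^{g_1}}{G}V^{g_1} \oplus \ind{K_2 \cap L^{g_2}}{G}V^{g_2} \longrightarrow 0,
$$
where $g_1,g_2 \in G$ are chosen so that $g_1e$ and $g_2e$ represent the two $L$-orbits of edges at distance $n+1$ from $e$. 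So the entire content of the proof is to observe that this complex is acyclic.

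First I would invoke the equalities $K_i \cap L^{g_i} = L \cap L^{g_i}$ for $i = 1,2$, also recorded in Lemma \ref{Filtquot}: under these identifications the source and target of the above differential are literally the same representation, and the differential is the identity map. Hence the complex has the form $[\,A \xrightarrow{\ \mathrm{id}\ } A\,]$ with $A = \ind{L \cap L^{g_1}}{G}V^{g_1} \oplus \ind{L \cap L^{g_2}}{G}V^{g_2}$, which is contractible. Therefore $\alpha(F_{n+1}V/F_nV) \simeq 0$, i.e.\ $F_{n+1}V/F_nV$ lies in the kernel of $\alpha$. (Recall from the proof of Lemma \ref{Filtquot} that $F_nV \to F_{n+1}V$ is pointwise injective, so the quotient is computed pointwise and stays in the heart; in particular there are no coherence issues to worry about.)

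There is essentially no obstacle remaining at this stage: the geometric input — counting the $L$-, $K_1$- and $K_2$-orbits of edges at distance $n+1$ from $e$ inside a fixed apartment, and checking that the relevant stabilizers coincide because $e$ lies on the geodesic joining $v_i$ to $g_ie$ — has already been carried out in Lemma \ref{Filtquot}. The only point to be mildly careful about is that the chain-level comparison map really is, after the identification of the equal subgroups, the identity on each summand, so that the two-term complex is genuinely null-homotopic rather than merely quasi-isomorphic to something small; but this is exactly what the final assertion of Lemma \ref{Filtquot} provides. This corollary will then be used to show that the filtration $F_\bullet V$ becomes constant after applying $\alpha$, which is the route toward the desired left Bousfield localization statement.
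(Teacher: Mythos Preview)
Your proposal is correct and follows exactly the approach the paper intends: the corollary is stated as an immediate consequence of Lemma~\ref{Filtquot}, and you have spelled out precisely why --- the explicit two-term complex has equal source and target (via $K_i \cap L^{g_i} = L \cap L^{g_i}$) with the identity as differential, hence is contractible. There is nothing to add.
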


This yields another way to show that the map $\MV(X) \to D(G)$ is a left Bousfield localization that we are going to sketch since it uses the same strategy that we want to employ for the relative case.
\begin{remark}
    The previous corollary proves that for every $V\in D(L)^\heartsuit$ the map
    $F_nV \to F_{n+1}V$ is an $\alpha$-equivalence.
    This implies that $\alpha (F_0V) \to \alpha( \colim_n \, F_nV) \simeq \alpha \circ   \res{}{} ( \ind{L}{G}V)$ is also an equivalence where the latter equivalence holds by Lemma \ref{FiltProp}.
    The counit $\alpha \circ  \res{}{} (  \ind{L}{G}V) \to \ind{L}{G}V$ is part of the composition 
\[\begin{tikzcd}[ampersand replacement=\&]
	{ \alpha (F_0V)} \&\& {\alpha \circ \res{}{} \circ \alpha (F_0V)  } \&\& {\alpha (F_0V)}
	\arrow[from=1-1, to=1-3]
	\arrow[from=1-3, to=1-5]
\end{tikzcd}\]
where we use that $\ind{L}{G}V=\alpha(F_0V)$, again by Lemma \ref{FiltProp}.
The first map is an equivalence by the previous discussion and the composition is an equivalence by the triangle identity of the adjunction $\alpha \dashv \res{}{}$.
Hence, the counit applied to $\alpha(F_0V)$ is also an equivalence.
Applying this to $V= \ind{C}{L}1$ for every compact open subgroup $C \subseteq L$ shows that 
the counit is an equivalence for a set of compact generators of $D(G)$.
Since both sides are compatible with colimits, this implies that
it is an equivalence in general.
Therefore, $\MV(X) \to D(G)$ is a left Bousfield localization.
\end{remark}

Next, we want to consider a relative version of the filtration.
For this, let $\chi: T^0 \to \C^\times$ be a smooth character with associated type $\rho_\chi$.
This induces filtration by applying $\pr{\chi} : \MV(X) \to \MV(X, \rho_\chi)$ to $F_nV$.  
\begin{definition}
    Let $\chi: T^0 \to \C^\times $ be a smooth character with associated type $\rho_\chi$. Let $\mathcal{D}(\chi)$ be the corresponding admissible collection with projection $\pr{\chi}: \MV(X) \to \MV(X, \rho_\chi)$.
    Then, for any $V \in D(L, \rho_\chi)$, we define
    $$
    F_n^\chi V:=\pr{\chi}F_n V. 
    $$
\end{definition}
Similar to the absolute case before, the relative filtration converges to $\pr{\chi} \circ \res{}{} \circ \ind{L}{G}V$.

\begin{lemma}
    \label{relFilt}
    Let $V \in D(L, \rho_\chi)^\heartsuit$. Then $F_0^\chi V = \Lan^{\int_G \simp{X}}_{BL} V$ and $\colim_n F^\chi_nV = \res{}{} \circ \alpha (F_0^\chi V)$.
    Furthermore, the canonical map $F_0^\chi V \to \colim _n F_n^\chi V$ is the unit of the adjunction $\alpha \dashv \pr{\chi} \circ \res{}{}$.
\end{lemma}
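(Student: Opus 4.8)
The plan is to deduce the statement from its absolute counterpart Lemma \ref{FiltProp} by pushing everything through the right adjoint $\pr{\chi}\colon \MV(X) \to \MV(X, \rho_\chi)$, using crucially that $\pr{\chi}$ preserves all colimits (Corollary \ref{MVchicomp}, which rests on Lemmas \ref{relcolimit} and \ref{relpwcolim}). By definition $F_n^\chi V = \pr{\chi}F_n V$. For the first identity: Lemma \ref{FiltProp} gives $F_0 V = \Lan_{BL}^{\int_G \simp{X}}V$, and since $L = G_e$ and $V \in D(L, \rho_\chi)^\heartsuit = \mathcal{D}(\chi)(e)^\heartsuit$, Lemma \ref{RelLan} shows $\Lan_{BL}^{\int_G \simp{X}}V$ already lies in $\MV(X, \rho_\chi)$. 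As $\pr{\chi}$ is the right adjoint of the fully faithful inclusion $\iota\colon \MV(X, \rho_\chi) \hookrightarrow \MV(X)$ (Lemma \ref{MVLoc}), it restricts to the identity on $\MV(X, \rho_\chi)$, so $F_0^\chi V = \pr{\chi}F_0 V = \Lan_{BL}^{\int_G \simp{X}}V$.

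For the convergence statement, since $\pr{\chi}$ commutes with colimits the tower $F_0^\chi V \to F_1^\chi V \to \cdots$ --- obtained by applying $\pr{\chi}$ term by term to $F_0 V \to F_1 V \to \cdots$ --- has colimit $\colim_n F_n^\chi V = \pr{\chi}\bigl(\colim_n F_n V\bigr)$. By Lemma \ref{FiltProp}, $\colim_n F_n V = \res{}{}\,\alpha(F_0 V)$ (with $\alpha(F_0 V) = \ind{L}{G}V$ by transitivity of left Kan extensions). Using $\iota F_0^\chi V = F_0 V$ and that $\alpha$ sends $\MV(X, \rho_\chi)$ into the Bernstein block $D(G, \rho_\chi)$, we obtain $\colim_n F_n^\chi V = \pr{\chi}\,\res{}{}\,\alpha(F_0^\chi V)$, which is exactly the right adjoint $\pr{\chi}\circ\res{}{}$ of the adjunction $\alpha \dashv \pr{\chi}\circ\res{}{}$ applied to $\alpha(F_0^\chi V)$.

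Finally, the canonical map $F_0^\chi V \to \colim_n F_n^\chi V$ is $\pr{\chi}$ applied to the canonical map $F_0 V \to \colim_n F_n V$, which by Lemma \ref{FiltProp} is the unit $\eta^\alpha_{F_0 V}\colon F_0 V \to \res{}{}\,\alpha(F_0 V)$ of $\alpha \dashv \res{}{}$. Tracing $\mathrm{id}_{\alpha(F_0^\chi V)}$ through the chain of adjunction isomorphisms defining $\alpha \dashv \pr{\chi}\circ\res{}{}$ --- which uses only that $\iota$ and the inclusion $D(G, \rho_\chi)\hookrightarrow D(G)$ are fully faithful, together with the absolute adjunction $\alpha \dashv \res{}{}$ and $\iota \dashv \pr{\chi}$ --- identifies its unit at an object $M$, modulo the canonical equivalence $M\simeq\pr{\chi}\iota M$, with $\pr{\chi}(\eta^\alpha_{\iota M})$. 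For $M = F_0^\chi V$ this is $\pr{\chi}(\eta^\alpha_{F_0 V})$, i.e.\ the canonical map $F_0^\chi V \to \colim_n F_n^\chi V$, as desired. The argument is a purely formal transfer of Lemma \ref{FiltProp} along $\pr{\chi}$; the single point using more than abstract adjunction calculus is the colimit-preservation of $\pr{\chi}$, which is precisely what guarantees the relative filtration still exhausts $\pr{\chi}\circ\res{}{}\circ\alpha(F_0^\chi V)$ instead of stabilizing too early, and I anticipate no genuine obstacle here --- the substantive content all lives in Lemma \ref{FiltProp} and the construction preceding it.
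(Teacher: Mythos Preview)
Your proposal is correct and follows essentially the same approach as the paper: reduce to the absolute Lemma \ref{FiltProp} by applying $\pr{\chi}$ throughout, using that $\pr{\chi}$ preserves colimits (Corollary \ref{MVchicomp}) and acts as the identity on objects already in $\MV(X,\rho_\chi)$. The only cosmetic differences are that the paper cites Lemma \ref{relmaps} rather than Lemma \ref{RelLan} to see that $\Lan_{BL}^{\int_G\simp{X}}V$ lies in $\MV(X,\rho_\chi)$, and that you spell out the unit identification in more detail than the paper's one-line appeal to the colocalization property; both are entirely equivalent.
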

\begin{proof}
    By definition, we have $F_0^\chi V \simeq \pr{\chi} \Lan_{BL}^{\int_G \simp{X}}V$.
    But Lemma \ref{relmaps} shows that $\Lan_{BL}^{\int_G \simp{X}}V \in \MV(X, \rho_\chi)$ and therefore $\pr{\chi}\Lan_{BL}^{\int_G \simp{X}}V = \Lan_{BL}^{\int_G \simp{X}}V$.
    Furthermore, since $\pr{\chi}$ commutes with colimits by Lemma \ref{MVchicomp}, it follows immediately that $\colim_n F^\chi_nV = \pr{\chi} \res{}{} \circ \alpha (F^\chi_0V)$.

    Lastly, since $\MV(X, \rho_\chi)$ is a colocalization of $\MV(X)$ and $F_0V \in \MV(X, \rho_\chi)$, it follows that we obtain the unit of the adjunction $\alpha \dashv \pr{\chi} \circ \res{}{}$ by applying $\pr{\chi}$ to the unit of the adjunction $\alpha \dashv \res{}{}$ and restricting it to $\MV(X, \rho_\chi)$.   \end{proof}

We want to employ the same strategy as before to show that the counit of the adjunction between $\alpha$ and $\pr{\chi} \circ \res{}{}$ is an equivalence. 
The main task for this strategy is to prove that the quotients of the relative filtration are also contractible after $\alpha$.
This will be the goal of the remainder of this section.
By Lemma \ref{Filtquot} together with the description of $\pr{\chi}$ from Lemma \ref{MVLoc}, this is equivalent to the following proposition.
\begin{proposition}
    \label{projind}
        Let $K=K_0$ or $K=K_1$ and let $g \in G$ such that $K \cap L^g = L \cap L^g$. Then the map 
        $$ 
        \begin{tikzcd}
            \ind{L}{K} \pr{\chi} \ind{L\cap L^g}{L}V^g \arrow[r] & \pr{\chi} \ind{K \cap L^g}{K}V^g
        \end{tikzcd}
        $$
        with $V = \ind{J}{L}\rho_\chi$ is an equivalence.
\end{proposition}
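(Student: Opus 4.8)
The plan is to reduce both sides to explicit direct sums of compactly induced representations via Mackey theory, and then identify which summands survive the projection $\pr{\chi}$ by invoking Roche's intertwining computation (Lemma \ref{RocheIntertwiner}, Lemma \ref{chiinter}). First I would rewrite $V^g = (\ind{J}{L}\rho_\chi)^g = \ind{J^g}{L^g}\rho_\chi^g$ and, using the hypothesis $K \cap L^g = L \cap L^g$, observe that $\ind{K\cap L^g}{K}V^g$ and $\ind{L}{K}\ind{L\cap L^g}{L}V^g$ have, before applying $\pr{\chi}$, the \emph{same} restriction to $L$: indeed by the Mackey formula $\res{K}{L}\ind{K\cap L^g}{K}V^g \simeq \bigoplus_{L\backslash K/(K\cap L^g)} \ind{L\cap (K\cap L^g)^k}{L}(V^g)^k$, and since $K\cap L^g = L\cap L^g$ one checks the double coset space $L\backslash K / (L\cap L^g)$ is the relevant index set on both sides. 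So the map in question is, after forgetting down to $L$, a map between two copies of the same sum of $\ind{L\cap L^{gk}}{L}(\rho_\chi^{gk})$-type pieces, and the claim becomes the statement that $\pr{\chi}$ kills exactly the same summands on each side.

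Next I would make the decomposition of $\pr{\chi}\ind{H}{L}W$ explicit for $H$ a compact open subgroup and $W$ a character: since $L$ is compact, $\ind{H}{L}W$ decomposes into irreducibles of $L$, and $\pr{\chi}$ projects onto the isotypic components of those irreducibles that occur in some $\ind{J_{\chi^\omega}}{L}\rho_{\chi^\omega}$ for $\omega \in W$ (this is the content of Lemma \ref{Dchi} / Lemma \ref{relpwcolim}, together with $J_{\chi^\omega}=J_\chi$ in rank one). By Frobenius reciprocity, an irreducible $\pi$ of $L$ occurs in $\ind{(L\cap L^{gk})}{L}\rho_\chi^{gk}$ and simultaneously in $\ind{J}{L}\rho_\chi$ iff $\Hom_{L\cap L^{gk}}(\rho_\chi^{gk}, \res{}{}\pi)\neq 0$ and $\Hom_J(\rho_\chi,\res{}{}\pi)\neq 0$; such a $\pi$ exists precisely when $\rho_\chi^{gk}$ and $\rho_\chi$ are intertwined by an element related to $gk$, i.e. when $gk$ (suitably interpreted as an element of $G$) lies in the intertwining set $\mathcal{I}(\rho_\chi) = J_\chi W_\chi J_\chi$. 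So on \emph{each} side of the asserted equivalence, the surviving summands are indexed by exactly those double cosets $L\, gk$ for which $gk$ intertwines $\rho_\chi$ with itself in the sense of Roche; the remaining summands are annihilated by $\pr{\chi}$. The map induced by induction-in-stages then matches these surviving summands identically (by Lemma \ref{lemmadiff} the structure map on the nose is the canonical inclusion), giving the equivalence.

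The main obstacle, which I would isolate as a separate lemma, is the bookkeeping translating "the double coset $L gk \subseteq G$ contributes a nonzero summand to $\pr{\chi}$" into Roche's statement $\mathcal{I}(\rho_\chi) = J_\chi W_\chi J_\chi$. Concretely one must check: (i) that the edges $g_0 e, g_1 e$ at distance $n+1$ from $e$ in a fixed apartment are represented by $g_i \in N(T)\cdot(\text{something in } G_e)$, so that conjugation by $g_i$ acts on $T^0$ through the Weyl group and one genuinely lands in the $W$-orbit of $\chi$; here I would use Lemma \ref{Weyl}, that $W^{\mathrm{aff}}$ acts simply transitively on the edges of the apartment, so $g_i$ can be chosen in $N(T)$ up to $G_e$; (ii) that $\rho_\chi^{g_i}$ restricted to $L\cap L^{g_i}$ intertwines $\rho_\chi$ iff $g_i \in J_\chi W_\chi J_\chi$, which is where the Iwahori decomposition of $J_\chi$ from Lemma \ref{IwahoridecompType} and the triviality of $\rho_\chi$ on the unipotent parts enter; and (iii) that the projection commutes with induction $\ind{L}{K}$ in the way needed — this follows from Lemma \ref{MVLoc} and the fact that $\pr{\chi}$ is computed pointwise, but one must verify the admissible collection on the vertex $\partial_i e$ is compatible with the one on $e$ under $\Lan$, which is exactly the closure-under-induction clause of admissibility established for $\mathcal{D}(\chi)$. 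Once these three points are in place, both sides reduce to the same explicit direct sum and the induced map is visibly the identity on it, so the equivalence follows.
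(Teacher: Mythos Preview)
Your plan has a genuine gap at point (iii). You assert that ``the projection commutes with induction $\ind{L}{K}$ in the way needed'' follows from Lemma \ref{MVLoc} and the closure-under-induction clause of admissibility. This is not correct, and in fact it is circular. Admissibility says that $\ind{L}{K}$ maps $D(L,\rho_\chi)$ into $D(K,\rho_\chi)$; in other words, the \emph{inclusions} $D(-,\rho_\chi)\hookrightarrow D(-)$ commute with induction. That is a statement about left adjoints. It does not imply that the \emph{right adjoints} $\pr{\chi}$ commute with induction. But the proposition is exactly the assertion that $\pr{\chi}^K\circ \ind{L}{K}$ and $\ind{L}{K}\circ\pr{\chi}^L$ agree on the specific representation $\ind{L\cap L^g}{L}V^g$, so invoking it as an ingredient is assuming the conclusion. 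Equivalently, by passing to adjoints your claim would require that $\res{K}{L}$ sends $D(K,\rho_\chi)$ into $D(L,\rho_\chi)$, and a Mackey computation of $\res{K}{L}\ind{J}{K}\rho_\chi$ shows this is not automatic: the summands $\ind{L\cap J^k}{L}\rho_\chi^k$ have no a priori reason to lie in the subcategory generated by $\ind{J}{L}\rho_{\chi^\omega}$.

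The paper's proof proceeds differently. Injectivity is immediate (Remark \ref{mapinj}), so it suffices to show both sides have the same number of irreducible constituents. On the right-hand side, since every irreducible in $D(K,\rho_\chi)$ is a summand of $\ind{J}{K}\rho_\chi$, the count equals $\dim\Hom_K(\ind{J}{K}\rho_\chi,\ind{K\cap L^g}{K}V^g)$, which Mackey and Roche's intertwiner formula identify with $\lvert J\backslash N(T)_\chi/J\cap(\pr{J}^K)^{-1}([g])\rvert$. On the left-hand side one computes $\Hom_J(\rho_{\chi^\omega},\ind{L\cap L^g}{L}V^g)$ for each $\omega$, using Lemma \ref{chiinter} to produce intertwiners from $\omega W_\chi$. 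The comparison is then closed by a case split on whether $W_\chi=W$, using Lemma \ref{numbirred} (which says $\ind{J}{K}\rho_\chi$ has two irreducible constituents if $W_\chi=W$ and is irreducible otherwise), Lemma \ref{sameind} (which identifies $\ind{J}{K}\rho_\chi\simeq\ind{J}{K}\rho_{\chi^\omega}$), and the double-coset combinatorics of Lemma \ref{diagram}. Your Mackey/intertwining setup is heading in the right direction, but without this case distinction and the auxiliary lemmas controlling how $\ind{J}{K}\rho_\chi$ decomposes, the argument does not close.
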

\begin{remark}
    \label{mapinj}
    Note that the injectivity follows immediately. Since the admissible collection is closed under induction, the map is part of a commutative square 
\[\begin{tikzcd}
	{\ind{L}{K} \pr{\chi} \ind{L \cap L^g}{L} V^g} && {\pr{\chi} \ind{K\cap L^g}{K}V^g} \\
	\\
	{\ind{L}{K} \ind{L \cap L^g}{L}V^g} && {\ind{K \cap L^g}{K}V^g}
	\arrow[from=1-3, to=3-3]
	\arrow[from=1-1, to=3-1]
	\arrow[from=1-1, to=1-3]
	\arrow["{=}", from=3-1, to=3-3]
\end{tikzcd}\]
where both vertical maps are injective.
\end{remark}
Before we prove the proposition, we need to establish some auxiliary results.

\begin{lemma}
    \label{diagram}
    Let $W=N(T)/T^0$ be the extended Weyl group with respect to $T$, let $L$ and $J$ be as above and let $K=K_0$ or $K =K_1$.
    Then the following diagram commutes.
\[\begin{tikzcd}
	N(T)/T^0 & {J \backslash N(T) /J} & {J\backslash G /J} \\
	N(T)/T^0 & {L \backslash N(T) /L} & {L \backslash G /L} \\
	{K\backslash N(T)/T^0}& {K \backslash N(T) /L} & {K \backslash G /L}
	\arrow[from=1-1, to=1-2]
	\arrow[from=1-2, to=1-3]
	\arrow["\pr{J}^L", from=1-3, to=2-3]
	\arrow["\pr{L}^K" ,from=2-3, to=3-3]
	\arrow[from=3-2, to=3-3]
	\arrow["(5)",from=3-1, to=3-2]
	\arrow["(2)", from=2-1, to=2-2]
	\arrow["(3)", from=2-2, to=2-3]
	\arrow["(1)", from=1-2, to=2-2]
	\arrow["\id"{description}, from=1-1, to=2-1]
	\arrow[ from=2-1, to=3-1]
	\arrow["(4)", from=2-2, to=3-2]
\end{tikzcd}\]
    The arrows $(1)$, $(2)$, $(3)$ and $(5)$ are bijective, $(4)$ is a $2:1$ surjection and $K\backslash N(T)/T^0$ is the quotient of the $\Z/2$-action by the element $\omega$ that corresponds to the reflection of the standard apartment at the vertex of $e$ stabilized by $K$ on $N(T)/T^0$.  
\end{lemma}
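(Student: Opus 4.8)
The plan is to reduce the entire diagram to the combinatorics of the group $W = N(T)/T^0$ acting on the standard apartment $\mathcal{A}$, using Lemma \ref{Weyl} together with the Bruhat--Tits decomposition of $G$. The point is that each object in the diagram is a double coset space which, after intersecting $N(T)$ with the relevant compact open subgroup, collapses either to $W$ or to a $\Z/2$-quotient of $W$, and that every arrow is then a canonical projection between such spaces.

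First I would compute the intersections of $N(T)$ with $J$, $L$ and $K$. Since $N(T)$ preserves $\mathcal{A}$ and $W$ acts freely and transitively on the edges of $\mathcal{A}$ by Lemma \ref{Weyl}, any element of $N(T)$ fixing $e$ lies in $T^0$; hence $N(T) \cap L = T^0$, and therefore also $N(T) \cap J = T^0$ because $T^0 \subseteq J \subseteq I \subseteq L$ by Lemma \ref{IwahoridecompType}. The stabiliser in $W$ of a vertex of $e$ is the order-two subgroup generated by the reflection $\omega$ at that vertex (which is nontrivial, again by Lemma \ref{Weyl}), so $N(T) \cap K = T^0 \sqcup \omega T^0$. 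As $T^0$ is characteristic in $T$, hence normal in $N(T)$, these intersections yield canonical identifications $J\backslash N(T)/J = L\backslash N(T)/L = N(T)/T^0 = W$ and $K\backslash N(T)/T^0 = K\backslash N(T)/L = \langle\omega\rangle\backslash W$, the set of left $\langle\omega\rangle$-cosets; this $\langle\omega\rangle\backslash W$ is precisely the quotient of the $\Z/2$-action by $\omega$ claimed at the end of the lemma.

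With these identifications, the arrows $(1)$, $(2)$ and $(5)$ are each induced by an inclusion of subgroups whose intersections with $N(T)$ agree, so they become the identity of $W$ (resp.\ of $\langle\omega\rangle\backslash W$) and are bijective; the arrow $(4)$ becomes the quotient map $W \to \langle\omega\rangle\backslash W$, which is a $2:1$ surjection because $\langle\omega\rangle$, generated by the order-two element $\omega \neq 1$, acts freely on $W$. The one substantive input is the bijectivity of $(3)\colon W = L\backslash N(T)/L \to L\backslash G/L$, which I would deduce from the Bruhat--Tits decomposition $G = \coprod_{w\in W} LwL$ \cite{BT72} attached to the Tits system $(G, L, N(T))$ --- legitimate here exactly because $L = G_e$ is the pointwise stabiliser of the chamber $e$ and $L\cap N(T) = T^0$ --- which asserts that every $L$-double coset in $G$ meets $N(T)$ and that two elements of $N(T)$ lie in the same $L$-double coset iff they are congruent modulo $T^0$. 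The unlabelled arrows cost nothing extra: the top-left horizontal arrow is $\id_W$, the maps $\pr{J}^L$ and $\pr{L}^K$ are the canonical double-coset projections, and $K\backslash N(T)/L \to K\backslash G/L$ is a bijection by the same Bruhat argument combined with $K = L \sqcup L\omega L$.

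Finally, commutativity of the four small squares is a formal diagram chase: every arrow is either one of the identifications above or a canonical projection of double cosets, so tracing a single representative $n \in N(T)$ shows that the two composites across any square send the class of $n$ to the same double coset ($LnL$, $KnL$, $(K\cap N(T))nT^0$, and so on). I expect the main obstacle to be essentially organisational --- pinning down that $L\cap N(T)$ is exactly $T^0$ and that $(G, L, N(T))$ is the correct Tits system, which together feed the sharpness of the Bruhat--Tits decomposition used for $(3)$ --- rather than anything deep; once those are in place the bijectivity, surjectivity and commutativity assertions all follow immediately.
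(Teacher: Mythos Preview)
Your proposal is correct and reaches the same conclusions, but the route differs from the paper's in an instructive way. You first compute the intersections $N(T)\cap J = N(T)\cap L = T^0$ and $N(T)\cap K = T^0 \sqcup \omega T^0$ directly from Lemma~\ref{Weyl}, which immediately identifies all the $N(T)$-double-coset spaces and makes $(1)$, $(2)$, $(5)$ into identities and $(4)$ into the quotient $W \to \langle\omega\rangle\backslash W$; you then invoke the Bruhat--Tits decomposition $G = \coprod_{w\in W} LwL$ as a black box for $(3)$. The paper instead proves the bijectivity of the composite $W \to L\backslash G/L$ geometrically --- identifying $L\backslash G/L$ with $L$-orbits of edges, using transitivity on pairs $(\mathcal{A}',e')$ from \cite{BT72} to reduce to edges of the standard apartment, and then appealing to the simply transitive $W$-action on those edges --- and only afterwards deduces $(1)$, $(2)$, $(5)$ by two-out-of-three arguments. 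Your packaging is cleaner and more algebraic; the paper's is more self-contained and sidesteps the need to check that $(G,L,N(T))$ really carries the Tits-system axioms in the possibly subdivided setting (which is the one place where your citation deserves a word of care, though for the rank-one groups at hand it is unproblematic).
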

\begin{proof}
    Consider the composition $N(T)/T^0 \to L \backslash N(T) /L \to L \backslash G / L$ first. 
    By definition $L$ is the stabilizer of the edge $e$ and $G$ acts transitive on edges, hence the left $G$-set $G/L$ can be identified with the set of edges of $X$.
    Therefore, $L\backslash G /L$ is equivalent to the set of $L$-orbits of edges of $X$.
    Let $\mathcal{A}$ be the standard apartment associated to $T$. In particular, this contains the edge $e$. Let $e'$ be any other edge.
    Any two edges lie in a common apartment, hence there is an apartment $\mathcal{A}'$ that contains both $e$ and $e'$.
    By \cite{BT72}[Section 7.4.], the group $G$ acts transitively on the set of pairs $(\mathcal{A}'', e')$ where $\mathcal{A}''$ is an apartment containing the edge $e'$. 
    Note that this is still true after subdividing $X$ if the $G$-action inverts edges.
    In particular there exists $g \in G$ such that $g\mathcal{A}'=\mathcal{A}$ and $g$ fixes $e$, hence $g \in L$. 
    Therefore, we can assume that every orbit is represented by an edge in $\mathcal{A}$.
    , The group $G$ acts by isometries and there are exactly two edges in $\mathcal{A}$ with the same, positive distance to $e$.
    These can not be mapped onto each other while fixing $e$ as this would invert the orientation. 
    In particular, they are not in the same $L$ orbit.
    By Lemma \ref{Weyl}, the extended Weyl group acts simply transitive on the edges of $\mathcal{A}$ and the map $W=N(T)/T^0 \to L \backslash G / L$ sends $\omega$ to the edge $\omega e$.  
    This shows that $N(T)/T^0 \to L \backslash G /L$ is bijective, hence $N(T)/T^0 \to L \backslash W / L$ is also bijective. 
    Now, by the two-out-of-three property, it follows that $L \backslash N(T)/  L \to L \backslash G / L$ is also bijective.
    By the commutativity of the diagram, the composition $N(T)/T^0 \to J \backslash N(T) / J \to L \backslash N(T) / L$ is also bijective. 
    Again this implies that $N(T)/N^0 \to J \backslash W / J$ is injective and therefore bijective, hence $J \backslash N(T) / J \to L \backslash N(T) / L$ has to be bijective. 

    The only thing that remains to be shown is that $(5)$ is bijective and $L \backslash N(T) /L \to K \backslash N(T) /L$ is $2:1$. 
    Note that because $N(T)/T^0 \to L \backslash N(T) / L$ is bijective,
    the map $N(T)/T^0 \to N(T)/L$ has to be injective. Taking $K$-orbits on both sides we see that $K \backslash N(T)/T^0 \to K \backslash N(T)/L$ is also injective. It is also clearly surjective, hence bijective. 
    The last claim that needs to be checked is that $L \backslash N(T)/L \to K\backslash N(T)/L$ is $2:1$. 
    From what we have shown so far, this is equivalent to showing that $N(T)/T^0 \to K \backslash N(T)/T^0$ is $2:1$.
    When we identify $W =N(T)/T^0$ with the edges of the standard apartment, this map corresponds to the quotient map of the action by of $N(T) \cap K$.
    Any element in $N(T) \cap K$ has to fix the vertex $v\subseteq e$ that is stabilized by $K$. 
    Furthermore, the group $W$ acts simply transitive on $\mathcal{A}$, and therefore $N(T)$ acts transitively with kernel $T^0$, hence the only element (up to multiplication with $T^0$) in the intersection acting non-trivially is the reflection $\omega$ at the vertex $v$.
    This precisely identifies the two edges with equal distance, hence the map in question is $2:1$. 
\end{proof}
\begin{corollary}
    If $W_\chi \neq W$, then $\pr{L}^K\vert _{N(T)_\chi}:L \backslash N(T)_ \chi / L \to K \backslash N(T)_\chi/ L $ is bijective.
\end{corollary}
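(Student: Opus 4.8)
The plan is to reduce the statement to the combinatorics of the extended Weyl group $W=N(T)/T^{0}$ via the identifications of Lemma \ref{diagram}, and then to exploit that a proper subgroup $W_{\chi}\subsetneq W$ must be the translation subgroup, which does not contain the relevant reflection. Surjectivity of $\pr{L}^{K}$, and hence of its restriction, is automatic, so the whole content is injectivity. I would first isolate the following consequences of Lemma \ref{diagram}: the bijection $(2)\colon W\xrightarrow{\sim}L\backslash N(T)/L$ identifies two elements $n,n'\in N(T)$ as having $LnL=Ln'L$ precisely when they have the same image in $W$; and under the identifications the map $(4)$, which is induced by $\pr{L}^{K}$, becomes the quotient map $W\to W/\langle\omega\rangle$ for the $\Z/2$-action of left translation by the reflection $\omega\in W$ at the vertex $v\subseteq e$ stabilised by $K$. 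From Lemma \ref{Weyl} I would further record that $\omega$ is one of the two free generators of $W$, hence has non-trivial image in the finite Weyl group $N(T)/T$.

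The next step is to determine $W_{\chi}$ under the hypothesis. Since $\chi^{w}$ depends only on the image of $w$ in $N(T)/T$, the subgroup $W_{\chi}$ contains the kernel $T/T^{0}$ of $W\to N(T)/T$; in the rank-one case with non-trivial Weyl group this kernel has index two, so $W_{\chi}$ is either $T/T^{0}$ or all of $W$. Hence $W_{\chi}\neq W$ forces $W_{\chi}=T/T^{0}$, and in particular $\omega\notin W_{\chi}$.

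The injectivity argument is then short. Suppose $n,n'\in N(T)_{\chi}$ with $Kn'L=KnL$. By the description of $(4)$, the images of $n$ and $n'$ in $W$ either coincide — whence $Ln'L=LnL$ and we are done — or differ by left translation by $\omega$. In the latter case $n'=\omega nt$ for a representative of $\omega$ in $N(T)$ and some $t\in T^{0}$; a short character computation, using that $T$ is abelian and that $\omega$ and $n$ normalise $T^{0}$, gives $\chi=\chi^{n'}=(\chi^{\omega})^{n}$, and together with $\chi^{n}=\chi$ this yields $\chi^{\omega}=\chi$, i.e.\ $\omega\in W_{\chi}$, contradicting the previous paragraph. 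So only the first case occurs, and $\pr{L}^{K}\vert_{N(T)_{\chi}}$ is injective.

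I do not anticipate a serious obstacle once Lemmas \ref{diagram} and \ref{Weyl} are available; the argument is essentially bookkeeping with double cosets. The one step that genuinely uses the hypothesis $W_{\chi}\neq W$ together with the rank-one assumption is the identification of the only proper possibility for $W_{\chi}$ as the index-two translation subgroup, which misses the reflection $\omega$ — precisely what rules out the collision $Ln'L=L\omega nL$ that would otherwise keep the map two-to-one on $N(T)_{\chi}$.
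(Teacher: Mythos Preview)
Your proposal is correct and follows essentially the same route as the paper's proof: both reduce via Lemma \ref{diagram} to the quotient map $W\to\langle\omega\rangle\backslash W$ and then use that $W_{\chi}\neq W$ forces $W_{\chi}=T/T^{0}$, which does not contain the reflection $\omega$. The paper phrases the conclusion as the coset decomposition $W=W_{\chi}\sqcup\omega W_{\chi}$ (from which injectivity on $W_{\chi}$ is immediate), whereas you unwind this into an explicit contradiction argument via a character computation; your version is more detailed but not genuinely different. One minor point: with the convention $(\chi^{a})^{b}=\chi^{ba}$ used in the paper, from $n'=\omega n t$ you get $\chi^{n'}=(\chi^{n})^{\omega}=\chi^{\omega}$ rather than $(\chi^{\omega})^{n}$, but the conclusion $\chi^{\omega}=\chi$ is the same either way.
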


\begin{proof}
    Let $\omega$ be the reflection at the vertex with stabilizer $K$. Then, by Lemma \ref{diagram}, the map $L \backslash N(T) / L \to K \backslash N(T) / L$ can be identified with the quotient map $W \to \omega \backslash W$.
    If $W_\chi \neq W$, then $W$ decomposes into $W_\chi \sqcup \omega W_\chi$ and the claim follows.
\end{proof}
\begin{lemma}
    $\ind{J}{L}\rho_{\chi^\omega} $ is irreducible.
\end{lemma}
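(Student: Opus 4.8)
The plan is to prove irreducibility by computing the endomorphism algebra of $\ind{J}{L}\rho_{\chi^\omega}$. Since $L = G_e$ is compact, the category $\rep{L}$ of smooth $\C$-representations is semisimple, and $\ind{J}{L}\rho_{\chi^\omega}$ is finite-dimensional because $[L:J]<\infty$; hence it suffices to show that $\End_L(\ind{J}{L}\rho_{\chi^\omega}) \cong \C$, as a finite-dimensional semisimple module with one-dimensional endomorphism ring is simple.

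First I would apply Mackey's formula together with Frobenius reciprocity to write $\End_L(\ind{J}{L}\rho_{\chi^\omega}) \cong \bigoplus_{g \in J\backslash L/J}\Hom_{J^g \cap J}\!\big((\rho_{\chi^\omega})^g, \rho_{\chi^\omega}\big)$, where the summand indexed by the double coset $JgJ$ is nonzero precisely when $g$ lies in the set of intertwiners $\mathcal{I}(\rho_{\chi^\omega})$. So the dimension of the endomorphism algebra is governed entirely by the double cosets $JgJ$ contained in $L \cap \mathcal{I}(\rho_{\chi^\omega})$.

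Next I would invoke Roche's computation of the intertwiners (Lemma \ref{RocheIntertwiner}), applied to the smooth character $\chi^\omega$ of $T^0$: one has $\mathcal{I}(\rho_{\chi^\omega}) = J_{\chi^\omega} W_{\chi^\omega} J_{\chi^\omega} = J\,W_{\chi^\omega}\,J$, using $J_{\chi^\omega}=J_\chi=J$ in the rank-one case. I then claim $L \cap \mathcal{I}(\rho_{\chi^\omega}) = J$. Indeed, any element of $J W_{\chi^\omega} J$ has the form $j_1 \tilde w j_2$ with $j_1, j_2 \in J \subseteq I \subseteq L$ and $\tilde w \in N(T)$ a lift of some $w \in W_{\chi^\omega} \subseteq W$; if this element lies in $L = G_e$, then $\tilde w \in G_e$, so the image $w$ of $\tilde w$ in $W = N(T)/T^0$ fixes the edge $e$ of the (subdivided) standard apartment $\mathcal{A}$. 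By Lemma \ref{Weyl} the group $W$ acts freely on the edges of $\mathcal{A}$, hence $w=1$ and the element lies in $JJ=J$. Consequently the Mackey sum reduces to the single term for the trivial double coset, and since $\rho_{\chi^\omega}$ is one-dimensional, $\End_L(\ind{J}{L}\rho_{\chi^\omega}) \cong \Hom_J(\rho_{\chi^\omega},\rho_{\chi^\omega}) \cong \C$, which proves the lemma.

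The only point requiring care is the geometric step identifying the condition $\tilde w \in G_e$ with $w=1$: this uses that $G$, and hence $W$ acting on $\mathcal{A}$, acts without inversion after the barycentric subdivision (relevant for $\mathrm{PGL}_2$), together with the freeness of the $W$-action on edges from Lemma \ref{Weyl}. I do not expect a genuine obstacle here; once Roche's intertwiner description is available, the argument is a short double-coset bookkeeping.
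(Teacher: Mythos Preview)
Your proof is correct and follows essentially the same strategy as the paper: compute $\End_L(\ind{J}{L}\rho_{\chi^\omega})$ via Mackey, identify the nonzero summands as intertwiners, and invoke Roche's description $\mathcal{I}(\rho_{\chi^\omega})=J W_{\chi^\omega} J$ to see that only the trivial double coset contributes. The only cosmetic difference is in the final counting step: the paper appeals to the bijections of Lemma~\ref{diagram} to conclude $\lvert J\backslash (N(T)_\chi\cap L)/J\rvert=1$, whereas you argue directly from the free action of $W$ on the edges of $\mathcal{A}$ (Lemma~\ref{Weyl}) that $\tilde w\in G_e$ forces $w=1$; this is exactly the geometric content underlying the relevant part of Lemma~\ref{diagram}.
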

\begin{proof}
    Since $L$ is compact every $L$-representation decomposes into a direct sum of irreducibles.
    Hence, it is enough to show that $\End(\ind{J}{L}\rho_{ \chi^\omega})$ is one-dimensional. 
    By Macky's formula 
    $$ \End_L(\ind{J}{L}\rho_{\chi^\omega}) \cong \Hom_J(\rho_{\chi^\omega}, \ind{J}{L}\rho_{\chi^\omega}) \cong \bigoplus_{[g] \in J \backslash L / J}
    \Hom_{J }(\rho_{\chi^\omega}, \ind{J \cap J^g}{J} (\rho_{\chi^\omega})^g). $$
    By the induction-restriction adjunction, we obtain an equivalence
\[\begin{tikzcd}
	{\Hom_{J }(\rho_{\chi^\omega} , \ind{J \cap J^g}{J}( \rho_{\chi^\omega}) ^g)} && {\Hom_{J \cap J^g}(\rho_{\chi^\omega} ,(\rho_{ \chi^\omega})^g)}
	\arrow["\simeq", tail reversed, from=1-1, to=1-3]
\end{tikzcd}\]
    and since $\rho_{\chi^\omega}$ is a character the dimension of is either one or zero.
    It is one precisely if $g$ is an intertwiner of $\rho_{\chi^\omega}$. 
    But by Lemma \ref{RocheIntertwiner}, $g \in J \backslash L / J$ intertwines $\rho_{\chi^\omega}$ if and only if $g \in J \backslash N(T)_\chi \cap L /J$. Since $J\backslash N(T)_\chi \cap L / J \cong L \backslash N(T)_\chi \cap L / L$ 
    by \ref{diagram} there is exactly one such intertwiner. 
\end{proof}
\begin{lemma}
    \label{numbirred}
    For $K=K_i$ for $i=0,1$ the number of irreducible components of $\ind{J}{K} \rho_{\chi^\omega}$ is equal to $  \vert (\pr{J}^K)^{-1}([1]) \cap J \backslash N(T)_\chi /J \vert $. More precisely, we have 
    one if $W_\chi \neq W$, and two distinct irreducibles if $W_\chi = W$.  
\end{lemma}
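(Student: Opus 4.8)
The plan is to reduce the statement to a computation of the endomorphism algebra $\End_K(\ind{J}{K}\rho_{\chi^\omega})$, and to observe that in our situation this dimension already pins down the number of irreducible constituents. Since $K$ is compact, $\ind{J}{K}\rho_{\chi^\omega}$ is a finite direct sum $\bigoplus_i \pi_i^{\oplus n_i}$ of pairwise distinct irreducibles, so by Schur's lemma $\dim_\C\End_K(\ind{J}{K}\rho_{\chi^\omega}) = \sum_i n_i^2$. In particular, if this dimension is $1$ the representation is irreducible, and if it is $2$ then $\sum_i n_i^2 = 2$ forces exactly two summands, each of multiplicity one. Hence it suffices to show that $\dim_\C\End_K(\ind{J}{K}\rho_{\chi^\omega})$ equals $|(\pr{J}^K)^{-1}([1]) \cap J\backslash N(T)_\chi/J|$ and that this number is $1$ when $W_\chi \neq W$ and $2$ when $W_\chi = W$. (Recall that $\rho_{\chi^\omega}$ lives on $J_{\chi^\omega}=J_\chi=J$ by the rank one statement $J_\chi=J_{\chi^\omega}$, so $\ind{J}{K}\rho_{\chi^\omega}$ makes sense with $J=J_\chi$.)

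For the dimension count I would argue exactly as in the proof that $\ind{J}{L}\rho_{\chi^\omega}$ is irreducible. Mackey's formula gives
\[
\End_K(\ind{J}{K}\rho_{\chi^\omega}) \;\cong\; \bigoplus_{[g] \in J\backslash K/J} \Hom_{J\cap J^g}\big(\rho_{\chi^\omega},(\rho_{\chi^\omega})^g\big),
\]
and each summand is at most one-dimensional because $\rho_{\chi^\omega}$ is a character, and is one-dimensional precisely when $g$ intertwines $\rho_{\chi^\omega}$. By Lemma~\ref{RocheIntertwiner} applied to the character $\chi^\omega$ — using $J_{\chi^\omega}=J$, and using that the finite Weyl group $N(T)/T$ has order two so that $\chi^\omega$ depends only on the image of $\omega$ there, whence $W_{\chi^\omega}=W_\chi$ and $N(T)_{\chi^\omega}=N(T)_\chi$ — the intertwiners of $\rho_{\chi^\omega}$ are exactly $J\,N(T)_\chi\,J$. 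Thus $\dim_\C\End_K(\ind{J}{K}\rho_{\chi^\omega})$ is the number of double cosets $JgJ$ with $g \in K\cap J\,N(T)_\chi\,J$.

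It remains to identify this set with $(\pr{J}^K)^{-1}([1]) \cap J\backslash N(T)_\chi/J$ and to evaluate it using Lemma~\ref{diagram}. The fibre $(\pr{J}^K)^{-1}([1])$ is the image of $J\backslash K/J$ in $J\backslash G/J$; if moreover $g \in J\,N(T)_\chi\,J$ then $JgJ = JnJ$ for some $n \in N(T)_\chi$, and $JgJ\subseteq K$ is equivalent to $n\in K$ since $J\subseteq K$. By the bijections of Lemma~\ref{diagram}, $N(T)_\chi \cap K$ maps injectively into $W = N(T)/T^0$ with image the stabilizer of the vertex $v$ fixed by $K$, namely $\{1,\omega\}$ with $\omega$ the reflection of the standard apartment at $v$; by Lemma~\ref{Weyl}, $\omega$ has nontrivial image in $N(T)/T$, so $\omega\in W_\chi$ if and only if $W_\chi = W$. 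Hence $N(T)_\chi\cap K$ equals $T^0$ when $W_\chi\neq W$ and $T^0\cup nT^0$ (with $n$ a lift of $\omega$) when $W_\chi=W$. Since $T^0\subseteq J$ and $J\cap N(T)=T^0$, the number of double cosets $JgJ$ with $g\in N(T)_\chi\cap K$ is $1$ in the first case and $2$ in the second (there $JnJ\neq J$ because $n\notin J$). Via the same bijections this count is exactly $|(\pr{J}^K)^{-1}([1]) \cap J\backslash N(T)_\chi/J|$; equivalently, it reflects the corollary to Lemma~\ref{diagram}, which says the relevant $2{:}1$ projection becomes bijective on the $\chi$-part precisely when $W_\chi\neq W$. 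Together with the first paragraph this gives the claim.

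The main obstacle is the double-coset bookkeeping in the last step: one must check that every double coset meeting $J\,N(T)_\chi\,J$ admits a representative in $N(T)_\chi$, that membership in $K$ can be tested on that representative, and that the identifications of Lemma~\ref{diagram} correctly translate "$g\in K$" into lying over the base point $[1]$. Once this is in place, the count of $N(T)_\chi\cap K$ modulo $J$ is immediate, and the reduction from $\dim_\C\End_K$ to the number of irreducible components is elementary precisely because that dimension is at most $2$.
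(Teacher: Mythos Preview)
Your proposal is correct and follows essentially the same approach as the paper: compute $\dim_\C\End_K(\ind{J}{K}\rho_{\chi^\omega})$ via Mackey, identify the nonzero summands with intertwiners using Lemma~\ref{RocheIntertwiner}, and read off the count from Lemma~\ref{diagram}. Your write-up is in fact more complete than the paper's, which leaves implicit both the passage from $\dim_\C\End_K$ to the number of irreducible constituents (your $\sum_i n_i^2$ argument) and the explicit evaluation of the cardinality in the two cases $W_\chi=W$ and $W_\chi\neq W$; you also make explicit the point $W_{\chi^\omega}=W_\chi$ needed to invoke Lemma~\ref{RocheIntertwiner} for $\rho_{\chi^\omega}$.
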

\begin{proof}
    Again, we compute the dimension of $\End_K(\ind{J}{K} \rho_{\chi^\omega})$ using Macky's formula. 
    $$ \End_K(\ind{J}{K} \rho_{\chi^\omega}) \cong \bigoplus_{[g] \in J \backslash K / J} \Hom_{J \cap J^g}(\rho_{\chi^\omega}, (\rho_{\chi^\omega})^g).$$
    The latter $\Hom$-sets are either zero or one-dimensional. By Lemma \ref{RocheIntertwiner}, they are one-dimensional if and only if $[g]$ intertwines $\rho_{\chi^\omega}$, i.e. if and only if $[g] \in (\pr{J}^K)^{-1}([1]) \cap J \backslash N(T)_\chi /J$. 
\end{proof}
\begin{lemma}
    \label{sameind}
    For any $\omega \in W$ there is an isomorphism $\ind{J}{K}\rho_\chi \simeq \ind{J}{K}\rho_{\chi^\omega}$. 
\end{lemma}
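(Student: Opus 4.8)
The plan is to reduce to a single substantive case and settle it with a short Mackey--Schur argument. First recall that the conjugate character $\chi^\omega$ depends only on the image $\bar\omega$ of $\omega$ in the finite Weyl group $N(T)/T$, which has order two. If $\bar\omega$ is trivial, then $\chi^\omega=\chi$ as characters of $T^0$, so Roche's construction returns literally the same pair, $J_{\chi^\omega}=J_\chi=J$ and $\rho_{\chi^\omega}=\rho_\chi$, and there is nothing to prove. Hence I may assume $\bar\omega$ is the nontrivial element; since $\rho_{\chi^\omega}$ then depends only on $\bar\omega$, and since the reflection $s$ at the vertex $\partial_i e$ stabilised by $K=K_i$ also has nontrivial image in $N(T)/T$ by Lemma \ref{Weyl}, I may replace $\omega$ by $s$, so that $\rho_{\chi^\omega}=\rho_{\chi^s}$. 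If moreover $\chi^s=\chi$, equivalently $W_\chi=W$, then again $\rho_{\chi^\omega}=\rho_\chi$ and we are done. Thus the only case that needs an argument is $K=K_i$, $\omega=s$ the reflection at $\partial_i e$, and $W_\chi\neq W$.

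In that case I would proceed as follows. Both $\ind{J}{K}\rho_\chi$ and $\ind{J}{K}\rho_{\chi^s}$ are finite-dimensional (as $[K:J]<\infty$) and semisimple (as $K$ is compact and we work over $\C$), and by Lemma \ref{numbirred}, applied once with the identity and once with $s$ in place of $\omega$, each has exactly one irreducible constituent, i.e.\ is irreducible. Choose a representative $w\in N(T)\subseteq G$ of $s$; since $s$ is the reflection fixing the vertex $\partial_i e$, we have $w\in G_{\partial_i e}=K$. By Lemma \ref{chiinter} the element $w\in sW_\chi$ intertwines $\rho_\chi$ and $\rho_{\chi^s}$, i.e.\ $\Hom_{J^w\cap J}(\rho_\chi^w,\rho_{\chi^s})\neq 0$. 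As $w\in K$, the double coset $JwJ\subseteq K$ contributes a nonzero summand to the Mackey decomposition of $\Hom_K(\ind{J}{K}\rho_\chi,\ind{J}{K}\rho_{\chi^s})$, so this Hom-space is nonzero. A nonzero homomorphism between irreducible representations is an isomorphism, so $\ind{J}{K}\rho_\chi\simeq\ind{J}{K}\rho_{\chi^s}=\ind{J}{K}\rho_{\chi^\omega}$, as desired.

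The only delicate point is the bookkeeping in the first paragraph: one must justify using that $\rho_{\chi^\omega}$ depends only on $\bar\omega$ (which follows from the corresponding statement for $\chi^\omega$ together with Roche's construction being a function of the character alone) and the freedom to reflect at $\partial_i e$ rather than at an arbitrary vertex of the standard apartment. Everything after that is routine and uses nothing beyond Roche's intertwining computation, already recorded in Lemmas \ref{RocheIntertwiner}, \ref{chiinter} and \ref{numbirred}.
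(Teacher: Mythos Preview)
Your proof is correct and follows essentially the same route as the paper's: reduce to the reflection $s$ at the vertex stabilised by $K$ with $W_\chi \neq W$, invoke Lemma \ref{numbirred} for irreducibility of both inductions, and exhibit a nonzero intertwiner in $K$ via Lemma \ref{chiinter}. The only cosmetic difference is that the paper packages the existence of an intertwiner inside $K$ through Lemma \ref{diagram} (showing $(\pr{J}^{K})^{-1}([1])\cap J\backslash \omega N(T)_\chi/J\neq\emptyset$), whereas you observe directly that any lift of $s$ to $N(T)$ lies in $K$ since $T^0\subseteq K$.
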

\begin{proof}
   If $\omega \in W_\chi$ then by definition $\chi = \chi^\omega$ and there is nothing to show. 
   Therefore, let us assume $\omega \notin W_\chi$, in particular $W_\chi \subsetneq W$.
   Furthermore, by Lemma \ref{Weyl} we can assume that $\omega$ is the reflection along the vertex that has stabilizer $K$, since the conjugation of $\chi$ by $\omega$ only depends on the image of $\omega$ in the finite Weyl group.
   Then by \ref{numbirred} both $\ind{J}{K} \rho_\chi$ and $\ind{J}{K}\rho_{\chi^\omega}$ are irreducible and hence we only need to show that there is a non-zero map between them. 
   Again, we calculate
   \begin{equation*}
    \Hom_K(\ind{J}{K}\rho_\chi , \ind{J}{K}\rho_{\chi^\omega}) \cong \Hom_J(\rho_\chi , \ind{J}{K}\rho_{\chi^\omega}) \cong \bigoplus_{[g] \in J \backslash K /J}\Hom_J(\rho_\chi , \ind{J\cap J^g}{J} {\rho_{\chi^\omega}}^g).
   \end{equation*}
   Simplifying further we obtain $\Hom_J(\rho_{\chi} , \ind{J\cap J^g}{J} {\rho_{\chi^\omega}}^g)\cong \Hom_{J\cap J^g}(\rho_\chi , {\rho_{\chi^\omega}}^g)$.
    The latter space is non-zero precisely if $g$ is an intertwiner of $\rho_\chi$ and $\rho_{\chi^\omega}$. By Lemma \ref{diagram} the set $(\pr{J}^{K})^{-1}([1]) \cap J \backslash \omega N(T)_\chi /J$ is non-empty, hence there is an intertwiner in $J\backslash K /J$ by Lemma \ref{chiinter} which induces a non-trivial map.  
\end{proof}

\begin{proof}[Proof of \ref{projind}]
    Since induction maps $D(L, \rho_\chi)$ to $D(K, \rho_\chi)$, it follows immediately that the map is injective (see remark \ref{mapinj}).
    For the surjectivity, first note that all representations appearing are finitely generated, and it suffices to count the number of irreducible representations on both sides since we already know that the map is injective. 
    We start by computing this number for the right-hand side.
    Since $\ind{J}{K}\rho_\chi = \ind{J}{K} \rho_{\chi^\omega}$ (Lemma \ref{sameind}) decomposes into one or two distinct irreducibles by Lemma \ref{numbirred} and every irreducible representation contained in $D(K, \rho_\chi)$ appears as a summand in $\ind{J}{K}\rho_\chi$, the desired number is exactly the dimension of the $\C$-vector space $\Hom_K(\ind{J}{K}\rho_\chi, \ind{K\cap L^g}{K}V^g)$.
    Using that $\pr{\chi}$ is right adjoint to the inclusion $D(K, \rho_\chi) \to D(K)$ we obtain 
         $$
         \Hom_K(\ind{J}{K}\rho_\chi , \pr{\chi}\ind{K \cap L^g}{K}V^g) \simeq \Hom_J(\rho_\chi, \ind{K \cap L^g}{K}V^g).
        $$
    With Macky's formula, we can decompose $\res{K}{J}\ind{K\cap L^g}{K} V^g$ as follows. The representation $\ind{K \cap L^g}{K}V^g$ is a direct summand of 
    $$\bigoplus_{[h] \in K \backslash G / L}\ind{K \cap L^h}{K}V^h =\res{G}{K} \ind{L}{G} \ind{J}{L} \rho_\chi.$$ 
    Restricting further to $J$ and applying (the proof of) Macky's formula again we also obtain a finer decomposition
    $$ 
    \res{G}{J}\ind{J}{G}\rho_\chi = \bigoplus_{[h] \in J \backslash G /J}\ind{J\cap J^h}{J}\rho_\chi^h
    $$
    such that  $\ind{K \cap L^g}{K}V^g\subseteq \res{G}{K}\ind{J}{G}\rho_\chi$ decomposes further into the summands corresponding to the preimage of $[g]$ under $\pr{J}^{K}:J \backslash G / J \to K \backslash G / L$. 
    With that we compute 
    $$
    \Hom_J(\rho_{\chi}, \ind{K \cap I^g}{K}V^g) \cong \bigoplus_{[h] \in (\pr{J}^{K})^{-1}([g])} \Hom_{J \cap J^h}(\rho_\chi, {\rho_{\chi}^h}).
    $$
    By Lemma \ref{RocheIntertwiner}, $h$ is an intertwiner of $\rho_\chi$ if and only if $h\in J \backslash N(T)_\chi /J$. Thus, the dimension is equal to 
    $ \vert J \backslash N(T)_\chi / J\cap (\pr{J}^{K})^{-1}([g]) \vert$. 

    For the left-hand side we start by considering $\Hom_J(\rho_{\chi^\omega}, \ind{L\cap L^g}{L}V^g)$ for any $\omega \in W$. 
    By an analogous calculation as above, we obtain
    $$\Hom_J(\rho_{\chi^\omega},  \ind{L \cap L^g}{L}V^g) \cong \bigoplus_{[h] \in (\pr{J}^{L})^{-1}([g])} \Hom_{J \cap J^h}(\rho_{\chi^\omega}, (\rho_\chi)^h).$$
    By Lemma \ref{chiinter} this has dimension at least $\vert (J \backslash \omega W_\chi / J\cap (\pr{J}^{L})^{-1}([g])) \vert$. 

    Now, we distinguish between the two cases $W_\chi =W$ and $W_\chi \neq W$.
    In the first case, we computed that $\pr{\chi}\ind{L\cap L^g}{L}V^g$ contains at least one copy of $\rho_\chi$, which by Lemma  \ref{numbirred} splits into two irreducibles after induction to $K$. 
    On the other hand by the above computation, $\pr{\chi}\ind{K \cap L^g}{K}V^g$ has also two irreducible components.  
    In the case $W_\chi \neq W$, we have $W = W_\chi \cup \omega W_\chi$ as a set, where $\omega$ represents any non-trivial element in the finite Weyl group.
    Hence, either $J \backslash N(T)_\chi /J \cap (\pr{J}^{L})^{-1}([g])$ or 
    $J \backslash \omega N(T)_\chi /J \cap (\pr{J}^{L})^{-1}([g])$ is non-empty and $\pr{\chi}\ind{L \cap L^g}{L}V^g$ contains at least one irreducible.  
    This representation stays irreducible by \ref{numbirred}, and we have again the same number of irreducibles on both sides.
    \end{proof}
    As a consequence of the above, we can finally prove that $\alpha$ restricts to a left Bousfield localization on $\MV(X, \rho_\chi)$. 

    \begin{theorem}
        \label{BousfieldType}
        Let $\chi: T^0 \to \C^\times$ be a rmooth character with associated type $\rho_\chi$.
        Then the functor $\alpha : \MV(X, \rho_\chi) \to D(G, \rho_\chi)$ is a left Bousfield localization.
    \end{theorem}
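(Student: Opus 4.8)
The plan is to prove that the counit of the adjunction $\alpha \dashv \pr{\chi} \circ \res{}{}$ is an equivalence; since $\alpha$ is the left adjoint and $\pr{\chi}\circ\res{}{}$ the right adjoint, this is exactly the assertion that $\pr{\chi}\circ\res{}{}$ is fully faithful, hence that $\alpha$ is a left Bousfield localization. Both $\id$ and $\alpha\circ\pr{\chi}\circ\res{}{}$ are colimit-preserving endofunctors of $D(G,\rho_\chi)$ (the latter as already observed when the counit was introduced, using Lemma \ref{MVchicomp} and that $\alpha$ is a left adjoint), and $D(G,\rho_\chi)$ is compactly generated by $\ind{J}{G}\rho_\chi$ because it is the Bernstein block of the type $\rho_\chi$. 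Therefore it suffices to check that the counit is an equivalence when evaluated at the single compact generator $\ind{J}{G}\rho_\chi$.

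To this end I would set $L := G_e$, take $V := \ind{J}{L}\rho_\chi \in D(L,\rho_\chi)^\heartsuit$, and put $M := F_0^\chi V = \Lan_{BL}^{\int_G \simp{X}} V$; by transitivity of left Kan extension one has $\alpha(M) \simeq \ind{L}{G} V \simeq \ind{J}{G}\rho_\chi$. By Lemma \ref{relFilt} the relative filtration $F_n^\chi V$ satisfies $F_0^\chi V = M$ and $\colim_n F_n^\chi V \simeq \pr{\chi}\circ\res{}{}\circ\alpha(M)$, and the canonical map $M \to \colim_n F_n^\chi V$ is precisely the unit $\eta_M$ of the adjunction evaluated at $M$.

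The essential input is that every filtration quotient $F_{n+1}^\chi V / F_n^\chi V$ lies in $\ker(\alpha)$. This follows from Lemma \ref{Filtquot} applied to $V$ together with the pointwise description of $\pr{\chi}$ in Lemma \ref{MVLoc}: the complex $\alpha(F_{n+1}^\chi V / F_n^\chi V)$ is the two-term complex assembled from the comparison maps of Proposition \ref{projind}, and that proposition shows these maps are equivalences, so the complex is acyclic. Since $\alpha$ preserves colimits, applying it to the filtration yields a sequence of equivalences $\alpha(F_0^\chi V) \simeq \alpha(F_1^\chi V) \simeq \cdots$, whence the induced map $\alpha(M) \to \alpha(\colim_n F_n^\chi V) \simeq \alpha\circ\pr{\chi}\circ\res{}{}\circ\alpha(M)$ is an equivalence; by the last sentence of Lemma \ref{relFilt} this map is exactly $\alpha(\eta_M)$. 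The triangle identity $\epsilon_{\alpha(M)} \circ \alpha(\eta_M) = \id_{\alpha(M)}$ then forces the counit $\epsilon_{\alpha(M)}\colon \alpha\circ\pr{\chi}\circ\res{}{}(\ind{J}{G}\rho_\chi) \to \ind{J}{G}\rho_\chi$ to be an equivalence, and the reduction in the first paragraph concludes the argument.

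I do not expect a serious obstacle at this stage: the genuinely hard step — that the quotients of the relative filtration become acyclic after $\alpha$, i.e.\ Proposition \ref{projind}, which rests on Roche's computation of the intertwiners of the principal series types — has already been carried out. What remains is the formal bookkeeping of the filtration's convergence, the colimit-compatibility of $\alpha$, and the triangle identities. The only point that needs mild care is the identification of the comparison map $\alpha(M) \to \alpha(\colim_n F_n^\chi V)$ with $\alpha$ applied to the unit $\eta_M$, and this is exactly what the final clause of Lemma \ref{relFilt} supplies; after that the reduction to the compact generator $\ind{J}{G}\rho_\chi$ is immediate from compact generation of the Bernstein block.
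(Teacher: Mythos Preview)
Your proposal is correct and follows essentially the same approach as the paper: reduce to the compact generator $\ind{J}{G}\rho_\chi$, write it as $\alpha(F_0^\chi V)$ with $V=\ind{J}{L}\rho_\chi$, use Proposition~\ref{projind} (via Lemma~\ref{Filtquot} and Lemma~\ref{MVLoc}) to show each filtration quotient dies under $\alpha$, and conclude via the triangle identity that the counit is an equivalence on $\ind{J}{G}\rho_\chi$. The only differences are expository, not mathematical.
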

    \begin{proof}
        We already know that $\alpha$ has a right adjoint given by $\pr{\chi} \circ \res{}{}$, hence we only need to show that the counit is an equivalence.
        Since both sides are compatible with colimits, it suffices to show this for the compact generator $\ind{J}{G} \rho_\chi$.
        By transitivity of left Kan extensions, $\ind{J}{G}\rho_\chi$ can be obtained as $\alpha (\Lan_{BL}^{\int_G \simp{X}}V)$ with $V= \ind{J}{L}\rho_\chi$. 
        Let $F_n^\chi V$ be the filtration from Lemma \ref{relFilt}. 
        Now we plug in the definitions of $\pr{\chi}$ and $F_n^{\chi}$ into Lemma \ref{Filtquot}.
        This shows that, to understand $\alpha(F^\chi_{n+1}V/F^\chi_nV)$, we have to compute the complex associated to  
\[\begin{tikzcd}
	{\ind{L}{G}\pr{\chi}\ind{L \cap L^{g_0}}{L}V^{g_0} \oplus \ind{L}{G} \pr{\chi}\ind{L \cap L^{g_1}}{L}V^{g_1}} \\
	{\ind{K_1}{G}\pr{\chi} \ind{K_0 \cap L^{g_0}}{K_1}V^{g_0} \oplus \ind{K_1}{G}\pr{\chi} \ind{K_1 \cap L^{g_1}}{K_1}V^{g_1} }
	\arrow[from=1-1, to=2-1]
\end{tikzcd}\]
        with $g_0,g_1$ as in Lemma \ref{Filtquot}.
        But this complex is contractible by Proposition \ref{projind}.
        Therefore, all maps $F_n^\chi V \to F_{n+1}^\chi V$ are equivalences, hence also the canonical map $F_0^\chi V \to \colim_n F_n ^\chi V = \pr{\chi } \circ \res{}{} \circ \alpha (F_0^\chi V)$.
        Again, we have the triangle identity associated to the adjunction $\alpha \dashv \pr{\chi} \circ \res{}{}$ 
\[\begin{tikzcd}
	{\alpha (F_0^\chi V)} && {\alpha \circ \pr{\chi} \circ \res{}{} \circ  \alpha (F_0^\chi V)} && {\alpha (F_0^\chi V).}
	\arrow[from=1-1, to=1-3]
	\arrow[from=1-3, to=1-5]
\end{tikzcd}\]
    The composition is always an equivalence.
    Furthermore, by the previous argument, the first map is also an equivalence.
    Therefore, the right-hand map is an equivalence as well.  
    But this is precisely the counit evaluated at $\alpha(F_0^\chi V) \simeq \ind{J}{G}\rho_\chi$, hence the claim follows.    
    \end{proof}
    Now we are in the position to prove the main result of this section.
    For any principal series type $\rho_\chi$, we can compute the $K$-theory spectrum of the corresponding Bernstein block as follows. 
    \begin{theorem}
        \label{KBernstein} 
        Let $G$ be a connected split reductive algebraic group of rank one and let $\rho_\chi$ be a principal series type associated to a smooth character $\chi: T^0 \to \C^\times$.
        \begin{enumerate}
            \item If $X_0$ has one $G$-orbit, then 
            \[\begin{tikzcd}
	            {K(G_e, \rho_\chi)} && {K(G_{\partial_0e}, \rho_\chi)} && {K(G, \rho_\chi)}
	            \arrow["{\ind{}{}}", from=1-3, to=1-5]
	            \arrow["{\ind{}{}(\partial_0) - \ind{}{}(\partial_1)^g}", from=1-1, to=1-3]
            \end{tikzcd}\]
            with maps induced by induction and $g \in G$ such that $g\partial_1e = \partial_0e$, is a cofibre sequence.
            \item If $X_0$ has two $G$-orbits, then 
            \[\begin{tikzcd}
            	{K(G_e, \rho_\chi)} && {K(G_{\partial_0e}, \rho_\chi)} \\
            	\\
	            {K(G_{\partial_1e}, \rho_\chi)} && {K(G, \rho_\chi)}
            	\arrow["\ind{}{}",from=1-1, to=1-3]
            	\arrow["\ind{}{}",from=1-1, to=3-1]
            	\arrow["\ind{}{}",from=3-1, to=3-3]
            	\arrow["\ind{}{}",from=1-3, to=3-3]
            \end{tikzcd}\]
            with maps induced by induction, is a pushout square.
        \end{enumerate}
        In particular, $K(G, \rho_\chi)$ is connective.
    \end{theorem}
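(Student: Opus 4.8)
The plan is to mirror exactly the argument for Theorem \ref{Kseq}, replacing the absolute categories by their $\rho_\chi$-relative counterparts throughout. First I would invoke the three ingredients that make the relative setup work just like the absolute one: Proposition \ref{Kercompg} applied to the admissible collection $\mathcal{D}(\chi)$ (so that $\MV_0(X,\mathcal{D}(\chi))$ is compactly generated and lies in $\mathrm{Pr}^L_{st,\omega}$), Theorem \ref{BousfieldType} (so that $\alpha:\MV(X,\rho_\chi)\to D(G,\rho_\chi)$ is a left Bousfield localization, hence gives an exact sequence $\MV_0(X,\mathcal{D}(\chi))\to\MV(X,\rho_\chi)\to D(G,\rho_\chi)$), and Definition \ref{DefMV0}/Notation \ref{NotationRelCat}. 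Passing to compact objects and applying $K$-theory (via Remark \ref{RmPrComp}) yields a bifibre sequence
\[
K(\MV_0(X,\mathcal{D}(\chi))^\omega)\longrightarrow K(\MV(X,\rho_\chi)^\omega)\longrightarrow K(G,\rho_\chi).
\]

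Next I would identify the two outer terms. Proposition \ref{KMV}, applied with $\mathcal{D}=\mathcal{D}(\chi)$ and using that $G$ acts transitively on edges, gives
\[
K(\MV(X,\rho_\chi)^\omega)\simeq\Bigl(\bigoplus_{[v]\in X_0/G}K(G_v,\rho_\chi)\Bigr)\oplus K(G_e,\rho_\chi),
\]
where $K(G_x,\rho_\chi):=K(D(G_x,\rho_\chi)^\omega)=K(\mathcal{D}(\chi)(x)^\omega)$, and one may take each orbit representative $v$ to satisfy $v\subset e$. For the kernel, Theorem \ref{RelKer} (whose hypotheses are met by Corollary \ref{BruhatTitsssSet}) gives $K(\MV_0(X,\mathcal{D}(\chi))^\omega)\simeq K(\mathcal{D}(\chi)(e)^\omega)\oplus K(\mathcal{D}(\chi)(e)^\omega)=K(G_e,\rho_\chi)\oplus K(G_e,\rho_\chi)$, with the map into the middle term induced by induction along the two face maps $\partial_i:G_e\to G_{\partial_i e}$.

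Then I would run the same diagram chase as in the proof of Theorem \ref{Kseq}: rewrite the bifibre sequence as a pullback square with the anti-diagonal $\nabla:K(G_e,\rho_\chi)\to K(G_e,\rho_\chi)\oplus K(G_e,\rho_\chi)$ on the left and the induction map $\partial_0+\partial_1$ into $\bigoplus_{[v]}K(G_v,\rho_\chi)$ on top, stack it over the diagonal $\Delta$, and read off that the fibres of the two vertical composites agree. If $X_0$ has one $G$-orbit, choosing $g\in G$ with $g\partial_1 e=\partial_0 e$ (which induces $K(G_{\partial_1 e},\rho_\chi)\simeq K(G_{\partial_0 e},\rho_\chi)$ because conjugation by $g$ carries $\mathcal{D}(\chi)(\partial_1 e)$ to $\mathcal{D}(\chi)(\partial_0 e)$ by admissibility of $\mathcal{D}(\chi)$) collapses the square to the cofibre sequence in (1); if $X_0$ has two $G$-orbits, the pullback square is precisely the pushout square in (2). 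Finally, connectivity of $K(G,\rho_\chi)$ follows exactly as in Theorem \ref{Kseq}: by Lemma \ref{tcomprel} the category $D(G_x,\rho_\chi)^\omega=\mathcal{D}(\chi)(x)^\omega$ carries a bounded $t$-structure with Noetherian heart, so $K(G_x,\rho_\chi)$ is connective by \cite[Theorem 1.2]{AGH19}, and $K(G,\rho_\chi)$ is a pushout (resp.\ cofibre) of connective spectra. The only genuinely new input over Theorem \ref{Kseq} is Theorem \ref{BousfieldType}, whose proof via the filtration $F_n^\chi V$ and Proposition \ref{projind} is the real content; given that, the present theorem is a formal consequence of the machinery already assembled, so I do not anticipate a substantial obstacle here beyond bookkeeping the orbit cases and the identification $K(G_x,\rho_\chi)=K(\mathcal{D}(\chi)(x)^\omega)$.
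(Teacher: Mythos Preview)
Your proposal is correct and follows essentially the same approach as the paper's proof: both assemble the Verdier sequence $\MV_0(X,\rho_\chi)\to\MV(X,\rho_\chi)\to D(G,\rho_\chi)$ from Theorem \ref{BousfieldType} and Proposition \ref{Kercompg}, identify the two left-hand $K$-theory terms via Proposition \ref{KMV} and Theorem \ref{RelKer}, and then rerun the diagram chase from Theorem \ref{Kseq} together with Lemma \ref{tcomprel} and \cite[Theorem 1.2]{AGH19} for connectivity. The only cosmetic difference is that the paper also cites Corollary \ref{D0MV0} alongside Theorem \ref{RelKer}, but the latter already encapsulates what is needed.
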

    \begin{proof}
       By \ref{BousfieldType} together with Corollary \ref{D0MV0} and Proposition \ref{Kercompg} we obtain a Verdier sequence
\[\begin{tikzcd}
	{\MV_0(X, \rho_\chi)} && {\MV(X, \rho_\chi)} && {D(G, \rho_\chi)}
	\arrow[from=1-1, to=1-3]
	\arrow[from=1-3, to=1-5]
\end{tikzcd}\]
    where $\MV_0(X, \rho_\chi)$ is by definition the kernel of the localization. 
    This induces a fibre sequence in $K$-theory.
    By Proposition \ref{KMV} we have 
    $$ 
    K(\MV (X, \rho_\chi)^\omega) \simeq \bigoplus_{[x] \in X/G} K( G_x, \rho_\chi). 
    $$
    Furthermore, by \ref{RelKer} we can also compute the $K$-theory of the kernel by
    $$
    K(\MV _0(X, \rho_\chi)^\omega) \simeq K(G_e, \rho_\chi) \bigoplus K(G_e, \rho_\chi)
    $$
    where $e$ is a fixed edge of $X$.
    By the same argument as in Theorem \ref{Kseq} this yields the desired colimit formula. 
    Additionally, similar to the absolute case, Lemma \ref{tcomprel} applied to $*$ with trivial $G_x$ action, the category $D(G_x, \rho_\chi)^\omega$ admits a bounded $t$-structure with Noetherian heart. 
    Then \cite[Theorem 1.2]{AGH19} implies that $K(G_x, \rho_\chi)$ is connective and therefore $K(G, \rho_\chi)$ is also connective since it is a pushout of connective spectra. 
    \end{proof}

\begin{remark}
In the proof of Theorem \ref{KBernstein} we used that the Bruhat--Tits building is a tree to compute the kernel and to show that $\alpha$ restricts to a left Bousfield localization on the full subcategory $\MV(X, \rho_\chi)$ that we associated to every principal series type $\rho_\chi$.
However, the definition of the categories $D(G_x, \rho_\chi)$ is independent of the dimension and can always be used to construct a relative assembly map for every principal series Bernstein block, and it seems to be an interesting question to ask whether this is always an equivalence.
\end{remark}
\addcontentsline{toc}{section}{References}
\bibliography{Ref.bib}
\bibliographystyle{alpha} 
\end{document}